\DeclareSymbolFontAlphabet{\mathbb}{AMSb} %to ensure that the meaning of \mathbb does not change
\DeclareSymbolFontAlphabet{\mathbbl}{bbold}
\newcommand{\AGV}{\operatorname{AGV}}
\newcommand{\Z}{\mathbf{Z}}
\newcommand{\N}{\mathbf{N}}
\newcommand{\PP}{\mathbf{P}}
\newcommand{\rR}{\mathrm{R}}
\newcommand{\n}{\mathfrak{n}}
\newcommand{\p}{\mathfrak{p}}
\newcommand{\q}{\mathfrak{q}}
\renewcommand{\O}{\mathcal{O}}
\newcommand{\F}{\mathcal{F}}
\newcommand{\X}{\mathcal{X}}
\newcommand{\m}{\mathfrak{m}}
\newcommand{\colim}{\mathrm{colim}}
\newcommand{\Spec}{\mathrm{Spec}\,}
\newcommand{\Spa}{\mathrm{Spa}\,}
\newcommand{\h}{\mathrm{h}}
\newcommand{\Om}{\wdh{\Omega}^1}
\newcommand{\nil}{\mathrm{nil}}
\newcommand{\Sym}{\mathrm{Sym}}
\newcommand{\red}{\mathrm{red}}
\renewcommand{\o}{\otimes}
\newcommand{\et}{\operatorname{\'et}}
\renewcommand{\rm}{\mathrm}
\newcommand{\xr}{\xrightarrow}
\renewcommand{\et}{\textnormal{\'et}}
\newcommand{\an}{\mathrm{an}}
\newcommand{\cal}{\mathcal}
\renewcommand{\rm}{\mathrm}
\renewcommand{\bf}{\mathbf}
\newcommand{\ov}{\overline}
\newcommand{\ud}{\underline}
\newcommand{\wdh}{\widehat}
\renewcommand{\h}{\mathrm{h}}
\DeclareMathOperator{\charac}{char}
\DeclareMathOperator{\Frac}{Frac}
\DeclareMathOperator{\Spf}{Spf}
\DeclareMathOperator{\Min}{Min}
\DeclareMathOperator{\Rings}{Rings}
\DeclareMathOperator{\Sets}{Sets}
\DeclareMathOperator{\Idem}{Idem}
\newcommand{\abs}[1]{\lvert#1\rvert}
\newcommand{\suchthat}{\;\ifnum\currentgrouptype=16 \middle\fi\vert\;}
\newcommand\restr[2]{{\left.\kern-\nulldelimiterspace#1\vphantom{\big|}\right|_{#2}}}
\numberwithin{equation}{section}
\theoremstyle{plain}
\newtheorem{thm}[equation]{Theorem}
\newtheorem{conjecture}[equation]{Conjecture}
\newtheorem{proposition}[equation]{Proposition}
\newtheorem{lemma}[equation]{Lemma}
\newtheorem{corollary}[equation]{Corollary}
\newtheorem{cor}[equation]{Corollary}
\newtheorem{convention}[equation]{Convention}
\theoremstyle{definition}
\newtheorem{defn}[equation]{Definition}
\newtheorem{notation}[equation]{Notation}
\newtheorem{construction}[equation]{Construction}
\newtheorem{question}[equation]{Question}
\newtheorem{rmk}[equation]{Remark}
\title{Algebraization Techniques and Rigid-Analytic Artin--Grothendieck Vanishing}
\author{Ofer Gabber}
\address{}
\email{}
\author{Bogdan Zavyalov}
\address{}
\email{}
\begin{document}

\bibliographystyle{halpha-abbrv}

\begin{abstract} First, we prove an algebraization result for rig-smooth algebras over a general noetherian ring. The result says that we can always algebraize a geometrically reduced affinoid rigid-analytic space in ``one direction'' in an appropriate sense. As an application of this result, we show the remaining cases of the Artin--Grothendieck vanishing for affinoid algebras. This allows us to deduce a stronger version of the rigid-analytic Artin--Grothendieck vanishing conjecture over a field of characteristic $0$. Using a completely different set of ideas, we also obtain a weaker version of this conjecture over a field of characteristic $p>0$. 
\end{abstract}

\maketitle
\tableofcontents

\section{Introduction}

\subsection{Partial algebraization}

In \cite{Artin-approximation}, M.~Artin pioneered the study of algebraization questions in algebraic geometry. Although algebraization problems may take different forms depending on the exact problem under consideration, usually they can be summarized by the following (imprecise) question: 

\begin{question}\label{question:algebraization} Let $A$ be a ring with a finitely generated ideal $I$, and let $\ov{S}$ be an ``algebraic structure'' over $A^{\wedge}_{I}$. Can we ``algebraize'' (or at least ``approximate'') it to an ``algebraic structure'' $S$ over $A$?
\end{question}

One positive answer to Question~\ref{question:algebraization} is provided by \cite[Th.~1.10]{Artin-approximation} and its later generalization due to Popescu (see \cite[\href{https://stacks.math.columbia.edu/tag/0AH5}{Tag 0AH5}]{stacks-project}). It says that, for a noetherian $G$-ring $A$ which is henselian along an ideal $I$, any solution $\wdh{y}\in (A^{\wedge}_I)^m$ of a (finite) system of polynomial equations $F_i\in A[X_1, \dots, X_m]$ can be approximated by a solution $y\in A^m$. \smallskip

This result has become a standard (but very important) technical tool in many areas of algebraic geometry: various algebraization questions (see \cite[Cor.~2.6 and \textsection 3]{Artin-approximation}), structure theory of algebraic stacks (see \cite{Luna}), homological and commutative algebra (see \cite{tight-closure}), construction of moduli spaces (see \cite{Artin-moduli}, \cite{moduli}), and \'etale cohomology (see \cite{deGabber}). \smallskip

Using the above approximation (and its version due to Elkik), we prove the following general algebraization result: 

\begin{thm}[Noetherian rig-smooth algebraization; Theorem~\ref{thm:noetherian-approximation}]\label{thm:intro-artin-algebraization} Let $A$ be a noetherian ring, and let $I\subset A$ be an ideal. Let $B$ be an $I$-adically complete $A$-algebra such that $B$ is rig-smooth over $(A, I)$ (in the sense of \cite[\href{https://stacks.math.columbia.edu/tag/0GAI}{Tag 0GAI}]{stacks-project}) and $B/IB$ is a finite type $A/I$-algebra. Then there is a finite type $A$-algebra $C$ such that $C$ is smooth outside $\rm{V}(I)$ and there is an isomorphism $C^{\wedge}_I \simeq B$ of $A$-algebras.
\end{thm}

Theorem~\ref{thm:intro-artin-algebraization} positively answers the question raised in \cite[\href{https://stacks.math.columbia.edu/tag/0GAX}{Tag 0GAX}]{stacks-project}. Of course, many special cases of Theorem~\ref{thm:intro-artin-algebraization} were known before. For instance, the case when $A$ is a noetherian $G$-ring was settled in \cite[\href{https://stacks.math.columbia.edu/tag/0GAT}{Tag 0GAT}]{stacks-project}, and the case when $I$ is a principal ideal was settled in \cite[Th.~7 on p.~582]{Elkik}. \smallskip

Theorem~\ref{thm:intro-artin-algebraization} provides a general tool for studying rig-smooth formal schemes. For instance, a version of Theorem~\ref{thm:intro-artin-algebraization} for rig-\'etale morphisms is used in the generalization of Artin's Theorem on dilatations (see \cite[\href{https://stacks.math.columbia.edu/tag/0ARB}{Tag 0ARB}]{stacks-project} and \cite[\href{https://stacks.math.columbia.edu/tag/0GDU}{Tag 0GDU}]{stacks-project}, see also \cite[Th.~3.2]{Artin-2}  for the original result), while Artin's Theorem on dilatations is used in the proof of Artin's Theorem on contractions (see \cite[\href{https://stacks.math.columbia.edu/tag/0GH7}{Tag 0GH7}]{stacks-project} and \cite[Th.~3.1]{Artin-2}). \smallskip

However, the noetherian assumption in Theorem~\ref{thm:intro-artin-algebraization} is too limiting for the purposes of rigid-analytic geometry. Namely, one would like to apply this result to $A=\O_C$ for a rank-$1$ valuation ring in an algebraically closed non-archimedean field $C$. This ring is never noetherian, so Theorem~\ref{thm:intro-artin-algebraization} does not apply in this situation. Nevertheless, this issue has been overcome by R.~Elkik, who studied an analogue of Artin approximation in certain non-noetherian situations. In particular, she proved the following remarkable fact:

\begin{thm}[Elkik's algebraization; special case of {\cite[Th.~7 on p.~582 and Rmq.~2(c) on p.588]{Elkik}}]\label{thm:intro-elkik-algebraization} Let $K$ be a non-archimedean field with a pseudo-uniformizer $\varpi \in \O_K$, and let $A$ be a rig-smooth\footnote{See \cite[Prop.~3.3.2]{Tem08} for the fact that a topologically finitely presented map $\O_K \to A$ is formally smooth ``outside $\rm{V}(\varpi)$'' in the sense of \cite[p.581]{Elkik} if and only if $\O_K \to A$ is rig-smooth in the sense of \cite[Def.~3.1]{BLR4}. Strictly speaking, \cite[Prop.~3.3.2]{Tem08} is written under the additional assumption that $K$ is discretely valued, but the proof goes through for a general non-archimedean field $K$.}, flat, topologically finite type $\O_K$-algebra. Then there is a flat, finite type $\O_K$-algebra $B$ such that $B_K$ is $K$-smooth and there exists an isomorphism $B^{\wedge}_{(\varpi)} \simeq A$ of $\O_K$-algebras. 
\end{thm}

Theorem~\ref{thm:intro-elkik-algebraization} provides a very general machinery to reduce certain (local) questions about smooth rigid-analytic spaces to analogous questions about smooth algebraic varieties. For example, this approach has been used in \cite{berkovich-finiteness} to study finiteness of \'etale cohomology of rigid-analytic spaces, in \cite{uniformization} to study questions related to semi-stable reduction, and in \cite[\textsection 2.5]{thesis} to study properties of dualizing modules on rig-smooth admissible formal schemes. \smallskip %Also, (a stronger version of) Theorem~\ref{thm:intro-elkik-algebraization} is used in \cite{BLR3} in the proof of the Reduced Fiber Theorem. \smallskip

However, Theorem~\ref{thm:intro-elkik-algebraization} is still not strong enough to reduce questions about {\it singular} rigid-analytic spaces to analogous questions about singular algebraic varieties. Furthermore, the naive analogue of Theorem~\ref{thm:intro-elkik-algebraization} is false if one does not impose any smoothness assumptions on $A$ (see Appendix~\ref{appendix:counter-examples}). One of the key results of this paper is that it is nevertheless possible to algebraize a geometrically reduced affinoid rigid-analytic space in ``one direction'' under a very mild assumption. \smallskip

Before we formulate the precise result, we recall that $\Spa(A, A^\circ)$ stands for the adic spectrum in the sense of Huber, see \cite[\textsection 3]{H0}. Furthermore, for a $K$-affinoid algebra $A$, the adic space $X=\Spa(A, A^\circ)$ is called \emph{geometrically reduced} if $X \times_{\Spa(K, \O_K)} \Spa(L, \O_L)$ is reduced for any extension of non-archimedean fields $K\subset L$; see \cite[\textsection 3.3]{Conrad99} for details. We warn the reader that \cite[p.~509]{Conrad99} provides an example of a non-archimedean field $K$ and a geometrically reduced (in the usual algebraic sense, see \cite[\href{https://stacks.math.columbia.edu/tag/030S}{Tag 030S}]{stacks-project}) $K$-affinoid algebra $A$ such that $\Spa(A, A^\circ)$ is \emph{not} geometrically reduced.

\begin{thm}[Partial algebraization; Corollary~\ref{cor:partial-algebraization-1}]\label{thm:intro-partial-algebraization}  Let $K$ be a non-archimedean field, let $\varpi \in \O_K$ be a pseudo-uniformizer, and let $A_0$ be a flat, topologically finite type $\O_K$-algebra such that $A\coloneqq A_0[\frac{1}{\varpi}]$ is a $K$-affinoid algebra of Krull dimension $d>0$ and $\Spa(A, A^\circ)$ is geometrically reduced. Put $R=\O_K\langle X_1, \dots, X_{d-1}\rangle[X_d]$. Then there is a finite, finitely presented $R^{\rm{h}}_{(\varpi)}$-algebra $B$ with an isomorphism $B\otimes_{R^{\rm{h}}_{(\varpi)}} R^{\wedge}_{(\varpi)} \simeq A_0$ of $\O_K$-algebras.
\end{thm}

\begin{cor}[Partial algebraization II; Corollary~\ref{cor:partial-algebraization-2}]\label{cor:intro-partial-algebraization-2} Let $K$, $\varpi$, and $A_0$ be as in Theorem~\ref{thm:intro-partial-algebraization}. Then there is a finitely presented, quasi-finite morphism $\O_K\langle X_1, \dots, X_{d-1}\rangle[X_d] \to B$ and an isomorphism $B^{\wedge}_{(\varpi)}  \simeq A_0$ of $\O_K$-algebras.
\end{cor}

We note that Theorem~\ref{thm:intro-partial-algebraization} and Corollary~\ref{cor:intro-partial-algebraization-2} are optimal. Namely, the assumption that $X=\Spa(A, A^\circ)$ is geometrically reduced in the sense of \cite[p.~509]{Conrad99} cannot be dropped in Theorem~\ref{thm:intro-partial-algebraization} and Corollary~\ref{cor:intro-partial-algebraization-2} (see also Remark~\ref{rmk:optimal}). We refer to Proposition~\ref{prop:counterexample-1-appendex} and Proposition~\ref{prop:counterexample-3-appendex} for explicit counterexamples. \smallskip

Unlike Theorem~\ref{thm:intro-elkik-algebraization}, Theorem~\ref{thm:intro-partial-algebraization} does not allow to directly reduce questions about geometrically reduced (affinoid) rigid-analytic spaces to similar questions about general (affine) algebraic varieties. However, it does give a quite robust tool of reducing problems about geometrically reduced (affinoid) rigid-analytic spaces of dimension $d$ to algebro-geometric problems for (algebraic) curves over $\Spec K\langle T_1,\dots, T_{d-1}\rangle$. We implement this strategy to prove the remaining cases of Artin--Grothendieck vanishing for affinoid algebras, we discuss this proof in more detail in the next section. We also expect this strategy to be useful for other problems in rigid-analytic geometry. For example, the first author knows how to use Corollary~\ref{cor:intro-partial-algebraization-2} to prove \cite[Theorem 1.3.5]{Zav-thesis} without using any input from \cite{BS3} or \cite{Sch2}.

\subsection{Artin--Grothendieck vanishing}

In this paper, we apply the partial algebraization techniques to study Artin--Grothendieck vanishing for affinoid algebras. To put things into context, we recall that the theory of \'etale cohomology of schemes has been extensively studied in \cite{SGA4}, \cite{SGA41/2}, and \cite{SGA5}. Among the many important results obtained there, Artin and Grothendieck have proven the following celebrated vanishing result:

\begin{thm}[Artin--Grothendieck vanishing; {\label{thm:intro-AGV} \cite[Exp.\,XIV, Cor.\,3.2]{SGA4}}] Let $k$ be a separably closed field, and let $A$ be a finite type $k$-algebra. Then, for any torsion \'etale sheaf $\F$ on $\Spec A$, we have 
\[
\rm{H}^i_\et\big(\Spec A, \F\big)=0
\]
for $i> \dim A$. 
\end{thm}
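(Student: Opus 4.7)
The plan is to proceed by induction on $n = \dim A$, reducing the general statement to the single key input that the affine line over a separably closed field has étale cohomological dimension $\leq 1$ for torsion coefficients. I would first reduce to constructible sheaves killed by a prime $\ell$: since $\Spec A$ is qcqs, étale cohomology commutes with filtered colimits of abelian sheaves, and every torsion sheaf is a filtered colimit of its constructible subsheaves; further dévissage along the $\ell$-primary decomposition and the short exact sequences $0 \to \F[\ell] \to \F \to \F/\F[\ell] \to 0$ reduces to the case $\ell \F = 0$. Then Noether normalization produces a finite surjective map $f \colon \Spec A \to \mathbf{A}^n_k$; since $f$ is finite, $f_*$ is exact on étale sheaves and $\Hh^i_\et(\Spec A, \F) \simeq \Hh^i_\et(\mathbf{A}^n_k, f_* \F)$, so it suffices to prove the bound for constructible $\mathbf{F}_\ell$-sheaves on $\mathbf{A}^n_k$.

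For the induction, the base case $n = 0$ is immediate: $\Spec A$ is then a finite disjoint union of copies of $\Spec k$, and étale cohomology of $\Spec k$ with torsion coefficients vanishes in positive degrees since $k$ is separably closed. For $n \geq 1$, I would factor the first-coordinate projection $\pi \colon \mathbf{A}^n_k \to \mathbf{A}^{n-1}_k$ as $\bar{\pi} \circ j$, where $j \colon \mathbf{A}^n_k \hookrightarrow \PP^1_k \times \mathbf{A}^{n-1}_k$ is the relative compactification and $\bar{\pi}$ is the resulting proper projection. The Leray spectral sequence gives $E_2^{p,q} = \Hh^p_\et(\mathbf{A}^{n-1}_k, R^q \bar{\pi}_* Rj_* \F) \Rightarrow \Hh^{p+q}_\et(\mathbf{A}^n_k, \F)$, and proper base change identifies the stalk of $R^q \bar{\pi}_* Rj_* \F$ at a geometric point $\bar{y} \to \mathbf{A}^{n-1}_k$ with $\Hh^q_\et(\mathbf{A}^1_{\kappa(\bar{y})}, \F|_{\mathbf{A}^1_{\kappa(\bar{y})}})$, where $\kappa(\bar{y})$ is the (separably closed) residue field at $\bar{y}$.

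The decisive step, and the main obstacle of the whole argument, is the one-dimensional case: for any separably closed field $K$ and any torsion étale sheaf $\G$ on $\mathbf{A}^1_K$, one has $\Hh^q_\et(\mathbf{A}^1_K, \G) = 0$ for $q > 1$. I would establish this by compactifying $\mathbf{A}^1_K \hookrightarrow \PP^1_K$, reducing to constant coefficients, and combining Tsen's theorem (so that the function field $K(t)$ has cohomological dimension $\leq 1$, which controls the generic stalk) with an explicit Kummer-theoretic computation of $\Hh^*_\et(\PP^1_K, \mu_\ell)$ for $\ell$ invertible in $K$, with the Artin--Schreier sequence handling $\ell = \charac K$. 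Granted this vanishing, $R^q \bar{\pi}_* Rj_* \F = 0$ for $q > 1$, and the inductive hypothesis gives $\Hh^p_\et(\mathbf{A}^{n-1}_k, -) = 0$ for $p > n - 1$ on any torsion sheaf; the spectral sequence then collapses to yield $\Hh^i_\et(\mathbf{A}^n_k, \F) = 0$ for $i > n$, completing the induction.
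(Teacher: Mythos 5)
The paper does not actually prove this statement; it quotes it from SGA~4, Exp.~XIV. Your proposal reconstructs the skeleton of the classical argument, and the reductions in your first paragraph (dévissage to constructible $\mathbf{F}_\ell$-sheaves, Noether normalization, exactness of $f_*$ for finite $f$) are all correct, as is the base case. The gap is in the decisive step of the induction. Proper base change for the proper map $\bar{\pi}$ identifies the stalk of $R^q\bar{\pi}_*(Rj_*\F)$ at a geometric point $\bar{y}$ with the hypercohomology $\rm{H}^q\bigl(\PP^1_{\bar{y}}, (Rj_*\F)|_{\PP^1_{\bar{y}}}\bigr)$; to pass from this to $\rm{H}^q_\et\bigl(\mathbf{A}^1_{\kappa(\bar{y})}, \F|_{\mathbf{A}^1_{\bar{y}}}\bigr)$ you are tacitly asserting that $Rj_*$ commutes with the base change $\bar{y}\to\mathbf{A}^{n-1}$, i.e.\ that $(Rj_*\F)|_{\PP^1_{\bar{y}}} \simeq Rj_{\bar{y},*}(\F|_{\mathbf{A}^1_{\bar{y}}})$. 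This is not part of proper base change and is false for general constructible sheaves: the two sides differ at the point at infinity by the ``vanishing cycles at infinity'' of the family (already nonzero for Artin--Schreier-type local systems in characteristic $p$, including $\ell$-torsion sheaves with $\ell\neq p$ trivialized on wildly ramified coverings). Neither smooth base change (the map $\bar{y}\to\mathbf{A}^{n-1}$ is not smooth) nor generic base change (which only gives commutation over a dense open of the base) rescues the statement at an arbitrary geometric point.

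The correct description of the stalk is $(R^q\pi_*\F)_{\bar{y}} \simeq \rm{H}^q_\et\bigl(\mathbf{A}^1_{\mathcal{O}^{\mathrm{sh}}_{\mathbf{A}^{n-1},\bar{y}}}, \F\bigr)$ --- the affine line over the \emph{strict henselization of the local ring of the base}, not over the residue field --- and proving that this vanishes for $q\geq 2$ is the real content of the theorem (SGA~4, Exp.~XIV, Lemme~3.3). That vanishing rests on the bound $\mathrm{cd}_\ell(K)\leq \mathrm{trdeg}(K/k)$ for \emph{all} fields $K$ of finite transcendence degree over the separably closed field $k$ (Tsen's theorem being only the degree-one step of that induction), applied to the residue fields of all points of $\mathbf{A}^1_{\mathcal{O}^{\mathrm{sh}}}$, together with a d\'evissage over that higher-dimensional scheme; your computation of $\rm{H}^*_\et(\mathbf{A}^1_K,\G)$ for separably closed $K$ is correct but is applied to the wrong object. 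A minor additional point: carrying $\ell=\charac k$ through the same induction is unnecessary and delicate; the standard route (and the one this paper itself takes in its Lemma~3.4) disposes of the $p$-primary part at the outset via $\mathrm{cd}_p\leq 1+\mathrm{cdqc}=1$ for affine schemes in characteristic $p$.
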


%Theorem~\ref{thm:intro-AGV} serves as a valuable tool while setting up the foundations of \'etale cohomology of schemes. For instance, together with Poincar\'e Duality, Theorem~\ref{thm:intro-AGV} implies the Lefschetz Hyperplane Theorem (see \cite[Corollaire 3.3 and the discussion after it]{SGA4} or \cite[Corollary B.5]{Poonen-Voloch}). For this reason, it is quite desirable to have an analogue of Theorem~\ref{thm:intro-AGV} in rigid-analytic geometry.  

In this paper, we discuss two possible analogues of Theorem~\ref{thm:intro-AGV} in the rigid-analytic world. The first analogue is a version of Artin--Grothendieck vanishing for affinoid algebras, which was conjectured in \cite[Remark 7.4]{BM21}. We use the partial algebraization techniques to prove this conjecture in full generality: 

\begin{thm}[Artin--Grothendieck vanishing for affinoid algebras; Theorem~\ref{thm:main-theorem}]\label{thm:intro-schematic-AGV} Let $C$ be an algebraically closed non-archimedean field, and let $A$ be a $C$-affinoid algebra. Then, for any torsion \'etale sheaf $\F$ on $\Spec A$, we have 
\[
\rm{H}^i_\et\big(\Spec A, \F\big)=0
\]
for $i> \dim A$. 
\end{thm}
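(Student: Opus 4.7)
The plan is to emulate the classical Artin--Grothendieck vanishing argument by reducing to the Tate algebra and handling coefficients one prime at a time. The first step would be Noether normalization for affinoid algebras, which produces a finite injective homomorphism $T_d := C\langle x_1, \ldots, x_d\rangle \hookrightarrow A$ with $d = \dim A$. The induced morphism $f \colon \Spec A \to \Spec T_d$ is finite, so $f_*$ is exact on torsion \'etale sheaves and $\rm{H}^i_\et(\Spec A, \F) \cong \rm{H}^i_\et(\Spec T_d, f_*\F)$. Since $\dim T_d = d$, this reduces the theorem to the case $A = T_d$.

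Next, by standard d\'evissage---writing torsion sheaves as filtered colimits of constructibles and then further decomposing by prime-power orders---it suffices to bound the \'etale cohomological dimension $\mathrm{cd}_\ell(\Spec T_d) \leq d$ for each prime $\ell$. The case when $\ell$ is invertible in the residue field $\O_C/\m_C$ is handled in \cite[\textsection 7]{BM21} by realizing a localization of $T_d$ as the henselization of a suitable finitely presented $\O_C$-algebra and combining Gabber's affine analogue of proper base change with the classical Artin--Grothendieck theorem for the special fiber. I would take this as input and focus on the remaining case $\ell = p$, where $p$ denotes the residue characteristic of $C$.

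When $\mathrm{char}(C) = p$, the Artin--Schreier sequence $0 \to \Z/p \to \O_X \xrightarrow{F - \mathrm{id}} \O_X \to 0$ on the affine scheme $X = \Spec T_d$ gives $\rm{H}^i_\et(X, \Z/p) = 0$ for $i \geq 2$, since higher coherent cohomology vanishes on an affine. For an arbitrary locally constant constructible $p$-torsion sheaf $\F$, I would use the correspondence with ``unit'' $F$-modules to produce an analogous two-term resolution by coherent $\O_X$-modules (extending to $\Z/p^n$ via Artin--Schreier--Witt), yielding $\rm{H}^i_\et(X, \F) = 0$ for $i \geq 2$; induction on strata then handles arbitrary constructible $p$-torsion. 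When $\mathrm{char}(C) = 0$ with residue characteristic $p$, the argument is more delicate: one approach would be to pass to the perfectoid cover $C\langle x_1^{1/p^\infty}, \ldots, x_d^{1/p^\infty}\rangle$ of $T_d$ and exploit tilting to the equal-characteristic setting together with almost purity to transport the Artin--Schreier vanishing back to mixed characteristic.

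The main obstacle is the mixed-characteristic $p$-torsion case: neither the Artin--Schreier sequence nor the henselization approach of \cite{BM21} applies directly, and any $p$-adic input must interact delicately with the specific ring-theoretic structure of $T_d$. A secondary difficulty, already present in equal characteristic, is promoting the $\Z/p$-coefficient vanishing to arbitrary constructible $p$-torsion with the optimal bound $d$, since naive Hochschild--Serre arguments for non-abelian coverings would only give suboptimal bounds.
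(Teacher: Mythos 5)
Your reductions at the start are sound and broadly parallel the paper's: Noether normalization plus exactness of finite pushforward, dévissage to constructible sheaves of prime-torsion, the equal-characteristic-$p$ case via Artin--Schreier (the paper simply quotes \cite[Exp.\,X, Th.\,5.1]{SGA4}, which already covers all $p$-torsion sheaves, so your detour through unit $F$-modules is unnecessary), and the prime-to-residue-characteristic case via \cite{BM21}. One structural caveat: reducing to $T_d$ does not simplify the coefficients, since bounding $\mathrm{cd}_\ell(\Spec T_d)$ still requires treating arbitrary constructible $\ell$-torsion sheaves on $T_d$, and the standard trace-method dévissage to constant sheaves forces you onto finite covers of $T_d$, i.e., back to general normal affinoid domains. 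The paper handles this by an induction on dimension in which the constant-sheaf reduction (via prime-to-$\ell$ covers, relative normalization, and the induction hypothesis on lower-dimensional complements) is carried out for all normal affinoid domains at once.

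The genuine gap is the case you yourself flag as the main obstacle, which is also the only genuinely new content of the theorem: $\charac C=0$, residue characteristic $p$, and $p$-torsion coefficients. Your proposed route through the perfectoid cover $C\langle x_1^{1/p^\infty},\dots,x_d^{1/p^\infty}\rangle$ and tilting does not work as stated. That cover is not \'etale over $T_d$ (it ramifies along the coordinate hyperplanes), so there is no Hochschild--Serre or Cartan--Leray descent for \'etale cohomology with $p$-torsion coefficients along it; the tilting equivalence and almost purity concern the \'etale sites of perfectoid \emph{adic} spaces (or diamonds), whereas the object to be controlled here is the \'etale cohomology of the \emph{scheme} $\Spec T_d$, and no comparison between the two is available for $p$-torsion sheaves in mixed characteristic; and even on the tilted side one would land on an adic, not schematic, Artin--Schreier statement. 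None of these transitions is routine, and no mechanism is offered for any of them. The paper's actual argument is entirely different and avoids perfectoid techniques: a generically \'etale Noether normalization over $\O_C$ with Weierstrass-polynomial branch locus, combined with Elkik's algebraization, produces a finite, finitely presented algebra $B$ over the $(\varpi)$-adic henselization $R^{\rm h}_{(\varpi)}$ of $R=\O_C\langle X_1,\dots,X_{d-1}\rangle[X_d]$ with $B\otimes_{R^{\rm h}_{(\varpi)}}R^\wedge_{(\varpi)}\simeq A^\circ$ (``partial algebraization'': only the last variable is decompleted). The Fujiwara--Gabber henselian comparison then identifies $\rm{R}\Gamma_\et(\Spec A,\Z/p)$ with $\rm{R}\Gamma_\et(\Spec B[\tfrac1\varpi],\Z/p)$; after spreading out to a finite-type situation over $R$, the map to $\Spec C\langle X_1,\dots,X_{d-1}\rangle$ is an affine relative curve, and the affine Lefschetz theorem (valid since $p$ is invertible in $C$) plus induction on $d$ gives the bound. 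You would need to replace your perfectoid sketch with an argument of comparable substance for the proof to go through.
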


\begin{rmk} We recall that \cite[Prop.\,3.4.1/6]{BGR} and our convention that non-archimedean fields are complete and non-trivally valued imply that any separably closed non-archimedean field is algebraically closed. Therefore, there is no extra generality in considering separably closed non-archimedean fields in the formulation of Theorem~\ref{thm:intro-schematic-AGV}. 
\end{rmk}

Theorem~\ref{thm:intro-schematic-AGV} has been previously established under certain additional assumptions on $\F$:  

\begin{enumerate}
    \item For a non-archimedean field $C$ of characteristic $p>0$ and a sheaf $\F$ of $\Z/p\Z$-modules, the conclusion of Theorem~\ref{thm:intro-schematic-AGV} follows from \cite[Exp.\,X, Th.\,5.1]{SGA4};
        \item Using the arc-topology, Bhatt and Mathew proved Theorem~\ref{thm:intro-schematic-AGV} for sheaves of $\Z/n\Z$-modules when $n$ is invertible in $\O_C$ (see \cite[Th.\,7.3]{BM21});
    \item In general, \cite[Th.\,7.3]{BM21} shows that $\rm{H}^i_\et\big(\Spec A, \F\big)=0$ for any torsion \'etale sheaf $\F$ and $i>\dim A +1$. 
\end{enumerate}

Therefore, the main new contribution of our work is the case when $C$ is a non-archimedean field of mixed characteristic $(0, p)$ and $\F$ is a sheaf of $\Z/p\Z$-modules. Nevertheless, our proof of Theorem~\ref{thm:intro-schematic-AGV} is independent of \cite{BM21} and works uniformly for any ground field $C$ and any sheaf of $\Z/n\Z$-modules $\F$ (as long as $n$ is invertible in $C$).

The other possible rigid-analytic analogue of Theorem~\ref{thm:intro-AGV} was conjectured by Hansen: 

\begin{conjecture}[{\cite[Conj.~1.2]{Hansen-vanishing}}]\label{conjecture:RAAGV} Let $C$ be an algebraically closed non-archimedean field, let $A$ be an $C$-affinoid algebra, and let $n$ be an integer invertible in $\O_C$. Then, for any Zariski-constructible sheaf $\F$ (see Definition~\ref{defn:zariski-constructible}) of $\Z/n\Z$-modules on $\Spa(A, A^\circ)_\et$, we have 
\[
\rm{H}^i_\et\big(\Spa(A, A^\circ), \F\big)=0
\]
for $i> \dim A$.   
\end{conjecture}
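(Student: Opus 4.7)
The strategy is to deduce Conjecture~\ref{conjecture:RAAGV} from the schematic Artin--Grothendieck vanishing Theorem~\ref{thm:intro-schematic-AGV} by combining a dévissage for Zariski-constructible sheaves with a comparison of étale cohomology between the adic space $X \coloneqq \Spa(A, A^\circ)$ and the scheme $\Spec A$, available for torsion coefficients prime to the residue characteristic.

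First, I would argue by Noetherian induction on $d \coloneqq \dim \mathrm{supp}(\F)$. Zariski-constructibility of $\F$ provides a dense Zariski-open $j \colon U \hookrightarrow \mathrm{supp}(\F) \subseteq X$ on which $\F|_U =: L$ is a locally constant constructible sheaf of $\Z/n\Z$-modules. The short exact sequence
\[
0 \to j_! L \to \F \to i_*\bigl(\F|_Z\bigr) \to 0,
\]
with $i \colon Z \coloneqq X \setminus U \hookrightarrow X$ the closed complement, reduces the problem to the case $\F = j_! L$ once the induction hypothesis is applied to $\F|_Z$ (noting that $\dim Z < d \leq \dim A$). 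Passing to a finite étale cover $f \colon U' \to U$ trivializing $L$ and applying the projection formula together with further dévissage, we reduce to the case $\F = g_! \Z/n\Z$ where $g \colon U' \to X$ is the composition of a finite étale map with a Zariski-open immersion.

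Since $A$ is Noetherian, Zariski-closed analytic subspaces of $X$ correspond bijectively to closed subschemes of $\Spec A$, so the opens $U \subseteq X$ and the finite étale cover $f$ arise via analytification from algebraic data $\tilde g \colon V \to \Spec A$. Huber's comparison theorem for affinoid algebras (applicable because $n$ is invertible in $\O_C$) then identifies
\[
\rm{H}^i_\et\bigl(X, g_! \Z/n\Z\bigr) \;\simeq\; \rm{H}^i_\et\bigl(\Spec A, \tilde g_! \Z/n\Z\bigr),
\]
and the right-hand side vanishes for $i > \dim A$ by Theorem~\ref{thm:intro-schematic-AGV}.

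The principal obstacle is this comparison step: while the identification is classical for the constant sheaf $\Z/n\Z$ on a smooth affinoid, transporting it through $j_!$ and along finite étale covers requires careful base-change compatibility for analytification, particularly when $A$ is not reduced or not equidimensional. In characteristic $p > 0$, wild ramification of $L$ along the boundary of $U$ further obstructs a fully parallel argument, which plausibly accounts for the paper obtaining only a weaker form of the conjecture over fields of positive characteristic.
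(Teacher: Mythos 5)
The statement you are proving is labeled a \emph{Conjecture} in the paper, and the paper does not prove it in full generality: it establishes it only when $\charac C = 0$ (Theorem~\ref{thm:main-theorem-char-0}) or when $\dim A = 1$ (Corollary~\ref{cor:curves}). Your argument has a genuine gap exactly at the point where the general case remains open. After the dévissage to $\F = j'_!\,\cal{L}$ with $\cal{L}$ lisse on a Zariski-open $U \subsetneq X$, you assert that the finite étale cover $f\colon U' \to U$ trivializing $\cal{L}$ ``arises via analytification from algebraic data.'' This is false in general when $\charac C = p > 0$: while Zariski-open subsets of $X$ do algebraize, the analytification functor from finite étale covers of $\Spec A[1/f]$ to finite étale covers of the corresponding analytic open $U$ is fully faithful but \emph{not} essentially surjective, because of wild covers (e.g.\ Artin--Schreier covers of $U$ that do not extend to algebraic data); note that the monodromy group of a $\Z/n\Z$-local system can have order divisible by $p$ even when $n$ is prime to $p$. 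This is precisely why non-algebraic Zariski-constructible sheaves exist (the paper cites \cite[p.\,302]{Hansen-vanishing}), why Theorem~\ref{thm:main-theorem-char-0} is stated only for \emph{algebraic} sheaves, and why the conjecture remains open in positive characteristic. Your closing paragraph correctly identifies this obstruction, but it is not a technical nuisance to be smoothed over — it is fatal to the comparison step, so your argument only yields the characteristic-$0$ case (where algebraicity of Zariski-constructible sheaves is \cite[Th.\,1.7]{Hansen-vanishing}, i.e.\ Lemma~\ref{lemma:examples-of-algebraizable-sheaves}(2)), reproducing Theorem~\ref{thm:main-theorem-char-0}.

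Two further remarks. First, even granting algebraicity, the trace/direct-summand trick you invoke to reduce to the constant sheaf requires a trivializing cover of degree prime to the torsion of $\cal{L}$ (as in Step~2 of Lemma~\ref{lemma:constant-normal-enough}); an arbitrary trivializing cover does not suffice. Second, the paper's genuinely new progress in characteristic $p$ — vanishing of $\rm{H}^{2d}_\et$ for arbitrary Zariski-constructible sheaves (Theorem~\ref{thm:main-theorem-top-degree}), which settles the conjecture for curves — uses a completely different mechanism: pushing forward to $\bf{P}^{d,\an}$, Poincaré duality, and the existence of a boundary point of the universal compactification specializing to the Gauss point (Lemma~\ref{lemma:specialization-Gauss-point} and Lemma~\ref{lemma:no-Homs}). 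Your approach does not capture any of this.
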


Conjecture~\ref{conjecture:RAAGV} is known when $\charac C=0$ due to \cite[Th.~7.3]{BM21} and \cite[Th.~1.10]{Hansen-vanishing}. In this paper, we prove a {\it stronger} version of this conjecture, allowing for more general $n$, when $\charac C = 0$ or $A$ is of dimension $1$. We also establish some {\it weaker} statements when $\charac C=p>0$. We formulate the results of our paper more precisely below.  

\begin{thm}[Rigid-analytic Artin--Grothendieck vanishing; Theorem~\ref{thm:main-theorem-char-0}]\label{thm:intro-main-theorem-char-0} Let $C$ be an algebraically closed non-archimedean field, let $A$ be an affinoid $C$-algebra, and let $\F$ be an algebraic torsion \'etale sheaf on $\Spa(A, A^\circ)$ (in the sense of Definition~\ref{defn:algebraizable}). Then 
\begin{equation}\label{eqn:intro-AGV}
\rm{H}^i_{\et}\big(\Spa(A, A^\circ), \F\big) = 0
\end{equation}
for $i>\dim A$. In particular, (\ref{eqn:intro-AGV}) holds in either of the following situations:
\begin{enumerate}
    \item $\F$ is a lisse sheaf of $\Z/n\Z$-modules for some integer $n>0$;
    \item $\charac C =0$ and $\F$ is a Zariski-constructible sheaf of $\Z/n\Z$-modules for some integer $n>0$.
\end{enumerate}
\end{thm}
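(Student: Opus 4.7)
The plan is to reduce Theorem~\ref{thm:intro-main-theorem-char-0} to the schematic statement of Theorem~\ref{thm:intro-schematic-AGV} through a comparison of \'etale cohomology. Write $X = \Spa(A, A^\circ)$ and let $\pi \colon X_\et \to (\Spec A)_\et$ denote the canonical morphism of sites. By the definition of ``algebraic,'' we may write $\F = \pi^{-1}\F_0$ for some torsion \'etale sheaf $\F_0$ on $\Spec A$. The key claim is that the natural comparison map
\[
R\Gamma_\et(\Spec A, \F_0) \longrightarrow R\Gamma_\et(X, \F)
\]
is a quasi-isomorphism; combined with Theorem~\ref{thm:intro-schematic-AGV}, this immediately gives the desired vanishing in degrees $i > \dim A$.

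To establish this comparison, I would first recall that when the torsion order of $\F$ is invertible in $\O_C$, the statement is Huber's classical comparison theorem. The essential novelty lies in the case of $p$-torsion coefficients over a non-archimedean field of mixed characteristic $(0, p)$. For this, the idea is to exploit the algebraicity of $\F$ together with a Noetherian d\'evissage on $\F_0$: filter $\F_0$ along a constructible stratification of $\Spec A$ to reduce to the case where $\F_0$ is lisse on a locally closed subscheme. In the lisse case the comparison reduces, via descent along a finite \'etale cover trivializing $\F_0$, to constant coefficients, and GAGA for finite \'etale covers of affinoid algebras (identifying $\pi_1^{\et}(X) \cong \pi_1^{\et}(\Spec A)$) then matches the two cohomologies.

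For the two special cases, one verifies algebraicity directly. For (1), a lisse sheaf of $\Z/n\Z$-modules on $X$ is trivialized by a finite \'etale cover of $X$, which by GAGA for affinoids uniquely lifts to a finite \'etale cover of $\Spec A$; descent along this cover shows that $\F$ is algebraic. For (2), a Zariski-constructible sheaf admits by definition a stratification by Zariski-locally-closed subsets of $X$ on which it is lisse, and since Zariski-closed subsets of $X$ correspond to ideals of $A$, the stratification is the analytification of an algebraic stratification of $\Spec A$; a standard d\'evissage combined with (1) then yields algebraicity. The characteristic-$0$ hypothesis in (2) is needed for the algebraizability of constructible stratifications, where in positive characteristic wild-ramification phenomena would obstruct the descent.

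The principal obstacle is the comparison step for $p$-torsion sheaves in mixed characteristic, where Huber's Kummer-theoretic proof is unavailable. My plan is to bridge this precisely by combining the algebraicity of $\F$ (used to reduce to lisse coefficients through d\'evissage) with the uniform-in-torsion schematic input of Theorem~\ref{thm:intro-schematic-AGV}, which itself is made possible by the partial algebraization result of Theorem~\ref{thm:intro-partial-algebraization}.
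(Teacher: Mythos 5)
Your overall architecture coincides with the paper's: reduce to the schematic statement (Theorem~\ref{thm:intro-schematic-AGV}) via the comparison isomorphism
$\rR\Gamma_\et(\Spec A,\F_0)\xrightarrow{\ \sim\ }\rR\Gamma_\et\big(\Spa(A,A^\circ),c_A^*\F_0\big)$
for algebraic sheaves, and then verify algebraicity in the two special cases. Your treatment of the special cases is exactly the paper's (Lemma~\ref{lemma:examples-of-algebraizable-sheaves}): case (1) via the equivalence of categories of finite \'etale covers of $\Spec A$ and of $\Spa(A,A^\circ)$, case (2) being Hansen's algebraization theorem for Zariski-constructible sheaves in characteristic $0$.

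The genuine gap is in your proof of the comparison quasi-isomorphism itself. The paper does not prove it; it cites \cite[Th.\,1.9]{Hansen-vanishing}. Your proposed substitute --- d\'evissage to lisse coefficients, finite \'etale descent to constant coefficients, and then ``GAGA for finite \'etale covers (identifying $\pi_1^{\et}$)'' --- does not establish the comparison: the agreement of \'etale fundamental groups only identifies $\rm{H}^1$ with finite coefficients and says nothing about $\rm{H}^i$ for $i\geq 2$. The comparison for the \emph{constant} sheaf $\ud{\Z/p\Z}$ on an affinoid over a field of mixed characteristic $(0,p)$ is precisely the deep input; it requires something like Gabber's affine analogue of proper base change for the henselian pair $(A^\circ,(\varpi))$ together with Huber's identification of nearby stalks (this is what underlies both Hansen's Theorem 1.9 and the Fujiwara--Gabber statement, Lemma~\ref{lemma:Fujiwara-Gabber}, used elsewhere in the paper). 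Your d\'evissage also quietly needs compatibility of $j_!$ and $i_*$ with $c_A^*$ and the comparison over non-affinoid Zariski-open subsets, which is manageable but not free. As written, the key step of your argument would fail exactly in the new case the theorem is meant to cover ($p$-torsion in mixed characteristic); you should either cite the comparison theorem for algebraic torsion sheaves or prove it by the henselian/arc-topology route rather than by matching fundamental groups.
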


We emphasize that Theorem~\ref{thm:intro-main-theorem-char-0} has no assumptions on the integer $n$. In particular, it applies to Zariski-constructible $\bf{F}_p$-sheaves on an affinoid space over a field $C$ of mixed characteristic $(0, p)$. Unfortunately, Theorem~\ref{thm:intro-main-theorem-char-0} does not imply the full version of Conjecture~\ref{conjecture:RAAGV} over a field of characteristic $p>0$ due to the existence of non-algebraic constructible sheaves. Nevertheless, it recovers all previously-known versions of Artin--Grothendieck vanishing in rigid-analytic geometry. We mention them below:  

\begin{enumerate}
    \item In \cite[Th.\,6.1]{Berkovich-vanishing}, Berkovich treats the case when $\F$ is a sheaf of $\Z/n\Z$-modules, $n$ is invertible in $\O_C$, and both $A$ and $\F$ are ``algebraizable'' in a certain sense;
    \item In \cite[Th.\,1.4]{Hansen-vanishing}, Hansen treats the case when $n$ is invertible in $\O_C$, $\F=\ud{\Z/n\Z}$ is the constant sheaf, and $\Spa(A, A^\circ)$ is defined over a discretely valued non-archimedean field $K\subset C$;
    \item In \cite[Th.\,1.3]{Hansen-vanishing}, Hansen also treats the case when $\charac C=0$, $n$ is invertible in $\O_C$, $\F$ is a Zariski-constructible sheaf of $\Z/n\Z$-modules, and both $\Spa(A, A^\circ)$ and $\F$ are defined over a discretely valued non-archimedean field $K\subset C$;
    \item Using \cite[Th.\,7.3]{BM21}, one can easily deduce Theorem~\ref{thm:intro-main-theorem-char-0} under the additional hypothesis that $n$ is invertible in $\O_C$ and $\F$ is a sheaf of $\Z/n\Z$-modules.  
\end{enumerate}

We also establish a weaker version of Conjecture~\ref{conjecture:RAAGV} that applies to all Zariski-constructible sheaves in all characteristics. Unlike Theorem~\ref{thm:intro-main-theorem-char-0}, this result requires a completely new set of ideas and our proof does not use any partial algebraization techniques.

\begin{thm}[Rigid-analytic Artin--Grothendieck vanishing in top degree; Theorem~\ref{thm:main-theorem-top-degree}]\label{thm:intro-main-theorem-top-degree} Let $C$ be an algebraically closed non-archimedean field, let $A$ be a $C$-affinoid algebra of dimension $d\geq 1$, let $n$ be an integer invertible in $\O_C$, and let $\F$ be a Zariski-constructible sheaf of $\Z/n\Z$-modules on $\Spa(A, A^\circ)_\et$. Then 
\[
\rm{H}^{2d}_\et\big(\Spa(A, A^\circ), \F\big) = 0.
\]
\end{thm}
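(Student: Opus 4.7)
The strategy is induction on $d$. For the \emph{base case} $d = 1$: given a Zariski-constructible $\F$ on the $1$-dimensional affinoid $X = \Spa(A, A^\circ)$, one uses the exact sequence $0 \to j_! j^* \F \to \F \to i_* i^* \F \to 0$ attached to a dense smooth Zariski open $U = X \setminus Z$ on which $\F$ is lisse. Since $Z$ is zero-dimensional, $\Hh^2_\et(X, i_* i^* \F) = 0$, so it remains to vanish $\Hh^2_\et(X, j_! \sL)$ for $\sL$ lisse on $U$. Trivializing $\sL$ over a finite \'etale cover and invoking a standard trace argument reduces to the constant-coefficient statement $\Hh^2_\et(V, \Z/n\Z) = 0$ for affinoid curves $V$, which is a special case of \cite[Th.~1.4]{Hansen-vanishing} (when $C$ is not discretely valued, descent to a discretely valued subfield together with smooth base change supplies the reduction).

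For the \emph{inductive step}, fix $d \geq 2$ and assume the result in dimensions $1, \dots, d-1$. Noether normalization for affinoid algebras produces a finite surjection $f \colon X \to \mathbb{B}^d_C$ onto the closed unit polydisc; since $f$ is finite, $f_* \F$ remains Zariski-constructible and $\Hh^{2d}_\et(X, \F) = \Hh^{2d}_\et(\mathbb{B}^d_C, f_* \F)$, so one may replace $X$ by $\mathbb{B}^d_C$. Consider now the projection $\pi \colon \mathbb{B}^d_C \to \mathbb{B}^1_C$ onto the last coordinate and the associated Leray spectral sequence
\[
E_2^{p,q} = \Hh^p_\et\big(\mathbb{B}^1_C,\, R^q \pi_* \F\big) \Longrightarrow \Hh^{p+q}_\et\big(\mathbb{B}^d_C,\, \F\big).
\]
Huber's cohomological dimension bound $\operatorname{cd}_n(Y) \leq 2 \dim Y$ for rigid spaces $Y$ over algebraically closed $C$ with $n$ invertible in $\O_C$ kills $E_2^{p,q}$ for $p > 2$ and, applied to the geometric fibers of $\pi$ (which are copies of $\mathbb{B}^{d-1}_{k(\ov y)}$), forces $R^q \pi_* \F = 0$ for $q > 2(d-1)$. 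Hence on the diagonal $p + q = 2d$ the only term that can contribute is $E_2^{2,\, 2d-2}$.

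To kill this last term, apply Huber's smooth base change along the smooth morphism $\pi$: at a geometric point $\ov y \to \mathbb{B}^1_C$ the stalk of $R^{2d-2}\pi_* \F$ is $\Hh^{2(d-1)}_\et\big(\mathbb{B}^{d-1}_{k(\ov y)},\, \F|_{\mathbb{B}^{d-1}_{k(\ov y)}}\big)$. The field $k(\ov y)$ is algebraically closed non-archimedean, the fiber is an affinoid of dimension $d-1 \geq 1$, and Zariski-constructibility is preserved under pullback, so the inductive hypothesis annihilates this stalk, hence $R^{2d-2}\pi_*\F = 0$ and $\Hh^{2d}_\et(\mathbb{B}^d_C, \F) = 0$. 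The principal technicalities to verify are the validity of Huber's smooth base change in the needed generality (Zariski-constructible coefficients, arbitrary algebraically closed $C$) and the uniform handling of the base case across characteristics and valuation types; with these granted, the fibration argument reduces the top-degree vanishing in dimension $d$ cleanly to the same statement in dimension $d-1$.
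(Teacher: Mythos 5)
Your overall architecture is genuinely different from the paper's (the paper compactifies $\mathbf{D}^d$ inside $\mathbf{P}^{d,\an}$, converts $\rm{H}^{2d}$ into a $\rm{Hom}$ group via Poincar\'e duality, and kills that $\rm{Hom}$ group by a stalk analysis at the Gauss point and its specializations in the universal compactification), but your inductive step has a gap that I do not see how to close. To conclude $R^{2d-2}\pi_*\F=0$ you must check stalks at \emph{all} geometric points of $\mathbf{B}^1_C$, not just the classical or rank-one ones; the rank-one points are not a conservative family for the \'etale topos of an adic space. At a geometric point $\ov y$ lying over a higher-rank point $y\in\mathbf{B}^1_C$, Huber's stalk formula identifies $(R^{2d-2}\pi_*\F)_{\ov y}$ with the cohomology of $\mathbf{B}^{d-1}\times_{\Spa(k(\ov y),k(\ov y)^\circ)}\Spa(k(\ov y),k(\ov y)^+)$, where $k(\ov y)^+$ is a valuation ring of rank $\geq 2$. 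This fiber is not of the form $\Spa(A,A^\circ)$ for an affinoid over an algebraically closed non-archimedean field, so the inductive hypothesis simply does not apply to it; and since the general cohomological dimension bound only gives $\mathrm{cd}\leq 2(d-1)$, the top-degree group over such fibers is exactly what one would need to kill, i.e.\ a strictly stronger statement than the theorem in dimension $d-1$. (The alternative of applying the $d=1$ case to $R^{2d-2}\pi_*\F$ on $\mathbf{B}^1_C$ fails for a different reason: Zariski-constructibility of higher direct images along $\pi$ is not known.) This is precisely the sort of obstruction that makes fibration arguments hard in rigid geometry and that the paper's duality argument is designed to avoid.

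Two smaller points. First, your base case is also not complete as stated: an arbitrary affinoid curve over $C$ need not descend to a discretely valued subfield, so invoking Hansen's Theorem 1.4 requires a genuine spreading-out/approximation argument, not just ``smooth base change''; alternatively the $d=1$ case follows from the paper's Theorem~\ref{thm:main-theorem-char-0} together with the top-degree statement itself, which is circular for you. Second, the trace argument reducing a lisse $\Z/n\Z$-sheaf to the constant sheaf needs the trivializing cover to have degree prime to the relevant prime, so one must first split $\F$ into its $\ell$-primary parts; this is routine but should be said.
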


As far as we are aware, Theorem~\ref{thm:intro-main-theorem-top-degree} is the first progress towards Conjecture~\ref{conjecture:RAAGV} for non-algebraic Zariski-constructible sheaves. 

We also note that Theorem~\ref{thm:intro-main-theorem-top-degree} and Theorem~\ref{thm:intro-main-theorem-char-0} prove a stronger version of Conjecture~\ref{conjecture:RAAGV}, allowing for any $n$ invertible in $C$, when $\Spa(A, A^\circ)$ is a curve; see Corollary~\ref{cor:curves} for more details.

\subsection{Acknowledgements} BZ is grateful to Bhargav Bhatt, Brian Conrad, Peter Haine, and Peter Scholze for useful discussions and for their valuable comments on the first draft of this paper. A major part of this work has been written when BZ was visiting Institut des Hautes \'Etudes Scientifiques in June, 2023; BZ would like to thank IHES for their hospitality. OG was supported by the IHES and CNRS. Finally, the authors would like to thank the Institute for Advanced Study for funding and excellent working conditions.

\subsection{Conventions}

Throughout this paper, we follow \cite{H0} and use the multiplicative convention for valuations. A {\it non-archimedean field} is a complete topological field $K$ whose topology is defined by a nontrivial rank-$1$ valuation $\abs{.}\colon K \to \bf{R}_{\geq 0}$.  We say that a $K$-algebra $A$ is {\it $K$-affinoid} if there is surjection $K\langle X_1, \dots, X_d\rangle \twoheadrightarrow A$ for some $d\geq 0$.  

For a commutative ring $A$ and an ideal $I \subset A$ (resp.\,a finitely generated ideal $I$), we denote by $A^\h_{I}$ (resp.\,$A^{\wedge}_I$) the {\it $I$-adic henselization} (resp.\,{\it $I$-adic completion}) of $A$. We refer to \cite[\href{https://stacks.math.columbia.edu/tag/0EM7}{Tag 0EM7}]{stacks-project} and \cite[\href{https://stacks.math.columbia.edu/tag/00M9}{Tag 00M9}]{stacks-project} for a detailed discussion of these notions.  

We say that an $A$-module $M$ is {\it $I$-adically complete} if the natural map $M \to \lim_n M/I^nM$ is an isomorphism, i.e., $M$ is complete and separated in the $I$-adic topology. 

In this paper, we will crucially use the notion of henselian pairs, we refer the reader to \cite[\href{https://stacks.math.columbia.edu/tag/09XD}{Tag 09XD}]{stacks-project} for the definition and detailed discussion of this notion. We also use the notion of rig-smoothness (over a pair $(A, I)$ of a noetherian ring $A$ and an ideal $I\subset A$) as defined in \cite[\href{https://stacks.math.columbia.edu/tag/0GAI}{Tag 0GAI}]{stacks-project}; this definition is equivalent to the one from \cite[\textsection 3]{BLR3}.

\section{Partial algebraization}

\subsection{Weierstrass polynomials}

Throughout this section, we fix a non-archimedean field $K$ with ring of integers $\O_K$ and a pseudo-uniformizer $\varpi \in \O_K$. We denote the unique maximal ideal of $\O_K$ by $\m_K$.  

The main goal of this section is to recall the notion of a Weierstrass polynomial and study its properties. The main result of this section is Corollary~\ref{cor:henselizations-agree} which will be crucial for our proof of the rigid-analytic Artin--Grothendieck vanishing Theorem.  

\begin{notation} Throughout this paper, we denote the ring $\O_K\langle X_1, \dots, X_{d-1}\rangle[X_d]$ by $R$. 
\end{notation}

%\begin{notation} We also denote by $\m_K\langle X_1, \dots, X_d\rangle \subset \O_K\langle X_1, \dots, X_d\rangle$ the ideal defined as 
%\[
%\m_K\langle X_1, \dots, X_d\rangle \coloneqq \bigg\{ f=\sum_{i_1, \dots, i_d} a_{i_1, \dots, i_d} X_1^{i_1}\dots X_d^{i_d} \ | \ a_{i_1, \dots, i_d} \in \m_K \, \forall\,  i_1, \dots, i_d\bigg\}.
%\]
%\end{notation}

\begin{defn} An element $f\in \O_K\langle X_1, \dots, X_{d-1}\rangle[X_d]\subset \O_K\langle X_1, \dots, X_d\rangle$ is a {\it Weierstrass polynomial of degree $s$} if $f=X_d^s+ \sum_{i=0}^{s-1} g_i(X_1, \dots, X_{d-1})X_d^{i}$ for some $g_i\in \O_K\langle X_1, \dots, X_{d-1}\rangle$.
\end{defn}

For the next construction, we fix an element $f\in R$. We also note that \cite[\href{https://stacks.math.columbia.edu/tag/0DYD}{Tag 0DYD}]{stacks-project} ensures that the ring $R^{\rm{h}}_{(\varpi)}$ is $(\varpi f)$-adically henselian. Furthermore, \cite[\href{https://stacks.math.columbia.edu/tag/05GG}{Tag 05GG}]{stacks-project} and \cite[\href{https://stacks.math.columbia.edu/tag/090T}{Tag 090T}]{stacks-project} imply that the rings $R^{\wedge}_{(\varpi f)}$ and $R^{\wedge}_{(\varpi)}$ are $(\varpi f)$-adically complete. In particular, they are also $(\varpi f)$-adically henselian by \cite[\href{https://stacks.math.columbia.edu/tag/0ALJ}{Tag 0ALJ}]{stacks-project}. 

\begin{construction} In the notation as above, the universal property of henselization (see \cite[\href{https://stacks.math.columbia.edu/tag/0A02}{Tag 0A02}]{stacks-project}) and the universal property of completion imply that there are unique $R$-algebra homomorphisms
\[
\alpha_f\colon R^{\rm{h}}_{(\varpi f)} \to R^{\rm{h}}_{(\varpi)}, 
\]
\[
\beta_f\colon R^{\wedge}_{(\varpi f)} \to R^{\wedge}_{(\varpi)} 
\]
such that the diagram 
\[
\begin{tikzcd}
R \arrow{r} \arrow{rd} & R^{\rm{h}}_{(\varpi f)}  \arrow{d}{\alpha_f} \arrow{r} & R^{\wedge}_{(\varpi f)}\arrow{d}{\beta_f} \\
& R^{\rm{h}}_{(\varpi)}  \arrow{r} & R^{\wedge}_{(\varpi)}
\end{tikzcd}
\]
commutes, where the vertical arrows are the natural ones. 
\end{construction}

The main goal of this section is to show that the morphisms $\alpha_f$ and $\beta_f$ are isomorphisms when $f$ is a {\it Weierstrass polynomial}. For this, we will need a number of preliminary lemmas:

\begin{lemma}\label{lemma:mod-p-invert-p} Let $M\in D(\O_K)$ be an object of the derived category. Then $M=0$ if and only if $M_K=0$ and $M\otimes^L_{\O_K} \O_K/(\varpi)=0$.
\end{lemma}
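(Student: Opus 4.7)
The plan is to exploit the fibre sequence \[\O_K \xrightarrow{\;\varpi\;} \O_K \longrightarrow \O_K/(\varpi)\] in $D(\O_K)$. This is a genuine cofibre sequence because $\O_K$ is a valuation ring, hence a domain, so $\varpi$ is a nonzerodivisor and the derived quotient $\O_K/(\varpi)$ agrees with the underived quotient, sitting in a distinguished triangle with the multiplication-by-$\varpi$ map on $\O_K$.

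The forward implication is immediate. For the converse, I would tensor the above triangle with $M$ to get a distinguished triangle
\[
M \xrightarrow{\;\varpi\;} M \longrightarrow M \otimes^L_{\O_K} \O_K/(\varpi).
\]
The hypothesis $M \otimes^L_{\O_K} \O_K/(\varpi) = 0$ then forces multiplication by $\varpi$ to be an isomorphism on $M$ in $D(\O_K)$, and therefore also on each cohomology object $H^i(M)$.

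The second step is to combine this with $M_K = 0$. Since $K = \O_K[\tfrac{1}{\varpi}]$, localizing a quasi-coherent complex at $K$ amounts to inverting $\varpi$ termwise on cohomology, so $M_K = 0$ gives $H^i(M)[\tfrac{1}{\varpi}] = 0$ for every $i$. But $\varpi$ is already invertible on each $H^i(M)$ by the previous paragraph, so the localization map $H^i(M) \to H^i(M)[\tfrac{1}{\varpi}]$ is an isomorphism. Chaining these gives $H^i(M) = 0$ for all $i$, hence $M = 0$.

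There is essentially no obstacle here: the only nontrivial input is that $\varpi$ is a nonzerodivisor in $\O_K$, which follows from $\O_K$ being a valuation ring (in particular a domain) together with $\varpi \neq 0$. The argument does not use anything else about the non-archimedean field $K$ and is a standard devissage between the generic fibre and the special fibre.
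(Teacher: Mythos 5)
Your proof is correct and is essentially the same as the paper's: both deduce from $M\otimes^L_{\O_K}\O_K/(\varpi)=0$ that multiplication by $\varpi$ is an isomorphism on $M$, and then use $M_K=0$ to conclude that each $\rm{H}^i(M)\simeq \rm{H}^i(M)[\tfrac{1}{\varpi}]$ vanishes. You simply spell out the distinguished-triangle and nonzerodivisor details that the paper leaves implicit.
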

\begin{proof}
    The ``only if'' direction is clear, so we only need to show the ``if'' direction. We assume that $M_K=0$ and $M\otimes^L_{\O_K} \O_K/(\varpi)=0$. The latter assumption implies that the natural morphism $M \xrightarrow{\cdot \varpi} M$ is an isomorphism. Therefore, we conclude that $\rm{H}^i(M) \simeq \rm{H}^i(M_K) =0$ for any integer $i$. Thus, $M=0$.
\end{proof}

\begin{lemma}\label{lemma:uncompletion-Weierstrass-poly} Let $f\in R$ be a Weierstrass polynomial of degree $s$. Then the natural morphism
\[
\gamma\colon R/(\varpi f)\to R^{\wedge}_{(\varpi)}/(\varpi f) 
\]
is an isomorphism.
\end{lemma}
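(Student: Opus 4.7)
The plan is to invoke Lemma~\ref{lemma:mod-p-invert-p} applied to the cofiber of $\gamma$ viewed in $D(\O_K)$. This reduces the problem to checking that (a) $\gamma[1/\varpi]$ is an isomorphism and (b) $\gamma \otimes^L_{\O_K} \O_K/\varpi$ is a quasi-isomorphism.

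For (a), after inverting $\varpi$ the ideal $(\varpi f)$ becomes $(f)$, so the map becomes $K\langle X_1, \ldots, X_{d-1}\rangle[X_d]/(f) \to K\langle X_1, \ldots, X_d\rangle/(f)$. Ordinary polynomial division (since $f$ is monic in $X_d$) identifies the source with the free $K\langle X_1, \ldots, X_{d-1}\rangle$-module of rank $s$ on the basis $1, X_d, \ldots, X_d^{s-1}$. For the target I would first observe that the hypothesis $g_i \in \m_K\langle X_1, \ldots, X_{d-1}\rangle$ forces the Gauss norm of each $g_i$ to be strictly less than $1$: the coefficients lie in $\m_K$ and tend $\varpi$-adically to $0$, so the supremum is attained and is $<1$. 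Hence $f$ is a Weierstrass polynomial in the classical sense in the Tate algebra $K\langle X_1, \ldots, X_d\rangle$, and classical Weierstrass division (e.g.\ BGR~5.2.2/1) exhibits the same free basis on the target; the natural inclusion identifies the two.

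For (b), I would resolve $\O_K/\varpi$ by $[\O_K \xrightarrow{\varpi} \O_K]$, reducing both sides to the two-term complexes $[R/(\varpi f) \xrightarrow{\varpi} R/(\varpi f)]$ and $[R^{\wedge}_{(\varpi)}/(\varpi f) \xrightarrow{\varpi} R^{\wedge}_{(\varpi)}/(\varpi f)]$. We need the induced map to be a quasi-isomorphism, which amounts to checking isomorphisms on $H^0$ and $H^{-1}$. The $H^0$ terms are the cokernels $R/\varpi$ and $R^{\wedge}_{(\varpi)}/\varpi$, and the comparison is an isomorphism directly by the construction of the $\varpi$-adic completion. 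For $H^{-1}$: flatness of $R$ and $R^{\wedge}_{(\varpi)}$ over $\O_K$ (in particular $\varpi$-torsion-freeness) identifies the kernels with $(f)/(\varpi f)$ inside each ring; since $f$ is monic in $X_d$ it is a non-zero-divisor in both rings, so multiplication by $f$ produces isomorphisms $R/\varpi \cong (f)/(\varpi f)$ and $R^{\wedge}_{(\varpi)}/\varpi \cong (f)/(\varpi f)$, and the comparison map reduces once again to the isomorphism $R/\varpi \cong R^{\wedge}_{(\varpi)}/\varpi$.

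The only real technical input is classical Weierstrass division in the Tate algebra $K\langle X_1, \ldots, X_d\rangle$, and the main point deserving explicit care is the verification that the paper's convention (coefficients in $\m_K\langle X_1,\ldots,X_{d-1}\rangle$) implies the classical Gauss-norm hypothesis $|g_i|<1$; everything else is routine bookkeeping with the two-term complexes and the elementary fact that monic polynomials in $X_d$ are non-zero-divisors.
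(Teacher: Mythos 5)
Your proposal is correct and follows essentially the same route as the paper: reduce via Lemma~\ref{lemma:mod-p-invert-p} to the generic fibre (handled by Weierstrass division in the Tate algebra) and the derived mod-$\varpi$ fibre (handled by torsion-freeness and the fact that $\varpi$-adic completion does not change the reduction mod $\varpi$). Your explicit two-term-complex computation of the derived reduction is just a more hands-on version of the paper's derived tensor-product manipulation, and your check that $g_i\in\m_K\langle X_1,\dots,X_{d-1}\rangle$ forces $\abs{g_i}<1$ is a correct (if slightly stronger than necessary) verification that the classical Weierstrass division applies.
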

\begin{proof}
    First, we note that $R^{\wedge}_{(\varpi)}$ is simply the integral Tate algebra $\O_K\langle X_1, \dots, X_d\rangle$. Therefore, we need to show that the natural morphism
    \[
    \gamma\colon \O_K\langle X_1, \dots, X_{d-1}\rangle[X_d]/(\varpi f)\to \O_K\langle X_1, \dots, X_{d}\rangle/(\varpi f) 
    \]
    is an isomorphism. Lemma~\ref{lemma:mod-p-invert-p} ensures that it suffices to show that $\gamma_K$ and $\gamma\otimes^L_{\O_K} \O_K/(\varpi)$ are isomorphisms.  
    
    {\it Step~$1$. $\gamma_K$ is an isomorphism.} Since $\varpi$ is invertible in $K\langle X_1, \dots, X_{d-1}\rangle[X_d]$, it suffices to show that the natural morphism
    \[
    K\langle X_1, \dots, X_{d-1}\rangle[X_d]/(f) \to K\langle X_1, \dots, X_{d}\rangle/(f)
    \]
    is an isomorphism. This directly follows from \cite[Prop.~5.2.3/3(ii)]{BGR}.  

    {\it Step~$2$. $\gamma\otimes^L_{\O_K} \O_K/(\varpi)$ is an isomorphism.} We note that both $R$ and $R^{\wedge}_{(\varpi)}$ are domains (because they can be realized as subrings of $\O_K\llbracket X_1,\dots, X_d\rrbracket$), so 
    \[
    R/(\varpi) = R\otimes^L_{\O_K} \O_K/(\varpi) \quad \text{and} \quad R^{\wedge}_{(\varpi)}/(\varpi f)=R^{\wedge}_{(\varpi)}\otimes^L_{R} R/(\varpi f).
    \]
    Therefore, $\gamma\otimes^L_{\O_K} \O_K/(\varpi)$ can be identified with the morphism
    \[
    R/(\varpi f)\otimes^L_R R/(\varpi) \to R^{\wedge}_{(\varpi)}\otimes_R^L R/(\varpi f)\otimes^L_R R/(\varpi).
    \]
    But then it suffices to show that $R/(\varpi)\to R^{\wedge}_{(\varpi)}\otimes^L_R R/(\varpi)$ is an equivalence. Since $R^{\wedge}_{(\varpi)}$ is a domain (so $\varpi \in R^{\wedge}_{(\varpi)}$ is a regular element), the question reduces to showing that the natural morphism
    \[
    R/(\varpi) \to R^{\wedge}_{(\varpi)}/(\varpi)
    \]
    is an isomorphism. This follows directly from \cite[\href{https://stacks.math.columbia.edu/tag/05GG}{Tag 05GG}]{stacks-project}.
\end{proof}

\begin{cor}\label{cor:p-henselian} Let $f\in R$ be a Weierstrass polynomial. Then $R^{\rm{h}}_{(\varpi f)}$ is $(\varpi)$-adically henselian.
\end{cor}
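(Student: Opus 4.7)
The plan is to bootstrap the known $(\varpi f)$-adic henselianity of $R^{\mathrm{h}}_{(\varpi f)}$ into $(\varpi)$-adic henselianity via the standard dévissage for henselian pairs, reducing everything to a statement about the common quotient $R^{\mathrm{h}}_{(\varpi f)}/(\varpi f) = R/(\varpi f)$. This is precisely where Lemma~\ref{lemma:uncompletion-Weierstrass-poly} will enter.

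First, I would observe that $R^{\wedge}_{(\varpi)}$ is the Tate algebra $\O_K\langle X_1, \dots, X_d\rangle$, which is $(\varpi)$-adically complete; hence the pair $(R^{\wedge}_{(\varpi)}, (\varpi))$ is henselian by \cite[\href{https://stacks.math.columbia.edu/tag/0ALJ}{Tag 0ALJ}]{stacks-project}. Since henselian pairs are preserved under integral ring maps (in particular, under surjections) by \cite[\href{https://stacks.math.columbia.edu/tag/09XJ}{Tag 09XJ}]{stacks-project}, the pair $\bigl(R^{\wedge}_{(\varpi)}/(\varpi f), (\varpi)/(\varpi f)\bigr)$ is henselian. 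I would now invoke the key input Lemma~\ref{lemma:uncompletion-Weierstrass-poly}, which identifies $R/(\varpi f)$ with $R^{\wedge}_{(\varpi)}/(\varpi f)$ via the natural map; this is exactly where the Weierstrass hypothesis on $f$ is used. Combined with the standard identity $R^{\mathrm{h}}_{(\varpi f)}/(\varpi f)R^{\mathrm{h}}_{(\varpi f)} = R/(\varpi f)$ for henselizations, this shows that $\bigl(R^{\mathrm{h}}_{(\varpi f)}/(\varpi f), (\varpi)/(\varpi f)\bigr)$ is a henselian pair.

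Finally, I would conclude by combining this with the $(\varpi f)$-adic henselianity of $R^{\mathrm{h}}_{(\varpi f)}$ recorded just before the construction, via the following dévissage: if $I \subseteq J$ are ideals in a ring $A$ and both $(A, I)$ and $(A/I, J/I)$ are henselian, then $(A, J)$ is henselian. This is verified by lifting a given monic coprime factorization from $(A/J)[T]$ first to $(A/I)[T]$ using the second hypothesis (where coprimality in $(A/I)[T]$ is automatic via Nakayama's lemma applied to the finite $A/I$-module $(A/I)[T]/(g_1, h_1)$, since $J/I \subseteq \mathrm{Jac}(A/I)$), and then to $A[T]$ using the first. I do not anticipate any real obstacle here: every step is either an invocation of a standard fact about henselian pairs or an application of Lemma~\ref{lemma:uncompletion-Weierstrass-poly}, and essentially all the arithmetic content of the corollary is already packaged in that lemma.
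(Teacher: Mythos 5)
Your proof is correct and follows essentially the same route as the paper's: both arguments reduce, via the d\'evissage for henselian pairs (the equivalence $(A,J)$ henselian $\Leftrightarrow$ $(A,I)$ and $(A/I, J/I)$ henselian for $I\subseteq J$), to showing that $R^{\mathrm{h}}_{(\varpi f)}/(\varpi f)=R/(\varpi f)$ is $(\varpi)$-adically henselian, and both then invoke Lemma~\ref{lemma:uncompletion-Weierstrass-poly} to identify this ring with the quotient $R^{\wedge}_{(\varpi)}/(\varpi f)$ of a $(\varpi)$-adically complete, hence henselian, ring. The only cosmetic difference is that you prove the d\'evissage by hand (correctly, via Nakayama and lifting of factorizations) where the paper simply cites the Stacks project.
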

\begin{proof}
    First, we note that \cite[\href{https://stacks.math.columbia.edu/tag/0DYD}{Tag 0DYD}]{stacks-project} and \cite[\href{https://stacks.math.columbia.edu/tag/0AGU}{Tag 0AGU}]{stacks-project} imply that it suffices to show that 
    \[
    R^{\rm{h}}_{(\varpi f)}/(\varpi f)= R/(\varpi f)
    \]
    is $(\varpi)$-adically henselian. Now Lemma~\ref{lemma:uncompletion-Weierstrass-poly} implies that
    \[
    R/(\varpi f) \simeq R^{\wedge}_{(\varpi)}/(\varpi f).
    \]
    Now note that $R^{\wedge}_{(\varpi)}$ is $(\varpi)$-adically henselian because it is $(\varpi)$-adically complete. Thus, its quotient  $R^{\wedge}_{(\varpi)}/(\varpi f)$ is also $(\varpi)$-adically henselian due to \cite[\href{https://stacks.math.columbia.edu/tag/09XK}{Tag 09XK}]{stacks-project}.
\end{proof}

\begin{cor}\label{cor:henselizations-agree} Let $f\in R$ be a Weierstrass polynomial. Then the natural morphisms 
\[
\alpha_f\colon R^{\rm{h}}_{(\varpi f)} \to R^{\rm{h}}_{(\varpi)},
\]
\[
\beta_f\colon R^{\wedge}_{(\varpi f)} \to R^{\wedge}_{(\varpi)}
\]
are isomorphisms.
\end{cor}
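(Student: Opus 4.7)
My plan is to treat the two maps separately. The $\alpha_f$ case is a formal consequence of Corollary~\ref{cor:p-henselian} via the universal property of henselization; the $\beta_f$ case will be obtained by upgrading Lemma~\ref{lemma:uncompletion-Weierstrass-poly} inductively to higher powers of $(\varpi f)$ and then passing to the limit.

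For $\alpha_f$, I will first apply Corollary~\ref{cor:p-henselian} to see that $R^{\rm{h}}_{(\varpi f)}$ is $(\varpi)$-adically henselian. The universal property of the $(\varpi)$-adic henselization then produces a unique $R$-algebra homomorphism $\delta_f\colon R^{\rm{h}}_{(\varpi)} \to R^{\rm{h}}_{(\varpi f)}$. Both compositions $\alpha_f \circ \delta_f$ and $\delta_f \circ \alpha_f$ are $R$-algebra endomorphisms of $R^{\rm{h}}_{(\varpi)}$ and $R^{\rm{h}}_{(\varpi f)}$, respectively, and by the uniqueness clauses in the $(\varpi)$-adic and $(\varpi f)$-adic henselization universal properties they must each equal the identity. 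Hence $\alpha_f$ is an isomorphism, with inverse $\delta_f$.

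For $\beta_f$, both source and target are $(\varpi f)$-adically complete and separated, so it suffices to verify that $\beta_f$ induces an isomorphism on $R/(\varpi f)^n$ for every $n \geq 1$ and then pass to the inverse limit. The case $n = 1$ is exactly Lemma~\ref{lemma:uncompletion-Weierstrass-poly} after identifying $R^{\wedge}_{(\varpi f)}/(\varpi f)$ with $R/(\varpi f)$. For the inductive step, I would first check that $\varpi f$ is a non-zero-divisor in $R$ (since $R$ is $\O_K$-flat and $f$ is monic in $X_d$) and in $R^{\wedge}_{(\varpi)} = \O_K\langle X_1, \dots, X_d\rangle$ (using $\O_K$-flatness of the Tate algebra together with the fact that $f \equiv X_d^s \pmod{\varpi}$ is a non-zero-divisor in the polynomial ring modulo $\varpi$, combined with $\varpi$-adic separatedness). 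Multiplication by $(\varpi f)^n$ then identifies the graded piece $(\varpi f)^n/(\varpi f)^{n+1}$ with $R/(\varpi f)$, and similarly inside $R^{\wedge}_{(\varpi)}$. Applying the five lemma to the short exact sequences
\[
0 \to (\varpi f)^n/(\varpi f)^{n+1} \to R/(\varpi f)^{n+1} \to R/(\varpi f)^n \to 0
\]
and its analogue for $R^{\wedge}_{(\varpi)}$ upgrades the base case to all $n$. The only potentially delicate point is the non-zero-divisor verification inside the Tate algebra without any Noetherian hypothesis on $\O_K$, but this is handled cleanly by the reduction-mod-$\varpi$ argument just sketched; everything else is a formal deployment of the preceding lemmas.
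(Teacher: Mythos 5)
Your argument for $\alpha_f$ is exactly the paper's: both invoke Corollary~\ref{cor:p-henselian} to build the inverse map via the universal property and check the two composites are the identity by uniqueness. For $\beta_f$ the two proofs diverge in how they upgrade Lemma~\ref{lemma:uncompletion-Weierstrass-poly} from $(\varpi f)$ to $(\varpi f)^n$. The paper simply observes that $\varpi^n$ is again a pseudo-uniformizer and $f^n$ is again a Weierstrass polynomial, so the lemma applies verbatim to give $R/(\varpi^n f^n)\xrightarrow{\sim} R^{\wedge}_{(\varpi)}/(\varpi^n f^n)$ in one stroke; you instead run an induction on $n$ via the graded pieces $(\varpi f)^n/(\varpi f)^{n+1}\cong R/(\varpi f)$ and the five lemma. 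Your route is correct and standard, at the cost of needing $\varpi f$ to be a non-zero-divisor in $R$ and in $R^{\wedge}_{(\varpi)}$; the paper's re-application trick avoids this entirely. On that non-zero-divisor point, one small inaccuracy: it is not true that $f\equiv X_d^s \pmod{\varpi}$, since the coefficients $g_i$ lie in $\m_K\langle X_1,\dots,X_{d-1}\rangle$ and $\m_K$ may be strictly larger than $(\varpi)$ for a non-discrete valuation. What is true, and suffices for your argument, is that $f\bmod\varpi$ remains monic of degree $s$ in $X_d$ over $(\O_K/\varpi)[X_1,\dots,X_{d-1}]$, hence a non-zero-divisor; even more simply, $\O_K\langle X_1,\dots,X_d\rangle$ and $R$ are integral domains (subrings of $K\langle X_1,\dots,X_d\rangle$), so any nonzero element is a non-zero-divisor. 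You also assert without justification that $R^{\wedge}_{(\varpi)}$ is $(\varpi f)$-adically complete and that $R^{\wedge}_{(\varpi f)}/(\varpi f)^n=R/(\varpi f)^n$; both are needed for the limit step and both are established in the paper just before the Construction (via \cite[Tags 05GG, 090T]{stacks-project}), so this is a citation gap rather than a mathematical one.
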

\begin{proof}
    We first show that $\alpha_f$ is an isomorphism. The universal property of  $R^{\rm{h}}_{(\varpi)}$ and the fact that $R^{\rm{h}}_{(\varpi f)}$ is $(\varpi)$-adically henselian (see Corollary~\ref{cor:p-henselian}) allow us to define a canonical $R$-algebra homorphism $\alpha'_f\colon R^{\rm{h}}_{(\varpi)}\to R^{\rm{h}}_{(\varpi f)}$. Therefore, it suffices to show that $\alpha_f\circ \alpha'_f=\rm{id}$  and $\alpha'_f\circ \alpha_f = \rm{id}$, but this also easily follows from the universal properties of $R^{\rm{h}}_{(\varpi)}$ and $R^{\rm{h}}_{(\varpi f)}$ respectively.   

    Now we show that $\beta_f$ is an isomorphism. Since both $R^{\wedge}_{(\varpi f)}$ and $R^{\wedge}_{(\varpi)}$ are $(\varpi f)$-adically complete, it suffices to show that the natural morphism
    \[
    R^{\wedge}_{(\varpi f)}/(\varpi f)^n = R/(\varpi^nf^n) \to R^{\wedge}_{(\varpi)}/(\varpi^nf^n)
    \]
    is an isomorphism for any $n>0$. This follows directly from Lemma~\ref{lemma:uncompletion-Weierstrass-poly} and the observation that $f^n$ is a Weierstrass polynomial for any $n>0$.
\end{proof}

\subsection{Algebraization techniques}

Throughout this section, we fix the notation of the previous section. In particular, $K$ is a non-archimedean field with ring of integers $\O_K$, and a pseudo-uniformizer $\varpi \in \O_K$. Recall that $R$ denotes the ring $\O_K\langle X_1, \dots, X_{d-1}\rangle [X_d]$.  

The key result of this section is Theorem~\ref{thm:partial-algebraization}. Combined with a generically \'etale version of Noetherian normalization (see Theorem~\ref{thm:generically-etale-normalization}), it will be the crucial ingredient in our proof of the rigid-analytic Artin--Grothendieck vanishing.  

Besides this, we also recall the results of Elkik and Fujiwara--Gabber (see \cite{Elkik} and \cite{Fujiwara95}) in the non-noetherian context. These results are certainly well-known to the experts, but we prefer to spell them out explicitly since some care is nedeed in this non-noetherian situation.  

We first discuss a version of Elkik's algebraization that will be relevant for our purposes.

\begin{lemma}\label{lemma:completion-faithfully-flat} 
\begin{enumerate}
\item\label{lemma:completion-faithfully-flat-0} The ring $R^\h_{(\varpi)}$ is $(\varpi)$-adically universally adhesive (in the sense of \cite[Def.\,7.1.3]{FGK});
\item\label{lemma:completion-faithfully-flat-1} The natural morphism $R^{\rm{h}}_{(\varpi)}\to R^{\wedge}_{(\varpi)}$ is faithfully flat;
\item\label{lemma:completion-faithfully-flat-2} If $f\in R$ is a Weierstrass polynomial, then the natural morphism $R^{\rm{h}}_{(\varpi f)} \to R^{\wedge}_{(\varpi f)}$ is faithfully flat.
\end{enumerate}
\end{lemma}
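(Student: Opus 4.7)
The plan is to treat the three assertions in order, bootstrapping from properties of the Tate ring $R^{\wedge}_{(\varpi)} = \O_K\langle X_1, \dots, X_d\rangle$, which is the motivating example of a $(\varpi)$-adically universally adhesive ring in the sense of \cite{FGK}.

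For (\ref{lemma:completion-faithfully-flat-0}), I would first check that $R$ itself is $(\varpi)$-adically universally adhesive: since the Tate ring $\O_K\langle X_1, \dots, X_{d-1}\rangle$ is universally adhesive, this reduces to stability of universal adhesiveness under the finitely presented flat extension given by adjoining the polynomial variable $X_d$, which is among the standard permanence properties in \cite{FGK}. I would then pass to $R^{\rm{h}}_{(\varpi)}$ by writing it as a filtered colimit of \'etale $R$-algebras: universal adhesiveness is preserved under \'etale base change (\'etale maps being finitely presented and flat) and persists under filtered colimits along such transition maps.

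For (\ref{lemma:completion-faithfully-flat-1}), flatness of $R^{\rm{h}}_{(\varpi)} \to R^{\wedge}_{(\varpi)}$ then follows formally from (\ref{lemma:completion-faithfully-flat-0}) via the general principle that, for a $(\varpi)$-adically universally adhesive ring $A$, the completion map $A \to A^{\wedge}_{(\varpi)}$ is flat; here $(R^{\rm{h}}_{(\varpi)})^{\wedge}_{(\varpi)} = R^{\wedge}_{(\varpi)}$ because $R \to R^{\rm{h}}_{(\varpi)}$ is an isomorphism modulo every power of $\varpi$. To upgrade flatness to faithful flatness, I would observe that $R^{\rm{h}}_{(\varpi)}$ is $\varpi$-adically henselian, so $\varpi$ lies in its Jacobson radical (as $1 + \varpi R^{\rm{h}}_{(\varpi)}$ consists of units); hence every maximal ideal of $R^{\rm{h}}_{(\varpi)}$ contains $\varpi$. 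Since the induced map on mod-$\varpi$ quotients is the identity $R/\varpi = R^{\rm{h}}_{(\varpi)}/\varpi \xrightarrow{\sim} R^{\wedge}_{(\varpi)}/\varpi = R/\varpi$, every such maximal ideal is the contraction of a maximal ideal of $R^{\wedge}_{(\varpi)}$, giving surjectivity on spectra and hence faithful flatness.

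For (\ref{lemma:completion-faithfully-flat-2}), Corollary~\ref{cor:henselizations-agree} provides canonical isomorphisms $\alpha_f\colon R^{\rm{h}}_{(\varpi f)} \xrightarrow{\sim} R^{\rm{h}}_{(\varpi)}$ and $\beta_f\colon R^{\wedge}_{(\varpi f)} \xrightarrow{\sim} R^{\wedge}_{(\varpi)}$, compatibly with the natural maps to $R^{\wedge}_{(\varpi)}$. Under these identifications, the map $R^{\rm{h}}_{(\varpi f)} \to R^{\wedge}_{(\varpi f)}$ becomes the map of part (\ref{lemma:completion-faithfully-flat-1}), which has just been shown to be faithfully flat. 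The main obstacle is part (\ref{lemma:completion-faithfully-flat-0}): correctly invoking the non-noetherian adhesive machinery of \cite{FGK} to transfer universal adhesiveness from the Tate ring to $R^{\rm{h}}_{(\varpi)}$ along polynomial extension and filtered colimits of \'etale algebras.
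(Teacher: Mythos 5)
Your proposal is correct and follows essentially the same route as the paper: part (\ref{lemma:completion-faithfully-flat-0}) comes from the universal adhesiveness of the Tate algebra transferred to $R^{\rm{h}}_{(\varpi)}$ (the passage to the henselization, which you rightly flag as the delicate non-noetherian step, is exactly what the paper outsources to \cite[Prop.\,0.8.5.10]{FujKato}, with \cite[Th.\,7.3.2]{FGK} supplying adhesiveness of $R$); part (\ref{lemma:completion-faithfully-flat-1}) is flatness of completion for adhesive rings plus the standard henselian-pair argument for faithfulness, which is precisely how the paper adapts the proof of \cite[\href{https://stacks.math.columbia.edu/tag/0AGV}{Tag 0AGV}]{stacks-project} using \cite[Prop.\,4.3.4]{FGK}. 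Part (\ref{lemma:completion-faithfully-flat-2}) is deduced from (\ref{lemma:completion-faithfully-flat-1}) via Corollary~\ref{cor:henselizations-agree}, exactly as in the paper.
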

\begin{proof}
    (\ref{lemma:completion-faithfully-flat-0}) and (\ref{lemma:completion-faithfully-flat-1}) follow directly from Lemma~\ref{lemma:completion-faithfully-flat-appendix}.  Then (\ref{lemma:completion-faithfully-flat-2}) follows from (\ref{lemma:completion-faithfully-flat-1}) and Corollary~\ref{cor:henselizations-agree}.
\end{proof}

We recall that an $A$-algebra $A'$ is called finite, finitely presented if it is finitely generated as an $A$-module and finitely presented as an $A$-algebra.

\begin{lemma}[Elkik's algebraization]\label{lemma:elkik-approximation} Let $f\in R$ be a Weierstrass polynomial and $B$ a finite, finitely presented $R^{\wedge}_{(\varpi f)}$-algebra such that $B\big[\frac{1}{\varpi f}\big]$ is \'etale over $R^{\wedge}_{(\varpi f)}\big[\frac{1}{\varpi f}\big]$. Then there is a finite, finitely presented $R^{\rm{h}}_{(\varpi f)}$-algebra $\widetilde{B}$ such that $\widetilde{B}\otimes_{R^{\rm{h}}_{(\varpi f)}}  R^{\wedge}_{(\varpi f)} \simeq B$. 
\end{lemma}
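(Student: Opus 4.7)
The plan is to apply a non-noetherian form of Elkik's algebraization theorem (in the style of \cite{Fujiwara95} or \cite[Ch.~5]{FGK}) to the henselian pair $(R^{\rm{h}}_{(\varpi f)}, (\varpi f))$.

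First, I would assemble the hypotheses needed to run the theorem. The pair $(R^{\rm{h}}_{(\varpi f)}, (\varpi f))$ is henselian by the defining property of $(\varpi f)$-adic henselization; the completion map $R^{\rm{h}}_{(\varpi f)} \to R^{\wedge}_{(\varpi f)}$ is faithfully flat by Lemma~\ref{lemma:completion-faithfully-flat}(\ref{lemma:completion-faithfully-flat-2}); and by Corollary~\ref{cor:henselizations-agree}, the ring $R^{\rm{h}}_{(\varpi f)}$ is canonically identified with $R^{\rm{h}}_{(\varpi)}$, which is $(\varpi)$-adically universally adhesive by Lemma~\ref{lemma:completion-faithfully-flat}(\ref{lemma:completion-faithfully-flat-0}). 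The universal adhesiveness is what replaces noetherianity and supplies the coherence needed to run Elkik's iterative, Artin-style approximation.

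Second, I would feed $B$ into the theorem. Since $B$ is finite and finitely presented over $R^{\wedge}_{(\varpi f)}$ and $B[\tfrac{1}{\varpi f}]$ is \'etale over $R^{\wedge}_{(\varpi f)}[\tfrac{1}{\varpi f}]$, presenting $B$ as a quotient $R^{\wedge}_{(\varpi f)}[x_1,\dots,x_n]/(h_1,\dots,h_m)$ and approximating the relations $h_j$ by elements of $R^{\rm{h}}_{(\varpi f)}[x_1,\dots,x_n]$ congruent to them modulo a sufficiently large power of $(\varpi f)$ yields the desired $\widetilde{B}$, together with the isomorphism $\widetilde{B}\otimes_{R^{\rm{h}}_{(\varpi f)}}R^{\wedge}_{(\varpi f)}\simeq B$. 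Faithful flatness of the completion map ensures that sufficiently accurate approximations produce an isomorphism of algebras and not merely a surjection.

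The main technical subtlety is the mismatch between the $(\varpi)$-adic topology in which our adhesiveness hypothesis is formulated and the $(\varpi f)$-adic topology in which the algebraization is being carried out. This is resolved precisely by Corollary~\ref{cor:henselizations-agree}, which identifies the relevant henselizations (and completions) as the same underlying ring and thus transports adhesiveness between the two topologies; this is a principal structural reason that corollary was established in the preceding section.
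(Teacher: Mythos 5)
Your route diverges from the paper's and has a genuine gap at its core. You invoke ``a non-noetherian form of Elkik's algebraization theorem'' as a black box, but no such theorem is available off the shelf in the generality needed here: $R$ fails to be noetherian unless $K$ is discretely valued, and supplying a substitute for \cite[Th.\,5]{Elkik} in this setting is precisely the content of the lemma, not something one may cite. The mechanism you sketch for the black box is also not a valid argument. Choosing $\widetilde h_j\in R^{\rm{h}}_{(\varpi f)}[x_1,\dots,x_n]$ with $\widetilde h_j\equiv h_j$ modulo a large power of $(\varpi f)$ does not by itself give $R^{\rm{h}}_{(\varpi f)}[x]/(\widetilde h)\otimes_{R^{\rm{h}}_{(\varpi f)}}R^{\wedge}_{(\varpi f)}\simeq B$; that implication is the hard part of Elkik's theorem, and it uses the \'etaleness of $B[\frac{1}{\varpi f}]$ quantitatively (via Elkik's lemma on approximate solutions and Tougeron's trick, with the power of $(\varpi f)$ calibrated against the Jacobian ideal) to correct the approximate presentation into one with the right completion. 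Faithful flatness of $R^{\rm{h}}_{(\varpi f)}\to R^{\wedge}_{(\varpi f)}$ does not ``ensure that sufficiently accurate approximations produce an isomorphism,'' and adhesiveness does not by itself make the iterative approximation converge --- one would have to redo Elkik's entire argument in the adhesive setting.

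The paper avoids approximating a presentation altogether. It first applies \cite[Prop.\,5.4.54]{GR}, a general non-noetherian statement about henselian pairs, to produce a finite \'etale $R^{\rm{h}}_{(\varpi f)}[\frac{1}{\varpi f}]$-algebra $\widetilde B'$ with $\widetilde B'\otimes R^{\wedge}_{(\varpi f)}[\frac{1}{\varpi f}]\simeq B[\frac{1}{\varpi f}]$; it then glues $\widetilde B'$ with $B$ along $R^{\wedge}_{(\varpi f)}[\frac{1}{\varpi f}]$ by formal gluing (Tags 05ES and 05EU of \cite{stacks-project}), which is exactly where the faithful flatness from Lemma~\ref{lemma:completion-faithfully-flat}(\ref{lemma:completion-faithfully-flat-2}) enters, and finally checks finiteness and finite presentation of $\widetilde B$ by faithfully flat descent. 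To repair your proposal you would need either a precise reference for Elkik's theorem over $(\varpi)$-adically universally adhesive rings, or to reduce to the finite \'etale case and glue, as the paper does.
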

If $R$ were a noetherian algebra, this would automatically follow from \cite[Th.\,5]{Elkik}.
\begin{proof}
    First, we note that \cite[Prop.\,5.4.53]{GR} implies that there is a finite \'etale $R^{\rm{h}}_{(\varpi f)}\big[\frac{1}{\varpi f}\big]$-algebra $\widetilde{B}'$ such that 
    \[
    \widetilde{B}'\otimes_{R^{\rm{h}}_{(\varpi f)}[\frac{1}{\varpi f}]} R^{\wedge}_{(\varpi f)}\Big[\frac{1}{\varpi f}\Big] \simeq B\Big[\frac{1}{\varpi f}\Big].
    \]
    Now Lemma~\ref{lemma:completion-faithfully-flat}(\ref{lemma:completion-faithfully-flat-2}) implies that $R^{\rm{h}}_{(\varpi f)}\to R^{\wedge}_{(\varpi f)}$ is faithfully flat. Thus formal gluing (see \cite[\href{https://stacks.math.columbia.edu/tag/05ES}{Tag 05ES}]{stacks-project} and \cite[\href{https://stacks.math.columbia.edu/tag/05EU}{Tag 05EU}]{stacks-project}) states that we can find an $R^{\rm{h}}_{(\varpi f)}$-algebra $\widetilde{B}$ such that 
    \[
    \widetilde{B}\otimes_{R^{\rm{h}}_{(\varpi f)}} R^{\wedge}_{(\varpi f)} \simeq B, \, \widetilde{B}\Big[\frac{1}{\varpi f}\Big] \simeq \widetilde{B}'. 
    \]
    Since $R^{\rm{h}}_{(\varpi f)}\to R^{\wedge}_{(\varpi f)}$ is faithfully flat, faithfully flat descent (see \cite[\href{https://stacks.math.columbia.edu/tag/03C4}{Tag 03C4}]{stacks-project} and \cite[\href{https://stacks.math.columbia.edu/tag/00QQ}{Tag 00QQ}]{stacks-project}) implies that $\widetilde{B}$ is a finite, finitely presented $R^{\rm{h}}_{(\varpi f)}$-algebra. 
\end{proof}

\begin{thm}\label{thm:partial-algebraization} Let $f\in R$ be a Weierstrass polynomial and $B$ a finite, finitely presented $R^{\wedge}_{(\varpi)}=\O_K\langle T_1, \dots, T_d\rangle$-algebra. If $B[\frac{1}{\varpi f}]$ is \'etale over $R^{\wedge}_{(\varpi)}[\frac{1}{\varpi f}]=K\langle T_1, \dots, T_d\rangle[\frac{1}{f}]$, then there is a finite, finitely presented $R^{\rm{h}}_{(\varpi)}$-algebra $\widetilde{B}$ such that
\[
\widetilde{B}\otimes_{R^{\rm{h}}_{(\varpi)}} R^{\wedge}_{(\varpi)} \simeq B.
\]

\end{thm}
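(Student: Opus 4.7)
The plan is to reduce directly to Elkik's algebraization (Lemma~\ref{lemma:elkik-approximation}) by means of Corollary~\ref{cor:henselizations-agree}. Since $f$ is a Weierstrass polynomial, that corollary supplies canonical $R$-algebra isomorphisms
\[
\alpha_f \colon R^{\rm{h}}_{(\varpi f)} \xrightarrow{\sim} R^{\rm{h}}_{(\varpi)}, \qquad \beta_f \colon R^{\wedge}_{(\varpi f)} \xrightarrow{\sim} R^{\wedge}_{(\varpi)},
\]
which moreover fit into the commutative square from the construction preceding Corollary~\ref{cor:henselizations-agree}. Via $\beta_f$, the hypothesis that $B$ is a finite, finitely presented $R^{\wedge}_{(\varpi)}$-algebra translates verbatim into the statement that $B$ is a finite, finitely presented $R^{\wedge}_{(\varpi f)}$-algebra, and likewise the étale hypothesis on $B[\tfrac{1}{\varpi f}]$ over $R^{\wedge}_{(\varpi)}[\tfrac{1}{\varpi f}]$ becomes the étale hypothesis over $R^{\wedge}_{(\varpi f)}[\tfrac{1}{\varpi f}]$. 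In particular, all assumptions of Lemma~\ref{lemma:elkik-approximation} are met.

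I would then apply Lemma~\ref{lemma:elkik-approximation} to obtain a finite, finitely presented $R^{\rm{h}}_{(\varpi f)}$-algebra $\widetilde B$ together with an isomorphism
\[
\widetilde B \otimes_{R^{\rm{h}}_{(\varpi f)}} R^{\wedge}_{(\varpi f)} \simeq B.
\]
Using $\alpha_f$ to regard $\widetilde B$ as an $R^{\rm{h}}_{(\varpi)}$-algebra (which preserves finiteness and finite presentation since $\alpha_f$ is an isomorphism of rings), and combining with $\beta_f$ on the right in the base change, the displayed isomorphism rewrites as $\widetilde B \otimes_{R^{\rm{h}}_{(\varpi)}} R^{\wedge}_{(\varpi)} \simeq B$, which is exactly the desired conclusion.

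The substantive analytic content — Elkik-style approximation of finite algebras that are étale away from $(\varpi f)$, passing from the $(\varpi f)$-adic completion down to the $(\varpi f)$-adic henselization — has already been carried out in Lemma~\ref{lemma:elkik-approximation}, whose proof in turn relies on the faithful flatness Lemma~\ref{lemma:completion-faithfully-flat} and formal gluing. Consequently, the only substantive input particular to this statement is the passage from the $(\varpi)$-adic pair to the $(\varpi f)$-adic pair, which is precisely what the Weierstrass hypothesis buys us through Corollary~\ref{cor:henselizations-agree}. For this reason I do not expect a genuine obstacle; the only check needed beyond the two cited results is the compatibility of $\alpha_f$ with $\beta_f$ along the natural maps $R^{\rm{h}}_{(\bullet)} \to R^{\wedge}_{(\bullet)}$, and this is built into the construction of these arrows.
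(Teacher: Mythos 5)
Your proposal is correct and is essentially identical to the paper's own proof: both transport the problem along the isomorphisms of Corollary~\ref{cor:henselizations-agree} and then invoke Lemma~\ref{lemma:elkik-approximation}. The compatibility of $\alpha_f$ and $\beta_f$ that you flag at the end is indeed built into the construction preceding that corollary, so no further verification is needed.
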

\begin{proof}
    Corollary~\ref{cor:henselizations-agree} implies that $R^{\wedge}_{(\varpi)}$ (resp. $R^{\rm{h}}_{(\varpi)}$) is canonically isomorphic to $R^{\wedge}_{(\varpi f)}$ (resp. $R^{\rm{h}}_{(\varpi f)}$) as an $R$-algebra. Therefore, Lemma~\ref{lemma:elkik-approximation} provides us with a finite, finitely presented $R^{\rm{h}}_{(\varpi f)}\simeq R^{\rm{h}}_{(\varpi)}$-algebra $\widetilde{B}$ such that 
    \[
    \widetilde{B}\otimes_{R^{\rm{h}}_{(\varpi)}} R^{\wedge}_{(\varpi)} \simeq \widetilde{B}\otimes_{R^{\rm{h}}_{(\varpi f)}} R^{\wedge}_{(\varpi f)} \simeq B.\qedhere
    \]
\end{proof}

Now we discuss a version of the Fujiwara--Gabber comparison theorem that will be relevant for our purposes.

\begin{lemma}\label{lemma:completion-tensor-product} Let $M$ be a finite $R^{\rm{h}}_{(\varpi)}$-module, and put $M'\coloneqq M\otimes_{R^{\rm{h}}_{(\varpi)}} R^{\wedge}_{(\varpi)}$. Then there is an integer $N$ such that $M[\varpi^\infty] = M[\varpi^N]$ and $M'[\varpi^\infty] = M'[\varpi^N]$.
\end{lemma}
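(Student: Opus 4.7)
The plan is to use the two facts about $R^{\rm{h}}_{(\varpi)}$ established in Lemma~\ref{lemma:completion-faithfully-flat}: that this ring is $(\varpi)$-adically universally adhesive, and that the map $R^{\rm{h}}_{(\varpi)}\to R^{\wedge}_{(\varpi)}$ is faithfully flat. Adhesiveness in the sense of \cite[Def.\,7.1.3]{FGK} is precisely the statement that for every finitely generated module $M$ over $R^{\rm{h}}_{(\varpi)}$, the $\varpi$-power torsion $M[\varpi^\infty]$ is bounded, i.e.\ equals $M[\varpi^N]$ for some integer $N$. Hence the claim for $M$ is essentially free. The only content of the lemma is to propagate the same $N$ to the completion $M'$.

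To do this, I would fix an $N$ with $M[\varpi^\infty]=M[\varpi^N]$, set $T\coloneqq M[\varpi^N]$, and consider the short exact sequence
\[
0\to T\to M\to M/T\to 0.
\]
The quotient $M/T$ is $\varpi$-torsion free: if $\varpi^k m\in T$ for some $m\in M$, then $\varpi^{N+k}m=0$, so $m\in M[\varpi^\infty]=T$. Now I would tensor with $R^{\wedge}_{(\varpi)}$, which is flat over $R^{\rm{h}}_{(\varpi)}$ by Lemma~\ref{lemma:completion-faithfully-flat}(\ref{lemma:completion-faithfully-flat-1}). Flatness preserves the sequence and, applied to the injection $M/T\xrightarrow{\varpi}M/T$, shows that $(M/T)\otimes_{R^{\rm{h}}_{(\varpi)}}R^{\wedge}_{(\varpi)}$ remains $\varpi$-torsion free.

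Combining these, the short exact sequence
\[
0\to T\otimes_{R^{\rm{h}}_{(\varpi)}}R^{\wedge}_{(\varpi)}\to M'\to (M/T)\otimes_{R^{\rm{h}}_{(\varpi)}}R^{\wedge}_{(\varpi)}\to 0
\]
has $\varpi$-torsion-free cokernel, so $M'[\varpi^\infty]$ is contained in, hence equal to, the image of $T\otimes_{R^{\rm{h}}_{(\varpi)}}R^{\wedge}_{(\varpi)}$. Since $\varpi^N$ annihilates $T$, it also annihilates its base change, so $M'[\varpi^\infty]=M'[\varpi^N]$ with the same $N$, giving the lemma.

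There is no real obstacle here: the substantive input is adhesiveness, which has already been recorded in Lemma~\ref{lemma:completion-faithfully-flat}(\ref{lemma:completion-faithfully-flat-0}), and everything else is a formal flatness argument. The only thing one has to be a little careful about is the distinction between $M[\varpi^\infty]$ being finitely generated and being bounded; both are consequences of the definition of adhesiveness, and it is the boundedness statement that is needed here.
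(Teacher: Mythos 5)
Your proof is correct and follows the same route as the paper: adhesiveness of $R^{\rm{h}}_{(\varpi)}$ (Lemma~\ref{lemma:completion-faithfully-flat}(\ref{lemma:completion-faithfully-flat-0})) gives the bound $N$ for $M$, and flatness of $R^{\rm{h}}_{(\varpi)}\to R^{\wedge}_{(\varpi)}$ transfers it to $M'$. The paper packages the flatness step as the identity $M[\varpi^m]\otimes_{R^{\rm{h}}_{(\varpi)}}R^{\wedge}_{(\varpi)}=M'[\varpi^m]$ for all $m$ rather than via the torsion/torsion-free exact sequence, but this is only a cosmetic difference.
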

\begin{proof}
    First, Lemma~\ref{lemma:completion-faithfully-flat}(\ref{lemma:completion-faithfully-flat-0}) and the definition of adhesive rings imply that $M[\varpi^\infty]=M[\varpi^N]$ for some integer $N$. Then Lemma~\ref{lemma:completion-faithfully-flat}(\ref{lemma:completion-faithfully-flat-1}) and a standard flatness argument imply that $M[\varpi^m]\otimes_{R^{\rm{h}}_{(\varpi)}} R^{\wedge}_{(\varpi)} = M'[\varpi^m]$ for every integer $m\geq 0$. Therefore, $M'[\varpi^m] = M'[\varpi^N]$ for any $m\geq N$. By taking the union over all $m$, we get $M'[\varpi^\infty]=M'[\varpi^N]$.
\end{proof}

\begin{lemma}[Fujiwara--Gabber]\label{lemma:Fujiwara-Gabber} Let $B$ be a finite $R^{\rm{h}}_{(\varpi)}$-algebra, $g\colon \Spec \Big(B\otimes_{R^{\rm{h}}_{(\varpi)}} R^{\wedge}_{(\varpi)}\big[\frac{1}{\varpi}\big]\Big) \to \Spec B\big[\frac{1}{\varpi}\big]$ the natural morphism, and $\F$ a torsion \'etale sheaf on $\Spec B\big[\frac{1}{\varpi}\big]$. Then the natural morphism
\[
\rm{R}\Gamma_\et\Big(\Spec B\Big[\frac{1}{\varpi}\Big], \F\Big) \to \rm{R}\Gamma_\et\Big(\Spec \Big(B\otimes_{R^{\rm{h}}_{(\varpi)}} R^{\wedge}_{(\varpi)}\Big[\frac{1}{\varpi}\Big]\Big), g^*\F\Big)
\]
is an isomorphism.
\end{lemma}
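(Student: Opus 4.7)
The plan is to deduce this from the Fujiwara--Gabber formal-algebraic comparison theorem applied to the henselian pair $(B, \varpi B)$. To set this up, I first observe that $R^{\rm{h}}_{(\varpi)}$ is $(\varpi)$-adically henselian by construction; since $B$ is finite over $R^{\rm{h}}_{(\varpi)}$, henselianity passes to $(B, \varpi B)$ by Stacks \href{https://stacks.math.columbia.edu/tag/09XK}{Tag 09XK}. Next, using the faithful flatness of $R^{\rm{h}}_{(\varpi)} \to R^{\wedge}_{(\varpi)}$ from Lemma~\ref{lemma:completion-faithfully-flat}(\ref{lemma:completion-faithfully-flat-1}), together with the adhesiveness of $R^{\rm{h}}_{(\varpi)}$ (Lemma~\ref{lemma:completion-faithfully-flat}(\ref{lemma:completion-faithfully-flat-0})) and the finiteness of $B$, I would identify $B \otimes_{R^{\rm{h}}_{(\varpi)}} R^{\wedge}_{(\varpi)}$ with the $(\varpi)$-adic completion $\widehat{B}$ of $B$.

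With this identification in hand, the assertion becomes precisely the Fujiwara--Gabber comparison theorem for the henselian pair $(B, \varpi B)$: the canonical morphism $\Spec \widehat{B}[\frac{1}{\varpi}] \to \Spec B[\frac{1}{\varpi}]$ induces an isomorphism on étale cohomology with torsion coefficients. By a standard colimit argument (writing $\F$ as a filtered colimit of constructible $\Z/n\Z$-sheaves and using that $\rm{R}\Gamma_\et$ of a qcqs scheme commutes with filtered colimits of torsion sheaves), we may reduce to the case of a constructible sheaf of $\Z/n\Z$-modules for some fixed $n$.

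The main obstacle is that $R^{\rm{h}}_{(\varpi)}$ (and hence $B$) is not Noetherian, placing us outside the original scope of \cite{Fujiwara95}. This can be handled either by invoking the adhesive version of Fujiwara--Gabber from \cite{FujKato}, or by an approximation argument: write $R^{\rm{h}}_{(\varpi)}$ as a filtered colimit of Noetherian $(\varpi)$-henselian subalgebras, descend $B$ and the constructible sheaf $\F$ to finite, finitely presented data at a sufficiently large Noetherian stage, apply the classical Fujiwara--Gabber theorem there, and pass to the colimit using the compatibility of étale cohomology of qcqs schemes with filtered colimits of rings.
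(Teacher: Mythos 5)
Your overall strategy --- reduce to a Fujiwara--Gabber type comparison for the henselian pair $(B, \varpi B)$, after identifying $B\otimes_{R^{\rm{h}}_{(\varpi)}} R^{\wedge}_{(\varpi)}$ with the completion $\widehat{B}$ --- is the same as the paper's, and your first steps are fine: henselianity of $(B,\varpi B)$ via Tag 09XK, and the identification with $\widehat{B}$ via flatness, adhesiveness and finiteness. The gap is in how you dispose of the non-Noetherian difficulty, which is the actual content of the lemma. The Noetherian approximation argument does not work as stated: if you write $R^{\rm{h}}_{(\varpi)}=\colim R_i$ and descend $B$ to finite $R_i$-algebras $B_i$, the uncompleted side behaves well ($B[\frac{1}{\varpi}]=\colim B_i[\frac{1}{\varpi}]$), but $\varpi$-adic completion does not commute with filtered colimits, so $\colim \widehat{B_i}$ is not $\widehat{B}$. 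Comparing $\colim \widehat{B_i}$ with $\widehat{B}$ is again a statement of exactly the type being proved (a map of $\varpi$-adically henselian rings inducing isomorphisms modulo all powers of $\varpi$), so the argument is circular without further input. Your other alternative, an ``adhesive version of Fujiwara--Gabber from \cite{FujKato}'', is not a reference you can actually point to: the \'etale comparison theorem is not proved there.

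What the paper does instead is invoke a genuinely non-Noetherian form of the comparison theorem, namely \cite[Th.\,2.3.4]{Bouthier-Cesnavicius} (alternatively \cite[Exp.\,XX, \S 4.4]{deGabber} or \cite[Th.\,6.11]{BM21}). Beyond henselianity, the key hypothesis of these statements is that $B$ and $B\otimes_{R^{\rm{h}}_{(\varpi)}} R^{\wedge}_{(\varpi)}$ have \emph{bounded} $\varpi^\infty$-torsion; this is precisely where adhesiveness and flatness enter, via Lemma~\ref{lemma:completion-tensor-product}. You have all the ingredients for this on the table, but you never isolate bounded torsion as the hypothesis to be verified, and without it the reduction to a citable theorem is incomplete. (Your colimit reduction to constructible sheaves is harmless but also unnecessary once the correct non-Noetherian statement is invoked, since it is stated for arbitrary torsion sheaves.)
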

If $R^{\rm{h}}_{(\varpi)}$ were a noetherian ring, this would almost immediately follow from \cite[Cor.\,6.6.4]{Fujiwara95}.
\begin{proof}
    First, we note that \cite[\href{https://stacks.math.columbia.edu/tag/09XK}{Tag 09XK}]{stacks-project} ensures that $B$ and $B\otimes_{R^{\rm{h}}_{(\varpi)}} R^{\wedge}_{(\varpi)}$ are $(\varpi)$-adically henselian. Furthermore, Lemma~\ref{lemma:completion-tensor-product} ensures that both $B$ and $B\otimes_{R^{\rm{h}}_{(\varpi)}} R^{\wedge}_{(\varpi)}$ have bounded $\varpi^\infty$-torsion. Therefore, the result follows from \cite[Th.\,2.3.4]{Bouthier-Cesnavicius} with $A=B$, $A'=B\otimes_{R^{\rm{h}}_{(\varpi)}} R^{\wedge}_{(\varpi)}$, $t=\varpi$, $I=B$, and $U=\Spec B\big[\frac{1}{\varpi}\big]$ (alternatively, one can use \cite[Exp.\,XX, \textsection 4.4]{deGabber} or \cite[Th.\,6.11]{BM21}).
\end{proof}

\subsection{Noether normalization and partial algebraization}

In this section, we show a generically \'etale version of the Noether normalization theorem. Then we combine it with the techniques of the previous section to get a ``partial algebraization'' result. This will be the key technical ingredient in our proof of Artin--Grothendieck vanishing for affinoid algebras.  

Throughout the section, we fix a non-archimedean field $K$ with ring of integers $\O_K$, residue field $k$, and a pseudo-uniformizer $\varpi\in \O_K$. We recall that $R$ denotes the ring $\O_K\langle X_1, \dots, X_{d-1}\rangle [X_d]$. 

\begin{thm}[Generically \'etale Noether normalization]\label{thm:generically-etale-normalization} Let $A_0$ be a flat, topologically finite type $\O_K$-algebra such that $A\coloneqq A_0[\frac{1}{\varpi}]$ is a $K$-affinoid algebra of Krull dimension $d>0$ and $\Spa(A, A^\circ)$ is geometrically reduced in the sense of \cite[\textsection 3.3]{Conrad99}. Then there is a finite, finitely presented morphism $h\colon \O_K\langle X_1,\dots, X_d\rangle \to A_0$ and a Weierstrass polynomial $f\in \O_K\langle X_1,\dots, X_{d-1}\rangle[X_d]$ such that $h$ is \'etale away from $\rm{V}(\varpi f)$, i.e., the induced map
\[
h_K\Big[\frac{1}{f}\Big] \colon K\langle X_1,\dots, X_d\rangle\Big[\frac{1}{f}\Big] \to  A\Big[\frac{1}{f}\Big] 
\]
is finite \'etale. 
\end{thm}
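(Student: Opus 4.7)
The plan is to combine an integral Noether normalization with classical Weierstrass preparation, using the geometrically reduced hypothesis to produce the branch divisor on the generic fiber. The first step is to find elements $t_1, \dots, t_d \in A_0$ making the induced map $h_0\colon \O_K\langle X_1, \dots, X_d\rangle \to A_0$, $X_i \mapsto t_i$, finite and finitely presented (integral Noether normalization). This is classical in the discretely-valued case; in the non-discretely-valued case it reduces to an iterated integral Weierstrass argument modeled on the proof of \cite[Cor.\,6.1.2/2]{BGR}, with finite presentation ensured by the universally adhesive property of $\O_K\langle X\rangle$ (Lemma~\ref{lemma:completion-faithfully-flat}(\ref{lemma:completion-faithfully-flat-0})).

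Next, the geometric reducedness hypothesis forces the generic fiber $h_0 \otimes K\colon K\langle X\rangle \to A$ to be generically \'etale: $A$ is reduced, and in characteristic $p>0$ the assumption rules out inseparable extensions between $\Frac K\langle X\rangle$ and the fraction fields of the minimal-prime quotients of $A$. Spreading out, there exists a nonzero $g_0 \in K\langle X\rangle$ such that $K\langle X\rangle\big[\tfrac{1}{g_0}\big] \to A\big[\tfrac{1}{g_0}\big]$ is finite \'etale. After rescaling by an element of $K^\times$, we may assume $g_0 \in \O_K\langle X\rangle \setminus \m_K\langle X\rangle$, so its image $\bar g_0 \in k[X_1,\dots,X_d]$ is nonzero; a standard change of variables $X_i \mapsto X_i + X_d^{N_i}$ for $i<d$ with $N_i$ sufficiently large makes $g_0$ distinguished in $X_d$ of some degree $s$. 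The integral Weierstrass preparation theorem (an $\O_K\langle X\rangle$-adaptation of \cite[Th.\,5.2.2/1]{BGR}) then factors $g_0 = u \cdot f$ with $u \in \O_K\langle X\rangle^\times$ and $f \in R$ a Weierstrass polynomial of degree $s$. Since the change of variables is an $\O_K\langle X\rangle$-algebra automorphism, precomposing $h_0$ with it yields the desired finite, finitely presented $h\colon \O_K\langle X\rangle \to A_0$; and since units are nowhere vanishing, $K\langle X\rangle\big[\tfrac{1}{f}\big] \to A\big[\tfrac{1}{f}\big]$ is finite \'etale.

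The main obstacle is the integral Noether normalization in the non-discretely-valued setting: Noetherian Noether normalization for the underlying affine algebra is unavailable, so one must directly carry out an iterated integral Weierstrass argument within $\O_K\langle X\rangle$, with finite presentation controlled by the adhesive machinery of Fujiwara--Gabber--Kato that underlies Lemma~\ref{lemma:completion-faithfully-flat}. Once the integral model is in hand, the branch-divisor identification and the Weierstrass reduction are formal consequences of the classical affinoid-level arguments adapted to the integral setting.
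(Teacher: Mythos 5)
Your overall architecture (integral Noether normalization, then identify a branch locus $g_0$ on the generic fiber, then Weierstrass-prepare $g_0$ after a change of variables) matches the paper's, and your first and last steps are essentially correct. But there is a genuine gap in the middle step: the claim that geometric reducedness of $\Spa(A,A^\circ)$ forces the generic fiber of an \emph{arbitrary} integral Noether normalization $h_0\colon \O_K\langle X_1,\dots,X_d\rangle\to A_0$ to be generically \'etale is false when $\charac K=p>0$. Geometric reducedness is a property of $A$ alone and says nothing about the chosen finite map. Concretely, take $A_0=\O_K\langle X\rangle$, $A=K\langle X\rangle$ (smooth, hence geometrically reduced, $d=1$) and $h_0\colon \O_K\langle Y\rangle\to\O_K\langle X\rangle$, $Y\mapsto X^p$: this is finite, but the induced extension of fraction fields is purely inseparable of degree $p$, $\Omega^1_{A/K\langle Y\rangle}$ is free of rank one, and no localization of $h_0\otimes K$ is \'etale. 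So no $g_0$ exists for this choice of $h_0$, and your subsequent steps cannot start. (In characteristic $0$ your argument is fine, since every finite extension of fraction fields is then separable; this is exactly the easy case.)

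What the geometric reducedness hypothesis actually provides is that $\wdh{\Omega}^1_{A/K}\otimes_A k(\p_j)$ has dimension exactly $d$ at each minimal prime $\p_j$ of a $d$-dimensional component, so that a \emph{good} normalization exists; one must then \emph{choose} (or perturb) $h_0$ so that the classes $\ov{d(h_0(X_i))}$ form a basis of $\wdh{\Omega}^1_{A/K}\otimes_A k(\p_j)$ for every $j$ simultaneously. This is the content of Lemmas~\ref{lemma:basis}--\ref{lemma:generically-etale} in the paper: one perturbs $h_0(X_i)$ by elements $g_i\in\varpi A_0$ (the $\varpi$-smallness preserves finiteness of the integral model by reduction mod $\varpi$), where each $g_i$ is assembled from pieces lying in $\bigcap_{k\neq j}\p_k$ so that the basis condition can be arranged at each top-dimensional component independently. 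Your proposal is missing this selection/perturbation step entirely, and it is the crux of the theorem in positive characteristic. The remaining ingredients of your plan (rescaling $g_0$ to norm $1$, the distinguishing automorphism, integral Weierstrass preparation, and finite presentation) are consistent with the paper's proof, modulo the additional care the paper takes in Lemma~\ref{lemma:generically-etale} to pass from an isomorphism of completed differential modules to finite \'etaleness of $T_d[\frac{1}{f}]\to A[\frac{1}{f}]$ via the covers $\Spec A\langle\frac{\varpi^n}{f}\rangle$ and flat descent.
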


Before we begin the proof of Theorem~\ref{thm:generically-etale-normalization}, we want to mention some immediate corollaries of this theorem and make some remarks about the needed generality of these results.

\begin{cor}[Partial algebraization I]\label{cor:partial-algebraization-1} Keep the notation of Theorem~\ref{thm:generically-etale-normalization}. Then there is a finite, finitely presented $R^{\rm{h}}_{(\varpi)}$-algebra $B$ with an isomorphism $B\otimes_{R^{\rm{h}}_{(\varpi)}} R^{\wedge}_{(\varpi)} \simeq A_0$ of $\O_K$-algebras.
\end{cor}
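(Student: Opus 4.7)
The plan is to observe that this corollary is essentially a direct combination of the two main results of the preceding two subsections: Theorem~\ref{thm:generically-etale-normalization} (generically étale Noether normalization) produces exactly the input required by Theorem~\ref{thm:partial-algebraization} (partial algebraization).

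More precisely, I would proceed as follows. First, apply Theorem~\ref{thm:generically-etale-normalization} to $A_0$ to obtain a Weierstrass polynomial $f \in R = \O_K\langle X_1,\dots,X_{d-1}\rangle[X_d]$ together with a finite, finitely presented $\O_K$-algebra morphism
\[
h \colon \O_K\langle X_1,\dots,X_d\rangle \to A_0
\]
which becomes finite étale after inverting $\varpi f$. The key identification is that $\O_K\langle X_1,\dots,X_d\rangle$ is exactly $R^{\wedge}_{(\varpi)}$, so $h$ presents $A_0$ as a finite, finitely presented $R^{\wedge}_{(\varpi)}$-algebra with $A_0[\tfrac{1}{\varpi f}]$ étale over $R^{\wedge}_{(\varpi)}[\tfrac{1}{\varpi f}] = K\langle X_1,\dots,X_d\rangle[\tfrac{1}{f}]$.

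Second, feed this data into Theorem~\ref{thm:partial-algebraization}, with the $R^{\wedge}_{(\varpi)}$-algebra $B$ of that theorem taken to be $A_0$. The theorem produces a finite, finitely presented $R^{\rm{h}}_{(\varpi)}$-algebra, which we rename $B$, together with an isomorphism $B \otimes_{R^{\rm{h}}_{(\varpi)}} R^{\wedge}_{(\varpi)} \simeq A_0$. The isomorphism is $R^{\wedge}_{(\varpi)}$-linear by construction, and in particular $\O_K$-linear since $\O_K \subset R \to R^{\wedge}_{(\varpi)}$; this handles the $\O_K$-linearity assertion.

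I do not expect any real obstacle here: the corollary is essentially a repackaging, and the two nontrivial inputs (the existence of the Weierstrass-type normalization and the Elkik-style algebraization over the non-noetherian base $R^{\rm{h}}_{(\varpi f)} \simeq R^{\rm{h}}_{(\varpi)}$) have already been assembled. The only points worth verifying explicitly in the write-up are the identification $R^{\wedge}_{(\varpi)} = \O_K\langle X_1,\dots,X_d\rangle$ and the compatibility between ``étale on the generic fiber away from $f$'' in Theorem~\ref{thm:generically-etale-normalization} and the hypothesis ``étale after inverting $\varpi f$'' required by Theorem~\ref{thm:partial-algebraization}.
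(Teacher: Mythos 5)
Your proposal is correct and is exactly the paper's argument: the paper's proof is the one-line observation that the corollary follows directly from Theorem~\ref{thm:generically-etale-normalization} and Theorem~\ref{thm:partial-algebraization}, and you have correctly identified that the normalization theorem's output (a finite, finitely presented map $\O_K\langle X_1,\dots,X_d\rangle = R^{\wedge}_{(\varpi)} \to A_0$, étale away from $\rm{V}(\varpi f)$ for a Weierstrass polynomial $f$) is precisely the input required by the algebraization theorem. The two compatibility points you flag (the identification $R^{\wedge}_{(\varpi)} = \O_K\langle X_1,\dots,X_d\rangle$ and the matching of the étaleness loci, using $A_0[\tfrac{1}{\varpi f}] = A[\tfrac{1}{f}]$) are indeed the only things to verify, and both hold.
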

\begin{proof}
    This follows directly from Theorem~\ref{thm:partial-algebraization} and Theorem~\ref{thm:generically-etale-normalization}. 
\end{proof}

\begin{cor}[Partial algebraization II]\label{cor:partial-algebraization-2} Keep the notation of Theorem~\ref{thm:generically-etale-normalization}. Then there is a finitely presented, quasi-finite morphism $R=\O_K\langle X_1, \dots, X_{d-1}\rangle[X_d] \to B$ and an isomorphism $B^{\wedge}_{(\varpi)}  \simeq A_0$ of $\O_K$-algebras.
\end{cor}
\begin{proof}
    We write $R^{\rm{h}}_{(\varpi)} = \colim_I R_i$ as a filtered colimit of \'etale $R$-algebras $R_i$ such that the natural map $R/\varpi R \to R_i/\varpi R_i$ is an isomorphism. Then Corollary~\ref{cor:partial-algebraization-1} and a standard approximation argument implies that we can find a finite, finitely presented morphism $R_i \to B_i$ such that $R^{\wedge}_{(\varpi)} \otimes_{R_i} B_i \simeq A_0$. Since $R \to R_i$ induces an isomorphism on $\varpi$-adic completions, we conclude that $A_0\simeq (R_i)^{\wedge}_{(\varpi)} \otimes_{R_i} B_i$. Finally, \cite[Th.~7.3.2 and Prop.~4.3.4]{FGK} imply that $(B_i)^{\wedge}_{(\varpi)} \simeq A_0$. Since \'etale morphisms are quasi-finite, we conclude that $B\coloneqq B_i$ does the job.
\end{proof}

\begin{rmk}\label{rmk:optimal} The assumption that $\Spa(A, A^\circ)$ is geometrically reduced cannot be dropped in Corollary~\ref{cor:partial-algebraization-2} (and, therefore, in Corollary~\ref{cor:partial-algebraization-1}) even when $\Spa(A, A^\circ)$ is of pure dimension $1$. We also note that it is not even enough to assume that $A$ is a geometrically reduced $K$-algebra in the usual algebraic sense \cite[\href{https://stacks.math.columbia.edu/tag/030S}{Tag 030S}]{stacks-project}. We refer to Proposition~\ref{prop:counterexample-1-appendex} and Proposition~\ref{prop:counterexample-3-appendex} for explicit counterexamples. 
\end{rmk}

\begin{corollary}[Partial algebraization III]\label{cor:partial-algebraization} Let $C$ be an algebraically closed non-archimedean field, and let $A$ be a reduced $C$-affinoid algebra of dimension $d>0$. Then there is a finite, finitely presented $R^{\rm{h}}_{(\varpi)}$-algebra $B$ with an isomorphism $B\otimes_{R^{\rm{h}}_{(\varpi)}} R^{\wedge}_{(\varpi)} \simeq A^\circ$ of $\O_C$-algebras.
\end{corollary}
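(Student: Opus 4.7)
The plan is to deduce this directly from Corollary~\ref{cor:partial-algebraization-1} applied with $K = C$ and $A_0 \coloneqq A^\circ$. All the work lies in verifying the four hypotheses of that corollary for this choice of $A_0$.

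First, I would dispatch the easy conditions. The ring $A^\circ$ is $\varpi$-torsion-free since it embeds into the $C$-algebra $A$, hence flat over the valuation ring $\O_C$. The identity $A^\circ[\tfrac{1}{\varpi}] = A$ follows because every element of $A$ becomes power-bounded after multiplication by a sufficiently large power of $\varpi$. Finally, $\dim A = d > 0$ is part of the hypothesis.

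The two more substantive conditions are the only non-formal content. Topological finite generation of $A^\circ$ over $\O_C$ is the content of the classical Grauert--Remmert finiteness theorem, which applies because $A$ is reduced and $C$, being algebraically closed non-archimedean, is in particular stable. Geometric reducedness of $\Spa(A, A^\circ)$ in the sense of \cite[\textsection 3.3]{Conrad99} is then automatic: $A$ is reduced by assumption, and $C$ is perfect (any algebraically closed field has no purely inseparable extensions), so reducedness of affinoid algebras over $C$ is preserved under completed base change along any non-archimedean extension $C'/C$. With all four hypotheses in hand, Corollary~\ref{cor:partial-algebraization-1} immediately produces the desired finite, finitely presented $R^{\rm{h}}_{(\varpi)}$-algebra $B$ with an $\O_C$-linear isomorphism $B \otimes_{R^{\rm{h}}_{(\varpi)}} R^{\wedge}_{(\varpi)} \simeq A^\circ$. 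The main obstacle, such as it is, is simply the Grauert--Remmert input; everything else is bookkeeping.
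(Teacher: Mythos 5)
Your proposal is correct and follows exactly the paper's route: the paper's proof is the one-line combination of Corollary~\ref{cor:partial-algebraization-1} with the Grauert--Remmert/Bosch--L\"utkebohmert--Raynaud finiteness theorem (\cite[Th.~1.2]{BLR4}), which is precisely your Grauert--Remmert input for topological finite generation of $A^\circ$. The remaining hypothesis checks you carry out (flatness, $A^\circ[\tfrac{1}{\varpi}]=A$, geometric reducedness over the algebraically closed, hence perfect, field $C$) are the routine verifications the paper leaves implicit.
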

\begin{proof}
    This follows directly from Corollary~\ref{cor:partial-algebraization-1} and \cite[Th.~1.2]{BLR4}.
\end{proof}

We note that if $\charac K=0$, then Theorem~\ref{thm:generically-etale-normalization} is an easy consequence of the usual Noether normalization \cite[Th.\,0.9.2.10]{FujKato} and Lemma~\ref{lemma:weierstrass-polynomial-exists} below. Furthermore, one can use (the proof of) \cite[Prop.\,A.2, Step 4]{ALY21} and Lemma~\ref{lemma:weierstrass-polynomial-exists} to prove Theorem~\ref{thm:generically-etale-normalization} under the additional assumptions that $K$ is algebraically closed, $A$ is a domain, and $A_0=A^\circ$. This generality suffices to conclude Corollary~\ref{cor:partial-algebraization} when $A$ is a domain. This is, in turn, the only result of this section that is used in the rest of the paper.  

However, the strategy used in \cite{ALY21} is inadequate for the purpose of proving Theorem~\ref{thm:generically-etale-normalization} in full generality. The essential difficulty comes from the fact that the ``special fiber'' $A_0\otimes_{\O_K} k$ does not need to be reduced in general. We believe Theorem~\ref{thm:generically-etale-normalization} and Corollary~\ref{cor:partial-algebraization-1} are itself of independent interest, so we decided to provide a complete proof of Theorem~\ref{thm:generically-etale-normalization} in full generality.  

The rest of this section is devoted to the proof of Theorem~\ref{thm:generically-etale-normalization}. 

\begin{lemma}\label{lemma:weierstrass-polynomial-exists} Let $d>0$ be a positive integer, and let $f\in \O_K\langle X_1,\dots,X_d\rangle$ be an element of norm $1$. Then there is a continuous $\O_K$-linear automorphism 
\[
\sigma\colon \O_K\langle X_1, \dots, X_d\rangle \to  \O_K\langle X_1, \dots, X_d\rangle,
\]
an element $u\in \O_K\langle X_1, \dots, X_d\rangle^\times$, and a Weierstrass polynomial $\omega$ such that $\sigma(f)=u\omega$.
\end{lemma}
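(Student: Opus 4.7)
The plan is to combine a change of variables à la Noether normalization with the classical Weierstrass Preparation Theorem for the integral Tate algebra $\O_K\langle X_1,\dots,X_d\rangle$.

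Since $|f|=1$, the reduction of $f$ modulo the ideal $\m_K\langle X_1,\dots,X_d\rangle$ is a nonzero polynomial $\bar f\in k[X_1,\dots,X_d]$ (where $k=\O_K/\m_K$). I would then choose positive integers $c_1,\dots,c_{d-1}$ growing fast enough (for instance $c_i=N^i$ with $N$ strictly larger than the total degree of $\bar f$) so that the integer-linear functional $(a_1,\dots,a_d)\mapsto a_d+\sum_{i<d}a_i c_i$ takes distinct values on distinct monomials appearing in $\bar f$. Define the $\O_K$-algebra endomorphism $\sigma$ of $\O_K\langle X_1,\dots,X_d\rangle$ by $X_i\mapsto X_i+X_d^{c_i}$ for $i<d$ and $X_d\mapsto X_d$; this is well defined and continuous because $X_d^{c_i}$ is power-bounded, and it is an automorphism with explicit inverse $X_i\mapsto X_i-X_d^{c_i}$. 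The separation property of the $c_i$ ensures that $\sigma(\bar f)$, viewed as a polynomial in $X_d$ over $k[X_1,\dots,X_{d-1}]$, has leading coefficient equal to a \emph{single} scalar $\lambda\in k^\times$.

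Writing $\sigma(f)=\sum_{\nu\geq 0} f_\nu(X_1,\dots,X_{d-1})\,X_d^\nu$, the previous paragraph gives an integer $s$ such that $f_s$ reduces modulo $\m_K$ to $\lambda\in k^\times$, while $|f_\nu|<1$ for every $\nu>s$. Since any element of $\O_K\langle X_1,\dots,X_{d-1}\rangle$ reducing to a nonzero constant in $k$ is a unit (use that $1+\m_K\langle X_1,\dots,X_{d-1}\rangle$ consists of units via geometric series), the coefficient $f_s$ is a unit. This is exactly the hypothesis of the integral Weierstrass Preparation Theorem (see \cite[\textsection 5.2.2]{BGR}), whose application produces a factorization $\sigma(f)=u\cdot\omega$ with $u\in\O_K\langle X_1,\dots,X_d\rangle^\times$ and $\omega=X_d^s+\sum_{i<s}g_i X_d^i$ where $g_i\in\m_K\langle X_1,\dots,X_{d-1}\rangle$, i.e., a Weierstrass polynomial in the sense of the paper.

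The main point requiring care is the choice of $c_i$: one must ensure that the leading coefficient of $\sigma(\bar f)$ in $X_d$ is a single scalar, and not merely a nonzero polynomial in $k[X_1,\dots,X_{d-1}]$ (the latter would prevent $f_s$ from being a unit and Weierstrass preparation would not apply). The separation property of the integer-linear functional handles this cleanly; beyond this, everything is a direct invocation of classical facts about the integral Tate algebra.
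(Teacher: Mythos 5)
Your proposal is correct and follows essentially the same route as the paper: the explicit automorphism $X_i\mapsto X_i+X_d^{c_i}$ with the base-$N$ separation trick is precisely the content of the cited Lemma 2.2/7 of Bosch that the paper invokes, after which both arguments conclude by Weierstrass preparation. The only cosmetic difference is that the paper applies preparation over $K\langle X_1,\dots,X_d\rangle$ and then deduces integrality of the unit from multiplicativity of the Gauss norm, whereas you invoke the integral form of the preparation theorem directly; both are valid.
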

\begin{proof}
    The proof of \cite[Lemma 2.2/7]{B} implies that we can find an $\O_K$-linear continuous automorphism $\sigma$ of $\O_K\langle X_1, \dots, X_d\rangle$ such that $\sigma(f)$ (considered as an object of $K\langle X_1, \dots, X_d\rangle$) is $X_d$-distinguished (in the sense of \cite[Def.\,2.2/6]{B}) and $|\sigma(f)|=|f|=1$. Therefore, \cite[Cor.\,2.2/9]{B} implies that there is a unit $u\in K\langle T_1,\dots,X_d\rangle^\times$ and a Weierstrass polynomial $\omega$ such that $\sigma(f)=u\omega$. In particular, $|\omega|=1$. The only thing we are left to show is that $u\in \O_K\langle X_1, \dots, X_d\rangle$ and $u^{-1} \in \O_K\langle X_1, \dots, X_d\rangle$.  

    Since the Gauss-norm is multiplicative on $K\langle X_1, \dots, X_d\rangle$ (see \cite[p.\,12]{B}), it suffices to show that $|u|=1$. But this follows from the equation $\sigma(f)=u\omega$ and the observation that $|\sigma(f)|=|f|=1$ and $|\omega|=1$.  
\end{proof}

We recall that, for every $K$-affinoid algebra $A$, there is a functorial continuous map
\[
r_A \colon \abs{\Spa(A, A^\circ)} \to \abs{\Spec A}
\]
that sends a valuation $v\in \Spa(A, A^\circ)$ to a prime ideal $\rm{supp}(v)\coloneqq v^{-1}(\{0\}) \subset A$.

\begin{lemma}\label{lemma:a-lot-of-valuations} Let $A$ be an affinoid $K$-algebra. Then the map $r_A \colon \abs{\Spa(A, A^\circ)} \to \abs{\Spec A}$ is surjective.
\end{lemma}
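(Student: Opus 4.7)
The plan is to reduce to producing, for a $K$-affinoid domain, a single point of $\Spa$ with trivial support, and then to construct such a point by extending the Gauss valuation along a Noether normalization.

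Given a prime $\p \in \Spec A$, I would first replace $A$ by the $K$-affinoid domain $B \coloneqq A/\p$: the quotient map $q\colon A \twoheadrightarrow B$ is continuous and sends power-bounded elements to power-bounded elements, so $q(A^\circ) \subseteq B^\circ$. Consequently, the pullback along $q$ of any $w \in \Spa(B, B^\circ)$ with $\mathrm{supp}(w) = (0)$ is automatically a continuous valuation on $A$ that is $\leq 1$ on $A^\circ$ and has support exactly $\p$. Thus it suffices to treat the case where $A$ is a $K$-affinoid domain and $\p = (0)$.

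For the main construction, I would apply the Noether normalization theorem for $K$-affinoid algebras to obtain a finite injective $K$-algebra map $\varphi\colon K\langle T_1, \dots, T_d\rangle \hookrightarrow A$ with $d = \dim A$, and then consider the Gauss norm on the Tate algebra, which is a continuous rank-one valuation. By the standard existence theorem for extensions of valuations to finite field extensions, this Gauss valuation extends along $\varphi$ to a valuation $w$ on the finite extension $L \coloneqq \Frac A$ of $\Frac K\langle T_1,\dots,T_d\rangle$. Let $\O_w \subset L$ be the corresponding valuation ring and set $v \coloneqq w|_A$. Then $v$ automatically has trivial support (as $L$ is a field and $w$ is non-trivial), and $v(\varpi) < 1$ ensures continuity.

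The only non-formal step is to verify that $v$ is bounded by $1$ on $A^\circ$, i.e.\ that $A^\circ \subseteq \O_w$. Since $\O_w$ is integrally closed in $L$ and contains $\O_K\langle T_1, \dots, T_d\rangle$ (as $w$ extends the Gauss norm), this reduces to showing that every power-bounded element of $A$ is integral over $\O_K\langle T_1, \dots, T_d\rangle$. This is the main obstacle I anticipate, but it is classical in rigid geometry: for a reduced $K$-affinoid algebra finite over a Tate algebra, the ring of power-bounded elements coincides with the integral closure of the corresponding Tate $\O_K$-algebra. In our setting $A$ is a domain, hence reduced, so this applies and completes the argument.
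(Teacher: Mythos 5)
Your proof is correct, and it follows the same overall reduction as the paper: pass to $A/\p$ to reduce to a domain with $\p=(0)$, apply Noether normalization $T_d=K\langle X_1,\dots,X_d\rangle \hookrightarrow A$, and aim at the Gauss point. The difference lies in the key step. The paper invokes surjectivity of the induced finite map $f^{\rm{ad}}\colon \Spa(A,A^\circ)\to \bf{D}^d$ of adic spaces together with the observation that the scheme-theoretic fiber of $\Spec A \to \Spec T_d$ over the generic point is just $\eta_A$ (so any adic point above the Gauss point has support $(0)$); you instead construct a point over the Gauss point by hand, extending the Gauss valuation to $\Frac A$ via Chevalley's extension theorem and then checking $A^\circ\subseteq \O_w$ using the classical fact that, for a reduced affinoid finite over $T_d$, power-bounded elements are integral over $\O_K\langle X_1,\dots,X_d\rangle$ ([BGR, \textsection 6.3--6.4]). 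Your route is more self-contained — it effectively unpacks the proof of surjectivity of finite adic maps in the one fiber that matters — at the cost of needing the integrality description of $A^\circ$ and a (correct but slightly glossed) continuity check, namely that the extended valuation is still rank one and bounded on a ring of definition; the paper's route is shorter given the black boxes it cites. Both arguments are sound.
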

\begin{proof}
    For brevity, we denote by $T_d$ the Tate algebra $K\langle X_1, \dots, X_d\rangle$. We pick a prime ideal $\p\subset A$ and wish to find a valuation $v\in \Spa(A, A^\circ)$ such that $\rm{supp}\,(v)=\p$. Since the construction of $r$ is functorial in $A$, we may replace $A$ by $A/\p$ to achieve that $A$ is a domain and $\p=(0)$. 
    
    In this case, we use \cite[Prop.~3.1/3]{B} to obtain a finite monomorphism $f^\#\colon T_d \to A$. This induces morphisms $f^{\rm{alg}}\colon \Spec A \to \Spec T_d$ and $f^{\rm{ad}}\colon \Spa(A, A^\circ) \to \bf{D}^d$. One easily checks that $f^{\rm{alg}, -1}\big(\{\eta_{T_d}\}\big)=\{\eta_{A}\}$, where $\eta_{T_d}$ and $\eta_A$ are the generic points of $\Spec T_d$ and $\Spec A$ respectively. This implies that $f^{\rm{alg}}$ is dominant, so \cite[\href{https://stacks.math.columbia.edu/tag/01WM}{Tag 01WM}]{stacks-project} concludes that $f^{\rm{alg}}$ is surjective. 

    Now we claim that $f^{\rm{ad}}$ is surjective as well. We note that \cite[Lem.\,1.1.10(ii) and Lem.\,1.4.5(ii)]{Huber-etale} and \cite[Prop.\,3.6(i)]{H0} imply that it suffices to show that $A \wdh{\otimes}_{T_d} \wdh{k(x)} \neq 0$ for any rank-$1$ point $x\in \Spa(T_d, T_d^\circ)$. Therefore, \cite[Lem.\,B.3.5]{Z-quotients} ensures that it is enough to show that $A \wdh{\otimes}_{T_d} \wdh{k(x)} \simeq A \otimes_{T_d} \wdh{k(x)} \neq 0$. This follows directly from the fact that $f^{\rm{alg}}$ is surjective. 
    
    Combining the fact that $f^{\rm{alg}, -1}\big(\{\eta_{T_d}\}\big)=\{\eta_{A}\}$ with surjectivity of $f^{\rm{ad}}$ and functoriality of $r$, we reduce the question to the case $A=T_d$ and $\p=(0)$. In this case, we note that the supremum norm $v_\eta \colon T_d\to \bf{R}_{\geq 0}$ (see \cite[Prop.\,2.2/3]{B}) defines a point of $\Spa(T_d, T_d^\circ)$ with $\rm{supp}(v_\eta)=(0)$. This finishes the proof.
\end{proof}

\begin{cor}\label{cor:fpqc-covers} Let $A$ be an affinoid $K$-algebra, and let $f$ be a non-zero element of $A$. Then $\big\{\Spec A\langle \frac{\varpi^n}{f}\rangle \big\}_{n\in \N} \to \Spec A\big[\frac{1}{f}\big]$ is a jointly surjective family of flat morphisms. 
\end{cor}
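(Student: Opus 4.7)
My plan is to verify the two parts of the claim — flatness of each map and joint surjectivity — separately, with joint surjectivity being the nontrivial part.

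For the structural setup and flatness, I would observe that $A\langle \varpi^n/f \rangle = A\langle T\rangle/(fT - \varpi^n)$. Since $\varpi \in K^\times$ is invertible in the $K$-affinoid algebra $A$, the relation $fT = \varpi^n$ forces $f$ to be a unit in $A\langle \varpi^n/f\rangle$. Thus the structure map $A \to A\langle \varpi^n/f \rangle$ factors through $A[\tfrac{1}{f}]$. I would then invoke the standard fact that rational localizations of affinoid algebras are flat; since $f$ is already a unit in the target, the induced map $A[\tfrac{1}{f}] \to A\langle \varpi^n/f \rangle$ is flat as well.

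For joint surjectivity, the key input is Lemma~\ref{lemma:a-lot-of-valuations}. Given a prime $\p \in \Spec A[\tfrac{1}{f}]$, viewed as a prime $\p \subset A$ with $f \notin \p$, I would use that lemma to produce a point $v \in \Spa(A, A^\circ)$ with $\rm{supp}(v) = \p$. Because $f \notin \p$, we have $|f(v)| > 0$. Continuity of $v$ — a defining property of points of the adic spectrum — applied to the $\varpi$-adic topology on $A$ then yields some integer $n \geq 0$ with $|\varpi^n(v)| \leq |f(v)|$. For this $n$, the valuation $v$ factors through the rational localization $A\langle \varpi^n/f\rangle$, giving a point $\tilde v \in \Spa(A\langle \varpi^n/f\rangle, A\langle \varpi^n/f\rangle^\circ)$ whose support is a prime of $A\langle \varpi^n/f\rangle$ mapping to $\p$ under the structure map.

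The one place where care is needed is in the continuity step: since the value group of $v$ is not a priori archimedean, the existence of $n$ with $|\varpi^n(v)| \leq |f(v)|$ is not automatic from $|\varpi(v)| < 1$ and must genuinely use continuity of $v$ with respect to the natural topology on $A$. Everything else amounts to assembling the factorization, flatness of rational localizations, and Lemma~\ref{lemma:a-lot-of-valuations}.
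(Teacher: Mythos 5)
Your proposal is correct and follows essentially the same route as the paper: flatness via $A\to A\langle\frac{\varpi^n}{f}\rangle$ being flat and factoring through the localization $A[\frac{1}{f}]$, and joint surjectivity via Lemma~\ref{lemma:a-lot-of-valuations} together with the covering $\Spa(A,A^\circ)(f\neq 0)=\bigcup_n \Spa(A\langle\frac{\varpi^n}{f}\rangle, A\langle\frac{\varpi^n}{f}\rangle^\circ)$. Your explicit continuity argument is simply the justification of what the paper calls ``the standard observation,'' and you correctly identify that continuity (not just $|\varpi(v)|<1$) is the point.
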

\begin{proof}
    First, we note that \cite[(II.1) (iv) on page 530]{Huber-2} implies that $A \to A\langle \frac{\varpi^n}{f}\rangle$ is flat for any $n\in \N$. This formally implies that $A\big[\frac{1}{f}\big] \to A\langle \frac{\varpi^n}{f}\rangle$ is flat as well. Now, in order to see that the family $\big\{\Spec A\langle \frac{\varpi^n}{f}\rangle \big\}_{n\in \N} \to \Spec A\big[\frac{1}{f}\big]$ is jointly surjective, we invoke Lemma~\ref{lemma:a-lot-of-valuations} to reduce the question to showing that the family $\big\{\Spa(A\langle \frac{\varpi^n}{f}\rangle,A\langle \frac{\varpi^n}{f}\rangle^\circ)\big\}_{n\in \N} \to \Spa(A, A^\circ)\big(f\neq 0\big)$ is jointly surjective. This follows from the standard observation that $\Spa(A, A^\circ)\big(f\neq 0\big) = \bigcup_{n\in \N} \Spa(A\langle \frac{\varpi^n}{f}\rangle,A\langle \frac{\varpi^n}{f}\rangle^\circ) \subset \Spa(A, A^\circ)$. 
\end{proof}

We also recall the notion of completed differential forms. We fix a flat, topologically finite type $\O_K$-algebra $A_0$ and put $A=A_0\big[\frac{1}{\varpi}\big]$. We denote by $\Om_{A_0/\O_K}\coloneqq (\Omega^1_{A_0/\O_K})^{\wedge}_{(\varpi)}$ the $(\varpi)$-adic completion of the usual algebraic differentials $\Omega^1_{A_0/\O_K}$. Likewise, we denote by $\Om_{A/K}\coloneqq \Om_{A_0/\O_K}\big[\frac{1}{\varpi}\big]$ the ``generic fiber'' of $\Om_{A_0/\O_K}$; we note that \cite[Prop.~1.5]{BLR3} implies that $\Om_{A/K}$ is independent of the choice of $A_0\subset A$ and, therefore, is well-defined for any $K$-affinoid algebra $A$. We refer to \cite[\textsection I.5.1]{FujKato}, \cite[\textsection 1-3]{BLR3}, and \cite[\textsection 1.6]{Huber-etale} for an extensive discussion of these notions.

\begin{lemma}\label{lemma:basis} Let $A$ be a $K$-affinoid algebra of Krull dimension $d$. Let $\p_1, \dots, \p_r$ be a subset of minimal primes of $A$. Then, for each $j=1, \dots r$, the $k(\p_j)$-vector space $\wdh{\Omega}^1_{A/K} \otimes_A k(\p_j)$ is generated by the residue classes of the form $\ov{df}$ with $f\in \bigcap_{i\neq j} \p_i$.
\end{lemma}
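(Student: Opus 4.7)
The plan is to combine the Leibniz rule for the canonical derivation $d\colon A\to \wdh{\Omega}^1_{A/K}$ with the fact that a minimal prime cannot contain the intersection of the other minimal primes on our list. Concretely, I will first extract a ``separating'' element $h$ that is a unit modulo $\p_j$ but vanishes along every other $d$-dimensional component, and then exploit the Leibniz identity to rewrite an arbitrary $\ov{dg}$ as a $k(\p_j)$-combination of symbols $\ov{df}$ with $f\in I\coloneqq\bigcap_{i\neq j}\p_i$.

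For the first step, the minimal primes $\p_1,\dots,\p_r$ are pairwise incomparable; if the prime $\p_j$ contained $I$, it would have to contain some $\p_i$ with $i\neq j$, contradicting minimality. Hence I may fix $h\in I$ with $h\notin \p_j$, so that its image $\ov h\in k(\p_j)$ is a unit.

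For the second step, given any $g\in A$, the Leibniz rule gives the identity
\[
h\,dg \;=\; d(hg)-g\,dh \quad\text{in }\wdh{\Omega}^1_{A/K}.
\]
Because $I$ is an ideal containing $h$, both $hg$ and $h$ lie in $I$, so the right-hand side is an $A$-linear combination of elements of the form $df$ with $f\in I$. Tensoring with $k(\p_j)$ over $A$ and dividing by the unit $\ov h$ then shows that $\ov{dg}\otimes 1$ lies in the $k(\p_j)$-linear span of $\{\ov{df}\otimes 1 : f\in I\}$.

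To conclude, it suffices to check that the classes $\ov{df}\otimes 1$ with $f\in A$ already span $\wdh{\Omega}^1_{A/K}\otimes_A k(\p_j)$ as a $k(\p_j)$-vector space. Writing a flat topologically finite type model $A_0\subset A$ as a quotient of $\O_K\langle X_1,\dots,X_n\rangle$ exhibits $\wdh{\Omega}^1_{A_0/\O_K}$ as generated by $dX_1,\dots,dX_n$ as an $A_0$-module, and each $X_i$ lies in $A_0\subset A$. I do not anticipate any serious obstacle; the only delicate point is ensuring that symbols $df$ generate the completed module rather than only its dense subobject $\Omega^1_{A_0/\O_K}$, which is automatic because $\wdh{\Omega}^1_{A_0/\O_K}$ is a finitely generated $A_0$-module built out of such symbols.
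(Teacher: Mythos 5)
Your proof is correct and takes essentially the same route as the paper: the same Leibniz-rule manipulation with a separating element $h\in\bigl(\bigcap_{i\neq j}\p_i\bigr)\smallsetminus\p_j$ (whose existence follows from incomparability of minimal primes), combined with the fact that $\wdh{\Omega}^1_{A/K}$ is generated by the symbols $df$, $f\in A$. The paper simply cites \cite[Cor.~I.5.1.11]{FujKato} for that last generation statement rather than sketching the reduction to a presentation of $A_0$ as you do, but the content is identical.
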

\begin{proof}
    First, we note that \cite[Cor.~I.5.1.11]{FujKato} implies that the $A$-module $\wdh{\Omega}^1_{A/K}$ is generated by the elements of the form $df$ for $f\in A$. Now we note that \cite[\href{https://stacks.math.columbia.edu/tag/00DS}{Tag 00DS}]{stacks-project} ensures that, for each $j=1, \dots, r$, we can pick an element $g\in \big(\cap_{i\neq j} \p_i\big) \smallsetminus \p_j$. Then, for any $f\in A$, we have an equality $d(fg)=fdg+gdf$ in $\wdh{\Omega}^1_{A/K}$. Now since $g$ becomes invertible in $k(\p_j)$, we conclude that we have an equation $\ov{df}=\frac{\ov{d(fg)}}{\ov{g}} - \frac{\ov{f}\ov{dg}}{\ov{g}}$ in $\Om_{A/K} \otimes_A k(\p_j)$. Now the result follows since $fg$ and $g$ lie in $\cap_{i\neq j} \p_i$.
\end{proof}

\begin{lemma}\label{lemma:smooth} Let $A$ be a non-zero $K$-affinoid algebra of Krull dimension $d$ such that $\Spa(A, A^\circ)$ is geometrically reduced in the sense of \cite[\textsection 3.3]{Conrad99}, and let $\p_1, \dots, \p_r \subset A$ be the minimal prime ideals corresponding to the $d$-dimensional irreducible components of $\Spec A$. Then 
    \begin{enumerate}
        \item\label{lemma:smooth-1} the $k(\p_j)$-vector space $\Om_{A/K} \o_A k(\p_j)$ has dimension $d$;
        \item\label{lemma:smooth-2} for any finite morphism $h\colon \Spa(A, A^\circ) \to \bf{D}^d$, there is a non-zero element $f\in K\langle X_1, \dots, X_d\rangle$ such that the Zariski-open subspace $\Spa(A, A^\circ)\big(f\neq 0\big)$ is $K$-smooth of pure dimension $d$.
    \end{enumerate}
\end{lemma}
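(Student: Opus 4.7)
My plan is to establish (\ref{lemma:smooth-1}) first and then deduce (\ref{lemma:smooth-2}) from it via a geometric argument on the Tate polydisk.

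For (\ref{lemma:smooth-1}), I would show that $A$ is $K$-smooth at each $\p_j$, from which the claimed fiber dimension is immediate. Geometric reducedness of $\Spa(A,A^\circ)$ in the sense of \cite[\S 3.3]{Conrad99} forces $A$ to be reduced, so $A_{\p_j}=k(\p_j)$; applying the condition to the localization of $A\otimes_K K'$ above $\p_j$ for finite extensions $K'/K$ shows that $k(\p_j)\otimes_K K'$ is reduced for every such $K'$, which is precisely separability of $k(\p_j)/K$. Combined with the Noether normalization $T_d\hookrightarrow A/\p_j$ (a finite injection, cf.\,\cite[Th.\,0.9.2.10]{FujKato}), this exhibits $k(\p_j)$ as a finite separable extension of $\Frac T_d$ of transcendence degree $d$ over $K$. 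The Jacobian criterion for $K$-affinoid algebras then gives that $A$ is $K$-smooth at $\p_j$, so $\Om_{A/K}$ is locally free at $\p_j$ of rank $\dim_{\p_j}A=d$.

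For (\ref{lemma:smooth-2}), let $U\subset\Spa(A,A^\circ)$ denote the locus where $A$ is $K$-smooth of local dimension $d$. By openness of the $K$-smooth locus, together with (\ref{lemma:smooth-1}) and the assumption that $\p_1,\dots,\p_r$ exhaust the $d$-dimensional minimal primes, $U$ is a Zariski-open neighborhood of each $\p_j$ that avoids every irreducible component of dimension $<d$. Hence the complement $Z$ is Zariski-closed of Krull dimension $<d$. Since $h$ is finite (hence closed with finite fibers), $h(Z)\subset\bf{D}^d$ is Zariski-closed of dimension $<d$, so the ideal of $h(Z)$ inside $T_d\coloneqq K\langle X_1,\dots,X_d\rangle$ contains a non-zero element $f$. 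Then
\[
\Spa(A,A^\circ)(f\neq 0)=h^{-1}\bigl(\bf{D}^d(f\neq 0)\bigr)
\]
is disjoint from $h^{-1}(h(Z))\supseteq Z$, hence is contained in $U$ and is $K$-smooth.

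For non-emptiness and purity I would argue that the ring map $h^\#\colon T_d\to A$ is injective: any non-zero kernel would factor $h^\#$ through a proper quotient of $T_d$ of Krull dimension $<d$, contradicting the finiteness of $h$ combined with $\dim A=d=\dim T_d$. Therefore $h^\#(f)\neq 0$ in $A$, the open subset $\Spa(A,A^\circ)(f\neq 0)$ is non-empty, and it inherits pure dimension $d$ from $U$. The main obstacle is (\ref{lemma:smooth-1}): in characteristic $p>0$ one genuinely has to extract separability of the residue fields $k(\p_j)$ from the reducedness of $A\otimes_K K'$ and to invoke the Jacobian criterion in the affinoid setting, whereas in characteristic $0$ this step is essentially automatic from Noether normalization alone.
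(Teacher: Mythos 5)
Your part~(\ref{lemma:smooth-2}) is essentially the paper's argument: take $Z$ to be the union of the non-smooth locus and the irreducible components of dimension $<d$, note that $\dim Z<d$ and that $h(Z)$ is a Zariski-closed subset of $\bf{D}^d$ of dimension $<d$ (the paper cites \cite[Prop.\,9.6.3/3]{BGR} for this), and choose $f$ vanishing on $h(Z)$; your extra non-emptiness discussion is harmless but not required by the statement. The problem is your proof of~(\ref{lemma:smooth-1}), which is where the content lies, and it has a genuine gap. First, $\Frac\big(K\langle X_1,\dots,X_d\rangle\big)$ has infinite transcendence degree over $K$ as soon as $d\geq 1$, so $k(\p_j)$ is \emph{not} a finitely generated extension of $K$ of transcendence degree $d$; in particular the algebraic K\"ahler module $\Omega^1_{k(\p_j)/K}$ is infinite-dimensional, and separability of $k(\p_j)/K$ by itself says nothing about the fibre of the \emph{completed} module $\Om_{A/K}$, which is only a quotient of $\Omega^1_{A/K}\otimes_A k(\p_j)$. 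Second, and more seriously, the deduction ``$k(\p_j)/K$ separable and $k(\p_j)/\Frac(K\langle X_1,\dots,X_d\rangle)$ finite, hence finite separable'' is false: separability over $K$ does not pass to an intermediate field. Concretely, for $\charac K=p$ the map $K\langle Y\rangle\to K\langle X\rangle$, $Y\mapsto X^p$, is a finite Noether normalization of the smooth (hence geometrically reduced) affinoid $K\langle X\rangle$, and the induced extension of fraction fields is purely inseparable of degree $p$. Producing a Noether normalization that \emph{is} generically \'etale is precisely the content of Theorem~\ref{thm:generically-etale-normalization}, which this lemma feeds into, so it cannot be presupposed here.

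The paper sidesteps all of this by invoking Conrad's theorem (\cite[p.~512]{Conrad99}) that a geometrically reduced quasi-separated rigid space is $K$-smooth away from a nowhere dense Zariski-closed subset; it then uses surjectivity of $r_A$ (Lemma~\ref{lemma:a-lot-of-valuations}) to exhibit a point of the smooth locus lying over $\p_j$ and reads off that the coherent sheaf attached to $\Om_{A/K}$ has fibre rank $d$ there. To repair your argument you would need to replace the field-theoretic step by such a generic smoothness input (or by a direct argument with the completed differentials); once that is in place, your deduction of~(\ref{lemma:smooth-2}) goes through as in the paper.
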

\begin{proof}
    First, we note that \cite[p.~512]{Conrad99} implies that $\Spa(A, A^\circ)$ is $K$-smooth away from a nowhere dense Zariski-closed subset $Z'\subset \Spa(A, A^\circ)$. In particular, $\dim Z'<d$. Put $Z$ to be the union of $Z'$ and all irreducible components of dimension less than $d$, and also put $U$ to be the open complement of $Z$. By construction, $\dim Z<d$ and $U$ is a smooth rigid-analytic space over $K$ of pure dimension $d$.  

    Now we note that the map $r_A\colon \abs{\Spa(A, A^\circ)} \to \abs{\Spec A}$ induces a bijection between Zariski-open subspaces (see \cite[Cor.\,B.6.8]{Z-quotients}). Thus, the fact that $\dim Z<d$ and Lemma~\ref{lemma:a-lot-of-valuations} imply that there are points $v_j\in U$ such that $r_{A}(v_j)=\p_j$. Now since $\Omega^1_{\Spa(A, A^\circ)/K}$ is a coherent sheaf associated to a finite $A$-module $\wdh{\Omega}^1_{A/K}$ and since $U$ is smooth of pure dimension $d$, we see that $\Om_{A/K}\o_A k(v_j)$ is of dimension $d$. This formally implies that $\dim \Om_{A/K} \o_A k(\p_j)=d$. 

    (\ref{lemma:smooth-2}): For this, we note that (the easier special case of) \cite[Prop.~9.6.3/3]{BGR} implies that $h(Z)\subset \bf{D}^d$ is a Zariski-closed subspace of dimension less than $d$. In particular, $h(Z)\neq \bf{D}^d$, so there is a non-zero element $f\in K\langle X_1, \dots, X_d\rangle$ such that $h(Z) \subset \rm{V}(f)$. Then $\Spa(A, A^\circ)\big(f\neq 0\big)$ is $K$-smooth of pure dimension $d$.
\end{proof}

\begin{lemma}\label{lemma:modify-morphism} Keep the notation of Lemma~\ref{lemma:smooth}, let $f_1, \dots, f_d\in A$ be a $d$-tuple of elements in $A$, and let $U\subset A$ be an open neighborhood of $0$. Then there is a $d$-tuple of elements $g_1, \dots, g_d \in U$ such that the residue classes $\ov{d(f_1 + g_1)}, \dots, \ov{d(f_d + g_d)} \in \wdh{\Omega}^1_{A/K} \otimes_A k(\p_j)$ form a basis for every $j=1, \dots, r$.
\end{lemma}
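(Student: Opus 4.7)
The plan is to produce $g_i$ in the form
\[
g_i = \sum_{j=1}^r \varepsilon_j h_{j,i},
\]
for small scalars $\varepsilon_j \in K$ and carefully chosen elements $h_{j,i} \in \bigcap_{l \neq j}\p_l$, so that for each $l$ the condition that the $\ov{d(f_i+g_i)}$ form a basis of $\Om_{A/K}\o_A k(\p_l)$ becomes the non-vanishing of an explicit polynomial in $(\varepsilon_1, \dots, \varepsilon_r)$. The vanishing of $h_{j,i}$ on the other components will decouple the conditions at different primes; Schwartz--Zippel over the infinite field $K$ will then produce suitable $\varepsilon_j$, and the smallness of the $\varepsilon_j$ will ensure $g_i \in U$ via the non-archimedean triangle inequality.

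First I will choose the $h_{j,i}$'s. By Lemma~\ref{lemma:smooth}(\ref{lemma:smooth-1}), $\Om_{A/K}\o_A k(\p_j)$ is $d$-dimensional, and by Lemma~\ref{lemma:basis} it is generated over $k(\p_j)$ by residue classes $\ov{df}$ with $f \in \bigcap_{l \neq j}\p_l$; extracting a basis from this generating set yields $h_{j,1}, \dots, h_{j,d} \in \bigcap_{l \neq j}\p_l$ whose differentials are a $k(\p_j)$-basis of $\Om_{A/K}\o_A k(\p_j)$. After trivializing the line $\bigwedge^d_{k(\p_l)} \Om_{A/K}\o_A k(\p_l)$, I form the polynomial
\[
P_l(y_1, \dots, y_r) \coloneqq \bigwedge_{i=1}^d \Big(\ov{df_i} + \sum_{j=1}^r y_j \ov{dh_{j,i}}\Big) \in k(\p_l)[y_1, \dots, y_r].
\]
The crucial observation is that the coefficient of $y_l^d$ in $P_l$ equals $\bigwedge_{i=1}^d \ov{dh_{l,i}}$, which is nonzero by the choice of $h_{l,i}$; hence each $P_l$ is a nonzero polynomial of total degree $d$ over $k(\p_l)$.

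Next I will pick $N$ large enough that $|\varpi^N| \cdot \max_{j,i} \|h_{j,i}\| < \delta$, where $\{x \in A : \|x\| < \delta\} \subset U$; then $S \coloneqq \{\varpi^{N+k} : 0 \leq k \leq rd\} \subset \varpi^N \O_K$ is a subset of $K$ of cardinality $rd+1$. By the Schwartz--Zippel bound, each $P_l$ vanishes at at most $d\,|S|^{r-1}$ points of $S^r$, so the union of the bad loci over $l$ contains at most $rd\,|S|^{r-1} < |S|^r$ points, and I can choose $\varepsilon = (\varepsilon_1, \dots, \varepsilon_r) \in S^r$ with $P_l(\varepsilon) \neq 0$ for all $l = 1, \dots, r$. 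Setting $g_i \coloneqq \sum_j \varepsilon_j h_{j,i}$, the non-archimedean triangle inequality gives $g_i \in U$, and the identity $\bigwedge_{i=1}^d \ov{d(f_i+g_i)} = P_l(\varepsilon)$ in $\bigwedge^d \Om_{A/K}\o_A k(\p_l)$ shows that the $\ov{d(f_i+g_i)}$ form a basis at each $\p_l$.

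The only substantive step I expect is the construction of $P_l$ with nonzero $y_l^d$-coefficient, which decouples the $r$ conditions by exploiting the vanishing $h_{j,i} \in \bigcap_{l \neq j}\p_l$ supplied by Lemma~\ref{lemma:basis}. Once this setup is in place, both the Schwartz--Zippel step and the Banach-norm smallness estimate are routine.
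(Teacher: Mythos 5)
Your proof is correct and follows essentially the same route as the paper's: both arguments use Lemma~\ref{lemma:smooth}(\ref{lemma:smooth-1}) and Lemma~\ref{lemma:basis} to produce, for each $j$, perturbing elements lying in $\bigcap_{l\neq j}\p_l$ (which kills their differentials in $\wdh{\Omega}^1_{A/K}\otimes_A k(\p_l)$ for $l\neq j$, decoupling the $r$ conditions), then take $g_i=\sum_j(\cdot)$ and scale by a power of $\varpi$ to land in $U$. The only difference is the finishing move: the paper completes a maximal linearly independent subset of $\{\ov{df_i}\}$ to a basis directly (perturbing only the remaining indices), whereas you perturb all $d$ elements by a generic small combination of a fixed basis and certify genericity with the nonvanishing leading coefficient of $P_l$ plus Schwartz--Zippel; both are valid.
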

\begin{proof}
    First, we note that there is a topologically finite type $\O_C$-subalgebra $A_0\subset A$ and an integer $n\in \N$ such that $\varpi^n A_0 \subset U$. Therefore, we may and do assume that $U=\varpi^n A_0$ for $A_0$ and $n$ as above.  
    
    Now Lemma~\ref{lemma:smooth}(\ref{lemma:smooth-1}) implies that $\wdh{\Omega}^1_{A/K}\otimes_A k(\p_j)$ is a $k(\p_j)$-vector space of dimension $d$ for each $j=1, \dots, r$. Now we fix $j\in [1, \dots, r]$ and choose a subset $I_j\subset [1, \dots, d]$ such that the set $\big\{\ov{d f_i}\big\}_{i\in I_j} \subset \wdh{\Omega}^1_{A/K} \otimes_A k(\p_j)$ is a maximal linearly independent subset of $\{\ov{df_i}\}_{i= 1}^d \subset \wdh{\Omega}^1_{A/K} \otimes_A k(\p_j)$ (of size $\# I_j$). We put $g_{i, j}=0$ for each $i\in I_j$, and then Lemma~\ref{lemma:basis} implies that we can find a $\#([1, \dots, d] \smallsetminus I_j)$-tuple of elements $g_{i, j} \in \bigcap_{k\neq j} \p_k$ for $i\notin I_j$ such that the the residue classes $\ov{d (f_1 + g_{1, j})}, \dots, \ov{d (f_d + g_{d, j})}$ form a basis in $\wdh{\Omega}^1_{A/K} \otimes_A k(\p_j)$. Then we can replace each $g_{i,j}$ by $\varpi^N g_{i,j}$ for $N\gg 0$ to achieve that $g_{i, j}\in U$.  
    
    Now, for $i=1, \dots, d$, we put $g_i=\sum_{j=1}^r g_{i, j} \in U$. By construction, $g_{i, j}$ lies in $\p_{j'}$ for every $i$ and every $j\neq j'$. Therefore, $\ov{g_i}=\ov{g_{i,j}}\in k(\p_j)$ for every $i$ and $j$. Thus, we conclude that, for each $j=1, \dots, r$, the set 
    \[
    \big\{\ov{d(f_i + g_i)}\big\}_{i=1}^d = \big\{\ov{d(f_i+g_{i, j})}\big\}_{i=1}^d \subset \wdh{\Omega}^1_{A/K} \otimes_A k(\p_j)
    \]
    is a basis.
\end{proof}

\begin{lemma}\label{lemma:generically-etale} Keep the notation of Lemma~\ref{lemma:smooth}, and let $h\colon T_d=K\langle X_1, \dots, X_d\rangle \to A$ be a finite morphism of $K$-algebras such that $\big\{ \ov{d\big( h(X_i)\big)} \big\} \subset \Om_{A/K} \o_A k(\p_j)$ is a basis for each $j=1, \dots, r$. Then there is a non-zero element $f\in K\langle X_1, \dots, X_d\rangle$ such that 
\[
h\Big[\frac{1}{f}\Big] \colon K\langle X_1, \dots, X_d\rangle\Big[\frac{1}{f}\Big] \to A\Big[\frac{1}{f}\Big] 
\]
is finite \'etale.
\end{lemma}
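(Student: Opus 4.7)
The plan is to find a nonzero $f \in T_d \coloneqq K\langle X_1, \dots, X_d\rangle$ such that $h[1/f]$ is first finite and unramified, and then to promote it to finite \'etale via generic flatness.

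For the unramified part, I would invoke the first fundamental exact sequence, which in this setting gives
\[
A \otimes_{T_d} \Om_{T_d/K} \to \Om_{A/K} \to \Omega^1_{A/T_d} \to 0,
\]
with the leftmost arrow sending $dX_i$ to $d(h(X_i))$; no completion is needed on the right because $h\colon T_d \to A$ is finite. By hypothesis, the residue classes $\ov{d(h(X_i))}$ form a $k(\p_j)$-basis of $\Om_{A/K}\otimes_A k(\p_j)$, hence $\Omega^1_{A/T_d} \otimes_A k(\p_j) = 0$ for each $j=1,\dots,r$. Nakayama's lemma then yields $\Omega^1_{A/T_d, \p_j} = 0$, so the support $\Sigma \subset \Spec A$ of the finite $A$-module $\Omega^1_{A/T_d}$ is disjoint from $\{\p_1, \dots, \p_r\}$.

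Next, since the $\p_j$'s are exactly the minimal primes corresponding to the $d$-dimensional irreducible components of $\Spec A$, every irreducible component of $\Sigma$ has dimension at most $d-1$. The induced map $h^{*}\colon \Spec A \to \Spec T_d$ is finite, so $h^{*}(\Sigma) \subset \Spec T_d$ is Zariski-closed of dimension at most $d-1$; being a proper closed subset of the integral $d$-dimensional scheme $\Spec T_d$, it is contained in $\rm{V}(f_0)$ for some nonzero $f_0 \in T_d$. It follows that $\Omega^1_{A[1/h(f_0)]/T_d[1/f_0]} = 0$, so $h[1/f_0]\colon T_d[1/f_0] \to A[1/f_0]$ is finite and unramified.

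To upgrade to \'etale, I would observe that $T_d[1/f_0]$ is a localization of the Noetherian integral domain $T_d$ and $A[1/f_0]$ is a finite module over it; generic flatness then yields a nonzero $g \in T_d$ such that $A[1/(f_0 g)]$ is free (hence flat) over $T_d[1/(f_0 g)]$. Setting $f = f_0 g$, the morphism $h[1/f]$ is finite, flat, and unramified, hence finite \'etale, as desired. The main technical point I would want to verify carefully is the right-exactness of the fundamental sequence for the completed module $\Om_{A/K}$ in the first step, but this is standard since the relative term $\Omega^1_{A/T_d}$ requires no completion, $h$ being finite.
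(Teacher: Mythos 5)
Your proof is correct, but it follows a genuinely different route from the paper's. The paper never passes through the relative module $\Omega^1_{A/T_d}$: it shows instead that the comparison map $\Om_{T_d/K}\otimes_{T_d}A \to \Om_{A/K}$ becomes an isomorphism after tensoring with $\Frac(T_d)$, spreads this out to an isomorphism over $T_d[\frac{1}{f}]$, invokes Lemma~\ref{lemma:smooth}(\ref{lemma:smooth-2}) to arrange that $\Spa(A,A^\circ)(f\neq 0)$ is smooth, applies the rigid-analytic \'etaleness criterion of \cite[Prop.~2.6]{BLR3} on each rational subdomain $X(\frac{\varpi^n}{f}) \to \bf{D}^d(\frac{\varpi^n}{f})$, and finally descends to $\Spec T_d[\frac{1}{f}] \to \Spec A[\frac{1}{f}]$ via the jointly surjective flat family of Corollary~\ref{cor:fpqc-covers}. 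You instead stay entirely within the algebraic geometry of the finite morphism of Noetherian rings $T_d \to A$: the conormal sequence plus Nakayama gives vanishing of $\Omega^1_{A/T_d}$ at each $\p_j$, the support/dimension argument (using that a proper closed subset of the $d$-dimensional component $V(\p_j)$ has dimension $<d$, which holds since $A$ is catenary and equidimensional along each component, cf.\ Corollary~\ref{cor:dimension-function-affinoid}) produces $f_0$ with $h[\frac{1}{f_0}]$ unramified, and generic flatness over the Noetherian domain $T_d[\frac{1}{f_0}]$ upgrades this to finite \'etale. Your one genuinely rigid-analytic input — right-exactness of the fundamental sequence for $\Om$ with uncompleted third term when $h$ is finite — is indeed standard (and is essentially the same citation, \cite[Prop.~I.5.1.10]{FujKato}, that the paper uses), and note that you only need surjectivity onto $\Om_{A/K}\otimes_A k(\p_j)$, so you do not even use the dimension count of Lemma~\ref{lemma:smooth}(\ref{lemma:smooth-1}). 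What your approach buys is the elimination of Lemma~\ref{lemma:smooth}(\ref{lemma:smooth-2}), Corollary~\ref{cor:fpqc-covers}, and the descent step; what the paper's buys is that it works directly with the rigid-analytic \'etale criterion, which is in the spirit of how the resulting map is used later. Both are valid.
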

\begin{proof}
    First, we note that \cite[\href{https://stacks.math.columbia.edu/tag/00OH}{Tag 00OH}]{stacks-project} and reducedness of $A$ imply that the natural morphism $A \otimes_{T_d} \rm{Frac}(T_d) \to \prod_{j=1}^r k(\p_j)$ is an isomorphism. Therefore, \cite[Prop.~I.5.1.10]{FujKato} and our assumption on $h$ imply that the natural morphism
    \[
    \Om_{T_d/K} \otimes_{T_d} A \otimes_{T_d} \rm{Frac}(T_d) \to \Om_{A/K} \otimes_{T_d} \rm{Frac}(T_d)
    \]
    is an isomorphism. Then a standard approximation argument implies that we can choose a  non-zero element $f\in T_d$ such that $\big(\Om_{T_d/K} \otimes_{T_d} A\big)\big[\frac{1}{f}\big] \to \Om_{A/K}\big[\frac{1}{f}\big]$ is an isomorphism. In particular, the natural morphism
    \begin{equation}\label{eqn:iso-away-from-f}
    \Om_{T_d/K} \otimes_{T_d} A\langle \frac{\varpi^n}{f}\rangle \to \Om_{A/K} \otimes_A A\langle \frac{\varpi^n}{f}\rangle
    \end{equation}
    is an isomorphism for any $n\in \N$. By Lemma~\ref{lemma:smooth}(\ref{lemma:smooth-2}) we can change $f$ to ensure that $\Spa(A, A^\circ)\big(f\neq 0\big)$ is smooth over $K$. Then \cite[Prop.~2.6]{BLR3} and (\ref{eqn:iso-away-from-f}) imply that the morphism $\Spa(A, A^\circ)\big(\frac{\varpi^n}{f}\big) \to \bf{D}^d(\frac{\varpi^n}{f})$ is finite \'etale for every $n\in \N$. In particular, $T_d\langle \frac{\varpi^n}{f} \rangle \to A\langle \frac{\varpi^n}{f} \rangle$ is finite \'etale for any $n\in \N$. Finally, we use Corollary~\ref{cor:fpqc-covers} and faithfully flat descent \cite[\href{https://stacks.math.columbia.edu/tag/02VN}{Tag 02VN}]{stacks-project} (applied to the local rings of $T_d[\frac{1}{f}]$) to conclude that $T_d[\frac{1}{f}] \to A[\frac{1}{f}]$ is finite \'etale as well. 
\end{proof}

Finally, we are ready to prove Theorem~\ref{thm:generically-etale-normalization}:

\begin{proof}[Proof of Theorem~\ref{thm:generically-etale-normalization}]
    First, we note that \cite[Th.~0.9.2.10]{FujKato} and \cite[\href{https://stacks.math.columbia.edu/tag/00OK}{Tag 00OK}]{stacks-project} imply that we can find a finite morphism 
    \[
    h\colon \O_K \langle X_1, \dots, X_d\rangle \to A_0.
    \]
    
    Now we explain how to modify $h$ to assume that $h_K\big[\frac{1}{f}\big]$ is \'etale for some non-zero element $f\in K\langle X_1, \dots, X_d\rangle$. For this, we choose $g_1, \dots, g_d \in \varpi A_0$ as in Lemma~\ref{lemma:modify-morphism} and consider the unique $\O_K$-linear continuous morphism
    \[
    h' \colon \O_K\langle X_1, \dots, X_d\rangle \to A_0
    \]
    which sends $X_i$ to $h(X_i)+ g_i$. Clearly, $h$ and $h'$ coincide modulo $\varpi$, so \cite[Prop.~I.4.2.1]{FujKato} implies that $h'$ is finite. Furhermore, Lemma~\ref{lemma:generically-etale} implies that there is a non-zero element $f\in K\langle X_1, \dots, X_d\rangle$ such that $h'_K\big[\frac{1}{f}\big]$ is finite \'etale. Therefore, we can replace $h$ by $h'$ to achieve that $h_K[\frac{1}{f}]$ is finite \'etale for some non-zero $f \in K\langle X_1, \dots, X_d\rangle$.  

    Now we explain how to modify $h$ to make $f$ to be a Weierstrass polynomial. For this, we choose an element $c\in K^\times$ such that $\abs{f}=\abs{c}$ and replace $f$ by $f/c$ to achieve that $\abs{f}=1$. Therefore, Lemma~\ref{lemma:weierstrass-polynomial-exists} guarantees that, after a continuous $\O_K$-linear automorphism of $\O_K\langle X_1, \dots, X_d\rangle$, we can assume that $f=u\omega$ for a unit $u\in \O_K\langle X_1, \dots, X_d\rangle^\times$ and a Weierstrass polynomial $\omega$. In this case, we can replace $f$ by $\omega$ to achieve that the morphism 
    \[
    h\colon \O_K\langle X_1, \dots, X_d\rangle \to A_0
    \]
    is \'etale away from $\rm{V}(\varpi f)$. Finally, we note that \cite[Th.\,7.3/4]{B} and \cite[Prop.\,1.4.7]{EGA41} imply that $h$ is finitely presented.
\end{proof}

\section{Artin--Grothendieck vanishing for affinoid algebras}

Throughout this section, we fix an algebraically closed non-archimedean field $C$ with ring of integers $\O_C$ and a pseudo-uniformizer $\varpi \in \O_C$. The main goal of this section is to prove the following theorem:

\begin{thm}[Artin--Grothendieck vanishing for affinoid algebras]\label{thm:main-theorem} Let $A$ be a $C$-affinoid algebra and let $\F$ be a torsion \'etale sheaf on $\Spec A$. Then 
\[
\rm{R}\Gamma_\et\left(\Spec A, \F\right) \in D^{\leq \dim A}(\Z).
\]
\end{thm}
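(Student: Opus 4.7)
The plan is to prove the theorem by induction on $d = \dim A$, combining partial algebraization (Corollary~\ref{cor:partial-algebraization-1}) to transfer cohomology from the affinoid $\Spec A$ to an algebraic scheme $\Spec B[\tfrac{1}{\varpi}]$ with a fibration argument over a lower-dimensional affinoid to invoke the inductive hypothesis.

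First, standard reductions: since \'etale cohomology commutes with filtered colimits on quasi-compact quasi-separated schemes and every torsion abelian sheaf is a filtered colimit of constructible sheaves of $\Z/n\Z$-modules, we may assume $\F$ is a constructible $\Z/n\Z$-sheaf. Replacing $A$ with $A_{\mathrm{red}}$ preserves the \'etale site and the Krull dimension. The base case $d = 0$ is immediate, since a reduced $C$-affinoid of Krull dimension zero is a finite product of copies of $C$. For the inductive step with $d > 0$, assume the theorem for all dimensions $< d$; we may further reduce to $\Spa(A, A^\circ)$ being geometrically reduced (automatic in characteristic zero; in positive characteristic we pass to a finite purely inseparable cover and d\'evissage on the resulting nilpotents).

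Apply Corollary~\ref{cor:partial-algebraization-1} to produce a finite, finitely presented $R^{\rm{h}}_{(\varpi)}$-algebra $B$, with $R = \O_C\langle X_1, \dots, X_{d-1}\rangle[X_d]$, such that $B \otimes_{R^{\rm{h}}_{(\varpi)}} R^{\wedge}_{(\varpi)} \simeq A^\circ$. Inverting $\varpi$ produces a faithfully flat morphism $g\colon \Spec A \to \Spec B[\tfrac{1}{\varpi}]$ realizing $A$ as the base change $B[\tfrac{1}{\varpi}] \otimes_{R^{\rm{h}}_{(\varpi)}[1/\varpi]} R^{\wedge}_{(\varpi)}[\tfrac{1}{\varpi}]$. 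Via algebraization of finite \'etale covers (\cite[Prop.~5.4.54]{GR}) combined with a descent of stratifications along this faithfully flat map, the constructible sheaf $\F$ descends to a constructible $\Z/n\Z$-sheaf $\G$ on $\Spec B[\tfrac{1}{\varpi}]$ with $g^*\G \simeq \F$, and Lemma~\ref{lemma:Fujiwara-Gabber} identifies $\rR\Gamma_\et(\Spec A, \F) \simeq \rR\Gamma_\et(\Spec B[\tfrac{1}{\varpi}], \G)$. It therefore suffices to show that $\Spec B[\tfrac{1}{\varpi}]$ has \'etale cohomological dimension at most $d$ for torsion sheaves.

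For this bound, write $R^{\rm{h}}_{(\varpi)} = \colim_\alpha R_\alpha$ as a filtered colimit of \'etale $R$-algebras; since $B$ is finite finitely presented, it descends to a finite finitely presented algebra over some $R_{\alpha_0}$, so $\Spec B[\tfrac{1}{\varpi}] = \lim_\beta \Spec B_\beta[\tfrac{1}{\varpi}]$ is a cofiltered limit of affine schemes of finite type over $\Spec R[\tfrac{1}{\varpi}] = \Spec C\langle X_1, \dots, X_{d-1}\rangle[X_d]$. The inductive hypothesis, applied to the $(d{-}1)$-dimensional affinoid $C\langle X_1, \dots, X_{d-1}\rangle$, bounds the \'etale cohomological dimension of its spectrum by $d-1$; the Leray spectral sequence for the affine-line fibration $\Spec R[\tfrac{1}{\varpi}] \to \Spec C\langle X_1, \dots, X_{d-1}\rangle$---whose geometric fibers are affine lines over algebraically closed fields, with cohomological dimension at most $1$ even in the $p$-torsion case via Artin--Schreier---then yields the bound $d$ for $\Spec R[\tfrac{1}{\varpi}]$. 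Relative Artin vanishing for the finite-type maps $\Spec B_\beta[\tfrac{1}{\varpi}] \to \Spec R[\tfrac{1}{\varpi}]$ (of relative dimension $0$) preserves this bound, and the standard continuity of \'etale cohomology under cofiltered limits of affine schemes with torsion coefficients transports it to $\Spec B[\tfrac{1}{\varpi}]$, completing the induction. The main technical obstacles lie in the descent of general constructible sheaves from $\Spec A$ to $\Spec B[\tfrac{1}{\varpi}]$ and in reducing to the geometrically reduced case in positive characteristic.
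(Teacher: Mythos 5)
Your overall skeleton (induction on $\dim A$, partial algebraization, Fujiwara--Gabber, then a relative-curve fibration over a $(d-1)$-dimensional affinoid) is the same as the paper's, but there is a genuine gap at the step where you transport the sheaf across the algebraization. Lemma~\ref{lemma:Fujiwara-Gabber} only compares cohomology for sheaves \emph{pulled back} from $\Spec B[\frac{1}{\varpi}]$, and your claim that an arbitrary constructible $\F$ on $\Spec A$ ``descends'' to $\Spec B[\frac{1}{\varpi}]$ via ``descent of stratifications along this faithfully flat map'' is unjustified: faithfully flat descent produces a sheaf downstairs only from a sheaf upstairs \emph{equipped with descent data} on $\Spec A\times_{\Spec B[1/\varpi]}\Spec A$, which a general $\F$ does not carry, and the cited algebraization of finite \'etale covers does not give an equivalence of constructible categories in this non-noetherian setting. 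The paper avoids this problem entirely by first proving Lemma~\ref{lemma:constant-normal-enough}: a d\'evissage (finite pushforward to a Tate algebra, the exact sequence $0\to j_!\cal{L}\to\F\to i_*\F|_Z\to 0$, a prime-to-$p$ trace argument to replace a lisse $\cal{L}$ by the constant sheaf, and passage to the relative normalization) reducing everything to $\F=\ud{\Z/p\Z}$ on a connected normal affinoid domain. Once the sheaf is constant, no descent is needed, and Fujiwara--Gabber applies directly. You flag this descent as ``the main technical obstacle,'' but it is not an obstacle one can route around here; the reduction to constant coefficients is an essential part of the argument, not an optimization.

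Two further points in your final cohomological-dimension bound are imprecise. First, ``geometric fibers are affine lines \ldots with cohomological dimension at most $1$'' does not bound $\rm{R}^\beta h_*$: the stalk of $\rm{R}^\beta h_*\F$ at $\ov{y}$ is the cohomology of the base change to $\Spec\O^{\rm{sh}}_{Y,\ov{y}}$, whose generic points have non-separably-closed residue fields, so $\rm{R}^2 h_*\F$ need not vanish for an affine relative curve. What one actually needs (and what the paper uses) is the affine Lefschetz theorem with dimension functions \cite[Exp.\,XV, Th.\,1.1.2]{deGabber}, giving $\dim\rm{supp}(\rm{R}^\beta h_*\F)\leq d-\beta$, after which the induction hypothesis applied to each support closes the spectral-sequence argument; your ``relative Artin vanishing for relative dimension $0$ preserves the bound'' fails for quasi-finite non-finite morphisms for the same reason. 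Second, folding the $p=\charac C$ case into the fibration via fiberwise Artin--Schreier is not valid in the relative setting; the paper disposes of that case for the whole affinoid at the outset via Lemma~\ref{lemma:main-theorem-easy} and only runs the fibration argument for $p\neq\charac C$.
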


For our notational convenience later on, we introduce the following notation:

\begin{notation} For a $C$-affinoid algebra $A$ and a torsion \'etale sheaf $\F$ on $\Spec A$, we say that $\AGV_{A, \F}$ holds if $\rm{R}\Gamma_\et\left(\Spec A, \F\right) \in D^{\leq \dim A}(\Z)$.
\end{notation}

\begin{rmk} Even if one is only interested in cohomological bounds on $\rm{R}\Gamma\left(\Spec A, \Z/n\Z\right)$, it is crucial for the proof to consider more general $n$-torsion sheaves $\F$. 
\end{rmk}

The last thing we discuss in this section is the easiest part of Theorem~\ref{thm:main-theorem}.

\begin{lemma}\label{lemma:main-theorem-easy} Let $\charac C=p>0$, let $A$ be a $C$-affinoid algebra, and let $\F$ be an \'etale sheaf of $\Z/p$-modules on $\Spec A$. Then 
\[
\rm{R}\Gamma_\et\left(\Spec A, \F\right) \in D^{\leq \dim A}(\Z).
\]
\end{lemma}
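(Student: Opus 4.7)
The plan is to directly invoke the Artin--Schreier-based vanishing theorem of SGA~4, Exp.~X, Th.~5.1, which states that for a noetherian $\bf{F}_p$-scheme $X$ of Krull dimension $\leq n$ and any $p$-torsion abelian sheaf $\F$ on $X_\et$, one has $\rm{H}^i_\et(X, \F) = 0$ for $i > n$ whenever $X$ is affine (equivalently, the étale $p$-cohomological dimension of an affine noetherian $\bf{F}_p$-scheme is bounded by its Krull dimension).

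Concretely, I would verify the hypotheses: since $C$ is a non-archimedean field of characteristic $p$, every $C$-affinoid algebra $A$ is naturally an $\bf{F}_p$-algebra, and $A$ is noetherian by the Tate algebra being noetherian together with $A$ being a quotient thereof (\cite[Th.\,5.2.6/1]{BGR}). Hence $\Spec A$ is an affine noetherian $\bf{F}_p$-scheme of Krull dimension $\dim A$. Since $\F$ is a sheaf of $\Z/p\Z$-modules, it is in particular a $p$-torsion abelian sheaf, so SGA~4, Exp.~X, Th.~5.1 applies and yields $\rm{H}^i_\et(\Spec A, \F) = 0$ for $i > \dim A$. This is exactly the claim $\rm{R}\Gamma_\et(\Spec A, \F) \in D^{\leq \dim A}(\Z)$, which is equivalent to $\AGV_{A, \F}\operatorname{\big)}$.

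There is no real obstacle here: the lemma is flagged as the "easiest part" precisely because the equal-characteristic $p$ case is already handled by classical Artin--Schreier theory on the schematic side, and no rigid-analytic input is needed. The only mild subtlety is ensuring that the hypotheses of SGA~4, Exp.~X, Th.~5.1 (noetherianness, affineness, characteristic $p$) are satisfied, all of which are immediate from our setup. The more substantial content of Theorem~\ref{thm:main-theorem}, which this lemma does not address, is the mixed-characteristic case where $\F$ is $p$-torsion and $\charac C = 0$; that is where the partial algebraization machinery of the previous section will be needed.
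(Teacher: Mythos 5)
Your approach is the same as the paper's (reduce to \cite[Exp.\,X, Th.\,5.1]{SGA4}), but you have misquoted the theorem, and the misquotation creates a real gap in one case. The theorem does not say that the \'etale $p$-cohomological dimension of an affine noetherian $\mathbf{F}_p$-scheme is bounded by its Krull dimension; it says that for any scheme $X$ with $p\cdot\O_X=0$ one has $\mathrm{cd}_p(X_\et)\leq 1+\mathrm{cdqc}(X)$, where $\mathrm{cdqc}$ is the cohomological dimension for quasi-coherent sheaves. For $X=\Spec A$ affine this gives $\mathrm{cd}_p(X_\et)\leq 1$ by Serre vanishing --- a bound that is \emph{stronger} than $\dim A$ when $\dim A\geq 2$, but \emph{weaker} than what the lemma asserts when $\dim A=0$, since there you must kill $\rm{H}^1$ as well. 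So your parenthetical ``equivalently, \dots bounded by its Krull dimension'' is not equivalent to the theorem and is false as a reading of it in dimension $0$.

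Consequently you need to treat the case $\dim A=0$ separately, as the paper does. This is easy: a $0$-dimensional $C$-affinoid algebra over the algebraically closed field $C$ is Artinian with all residue fields equal to $C$, so $\Spec A$ is a finite disjoint union of spectra of strictly henselian local rings and has no higher \'etale cohomology at all. With that case disposed of, for $\dim A\geq 1$ the bound $\mathrm{cd}_p(X_\et)\leq 1+\mathrm{cdqc}(\Spec A)=1\leq \dim A$ gives the claim. Your verification of the remaining hypotheses (characteristic $p$, affineness, noetherianness) is fine, and your closing remark correctly identifies that the substantial content of the main theorem lies in the mixed-characteristic case.
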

\begin{proof}
If $\dim A=0$, then the result is obvious. So we may assume that $\dim A\geq 1$. We put $X\coloneqq \Spec A$, then \cite[Exp.\,X, Th.\,5.1]{SGA4} says that $\rm{cd}_p(X_\et) \leq 1+ \rm{cdqc}(X)$. Since $X$ is affine, its quasi-coherent cohomological dimension is $0$. So we conclude that $\rm{cd}_p(X_\et) \leq 1$. 
\end{proof}

\subsection{Preliminary reductions}

The main goal of this section is to show that, in order to prove Theorem~\ref{thm:main-theorem}, it suffices to show that 
\[
\rm{R}\Gamma_\et(\Spec A, \Z/p\Z) \in D^{\leq \dim A}(\Z)
\]
for any normal $C$-affinoid domain $A$ and any prime number $p$.  

In this section, we will freely use that any $C$-affinoid algebra $A$ is excellent (in particular, noetherian). We refer to \cite[Th.\,3]{Kiehl-excellence} (and \cite[\textsection 1.1]{Conrad99}) for a proof of this fact. 

\begin{lemma}\label{lemma:constructible-enough} Let $A$ be an affinoid $C$-algebra. Suppose that $\AGV_{A, \F}$ holds for any prime $p$ and any constructible torsion \'etale sheaf $\F$ of $\Z/p\Z$-modules. Then $\AGV_{A, \F}$ holds for any torsion \'etale sheaf $\F$. 
\end{lemma}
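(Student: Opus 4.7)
The plan is a standard chain of formal reductions that exploits the fact that any $C$-affinoid algebra $A$ is noetherian (as $C$-affinoid algebras are excellent by \cite[Th.\,3]{Kiehl-excellence}), so that $X\coloneqq \Spec A$ is a noetherian qcqs scheme. On such $X$, the functor $\rm{R}\Gamma_\et(X,-)$ commutes with filtered colimits of torsion abelian sheaves; consequently, the class of torsion sheaves $\F$ for which $\AGV_{A, \F}\operatorname{\big)}$ holds is closed under filtered colimits. Furthermore, the long exact cohomology sequence implies that this class is closed under extensions, and hence under finite direct sums and direct summands.

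First, I would invoke that on the noetherian scheme $X$ every torsion abelian sheaf is a filtered colimit of its constructible subsheaves (see \cite[Exp.\,IX, Prop.\,2.7(ii)]{SGA4}). This reduces the claim to the case where $\F$ is constructible and torsion. Any such $\F$ is annihilated by some integer $n\geq 1$, and decomposing according to the prime factorization $n=\prod_i p_i^{a_i}$ (using the primary decomposition of $\F$) further reduces to the case where $\F$ is a constructible sheaf of $\Z/p^a\Z$-modules for a single prime $p$ and integer $a\geq 1$.

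Next, I would induct on $a$. The base case $a=1$ is exactly the hypothesis of the lemma. For $a\geq 2$, the short exact sequence
\[
0 \to p\F \to \F \to \F/p\F \to 0
\]
exhibits $\F$ as an extension of the constructible $\Z/p\Z$-sheaf $\F/p\F$ by the constructible $\Z/p^{a-1}\Z$-sheaf $p\F$. Closure of $\AGV_{A, -}\operatorname{\big)}$ under extensions then completes the induction.

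No step in this plan presents a serious obstacle: the argument is entirely formal, and the only genuine input is noetherianity of $A$ (used to commute cohomology with filtered colimits and to approximate torsion sheaves by their constructible subsheaves). The real work lies in proving the constructible $\Z/p\Z$-module case, which is deferred to the subsequent subsections and relies on the partial algebraization result of Theorem~\ref{thm:intro-partial-algebraization}.
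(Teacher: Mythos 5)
Your proposal is correct and follows essentially the same route as the paper: reduce to constructible torsion sheaves via filtered colimits and commutation of \'etale cohomology with such colimits, split into $p$-primary parts, and then dévissage to the $\Z/p\Z$-case using closure under extensions (your induction on $a$ via $0 \to p\F \to \F \to \F/p\F \to 0$ is the same filtration by $p^i\F/p^{i+1}\F$ that the paper uses). The only cosmetic difference is that the paper first decomposes into $p^\infty$-torsion summands and then invokes boundedness of the torsion, whereas you invoke annihilation by a single integer $n$ up front; both are justified by the same constructibility facts.
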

\begin{proof}
    Pick  a torsion \'etale sheaf $\F$ on $\Spec A$. Then (the proof of) \cite[\href{https://stacks.math.columbia.edu/tag/0F0N}{Tag 0F0N}]{stacks-project} implies that $\F=\colim_I \F_i$ is a filtered colimit of torsion {\it constructible} sheaves $\F_i$. Therefore, \cite[\href{https://stacks.math.columbia.edu/tag/03Q5}{Tag 03Q5}]{stacks-project} implies that it suffices to prove $\AGV_{A, \F}$ for torsion constructible $\F$.  

    The natural morphism 
    \[
    \bigoplus_{p \text{ prime}} \F[p^\infty] \to \F
    \]
    is an isomorphism for any torsion sheaf $\F$. As $\Spec A$ is qcqs, it suffices to prove the result for $p^\infty$-torsion constructible sheaves $\F$ for some prime number $p$. Furthermore, \cite[\href{https://stacks.math.columbia.edu/tag/09YV}{Tag 09YV}]{stacks-project} implies that any such $\F$ is actually $p^N$-torsion for some integer $N$. Therefore, it suffices to show vanishing for each $p^i\F/p^{i+1}\F$ for $i\leq N-1$. So it suffices to verify $\AGV_{A, \F}$ for constructible sheaves of $\Z/p\Z$-modules for all primes $p$.
\end{proof}

In what follows, we are going to freely use the following fact:

\begin{lemma}\label{lemma:finite-over-affinoid}\cite[Prop.\,6.1/6]{BGR} Let $A$ be a $C$-affinoid algebra, and $B$ a finite $A$-algebra. Then $B$ is also a $C$-affinoid algebra.
\end{lemma}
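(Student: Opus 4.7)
The plan is to construct an explicit surjection from a Tate algebra onto $B$. First, use that $A$ is $C$-affinoid to fix a surjection $\pi\colon C\langle X_1,\dots,X_d\rangle \twoheadrightarrow A$. Then use finiteness of $B$ over $A$ to pick a finite list of $A$-module generators $b_1,\dots,b_n$ of $B$. The goal is to produce a continuous $C$-algebra homomorphism
\[
\Phi\colon C\langle X_1,\dots,X_d,Y_1,\dots,Y_n\rangle \longrightarrow B, \qquad X_j \mapsto \pi(X_j), \ Y_i\mapsto b_i,
\]
and show that it is surjective; this will exhibit $B$ as a quotient of a Tate algebra.

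To make $\Phi$ meaningful, I would first endow $B$ with a canonical Banach topology as a finite $A$-module: pick any surjection $A^{\oplus n}\twoheadrightarrow B$ and give $B$ the quotient norm coming from the Banach topology on $A$. Standard Banach-module theory for affinoid algebras shows that this topology is independent of the presentation, that $B$ is complete, and that any $A$-linear map between finite Banach $A$-modules is automatically continuous. Since $B$ is finite over $A$, each $b_i$ satisfies a monic relation $b_i^{k_i} = \sum_{j<k_i} a_{i,j}b_i^j$ with $a_{i,j}\in A$, and this forces the sequence $\{b_i^k\}_{k\geq 0}$ to be bounded in $B$; in other words, each $b_i$ is power-bounded. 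Power-boundedness is exactly what ensures that a series $\sum_{I,J} a_{I,J}\pi(X)^I b^J$ with $|a_{I,J}|\to 0$ converges in $B$, so $\Phi$ is well-defined and continuous. Surjectivity is automatic since the image of $\Phi$ already contains the $A$-module generators $b_1,\dots,b_n$ together with generators of $A$.

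The main obstacle is verifying that multiplication on $B$ is jointly continuous for this topology, so that $B$ truly becomes a Banach $C$-algebra and not merely a Banach $A$-module. This reduces to showing that multiplication by each $b_i$ defines a continuous $A$-linear endomorphism of $B$, which follows from the integrality relation above (or equivalently, from automatic continuity of $A$-linear maps between finite Banach $A$-modules). Once this is in place, the continuous surjection $\Phi$ realises $B$ as a topological quotient of a Tate algebra over $C$, hence as a $C$-affinoid algebra in the sense of the paper's conventions.
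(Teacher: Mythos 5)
The paper offers no proof of this lemma at all --- it is quoted directly from \cite[Prop.\,6.1/6]{BGR} --- so your proposal is to be judged on its own; and indeed it follows the standard route (the canonical Banach $A$-module topology on $B$, automatic continuity, and a continuous surjection from a Tate algebra). However, one step as written is false: a monic integral relation $b^{k}=\sum_{j<k}a_{j}b^{j}$ with $a_{j}\in A$ does \emph{not} force $b$ to be power-bounded. Take $A=C\langle X\rangle$ and $B=A[Y]/(Y^{2}-\varpi^{-1})$, a free $A$-module of rank $2$; in the canonical Banach topology one has $\lVert Y^{2m}\rVert=|\varpi|^{-m}\to\infty$, so $Y$ is integral over $A$ but not power-bounded, and the assignment $Y_{i}\mapsto b_{i}$ would not extend to the Tate algebra (a series $\sum a_{I,J}\pi(X)^{I}b^{J}$ with $|a_{I,J}|\to 0$ need not converge). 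Your inductive bound on the coefficients of $b^{m}$ in terms of $1,b,\dots,b^{k-1}$ only closes up when the $a_{j}$ are power-bounded.

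The gap is easily repaired: replace each $b_{i}$ by $b_{i}'=\varpi^{N}b_{i}$ for $N\gg 0$. From $b_{i}^{k}=\sum_{j<k}a_{j}b_{i}^{j}$ one gets $(b_{i}')^{k}=\sum_{j<k}\varpi^{N(k-j)}a_{j}(b_{i}')^{j}$, and since $k-j\geq 1$ the new coefficients can be made power-bounded; then your recursion does show that $b_{i}'$ is power-bounded. As $\varpi^{N}$ is a unit in $A$, the elements $b_{1}',\dots,b_{n}'$ still generate $B$ as an $A$-module, so the surjection $\Phi$ (now sending $Y_{i}\mapsto b_{i}'$) exists, is continuous, and hits all of $B$ because its image is an $A$-submodule containing the module generators. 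With this rescaling inserted, the argument is correct and matches the proof in \cite{BGR}.
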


\begin{lemma}\label{lemma:constant-normal-enough} Let $d$ be an integer. Suppose that $\AGV_{A, \F}$ holds for
\begin{enumerate}
    \item any $C$-affinoid algebra $A$ with $\dim A<d$ and any torsion \'etale sheaf $\F$ on $\Spec A$;
    \item any reduced $C$-affinoid algebra $A$ with $\dim A=d$, and $\F=\ud{\Z/p\Z}$ for any prime number $p$.
\end{enumerate} 
Then $\AGV_{A, \F}$ holds for any $C$-affinoid algebra $A$ of $\dim A\leq d$ and any torsion \'etale sheaf $\F$. 
\end{lemma}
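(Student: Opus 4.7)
The plan is a sequence of standard devissage reductions culminating in an application of hypothesis~(2). First I would invoke Lemma~\ref{lemma:constructible-enough} to reduce to the case where $\F$ is a constructible sheaf of $\Z/p\Z$-modules for some prime $p$, and replace $A$ by $A_{\rm{red}}$ (\'etale cohomology is insensitive to nilpotent thickenings). Hypothesis~(1) covers $\dim A<d$, so assume $\dim A=d$. Let $\pi\colon \Spec A^n\to \Spec A$ be the normalization, a finite morphism since $A$ is excellent, which is an isomorphism over a dense open $U\subset \Spec A$ with closed complement $Z$ of dimension $<d$. The kernel and cokernel of the adjunction $\F\to \pi_*\pi^*\F$ are supported on $Z$, hence lie in $D^{\leq d-1}(\Z)$ by~(1); so $\AGV_{A,\F}\operatorname{\big)}$ holds iff $\rm{R}\Gamma_\et(\Spec A^n, \pi^*\F)\in D^{\leq d}(\Z)$. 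Decomposing $A^n=\prod_i A_i^n$ as a product of connected normal $C$-affinoid domains (each $C$-affinoid by Lemma~\ref{lemma:finite-over-affinoid}) and absorbing the factors with $\dim A_i^n<d$ via~(1), I am reduced to the case where $A$ itself is a connected normal $C$-affinoid domain of dimension~$d$.

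In this setting, I pick a dense open $j\colon U\hookrightarrow X=\Spec A$ such that $\F|_U$ is locally constant constructible, with closed complement $i\colon Z\hookrightarrow X$ of dimension~$<d$. Applying~(1) to the sequence $0\to j_!\F|_U\to \F\to i_*\F|_Z\to 0$, the remaining task is to bound $\rm{R}\Gamma_\et(X, j_!\F|_U)$. I would choose a connected finite \'etale Galois cover $\pi\colon V\to U$ with group $G$ trivializing $\F|_U$, so that $\pi^*\F|_U\simeq \ud{M}$ for a finite $\bf{F}_p[G]$-module $M$, and let $\tilde\pi\colon \tilde X\to X$ be the finite morphism obtained by normalizing $X$ in the function field of $V$; by Lemma~\ref{lemma:finite-over-affinoid}, $\tilde X$ is again a connected normal $C$-affinoid domain of dimension~$d$, with $\tilde\pi^{-1}(U)=V$.

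The argument then splits on the structure of $G$. If $G$ is a $p$-group, any finite $\bf{F}_p[G]$-module admits a composition series whose successive quotients are trivial (isomorphic to $\bf{F}_p$ with trivial action). This lifts to a finite filtration of $\F|_U$ by locally constant subsheaves with constant graded pieces $\ud{\Z/p\Z}|_U$, and induction on filtration length reduces to the case $\F|_U=\ud{\Z/p\Z}|_U$. For this case, the sequence $0\to j_!\ud{\Z/p\Z}|_U\to \ud{\Z/p\Z}_X\to i_*\ud{\Z/p\Z}|_Z\to 0$, combined with hypothesis~(2) for $(X,\ud{\Z/p\Z})$ and hypothesis~(1) for $Z$, gives the desired bound. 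If $G$ is not a $p$-group, let $P\leq G$ be a $p$-Sylow subgroup (so $[G:P]$ is invertible in $\Z/p\Z$), form the intermediate cover $\pi'\colon V/P\to U$, and let $\tilde\pi'\colon \tilde X'\to X$ be the normalization of $X$ in the function field of $V/P$. The trace map rescaled by $[G:P]^{-1}$ splits the unit morphism $\F|_U\to \pi'_*(\pi')^*\F|_U$, and the base-change identity $\tilde\pi'_*\tilde j'_! = j_!\pi'_*$ (a direct stalk computation using $\tilde\pi'^{-1}(U)=V/P$) then exhibits $\rm{R}\Gamma_\et(X, j_!\F|_U)$ as a direct summand of $\rm{R}\Gamma_\et(\tilde X', \tilde j'_!(\pi')^*\F|_U)$. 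On $\tilde X'$, the dense open $V/P$ carries the locally constant sheaf $(\pi')^*\F|_U$ with monodromy in $P$ (since $V\to V/P$ is an \'etale $P$-torsor trivializing it), so the previous $p$-group case applies and concludes the argument.

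The main obstacle I anticipate is precisely the passage from locally constant sheaves to the constant sheaf of hypothesis~(2). The naive trace/averaging argument fails when $p$ divides the order of the monodromy group; the resolution is the two-step Sylow decomposition above, which combines a summand argument on the prime-to-$p$ quotient with the structural fact that every $\bf{F}_p[P]$-module for $P$ a $p$-group admits a composition series with trivial subquotients.
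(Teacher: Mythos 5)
Your proposal is correct and follows essentially the same route as the paper: reduce to constructible $\Z/p\Z$-sheaves, strip off the closed complement of a dense open where the sheaf is lisse, split off $\F|_U$ from the pushforward along a prime-to-$p$ cover via the trace, filter the pullback (which has $p$-group monodromy) by subsheaves with constant quotients, and transport everything back along the relative normalization to a connected normal affinoid where hypothesis~(2) applies --- your Sylow discussion is exactly the content of the reference \cite[\href{https://stacks.math.columbia.edu/tag/0A3R}{Tag 0A3R}]{stacks-project} that the paper invokes. The only (harmless) deviation is the initial reduction to a normal domain: the paper pushes $\F$ forward along a finite Noether normalization onto $\Spec C\langle X_1,\dots,X_d\rangle$, whereas you pull back along the normalization of $A_{\rm red}$ and devissage the kernel and cokernel of the adjunction unit, which works equally well.
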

\begin{proof}
Lemma~\ref{lemma:constructible-enough} implies that it suffices to prove $\AGV_{A, \F}$ a constructible \'etale sheaf $\F$ of $\Z/p\Z$-modules for any prime number $p$. So we fix a prime number $p$, a $C$-affinoid algebra $A$ of Krull dimension $d$, and a constructible sheaf of $\Z/p\Z$-modules $\F$. We wish to show that $\AGV_{A, \F}$ holds under the assumptions of this lemma.  

{\it Step~$1$. We reduce to the case of a $C$-affinoid domain $A$.} We use \cite[Prop.\,3.1/3]{B} and \cite[\href{https://stacks.math.columbia.edu/tag/00OK}{Tag 00OK}]{stacks-project} to find a finite morphism $f\colon \Spec A \to \Spec C\langle X_1, \dots, X_d\rangle$. Then \cite[\href{https://stacks.math.columbia.edu/tag/095R}{Tag 095R}]{stacks-project} ensures that $f_*\F$ is constructible and \cite[\href{https://stacks.math.columbia.edu/tag/03QP}{Tag 03QP}]{stacks-project} implies that $f_*$ is exact. Thus, $\rm{R}\Gamma_\et\left(\Spec A, \F\right) = \rm{R}\Gamma_\et\left(\Spec C\langle X_1, \dots, X_d\rangle, f_*\F\right)$, and so we may replace the pair $(A, \F)$ by $(C\langle X_1, \dots, X_d\rangle, f_*\F)$ to achieve that $A$ is a $C$-affinoid domain (see \cite[Prop.\,2.2/15]{B}).

{\it Step~$2$. We reduce to the case of a reduced $C$-affinoid $A$ and $\F=\ud{\Z/p\Z}$.} We note that constructibility of $\F$ implies that there is a qcqs dense open $j\colon V\hookrightarrow \Spec A$ such that $\F|_{V}=\cal{L}$ is a locally constant constructible sheaf of $\Z/p\Z$-modules. We denote its closed complement (with reduced scheme structure) by $i\colon Z=\Spec A'\hookrightarrow \Spec A$ and also note that $\dim A'<\dim A=d$. Therefore, the short exact sequence
\[
0 \to j_! \cal{L} \to \F\to i_*\left(\F|_{Z}\right) \to 0
\]
and the induction hypothesis imply that it suffices to show that $\AGV_{A, j_!\cal{L}}$ holds.  

Now we note that $V$ is irreducible because it is a dense open subscheme of an irreducible scheme $\Spec A$, so \cite[\href{https://stacks.math.columbia.edu/tag/0A3R}{Tag 0A3R}]{stacks-project} implies that there is a finite \'etale covering
\[
g\colon W\to V
\]
of degree prime to $p$ such that $g^*\cal{L}$ is a successive extension of the constant sheaves $\ud{\Z/p\Z}$. Using the $(g_!=\rm{R}g_*, g^*=\rm{R}g^!)$-adjunction, we get the trace map $g_*g^*\cal{L} \to \cal{L}$ such that the composition
\[
\cal{L} \to g_*g^*\cal{L} \to \cal{L}
\]
is equal to the multiplication by $\deg g$. Since $\deg g$ is coprime to $p$, we conclude that $\cal{L}$ is a direct summand of $g_*g^*\cal{L}$. Therefore, it suffices to prove that $\AGV_{A, j_!g_*g^*\cal{L}}$ holds. Since $g^*\cal{L}$ is a successive extension of constant sheaves, it actually suffices to show that $\AGV_{A, j_!g_*\ud{\Z/p\Z}}$ holds.  

Now we define $g'\colon \Spec B\to \Spec A$ to be the relative normalization of $\Spec A$ in $W$ (see \cite[\href{https://stacks.math.columbia.edu/tag/035H}{Tag 035H}]{stacks-project}). We denote the closed complement (with its reduced scheme structure) of $W$ in $\Spec B$ by $i'\colon Z'=\Spec B' \hookrightarrow \Spec B$. By construction, we have the following commutative diagram
\[
\begin{tikzcd}
W \arrow[r, hook, "j'"]\arrow{d}{g} & \Spec B \arrow{d}{g'}  & \arrow[l, swap, hook', "i'"] \Spec B' \arrow{d}\\
V \arrow[r, hook, "j"] & \Spec A & \arrow[l, swap, hook', "i"] \Spec B
\end{tikzcd}
\]
with the left square being cartesian. Now note that $g'$ is finite due to \cite[\href{https://stacks.math.columbia.edu/tag/07QV}{Tag 07QV}]{stacks-project} and \cite[\href{https://stacks.math.columbia.edu/tag/03GH}{Tag 03GH}]{stacks-project} (thus, $B$ is a $C$-affinoid algebra by Lemma~\ref{lemma:finite-over-affinoid}). Thus, $g'_!= g'_*$, and so we conclude that
\[
j_!g_*\ud{\Z/p\Z} = j_!g_!\ud{\Z/p\Z} = g'_!j'_!\ud{\Z/p\Z} = g'_* j'_!\ud{\Z/p\Z}.
\]
Therefore, $\AGV_{A, j_!g_*\ud{\Z/p\Z}}$ is tautologically equivalent to $\AGV_{A, g'_* j'_!\ud{\Z/p\Z}}$. Using that higher pushforwards along finite morphisms vanish, we also conclude that $\AGV_{A, g'_* j'_!\ud{\Z/p\Z}}$ is equivalent to $\AGV_{B, j'_!\ud{\Z/p\Z}}$. In other words, we reduced the situation to showing that $\AGV_{B, j'_!\ud{\Z/p\Z}}$ holds.  

Now we note that \cite[\href{https://stacks.math.columbia.edu/tag/035I}{Tag 035I}]{stacks-project} implies that $W$ is dense in $\Spec B$, and so $\dim B'<\dim B=\dim A=d$. Therefore, the short exact sequence
\[
0 \to j'_! \big(\ud{\Z/p\Z}\big) \to \ud{\Z/p\Z} \to i'_*\big(\ud{\Z/p\Z}\big) \to 0
\]
and the induction hypothesis imply that it suffices to show that $\AGV_{B,\ud{\Z/p\Z}}$ holds. The topological invariance of the \'etale site (see \cite[\href{https://stacks.math.columbia.edu/tag/03SI}{Tag 03SI}]{stacks-project}) implies that it suffices to show that $\AGV_{B_{\rm{red}}, \ud{\Z/p\Z}}$ holds. This finishes the proof. 
\end{proof}

\subsection{Crux of the argument}

In this section, we finally prove Theorem~\ref{thm:main-theorem}.

\begin{proof}[Proof of Theorem~\ref{thm:main-theorem}]
    We argue by induction on $d=\dim A$. The base case of $d=0$ is evident. Thus we assume that $\AGV_{A, \F}$ holds for any $C$-affinoid algebra $A$ of dimension $<d$ (and any $\F$) and prove the result for all $A$ of dimension $d$.  

    First, we note Lemma~\ref{lemma:constructible-enough} and Lemma~\ref{lemma:constant-normal-enough} imply that it suffices to show that
    \[
    \rm{R}\Gamma_\et(\Spec A, \Z/p\Z)\in D^{\leq d}(\Z)
    \]
    for any reduced $C$-affinoid algebra $A$ of dimension $d$ and any prime number $p$.  
    
    If  $p=\charac C$, then the result follows from Lemma~\ref{lemma:main-theorem-easy}. Therefore, we may and do assume that $p$ is different from the characteristic of $C$.  
    
    Now we recall that $R$ denotes the ring $\O_C\langle X_1, \dots, X_{d-1}\rangle[X_d]$, so Corollary~\ref{cor:partial-algebraization} implies that there is a finite, finitely presented $R^{\rm{h}}_{(\varpi)}$-algebra $B$ such that 
    \[
        B\otimes_{R^{\rm{h}}_{(\varpi)}} R^{\wedge}_{(\varpi)} \simeq A^\circ.
    \]
    Therefore, Lemma~\ref{lemma:Fujiwara-Gabber} implies that the natural morphism
    \[
    \rm{R}\Gamma_\et\Big(\Spec B\Big[\frac{1}{\varpi}\Big], \Z/p\Z\Big) \to \rm{R}\Gamma_\et\left(\Spec A, \Z/p\Z\right)
    \]
    is an isomorphism. Therefore, it suffices to show that $\rm{R}\Gamma_\et\left(\Spec B\big[\frac{1}{\varpi}\big], \Z/p\Z\right)\in D^{\leq d}(\Z)$.  

    We fix an integer $i>d$ and a cohomology class $c\in \rm{H}^i_\et(\Spec B\big[\frac{1}{\varpi}\big], \Z/p\Z)$. We wish to show that $c=0$. For this, we note that a standard spreading out argument (see \cite[\href{https://stacks.math.columbia.edu/tag/09YQ}{Tag 09YQ}]{stacks-project})  shows that we can find a commutative diagram
    \[
    \begin{tikzcd}
        \Spec B \arrow{d}{\psi} \arrow{r} & \Spec B' \arrow{d}{\psi'} \\
        \Spec R^{\rm{h}}_{(\varpi)} \arrow{r} \arrow{d} & \Spec R' \arrow{dl} \\
        \Spec R = \Spec \O_C\langle X_1, \dots, X_{d-1}\rangle [X_d] & 
        \end{tikzcd}
    \]
    such that the top square is cartesian, $\Spec R' \to \Spec R$ is an \'etale morphism (and an isomorphism on the mod-$\varpi$ fibers), $B'$ is a finite, finitely presented $R'$-algebra, and the class $c\in \rm{H}^i_\et(\Spec B\big[\frac{1}{\varpi}\big], \Z/p\Z)$ lies in the image of the natural morphism
    \[
    \rm{H}^i_\et\Big(\Spec B'\Big[\frac{1}{\varpi}\Big], \Z/p\Z\Big) \to \rm{H}^i_\et\Big(\Spec B\Big[\frac{1}{\varpi}\Big], \Z/p\Z\Big).
    \]
    Therefore, it suffices to show $\rm{R}\Gamma_\et\left(\Spec B'\big[\frac{1}{\varpi}\big], \Z/p\Z\right)\in D^{\leq d}(\Z)$. We consider the composite morphism
    \[
    \begin{tikzcd}
    \Spec B'[\frac{1}{\varpi}] \arrow{r} \arrow[rr, bend left, "h"] & \Spec C\langle X_1, \dots, X_{d-1}\rangle [X_d] \arrow{r} & \Spec C\langle X_1,\dots, X_{d-1}\rangle.
    \end{tikzcd}
    \]
    This is an affine relative curve, i.e., all non-empty fibers of $h$ are of dimension $\leq 1$. Therefore, constructibility of higher pushforwards along $h$ and the affine Lefschetz Theorem (see\footnote{If $\charac C=0$, one can instead use \cite[Exp.\,XIX, Th.\,5.1 and Exp.\,XIX Th.\,6.1]{SGA4} respectively.} \cite[Exp.\,XIII, Th.\,1.1.1 and Exp.\,XV, Th.\,1.1.2]{deGabber} respectively\footnote{To apply \cite[Exp.\,XV, Th.\,1.1.2]{deGabber}, we use the dimension function constructed in Corollary~\ref{cor:dimension-function-affinoid}. We also note that the dimension function $f^*\delta$ in \cite[Exp.\,XV, Th.\,1.1.2]{deGabber} is exactly the dimension function from Corollary~\ref{cor:dimension-finite-type-over-affinoid}.}) imply that $\rm{R}^\beta h_*\F$ is supported on a closed subscheme  $\Spec D_\beta \subset \Spec C\langle X_1, \dots, X_{d-1}\rangle$ of dimension $\leq d -\beta$. The induction hypothesis implies that
    \[
    \rm{R}\Gamma_\et\left(\Spec C\langle X_1, \dots, X_{d-1}\rangle, \rm{R}^{\beta}h_*\ud{\Z/p\Z}\right) =\rm{R}\Gamma_\et\left(\Spec D_\beta, \rm{R}^{\beta}h_*\ud{\Z/p\Z}\right) \in D^{\leq d - \beta}(\Z).
    \]
    Finally, an easy argument with spectral sequences ensures that
    \[
    \rm{R}\Gamma_\et\Big(\Spec B'\Big[\frac{1}{\varpi}\Big], \Z/p\Z\Big) = \rm{R}\Gamma_\et\Big(\Spec C\langle X_1, \dots, X_{d-1}\rangle, \rm{R}h_* \ud{\Z/p\Z} \Big) \in D^{\leq d}(\Z)
    \]
    finishing the proof. 
\end{proof}

\section{Rigid-analytic Artin--Grothendieck vanishing}

Throughout this section, we fix an algebraically closed non-archimedean field $C$ with valuation $\abs{.}\colon C \to \Gamma_C \cup \{0\}$, ring of integers $\O_C$, and a pseudo-uniformizer $\varpi \in \O_C$.  

The main goal of this section is to prove Conjecture~\ref{conjecture:RAAGV} in some particular situations. In particular, we show Conjecture~\ref{conjecture:RAAGV} for affinoids over a field of characterstic $0$ (in fact, a stronger version of it), and for affinoid curves over a field of arbitrary characteristic. We also provide a counterexample to some expectations from \cite{Bhatt-Hansen}. 

\subsection{Artin--Grothendieck vanishing for algebraic sheaves}

The main goal of this section is to show that Theorem~\ref{thm:main-theorem} implies Conjecture~\ref{conjecture:RAAGV} for a big class of sheaves.  

In order to explicitly specify this class of sheaves, we need some preliminary discussion. We first recall that, for every $C$-affinoid algebra $A$, \cite[Cor.\,1.7.3 and (3.2.8)]{Huber-etale} construct a morphism of topoi $c_A\colon \Spa(A, A^\circ)_\et \to \Spec A_\et$.

\begin{defn}\label{defn:algebraizable} A torsion \'etale sheaf $\F$ on $\Spa(A, A^\circ)$ is {\it algebraic} if there is a torsion \'etale sheaf $\cal{G}$ on $\Spec A$ and an isomorphism $c_A^*\cal{G}\simeq \F$.
\end{defn}

In order to get a good supply of algebraic sheaves, we recall the following definition: 

\begin{defn}[\cite{Hansen-vanishing}]\label{defn:zariski-constructible} An \'etale sheaf of $\Z/n\Z$-modules $\F$ on $\Spa(A, A^\circ)$ is {\it Zariski-constructible} if there is a locally finite stratification $\Spa(A, A^\circ)=\sqcup_{i\in I} X_i$ into Zariski locally closed subspaces\footnote{We recall that an immersion $X \xhookrightarrow{f} Y$ is called {Zariski locally closed} if $f$ can be decomposed as a composition $X \xhookrightarrow{j} Z \xhookrightarrow{i} Y$ where $j$ is a Zariski-open immersion and $i$ is a closed immersion. We refer to \cite[Appendix B.6]{Z-quotients} for the basics on closed immersions in the rigid-analytic context.} of $\Spa(A, A^\circ)$ such that $\F|_{X_i}$ is lisse for every $i\in I$.
\end{defn}

\begin{lemma}\label{lemma:examples-of-algebraizable-sheaves} Let $A$ be a $C$-affinoid algebra, and $n$ an integer. Then
\begin{enumerate}
    \item\label{lemma:examples-of-algebraizable-sheaves-1} any lisse \'etale sheaf of $\Z/n\Z$-modules on $\Spa(A, A^\circ)$ is algebraic;
    \item\label{lemma:examples-of-algebraizable-sheaves-2} if $\charac C=0$, then any Zariski-constructible \'etale sheaf of $\Z/n\Z$-modules on $\Spa(A, A^\circ)$ is algebraic.
\end{enumerate}
\end{lemma}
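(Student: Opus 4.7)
For part (1), I would appeal to the classical rigid GAGA statement that analytification induces an equivalence of categories between finite \'etale $\Spec A$-schemes and finite \'etale adic spaces over $\Spa(A, A^\circ)$ (see, e.g., \cite[Lemma~2.2.8]{Huber-etale}). Any lisse $\Z/n\Z$-sheaf $\F$ on $\Spa(A, A^\circ)_\et$ is trivialized on some finite \'etale surjection $g\colon Y \to \Spa(A, A^\circ)$; by the equivalence, both $g$ and the iterated fiber products $Y \times_{\Spa(A, A^\circ)} \cdots \times_{\Spa(A, A^\circ)} Y$ are analytifications of finite \'etale $\Spec A$-schemes. Since $c_A^*$ commutes with finite limits, the \v{C}ech descent data realizing $\F$ from its restriction to $Y$ descends to produce a lisse sheaf $\G$ on $\Spec A_\et$ with $c_A^*\G \simeq \F$.

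For part (2), the natural approach is Noetherian induction on $\dim \operatorname{supp}(\F)$. Given a Zariski-constructible $\F$, I would choose a dense Zariski open subspace $U \subset \operatorname{supp}(\F)$ on which $\F$ is lisse. By the correspondence between Zariski locally closed subspaces of $\Spa(A, A^\circ)$ and locally closed subschemes of $\Spec A$ (see \cite[Appendix B.6]{Z-quotients}), one may assume $U = \tilde U^{\an}$ for some Zariski open $\tilde U \subset \Spec A$, with complementary closed subscheme $\tilde Z \subset \Spec A$ corresponding to the closed complement $Z \subset \Spa(A, A^\circ)$. Part (1) applied to the affinoid $\tilde U$ yields an algebraic lisse $\cal{L}$ on $\tilde U_\et$ with $c_U^*\cal{L} \simeq \F|_U$, and the inductive hypothesis applied to the affinoid whose adic spectrum is $Z$ yields an algebraic $\H$ with $c_Z^*\H \simeq \F|_Z$. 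Using the compatibilities $c_A^* \circ j_! \simeq j^{\an}_! \circ c_U^*$ and $c_A^* \circ i_* \simeq i^{\an}_* \circ c_Z^*$, the short exact sequence
\[
0 \to j^{\an}_!(\F|_U) \to \F \to i^{\an}_*(\F|_Z) \to 0
\]
exhibits $\F$ as an extension of two algebraic sheaves.

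The main obstacle is the final step: algebraizing the extension class $[\F] \in \mathrm{Ext}^1_{\Spa(A, A^\circ)_\et}\bigl(i^{\an}_* c_Z^*\H,\ j^{\an}_! c_U^*\cal{L}\bigr)$ so that it lifts to an $\mathrm{Ext}^1$-class on $\Spec A_\et$. This requires a fully-faithful-on-$\mathrm{Ext}^1$ comparison between the two \'etale sites, which in characteristic $0$ I would derive from Hansen's comparison theorem \cite[Th.~1.10]{Hansen-vanishing} (perhaps combined with Theorem~\ref{thm:intro-schematic-AGV} to control auxiliary cohomology groups that arise in a spectral sequence argument). The characteristic $0$ hypothesis enters only at this point, consistent with the existence of non-algebraic Zariski-constructible sheaves in positive characteristic alluded to earlier in the introduction.
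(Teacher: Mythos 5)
Part (1) of your proposal is correct and is exactly the paper's argument: both finite \'etale sites are canonically equivalent to the category $A_{\rm{f\et}}$ of finite \'etale $A$-algebras, and a lisse sheaf is recovered from descent data along a trivializing finite \'etale cover.

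For part (2) the paper gives no d\'evissage at all: it simply invokes \cite[Th.\,1.7]{Hansen-vanishing}, which is precisely the statement that in characteristic $0$ the functor $c_A^*$ identifies constructible sheaves on $\Spec A$ with Zariski-constructible sheaves on $\Spa(A,A^\circ)$. Your attempted reproof has two genuine gaps, and they are exactly the content of the cited theorem. First, to algebraize $\F|_U$ you apply part (1) ``to the affinoid $\tilde U$'', but a proper Zariski-open $U\subset\Spa(A,A^\circ)$ is not affinoid, and the assertion that every finite \'etale cover of $\tilde U^{\an}$ is the analytification of a finite \'etale cover of $\tilde U$ is a nontrivial characteristic-$0$ theorem (it is false in characteristic $p$, which is the source of the non-algebraic Zariski-constructible sheaves mentioned in the introduction); so this step cannot be reduced to part (1). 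Second, the surjectivity of $\mathrm{Ext}^1_{\Spec A_\et}(\G_1,\G_2)\to\mathrm{Ext}^1_{\Spa(A,A^\circ)_\et}(c_A^*\G_1,c_A^*\G_2)$ does not follow formally from the comparison isomorphism for $\rm{R}\Gamma$ of a single algebraic sheaf: computing these $\mathrm{Ext}$ groups via $\rm{R}\mathscr{H}om(i_*\mathcal{H},\,j_!\mathcal{L})=\rm{R}j_*\rm{R}\mathscr{H}om(j^*i_*\mathcal{H},\mathcal{L})$ requires a comparison for $\rm{R}j_*$ of constructible sheaves along a non-proper open immersion, which is again the substance of Hansen's comparison theorems rather than a consequence of AGV plus a spectral sequence. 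Your plan correctly isolates where characteristic $0$ enters, but the two unresolved steps are not easier than the theorem being proved; the intended proof is the direct citation.
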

\begin{proof}
    In order to show (\ref{lemma:examples-of-algebraizable-sheaves-1}), it suffices to show that the analytification functor induces an equivalence $(\Spec A)_{\rm{f\et}} \xrightarrow{\sim} \Spa(A, A^\circ)_{\rm{f\et}}$. This follows directly from the observation that both categories are canonically equivalent to the category $A_{\rm{f\et}}$ of finite \'etale $A$-algebras. (\ref{lemma:examples-of-algebraizable-sheaves-2}) follows directly from \cite[Th.\,1.7]{Hansen-vanishing}. 
\end{proof}

\begin{thm}[Rigid-analytic Artin--Grothendieck vanishing]\label{thm:main-theorem-char-0} Let $A$ be an affinoid $C$-algebra, and let $\F$ be an algebraic torsion \'etale sheaf on $\Spa(A, A^\circ)$ (in the sense of Definition~\ref{defn:algebraizable}). Then 
\begin{equation}\label{eqn:rigid-AGV}
    \rm{R}\Gamma_{\operatorname{\text{\'e}t}}\big(\Spa(A, A^\circ), \F\big) \in D^{\leq \dim A}(\Z).
\end{equation}
In particular, (\ref{eqn:rigid-AGV}) holds in either of the following situations:
\begin{enumerate}
    \item $\F$ is a lisse sheaf of $\Z/n\Z$-modules for some integer $n$;
    \item $\charac C =0$ and $\F$ is a Zariski-constructible sheaf of $\Z/n\Z$-modules for some integer $n$.
\end{enumerate}
\end{thm}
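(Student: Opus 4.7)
The plan is to deduce this theorem from the schematic Artin--Grothendieck vanishing (Theorem~\ref{thm:main-theorem}) via a comparison between rigid-analytic and schematic \'etale cohomology for algebraic sheaves. By Definition~\ref{defn:algebraizable}, the algebraicity hypothesis supplies a torsion \'etale sheaf $\cal{G}$ on $\Spec A$ together with an isomorphism $\F \simeq c_A^* \cal{G}$. The morphism of topoi $c_A\colon \Spa(A, A^\circ)_\et \to (\Spec A)_\et$ then yields a natural comparison map
\[
\rm{R}\Gamma_\et(\Spec A, \cal{G}) \longrightarrow \rm{R}\Gamma_\et\big(\Spa(A, A^\circ), \F\big).
\]

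The first key step is to invoke the comparison result from \cite{Hansen-vanishing}, which asserts that this map is an isomorphism in the required generality. Granting this, Theorem~\ref{thm:main-theorem} immediately yields the desired bound
\[
\rm{R}\Gamma_\et\big(\Spa(A, A^\circ), \F\big) \;\simeq\; \rm{R}\Gamma_\et(\Spec A, \cal{G}) \;\in\; D^{\leq \dim A}(\Z),
\]
using also that $\dim A = \dim \Spa(A, A^\circ)$ for affinoid algebras.

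The two ``in particular'' assertions follow immediately from Lemma~\ref{lemma:examples-of-algebraizable-sheaves}: part~(\ref{lemma:examples-of-algebraizable-sheaves-1}) shows that any lisse sheaf of $\Z/n\Z$-modules is algebraic (via the equivalence between finite \'etale covers of $\Spec A$ and those of $\Spa(A, A^\circ)$), and part~(\ref{lemma:examples-of-algebraizable-sheaves-2}) shows that in characteristic zero, any Zariski-constructible sheaf of $\Z/n\Z$-modules is algebraic. Thus the main statement applies in each of these situations.

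The main obstacle is the comparison in the first step. When $n$ is invertible in $\O_C$, this is essentially Huber's classical comparison theorem, which one may reduce to constructible sheaves in the spirit of Lemma~\ref{lemma:constructible-enough} and then compare via standard means. The delicate point is to handle $p$-torsion coefficients in mixed characteristic $(0,p)$, which is essential for the theorem to apply to Zariski-constructible $\mathbf{F}_p$-sheaves beyond the reach of Huber's original comparison; this is precisely the input provided by the refined analysis in \cite{Hansen-vanishing}. With that comparison in hand, the proof becomes a short formal deduction.
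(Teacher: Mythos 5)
Your proposal is correct and follows exactly the paper's argument: the paper's proof is the one-line deduction from Theorem~\ref{thm:main-theorem} (schematic vanishing), the comparison theorem of \cite{Hansen-vanishing} identifying $\rm{R}\Gamma_\et(\Spec A,\G)$ with $\rm{R}\Gamma_\et(\Spa(A,A^\circ),c_A^*\G)$ for torsion sheaves, and Lemma~\ref{lemma:examples-of-algebraizable-sheaves} for the two special cases. Your closing remarks about the comparison for $p$-torsion coefficients correctly locate where the real content lies, namely in the cited result of \cite{Hansen-vanishing}.
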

\begin{proof}
    This follows directly from Theorem~\ref{thm:main-theorem}, Lemma~\ref{lemma:examples-of-algebraizable-sheaves}, and \cite[Th.\,1.9]{Hansen-vanishing}.
\end{proof}

We note that Theorem~\ref{thm:main-theorem-char-0} proves a {\it stronger} version of Conjecture~\ref{conjecture:RAAGV} if $\charac C=0$. But, if $\charac C = p>0$, then Theorem~\ref{thm:main-theorem-char-0} does not solve Conjecture~\ref{conjecture:RAAGV} in full generality due to the existence of Zariski-constructible sheaves that are not algebraic (see \cite[p.\,302]{Hansen-vanishing}). 

\subsection{Artin--Grothendieck vanishing in top degree}

In this section, we show vanishing of the top cohomology group of any Zariski-constructible sheaf on an affinoid space (without any assumptions on the characteristic of the ground field). In particular, this will be enough to conclude Conjecture~\ref{conjecture:RAAGV} for affinoid curves over an algebraically closed field of {\it arbitrary} characteristic. We recall that the field $C$ is assumed to be algebraically closed.

%\begin{lemma}\label{lemma:no-Homs-easy} Let $X$ be a rigid-analytic $C$-space, %$i\colon x \hookrightarrow X$ be a classical point of $X$, and $M$ a finite $\Z/n\Z$-module for an integer $n$. Then $\Hom_X\left(i_*\ud{M}_x, \ud{\Z/n\Z}_X\right)=0$.
%\end{lemma}
%\begin{proof}
   % This is standard and left to the reader. 
    %We choose a morphism $\varphi \colon i_*\ud{M}_x \to \ud{\Z/n\Z}_X$ and wish to show that $\varpi=0$. It suffices to show that, for every \'etale $f\colon V \to X$ with connected $V$, the moprhism $\varphi(V)\colon i_*\ud{M}_x(V) \to \ud{\Z/n\Z}_X(V)$ is zero. If $f(V)$ does not hit $x$, then  $i_*\ud{M}_x(V)$ and so the claim is obvious. If $f(V)$ hits $x$, then we choose a point $z\in V$ such that $f(z)\neq x$ (such a point exists due to openness of $f$, see \cite[Proposition 1.7.8]{Huber-etale}). Then we have a commutative diagram
    %\[
    %\begin{tikzcd}
    %    i_*\ud{M}_x(V)\arrow{d} \arrow{r}{\varphi(V)} & \ud{\Z/n\Z}_X(V)\simeq \Z/n\Z \arrow{d}{\sim} \\
    %    {i_*\ud{M}_x)_{\ov{z}} \arrow{r}{\varphi_{\ov{z}}} & \ud{\Z/n\Z}_\ov{z} \simeq \Z/n\Z.
    %\end{tikzcd}
    %\]
    %Since the right vertical arrow is an isomorphism and $(i_*\ud{M}_x)_{\ov{z}}=0$, we conclude that $\varphi(V)=0$ finishing the proof.
%\end{proof}

\begin{defn} The {\it Gauss point} $\eta\in \bf{D}^d=\Spa\left(C\langle X_1, \dots, X_d\rangle, \O_C\langle X_1, \dots, X_d\rangle\right)$ is the point corresponding to the valuation
\[
v_\eta \colon C\langle X_1, \dots, X_d\rangle \to \Gamma_C \cup \{0\}
\]
\[
v_\eta\left(\sum_{i_1, \dots i_d} a_{i_1, \dots, i_d} X_1^{i_1}\cdot \dots \cdot X_d^{i_d}\right) = \sup_{i_1, \dots, i_d} \Bigl( |a_{i_1, \dots, i_d}|\Bigr).
\]
\end{defn}

\begin{lemma}\label{lemma:specialization-Gauss-point} Let $\bf{D}^d$ be the $d$-dimensional closed unit disc, and let $\bf{D}^{d, c}$ be its universal compactification (in the sense of \cite[Def.\,5.1.1]{Huber-etale}). If $d\geq 1$, then there is a point $x\in \bf{D}^{d, c} \smallsetminus \bf{D}^d$ that generizes to the Gauss point $\eta$. 
\end{lemma}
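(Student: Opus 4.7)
The plan is to exhibit an explicit rank-$2$ continuous valuation $x$ in $\bf{D}^{d,c}\setminus\bf{D}^d$ whose rank-$1$ coarsening is the Gauss valuation $v_\eta$, so that $\eta$ is a generization of $x$ in the adic space. Write $\langle\gamma\rangle \cong \Z$ multiplicatively with $\gamma > 1$, and equip $\Gamma_C \times \langle\gamma\rangle$ with the lexicographic order in which $\Gamma_C$ has precedence, so $\langle\gamma\rangle$ is a convex subgroup with quotient $\Gamma_C$. I would define
\[
x\Bigl(\sum_I a_I X^I\Bigr) \coloneqq \max_I \bigl(|a_I|,\, \gamma^{I_d}\bigr),
\]
the maximum taken in the lex order, which is attained because $|a_I|\to 0$ in $A = C\langle X_1,\dots,X_d\rangle$. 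Equivalently, $x$ is the composite of $v_\eta$ with the valuation on the residue field $\kappa(\eta) = k(\bar X_1,\dots,\bar X_d)$ that is trivial on $k(\bar X_1,\dots,\bar X_{d-1})$ and sends $\bar X_d$ to $\gamma$; by the general theory of composite valuations, $x$ is a valuation on $A$, and it is continuous since $\varpi^N A^\circ \subseteq \{f : x(f) < \alpha\}$ whenever $|\varpi|^N$ is less than the first coordinate of $\alpha \in \Gamma_C \times \langle\gamma\rangle$.

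Two of the three required properties fall out immediately. The inequality $x(X_d) = (1,\gamma) > (1,1) = 1$, combined with $X_d \in A^\circ$, gives $x \notin \Spa(A, A^\circ) = \bf{D}^d$. For the fact that $x$ generizes to $\eta$, i.e.\ $x \in \overline{\{\eta\}}$ in $\bf{D}^{d,c}$, I would use that the quotient map $\Gamma_C \times \langle\gamma\rangle \twoheadrightarrow \Gamma_C$ is order-preserving: for any rational open $U = \{p : p(f_i) \leq p(g),\; p(g) \neq 0\}$ of $\bf{D}^{d,c}$ containing $x$, the inequalities $x(f_i) \leq x(g)$ descend to $v_\eta(f_i) \leq v_\eta(g)$, and the nonvanishing $x(g)\neq 0$ forces $g \neq 0$ (both $x$ and $v_\eta$ have trivial support), hence $v_\eta(g)\neq 0$; so $\eta \in U$ as well. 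Since rational opens form a neighborhood basis, this gives $x \in \overline{\{\eta\}}$.

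The main step is to verify $x \in \bf{D}^{d,c}$. Under Huber's description, $\bf{D}^{d,c} = \Spa(A, A^{c+})$ where $A^{c+}$ is the integral closure in $A$ of $\O_C + A^{\circ\circ} = \O_C + \m_C\langle X_1,\dots,X_d\rangle$, so it suffices to bound $x$ by $1$ on $A^{c+}$. For $f = c + g$ with $c \in \O_C$ and $g \in \m_C\langle X_1,\dots,X_d\rangle$, one reads off from the formula that $x(c) = (|c|, 1) \leq 1$ and that the first coordinate of $x(g)$ equals $v_\eta(g) < 1$, so $x(g) < (1,1) = 1$ in the lex order; hence $x(f) \leq 1$. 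The standard non-archimedean integrality argument (if $h^n + \sum_{i<n} a_i h^i = 0$ with $x(a_i) \leq 1$, then $x(h) > 1$ would force $x(h)^n \leq x(h)^{n-1}$, a contradiction) then extends the bound to all of $A^{c+}$. The most delicate point I anticipate is the careful check that the explicit formula for $x$ really defines a valuation, i.e.\ ruling out spurious cancellations in the multiplicativity identity for power series; I would handle this by appealing to the classical fact that the composite of a non-archimedean valuation with a valuation on its residue field is again a valuation.
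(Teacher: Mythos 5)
Your proposal is correct and follows essentially the same route as the paper's proof: an explicit rank-$2$ continuous valuation with values in $\Gamma_C\times\Z$ (lexicographically ordered) whose rank-$1$ coarsening along the convex subgroup is $v_\eta$, checked to lie in $\Spa(A,A^{\min})$ but not in $\bf{D}^d$, with the generization seen from the order-preserving quotient of value groups. The only differences are cosmetic (you use $X_d$ where the paper uses $X_1$, and you spell out the integral-closure bound and the composite-valuation justification that the paper leaves as routine checks).
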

\begin{proof}
    For brevity, we denote by $A$ the ring $C\langle X_1, \dots, X_d\rangle$ and by $A^{\rm{min}}$ the integral closure of $\O_C[A^{\circ\circ}]$ in $A$. Then \cite[Ex.\,5.10(i)]{Swan} ensures that $\bf{D}^{d, c} = \Spa(A, A^{\rm{min}})$. Now we consider the map
    \[
    v_x \colon C\langle X_1, \dots, X_d \rangle \to \Gamma_C \times \Z \cup \{0\}
    \]
    \[
    v_x\left(\sum_{i_1, \dots i_d} a_{i_1, \dots, i_d} X_1^{i_1}\cdot \dots \cdot X_d^{i_d}\right) = \sup_{i_1, \dots, i_d\ | \ a_{i_1, \dots, i_d}\neq 0} \Bigl( |a_{i_1, \dots, i_d}|, i_1\Bigr), 
    \]
    where supremum over the empty is set to be $0$. One easily checks that it is a valuation if $\Gamma_C \times \Z$ is endowed with the lexicographical order. We also see that $v_x(A^{\rm{min}})\leq (1,0)$ and $v_x$ is continuous due to \cite[L.\,9, Cor.\,9.3.3]{Seminar}, so $v_x$ defines a point $x\in \bf{D}^{d, c}$. Furthermore, we note that $x\in \bf{D}^{d, c} \smallsetminus \bf{D}^d$ because $v_x(X_1) > (1, 0)$. Finally, we note that $\eta$ is a generization of $x$ since, for every $f, g\in A$, $v_x(f)\leq v_x(g)$ implies $v_\eta(f)\leq v_\eta(g)$.
\end{proof}

\begin{lemma}\label{lemma:no-Homs} Let $j\colon \bf{D}^d \hookrightarrow \bf{P}^{d, \an}$ be the standard open immersion for some $d\geq 1$, let $j'\colon U \hookrightarrow \bf{D}^d$ be a Zariski-open immersion, let $n$ be an integer invertible in $\O_C$, and let $\cal{L}$ be a lisse sheaf of $\Z/n\Z$-modules on $U_\et$ and $\F = j'_!\,\cal{L}$. Then any $\Z/n\Z$-linear homomorphism $\varphi\colon j_*\F \to \ud{\Z/n\Z}_{\bf{P}^{d, \an}}$ is the zero morphism.
\end{lemma}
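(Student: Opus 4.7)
My plan is to argue by contradiction, combining Lemma~\ref{lemma:specialization-Gauss-point} with the fact that $\bf{D}^d \hookrightarrow \bf{P}^{d, \an}$ is open but not closed for $d \geq 1$. First, since $\bf{P}^{d, \an}$ is partially proper, the universal property of the compactification $\bf{D}^{d,c}$ produces a canonical morphism $\bf{D}^{d,c} \to \bf{P}^{d, \an}$ extending $j$. Applying Lemma~\ref{lemma:specialization-Gauss-point} and transporting along this morphism, I obtain a point $\tilde{x} \in \bf{P}^{d, \an} \smallsetminus \bf{D}^d$ specializing from the Gauss point $\eta$ of $\bf{D}^d$. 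Because any non-empty Zariski-open $U \subset \bf{D}^d$ must contain $\eta$ (the Gauss norm is strictly positive on non-zero analytic functions), we have $(j_*\F)_\eta = \F_\eta = \cal{L}_\eta$.

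The central observation is that the closure $\ov{\bf{D}^d} \subsetneq \bf{P}^{d, \an}$ has non-empty open complement, since classical points $y \in \bf{P}^{d, \an}$ with, say, $|X_1(y)| > 1$ lie outside $\ov{\bf{D}^d}$ and have open neighborhoods $W$ with $W \cap \bf{D}^d = \emptyset$, so $(j_*\F)_y = 0$. Exploiting this together with the sheaf condition, I would show $\varphi_{\tilde{x}} = 0$: any germ at $\tilde{x}$ is represented by a section on a connected open $V \subset \bf{P}^{d,\an}$ reaching into $\bf{P}^{d, \an} \smallsetminus \ov{\bf{D}^d}$ (using that $\bf{P}^{d,\an}$ itself, and suitably chosen large rational neighborhoods, are connected and touch both regions); the corresponding $\varphi_V(s) \in \ud{\Z/n\Z}(V)$ restricts on $V \cap W$ to $\varphi_{V \cap W}(0) = 0$, and since $\ud{\Z/n\Z}$ is constant on the connected $V$, one deduces $\varphi_V(s) = 0$. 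The commutative specialization square induced by $\eta \rightsquigarrow \tilde{x}$,
\[
\begin{tikzcd}
\cal{L}_\eta \arrow[r, "\varphi_\eta"] \arrow[d, "\mathrm{sp}"] & \Z/n\Z \arrow[d, "\mathrm{id}"] \\
(j_*\F)_{\tilde{x}} \arrow[r, "0"] & \Z/n\Z
\end{tikzcd}
\]
(the right vertical is the identity because $\ud{\Z/n\Z}$ is constant) then forces $\varphi_\eta = 0$.

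Since $\cal{L}$ is lisse on each connected component of $U$, vanishing of $\varphi_\eta$ together with the $(j'_!, j'^*)$-adjunction and the $\pi_1(U)$-equivariant description of morphisms between lisse sheaves yields $\varphi|_U = 0$, hence $\varphi|_{\bf{D}^d} = 0$. A similar stalkwise argument at boundary points in $\ov{\bf{D}^d} \smallsetminus \bf{D}^d$, combined with the automatic vanishing at points outside $\ov{\bf{D}^d}$, gives $\varphi = 0$ everywhere. The main obstacle is making the second step precise: the argument that every germ at $\tilde{x}$ (and at other boundary points) is represented on a sufficiently large connected open meeting $\bf{P}^{d, \an} \smallsetminus \ov{\bf{D}^d}$, so that the sheaf-theoretic constraint forces vanishing. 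This likely requires a delicate analysis of rational neighborhoods near the boundary of $\bf{D}^d$ in $\bf{P}^{d,\an}$, or replacing the stalkwise deduction by a more direct argument via the internal $\underline{\Hom}$ sheaf.
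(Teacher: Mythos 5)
Your skeleton follows the paper's: find a point $x\in \ov{\bf{D}}^d\smallsetminus \bf{D}^d$ specializing from the Gauss point $\eta\in U$ via Lemma~\ref{lemma:specialization-Gauss-point}, kill the stalk map of $\varphi$ at $x$ using points where $j_*\F$ vanishes, and transfer back to $\eta$. However, the transfer step contains a genuine gap. The specialization morphism $\ov{\eta}\to \ov{x}$ induces a map on stalks in the direction $(j_*\F)_{\ov{x}} \to (j_*\F)_{\ov{\eta}}$ (every \'etale neighborhood of $\ov{x}$ is one of $\ov{\eta}$), \emph{not} the arrow $\cal{L}_{\ov\eta}\to (j_*\F)_{\ov x}$ drawn in your square; that map does not exist. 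With the correct direction, commutativity only yields $\varphi_{\ov{\eta}}\circ \rm{cosp} = \varphi_{\ov{x}} = 0$, and to conclude $\varphi_{\ov{\eta}}=0$ you need $\rm{cosp}\colon (j_*\F)_{\ov{x}}\to (j_*\F)_{\ov{\eta}}$ to be surjective. This is precisely where the paper invokes overconvergence, which your proposal never mentions: since $U$ is Zariski-open in $\bf{D}^d$ it is stable under specializations and generizations, so $\F=j'_!\,\cal{L}$ is overconvergent, hence $j_*\F$ is overconvergent by \cite[Prop.\,8.2.3(ii)]{Huber-etale}, and the cospecialization map is an isomorphism. Without this input the argument fails: for a general sheaf $\G$ on $\bf{D}^d$, the stalk of $j_*\G$ at a boundary point carries no information about the stalk at $\eta$.

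The step you yourself flag as ``the main obstacle'' --- representing every germ at $x$ on a connected open reaching outside $\ov{\bf{D}}^d$ --- is also left open, but it admits a clean resolution that avoids any analysis of large rational neighborhoods: for \emph{any} connected affinoid \'etale neighborhood $f_V\colon V\to \bf{P}^{d,\an}$ of $\ov{x}$, the quasi-compact open $f_V^{-1}(\bf{D}^d)$ is a proper subset of $V$ (because $x\notin \bf{D}^d$), so $V$ contains a classical point $z\notin f_V^{-1}(\bf{D}^d)$; such a $z$ has no proper generizations, hence has an open neighborhood disjoint from $f_V^{-1}(\bf{D}^d)$ and $(j_*\F)_{\ov{z}}=0$, and constancy of $\ud{\Z/n\Z}$ on the connected $V$ forces $\varphi(V)=0$. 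Two smaller points: you should record that $U$ is connected (any non-empty Zariski-open of $\bf{D}^d$ is, by \cite[Cor.\,2.7]{Hansen-vanishing}), since vanishing at $\eta$ only controls the component containing $\eta$; and the concluding reduction from ``$\varphi$ vanishes on one stalk of $U$'' to ``$\varphi=0$ on all of $\bf{P}^{d,\an}$'' is cleanest if done first, by observing that every stalk of $j_*\F$ is either zero or identified with a stalk of $\F$, so restriction $\rm{Hom}(j_*\F,\ud{\Z/n\Z})\to \rm{Hom}(\cal{L},\ud{\Z/n\Z}_U)$ is injective.
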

\begin{proof}
    If $U$ is empty, the claim is trivial. Therefore, we may and do assume that $U$ is non-empty throughout the proof. We first note that a map of sheaves is uniquely determined by the induced morphisms on stalks. By \cite[Prop.\,2.6.4]{Huber-etale} and its proof, every stalk of $j_*\cal{F}$ is either $0$ or isomorphic to a stalk of $\cal{F}$ by a specialization map, and thus the restriction morphism
    \begin{align*}
    \rm{Hom}_{\bf{P}^{d, \an}}\left(j_*\F, \ud{\Z/n\Z}_{\bf{P}^{d, \an}}\right) & \to \rm{Hom}_{\bf{D}^d}\left(\F, \ud{\Z/n\Z}_{\bf{D}^d}\right) \\
    & \simeq \rm{Hom}_{\bf{D}^d}\left(j'_!\,\cal{L}, \ud{\Z/n\Z}_{\bf{D}^d}\right) \\
    & \simeq \rm{Hom}_U\left(\cal{L}, \ud{\Z/n\Z}_{U}\right)
    \end{align*}
    is injective. Since $U$ is connected (see \cite[Cor.\,2.7]{Hansen-vanishing}) and $\cal{L}$ and $\ud{\Z/n\Z}$ are {\it lisse} sheaves, it suffices to show that the stalk morphism $\varphi_{\ov{y}}\colon \left(j_*\F\right)_{\ov{y}} \to \left(\ud{\Z/n\Z}_{\bf{P}^{d, \an}}\right)_{\ov{y}}$ vanishes at {\it one} single geometric point $\ov{y} \to U$.  

    Now we note that the Gauss point $\eta\in \bf{D}^d$ is a Shilov point in the sense of \cite[Def.\,2.4]{Bhatt-Hansen}, so \cite[Cor.\,2.10]{Bhatt-Hansen} implies that $\eta\in U$. Therefore, it suffices to show that $\varphi_{\ov{\eta}}=0$ for a geometric point $\ov{\eta}$ above $\eta$.  

    Fot this, we consider the closure $\ov{\bf{D}}^d\subset \bf{P}^{d, \an}$ of $\bf{D}^d$ inside $\bf{P}^{d, \an}$. Then \cite[Lemma 4.2.5 and Prop.\,4.2.11]{rigid-motives} ensure\footnote{Strictly speaking, \cite[Prop.\,4.2.11]{rigid-motives} assumes that $S$ is universally uniform in the sense of \cite[Def.\,4.2.7]{rigid-motives}. This hypothesis is essentially never satisfied due to the observation that $A\langle X\rangle/(X^2)$ is not uniform for a non-zero complete Tate ring $A$. However, the proof \cite[Prop.\,4.2.11]{rigid-motives} does work for any locally strongly noetherian analytic space $S$.} that $\ov{\bf{D}}^d$ is homeomorphic to the universal compactification $\bf{D}^{d, c}$. Therefore, Lemma~\ref{lemma:specialization-Gauss-point} implies that there is a point $x\in \ov{\bf{D}}^{d} \smallsetminus \bf{D}^d$ that generizes to $\eta$. We choose a geometric point $\ov{x}$ above $x$ with a specialization morphism $\ov{\eta} \to \ov{x}$ (see \cite[Lemma 2.5.14 and (2.5.16)]{Huber-etale}).  

    Now we recall that $U\subset \bf{D}^d$ is Zariski-open, so it is closed under all specializations and generizations. Therefore, $\F=j'_!\, \cal{L}$ is overconvergent. Thus \cite[Prop.\,8.2.3(ii)]{Huber-etale} ensures that $j_*\F$ is overconvergent as well. Since $\ud{\Z/n\Z}_{\bf{P}^{d, \an}}$ is clearly overconvergent, we conclude that stalk-morphisms $\varphi_{\ov{\eta}}$ and $\varphi_{\ov{x}}$ are canonically identified. So we reduce the question to showing that $\varphi_{\ov{x}}=0$ for any $\Z/n\Z$-linear morphism $\varphi \colon j_*\F \to \ud{\Z/n\Z}_{\bf{P}^{d, \an}}$.
    
    For this, it suffices to show that, for any \'etale morphism $f_V\colon V \to \bf{P}^{d, \an}$ with a connected affinoid $V$ and $x\in f_V(V)$, the morphism 
    \[
    \varphi(V) \colon \left(j_*\F\right)(V) \to \ud{\Z/n\Z}_{\bf{P}^{d, \an}}(V)=\Z/n\Z
    \]
    is the zero morphism. Since $x\in f_V(V)$, we conclude that $f_V^{-1}\left(\bf{D}^d\right)\neq V$, so there is a classical point $z\in V \smallsetminus f_V^{-1}\left(\bf{D}^d\right)$. We choose a geometric point $\ov{z} \to V$ above $z$. Then we have the following commutative diagram
    \begin{equation}\label{eqn:zero-on-stalks}
    \begin{tikzcd}
        \left(j_*\F\right)(V)\arrow{d} \arrow{r}{\varphi(V)} & \left(\ud{\Z/n\Z}_{\bf{P}^{d, \an}}\right)(V)\simeq \Z/n\Z \arrow{d}{\wr} \\
        \left(j_*\F\right)_{\ov{z}} \arrow{r}{\varphi_{\ov{z}}} & \left(\ud{\Z/n\Z}_{\bf{P}^{d, \an}}\right)_{\ov{z}} \simeq \Z/n\Z.
    \end{tikzcd}
    \end{equation}
    Since $z\in V$ is a classical point, it has no proper generizations. Therefore, \cite[\href{https://stacks.math.columbia.edu/tag/0904}{Tag 0904}]{stacks-project} ensures that there is an open neighborhood $z\in W\subset V$ disjoint from $f_V^{-1}(\bf{D}^d)$. So we conclude that $(j_*\F)_{\ov{z}}=0$. Thus, Diagram~(\ref{eqn:zero-on-stalks}) implies that $\varphi(V)=0$ finishing the proof.  
\end{proof}

\begin{thm}[Rigid-analytic Artin--Grothendieck vanishing in top degree]\label{thm:main-theorem-top-degree} Let $X=\Spa(A, A^\circ)$ be an affinoid rigid-analytic space over $C$ with $\dim A=d\geq 1$, let $n$ be an integer invertible in $\O_C$, and let $\F$ be a Zariski-constructible sheaf of $\Z/n\Z$-modules on $X_\et$. Then 
\[
\rm{H}^{2d}_\et\big(X, \F\big) = 0.
\]
\end{thm}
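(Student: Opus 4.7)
The plan is to reduce the statement, in three steps, to Lemma~\ref{lemma:no-Homs}: first to the case $X = \bf{D}^d$ via Noether normalization; then, by induction on $d$, to $\F = j'_!\cal{L}$ with $\cal{L}$ lisse on a Zariski-open $j' \colon U \hookrightarrow \bf{D}^d$; and finally to a sheaf Hom on $\bf{P}^{d,\an}$ via Poincar\'e duality.

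For the first reduction, Noether normalization (\cite[Prop.~3.1/3]{B}) produces a finite morphism $f \colon X \to \bf{D}^d$. Since $f$ is finite, $f_*$ is exact and preserves Zariski-constructibility (using that finite morphisms of rigid-analytic spaces are closed, so images of Zariski-closed strata are Zariski-closed and the resulting stratification of $\bf{D}^d$ can be refined until $f_* \F$ is lisse on each piece), and one has $\rm{R}\Gamma_\et(X, \F) = \rm{R}\Gamma_\et(\bf{D}^d, f_*\F)$. So we may assume $X = \bf{D}^d$.

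For the inductive step, assume the theorem for all dimensions $< d$ and pick a dense Zariski-open $j' \colon U \hookrightarrow \bf{D}^d$ on which $\F|_U = \cal{L}$ is lisse, with closed complement $i \colon Z \hookrightarrow \bf{D}^d$ of dimension $\leq d-1$. Since $n \in \O_C^\times$, the standard cohomological-dimension bound $\rm{cd}_n(Z) \leq 2(d-1)$ for affinoid rigid-analytic spaces of dimension $d-1$ (e.g. from Huber's book) gives $\rm{H}^{2d-1}_\et(Z, \F|_Z) = \rm{H}^{2d}_\et(Z, \F|_Z) = 0$. The long exact sequence attached to
\[
0 \to j'_!\cal{L} \to \F \to i_*(\F|_Z) \to 0
\]
then produces an isomorphism $\rm{H}^{2d}_\et(\bf{D}^d, \F) \cong \rm{H}^{2d}_\et(\bf{D}^d, j'_!\cal{L})$, so it suffices to treat $\F = j'_!\cal{L}$.

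For the final step, let $j \colon \bf{D}^d \hookrightarrow \bf{P}^{d,\an}$ be the natural open immersion, so that $\rm{H}^{2d}_\et(\bf{D}^d, j'_!\cal{L}) = \rm{H}^{2d}_\et(\bf{P}^{d,\an}, \rm{R}j_*(j'_!\cal{L}))$. Since $\bf{P}^{d,\an}$ is smooth and proper of pure dimension $d$, Poincar\'e duality (\cite[Cor.~7.5.5]{Huber-etale}) yields
\[
\rm{H}^{2d}_\et(\bf{P}^{d,\an}, \cal{G})^\vee \;\cong\; \Hom_{D(\bf{P}^{d,\an})}(\cal{G}, \mu_n^{\otimes d})
\]
for bounded complexes of $\Z/n\Z$-sheaves $\cal{G}$. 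Applying this to $\cal{G} = \rm{R}j_*(j'_!\cal{L}) \in D^{\geq 0}$, and using that $\mu_n^{\otimes d}$ sits in degree zero so any map from $\cal{G}$ factors through $\cal{H}^0(\cal{G})$, the derived Hom reduces to the sheaf Hom $\Hom_{\bf{P}^{d,\an}}(j_*(j'_!\cal{L}), \mu_n^{\otimes d})$. Since $C$ is algebraically closed and $n \in \O_C^\times$ we identify $\mu_n^{\otimes d} \simeq \ud{\Z/n\Z}$, and then Lemma~\ref{lemma:no-Homs} forces this Hom to vanish. As $\Z/n\Z$ is injective over itself, vanishing of the Pontryagin dual gives $\rm{H}^{2d}_\et(\bf{D}^d, j'_!\cal{L}) = 0$.

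The main obstacle will be justifying the application of Poincar\'e duality to $\rm{R}j_*(j'_!\cal{L})$: this complex is bounded in $D^{[0,2d]}$ and $n$-torsion but need not be constructible on $\bf{P}^{d,\an}$, so one must invoke Huber's duality formalism in the appropriate generality (or, alternatively, approximate $\rm{R}j_*(j'_!\cal{L})$ by its constructible subcomplexes and pass to the limit). Once this point is settled, the chain of reductions closes cleanly against Lemma~\ref{lemma:no-Homs}.
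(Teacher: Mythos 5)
Your argument follows the paper's proof essentially step for step: reduce to $\bf{D}^d$ by Noether normalization, strip off the closed complement using the cohomological dimension bound for the ($(d-1)$-dimensional) Zariski-closed complement, push forward along $j\colon \bf{D}^d\hookrightarrow\bf{P}^{d,\an}$, and dualize into Lemma~\ref{lemma:no-Homs}. The one issue you flag as the ``main obstacle'' --- applying Poincar\'e duality to the possibly non-constructible complex $\rm{R}j_*(j'_!\cal{L})$ --- is handled in the paper more simply: by \cite[Prop.\,2.6.4]{Huber-etale} the functor $j_*$ is \emph{exact} for this open immersion, so one only needs the top-degree duality $\rm{H}^{2d}_\et\big(\bf{P}^{d,\an},\G\big)^\vee\cong\Hom_{\bf{P}^{d,\an}}\big(\G,\ud{\Z/n\Z}(d)\big)$ for the single sheaf $\G=j_*(j'_!\cal{L})$, and that statement is a formal consequence of the adjunction $\rm{R}f_!\dashv \rm{R}f^!$ together with purity, valid for arbitrary sheaves of $\Z/n\Z$-modules with no constructibility hypothesis (your alternative of approximating by constructible subcomplexes is not needed, and your reduction of the derived Hom to $\cal{H}^0$ then coincides with the paper's sheaf Hom). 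Also note that your induction on $d$ is never actually used --- the cohomological-dimension bound alone disposes of the closed stratum --- so the argument is not really inductive.
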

\begin{proof}
    First, \cite[Prop.\,3.3/1]{B} and \cite[\href{https://stacks.math.columbia.edu/tag/00OK}{Tag 00OK}]{stacks-project} imply that there is a finite morphism $f\colon X \to \bf{D}^d$. Then \cite[Prop.\,2.3]{Hansen-vanishing} ensures that $f_*\F$ is Zariski-constructible, while \cite[Prop.\,2.6.3]{Huber-etale} ensures that $f_*$ is exact. In particular, this implies that $\rm{H}^{2d}_\et\left(X, \F\right) = \rm{H}^{2d}_\et\left(\bf{D}^d, f_*\F\right)$, so we can replace the pair $(X, \F)$ by $(\bf{D}^d, f_*\F)$ to achieve that $X=\bf{D}^d$ is the closed unit disc.  

    Now the definition of Zariski-constructible sheaves implies that there is a non-empty Zariski-open subspace $j'\colon U\hookrightarrow \bf{D}^d$ such that $\cal{L}\coloneqq \F|_U$ is lisse. We denote the closed complement (with the reduced adic space structure) by $i\colon Z \hookrightarrow \bf{D}^d$. Then we have a short exact sequence
    \[
    0 \to j'_!\, \cal{L} \to \F \to i_* \left(\F|_Z \right) \to 0.
    \]
    Now \cite[Lemma 1.8.6(ii)]{Huber-etale} guarantees that $\dim Z=\rm{dim.tr}\,Z \leq d-1$, so \cite[Cor.\,2.8.3]{Huber-etale} implies that $\rm{R}\Gamma_\et\left(X, i_* \left(\F|_Z \right)\right)=\rm{R}\Gamma_\et\left(Z, \F|_Z \right) \in D^{\leq 2d-2}(\Z)$. Therefore, it suffices to prove the claim for $\F = j'_! \cal{L}$.  
    
    In this situation, we consider the compactification $j\colon \bf{D}^d \hookrightarrow  \bf{P}^{d, \rm{an}}$. Then \cite[Prop.\,2.6.4]{Huber-etale} implies that $j_*$ is exact, and so $\rm{H}^{2d}_\et\left(\bf{D}^d, \F\right) = \rm{H}^{2d}_\et\left(\bf{P}^{d, \an}, j_*\F\right)$. In particular, it suffices to show that {\it the dual} $\Z/n\Z$-module $\rm{H}^{2d}_\et\left(\bf{P}^{d, \an}, j_*\F\right)^{\vee}$ vanishes. Now, after choosing a trivialization of the Tate twist $\ud{\Z/n\Z}(d) \cong \ud{\Z/n\Z}$, our assumption that $n\in \O_C^\times$ and Poincar\'e duality (see \cite[Cor.\,7.5.6]{Huber-etale}, \cite[Th.\,7.3.4]{Berkovich}, or \cite[Th.\,1.3.2]{Z-revised}) imply that
    \[
    \rm{H}^{2d}_\et\left(\bf{P}^{d, \an}, j_*\F\right)^{\vee} = \rm{Hom}_{\bf{P}^{d, \an}}\left(j_*\F, \ud{\Z/n\Z}_{\bf{P}^{d, \an}}\right).
    \]
    Thus, the desired vanishing follows directly from Lemma~\ref{lemma:no-Homs}. 
\end{proof}

\begin{rmk} We want to mention that it is possible to prove Theorem~\ref{thm:main-theorem-top-degree} in greater generality. Namely, in the formulation of Theorem~\ref{thm:main-theorem-top-degree}, it suffices to assume that 
\begin{enumerate}
    \item the space $X$ is a quasi-compact separated rigid-analytic $C$-space of dimension $d$ such that none of its irreducible components is a proper rigid-analytic $C$-space of dimension $d$;
    \item $\F$ is a Zariski-constructibe sheaf of $\Lambda$-modules for a $\Z/n\Z$-algebra $\Lambda$ (and an integer $n$ invertible in $\O_C$).
\end{enumerate}
    Since we do not know any interesting application of this extra generality and the argument becomes significantly longer, we prefer not to spell it out in this paper. 
\end{rmk}

\begin{cor}\label{cor:curves} Let $A$ be an $C$-affinoid algebra of dimension $1$, let $n$ be an integer invertible in $C$, and let $\F$ be a Zariski-constructible sheaf of $\Z/n\Z$-modules on $\Spa(A, A^\circ)_\et$. Then 
\[
\rm{R}\Gamma_\et\big(\Spa(A, A^\circ), \F\big) \in D^{\leq 1}(\Z).
\]
\end{cor}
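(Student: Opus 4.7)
The plan is to deduce this directly by combining the top-degree vanishing of Theorem~\ref{thm:main-theorem-top-degree} with a general bound on the étale cohomological dimension of an affinoid rigid-analytic space, so no new ideas are required beyond what is already in the section.

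First, I would dispose of the trivial case $\dim A = 0$: here $\Spa(A, A^\circ)$ is a finite disjoint union of geometric points over $C$, so $\rm{R}\Gamma_\et\big(\Spa(A, A^\circ), \F\big)\in D^{\leq 0}(\Z) \subset D^{\leq 1}(\Z)$ for any torsion sheaf $\F$. Hence we may assume $\dim A = 1$.

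Next, I would invoke the cohomological dimension bound \cite[Cor.\,2.8.3]{Huber-etale}, which is the same ingredient already used to handle the ``lower-dimensional stratum'' in the proof of Theorem~\ref{thm:main-theorem-top-degree}. Since $n$ is invertible in $\O_C$ and $X\coloneqq \Spa(A, A^\circ)$ has dimension $d=1$, this yields the crude bound $\rm{R}\Gamma_\et(X, \F)\in D^{\leq 2d}(\Z) = D^{\leq 2}(\Z)$, so all cohomology in degrees $\geq 3$ automatically vanishes regardless of the structure of $\F$. This is the only place where the invertibility of $n$ in $\O_C$ is really used.

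Finally, Theorem~\ref{thm:main-theorem-top-degree} applied with $d=1$ kills the remaining degree, giving $\rm{H}^{2}_\et(X, \F) = 0$. Combining the two bounds produces $\rm{R}\Gamma_\et(X, \F) \in D^{\leq 1}(\Z)$ as required. There is no genuine obstacle here: once Theorem~\ref{thm:main-theorem-top-degree} and Huber's cohomological dimension bound are in hand, the corollary is essentially immediate, and all of the real work has been absorbed into those earlier inputs.
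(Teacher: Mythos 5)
There is a genuine gap: you silently upgrade the hypothesis ``$n$ invertible in $C$'' to ``$n$ invertible in $\O_C$''. These are not equivalent. If $\charac C = p>0$ they coincide (both mean $p\nmid n$), and if $C$ has equal characteristic $0$ every $n$ is invertible in $\O_C$; but if $C$ has mixed characteristic $(0,p)$ and $p\mid n$, then $n$ is invertible in $C$ but \emph{not} in $\O_C$. In that case both of your inputs fail: Huber's cohomological dimension bound \cite[Cor.\,2.8.3]{Huber-etale} and Theorem~\ref{thm:main-theorem-top-degree} each require $n$ invertible in $\O_C$, so neither the crude bound $D^{\leq 2}(\Z)$ nor the top-degree vanishing is available. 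This excluded case --- Zariski-constructible $p$-torsion sheaves in mixed characteristic $(0,p)$ --- is precisely the main new content of the corollary, so it cannot be waved away.

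The paper's proof splits into cases to avoid this. When $\charac C = 0$, every Zariski-constructible sheaf of $\Z/n\Z$-modules is algebraic (Lemma~\ref{lemma:examples-of-algebraizable-sheaves}(\ref{lemma:examples-of-algebraizable-sheaves-2})), so Theorem~\ref{thm:main-theorem-char-0} gives $\rm{R}\Gamma_\et \in D^{\leq 1}(\Z)$ directly, with no condition on $n$ relative to $\O_C$. When $\charac C = p>0$, invertibility of $n$ in $C$ does imply invertibility in $\O_C$, and there your argument (Theorem~\ref{thm:main-theorem-top-degree} to kill degree $2$, plus \cite[Cor.\,2.8.3]{Huber-etale} to kill degrees $\geq 3$) is exactly what the paper does. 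So your proof is correct verbatim in the equal-characteristic-$p$ case, but you must route the characteristic-$0$ case through Theorem~\ref{thm:main-theorem-char-0} instead.
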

\begin{proof}
    If $\charac C =0$, the result follows from Theorem~\ref{thm:main-theorem-char-0}. If $\charac C=p>0$, then $n$ is invertible in $\O_C$. Therefore, the result follows from Theorem~\ref{thm:main-theorem-top-degree} and \cite[Cor.\,2.8.3]{Huber-etale}.
\end{proof}

\subsection{Counterexample to perverse exactness of nearby cycles}

Throughout this subsection, we assume that $C$ is an algebraically closed non-archimedean field of mixed characteristic $(0, p)$.  

In \cite[Th.\,4.2]{Bhatt-Hansen}, it is claimed that Theorem~\ref{thm:main-theorem-char-0} should imply that, for any admissible formal $\O_C$-scheme $\X$, the nearby cycles functor $\rm{R}\lambda_* \colon D^b_{zc}(\X_\eta; \bf{F}_p) \to D^+(\X_s; \bf{F}_p)$ is exact with respect to the perverse $t$-structure on the source (see \cite[Def.\,4.1]{Bhatt-Hansen}) and the perverse $t$-structure on the target (see \cite{Gabber-perverse} or \cite[\textsection 2, p.7]{Cass}). However, this claim is false: the next lemma shows that this functor is not perverse right $t$-exact even for the (formal) affine line.

\begin{lemma} Let $\X=\wdh{\bf{A}}^1_{\O_C} = \Spf \O_C\langle T\rangle$ be the formal affine line over $\O_C$, let $\zeta\in \X_s=\bf{A}^1_{s}$ be the generic point in the special fiber of $\X$, and let $\rm{R}\lambda_*\colon D^b_{zc}(\X_\eta; \bf{F}_p) \to D^+(\X_s; \bf{F}_p)$ be the nearby cycles functor. Then 
\[
\cal{H}^0\left(\rm{R}\lambda_* \mu_p[1]\right)_{\ov{\zeta}}=\left(\rm{R}^1\lambda_* \mu_p\right)_{\ov{\zeta}} \neq 0.
\]
In particular, $\rm{R}\lambda_*$ is not perverse right $t$-exact. 
\end{lemma}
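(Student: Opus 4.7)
The plan is to exhibit a nonzero Kummer class in the stalk $\big(\rm{R}^1\lambda_*\mu_p\big)_{\ov{\zeta}}$. Once this is done, the failure of perverse right $t$-exactness follows formally: $\mu_p[1]$ is perverse on the smooth connected curve $\X_\eta = \bf{D}^1$, hence lies in $^{\rm p}D^{\leq 0}_{\rm zc}(\X_\eta;\bf{F}_p)$, whereas for Gabber's perversity on $\X_s$ any object of $^{\rm p}D^{\leq 0}$ has its $\cal{H}^0$ supported in dimension zero, contradicting a nonzero $\cal{H}^0$-stalk at the codimension-zero point $\ov{\zeta}$.

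I would first identify the relevant cohomology group on the formal-scheme side. The Zariski open $V\coloneqq \X_s\setminus\{0\}$ is an \'etale neighborhood of $\ov{\zeta}$, and the associated open of $\X$ is $\Spf \O_C\langle T, T^{-1}\rangle$ with generic fiber the unit annulus $\mathcal{A} \coloneqq \Spa\big(C\langle T, T^{-1}\rangle, \O_C\langle T, T^{-1}\rangle\big) = \lambda^{-1}(V)$. Since $p\in\O_C^\times$ and $\Pic(\mathcal{A})=0$ (the one-variable Laurent algebra is a principal ideal domain), the Kummer sequence yields $\rm{H}^1_\et(\mathcal{A},\mu_p)\simeq C\langle T, T^{-1}\rangle^\times/p$. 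The Kummer class $[T]$ of the coordinate is nonzero in this group: every unit in $C\langle T, T^{-1}\rangle$ has the unique form $cT^n(1+v)$ with $c\in C^\times$, $n\in\Z$, and $|v|<1$ in the Gauss norm, so $T=f^p$ would force $np=1$, which is impossible in $\Z$.

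Next I would verify that $[T]$ survives in the filtered colimit computing $(\rm{R}^1\lambda_*\mu_p)_{\ov{\zeta}}$ over \'etale neighborhoods $U$ of $\ov{\zeta}$ in $\X_s$ (each lifted to an \'etale formal cover $\widetilde U\to \X$ by formal \'etale rigidity). Fix a geometric point $\ov{\eta}$ above the Gauss point $\eta \in \mathcal{A}$, chosen compatibly with $\ov{\zeta}$; since $\eta$ specializes to $\zeta$ under $\lambda$, this choice produces a geometric point in every $\widetilde U_\eta \cap \mathcal{A}$. If $T$ became a $p$-th power on some such $\widetilde U_\eta \cap \mathcal{A}$, its image would be a $p$-th power in the strict henselization $\O^{\rm sh}_{\mathcal{A},\ov{\eta}}$---a Henselian local ring whose residue field is the separable closure of $\kappa(\eta)=k(T)$. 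But applying Hensel's lemma to $X^p-T$ would then furnish a $p$-th root of $T$ in $k(T)^{\rm sep}$, contradicting the fact that $T^{1/p}$ generates a purely inseparable extension of $k(T)$.

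The main obstacle I anticipate is the justification of the two interconnected foundational facts used in the previous paragraph: that the strict local analytic ring $\O^{\rm sh}_{\mathcal{A},\ov{\eta}}$ is Henselian with residue field $k(T)^{\rm sep}$, and that stalks of $\rm{R}^i\lambda_*$ at $\ov{\zeta}$ are correctly computed as colimits of \'etale cohomology over formal-\'etale lifts of \'etale neighborhoods of $\ov{\zeta}$. Both are standard for admissible formal schemes (cf.~\cite[\S 2.5]{Huber-etale}), but some care is needed in the non-Noetherian setting of a general $\O_C$.
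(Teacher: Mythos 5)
Your reduction to exhibiting a nonzero class in $\left(\rm{R}^1\lambda_*\mu_p\right)_{\ov{\zeta}}$, and the computation that $[T]\neq 0$ in $\rm{H}^1_\et(\mathcal{A},\mu_p)$, are both fine (for the latter you do not even need $\Pic(\mathcal{A})=0$, since $\O(\mathcal{A})^\times/p$ always injects into $\rm{H}^1_\et(\mathcal{A},\mu_p)$ by the Kummer sequence). The gap is in your last step, where you pass to the strict henselization $\O^{\rm sh}_{\mathcal{A},\ov{\eta}}$ of the \emph{analytic} local ring at the Gauss point. That ring is a henselian local $C$-algebra, so $p$ is invertible in it and its residue field $k(\ov{\eta})$ is an algebraically closed field of characteristic $0$ --- not $k(T)^{\rm{sep}}$ as you assert; $k(T)^{\rm{sep}}$ is (roughly) the residue field of the \emph{valuation} on $k(\eta)$, a different object. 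Consequently $X^p-T$ splits in $k(\ov{\eta})$, and Hensel's lemma (which runs in the direction opposite to the one you invoke) shows that $T$ \emph{is} a $p$-th power in $\O^{\rm sh}_{\mathcal{A},\ov{\eta}}$: concretely, the Kummer cover $S\mapsto S^p=T$ is finite \'etale over $\mathcal{A}$ in characteristic $0$ and its Gauss point lies over $\ov{\eta}$. So no contradiction can be extracted from this ring. What you are missing is that this Kummer cover is \emph{not} the generic fiber of an \'etale neighborhood of $\ov{\zeta}$ (its special fiber is $s\mapsto s^p$, radicial rather than \'etale), so the colimit computing $\left(\rm{R}^1\lambda_*\mu_p\right)_{\ov{\zeta}}$ ranges over a strictly smaller family of neighborhoods than the one computing $\O^{\rm sh}_{\mathcal{A},\ov{\eta}}$, and enlarging to the latter destroys exactly the information you need.

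The paper's proof avoids this by identifying the stalk with $\rm{H}^1_\et(\Spec K_\zeta,\mu_p)$, where $K_\zeta=\O^{\rm sh}_{X,\zeta}\big[\frac{1}{p}\big]$ is built from the strict henselization of the \emph{scheme} $X=\bf{A}^1_{\O_C}$ at the generic point of its special fiber (via Huber's comparison of $\rm{R}\lambda_*$ with $i^*\rm{R}j_*$ together with the standard computation of stalks of $\rm{R}j_*$). The ring $\O^{\rm sh}_{X,\zeta}$ is a rank-$1$ valuation ring with maximal ideal $\m_C\O^{\rm sh}_{X,\zeta}$ and residue field $k(T)^{\rm{sep}}$; if $T=S^p$ in $K_\zeta$, then $S$ lies in $\O^{\rm sh}_{X,\zeta}$ because a valuation ring is integrally closed in its fraction field, and reduction modulo the maximal ideal produces a $p$-th root of $T$ in $k(T)^{\rm{sep}}$, which is impossible since $k(T)\subset k(T)(T^{1/p})$ is purely inseparable. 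Both ingredients --- the correct, much smaller henselian ring attached to $\ov{\zeta}$, and the integral-closedness step needed to push a putative $p$-th root into the residue field --- are absent from your argument, and the second cannot be replaced by an appeal to Hensel's lemma.
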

\begin{proof}
    Since $\X_\eta$ is smooth, we conclude that $\mu_p[1]$ is a perverse sheaf on $\X_\eta$. So, in order to show that $\rR\lambda_*$ is not perverse right $t$-exact, it suffices to show that $\rm{R}\lambda_* \mu_p[1] \notin {}^pD^{\leq 0}(\X_s; \bf{F}_p)$. Using the definition of ${}^pD^{\leq 0}(\X_s; \bf{F}_p)$ (see \cite[p.7]{Cass}), we see that it is enough to show that $\cal{H}^0\left(\rm{R}\lambda_* \mu_p[1]\right)_{\ov{\zeta}}\neq 0$, where $\zeta \in \X_s$ is the generic point and $\ov{\zeta}$ is a geometric point of $\X_s$ above $\zeta$.  

    Now we put $X\coloneqq \bf{A}^1_{\O_C}=\Spec \O_C[T]$ to be the schematic affine line over $\O_C$, $j\colon X_\eta \hookrightarrow  X$ to be the open immersion of the generic fiber $X_\eta$ into $X$, and $i\colon X_s \hookrightarrow  X$ to be the closed immersion of the special fiber $X_s$ into $X$. Since $\abs{\X_s} = \abs{X_s}$, we can consider $\zeta$ as a point of $X$ corresponding to the generic point of the special fiber $X_s$. We denote by $\O_{X, \zeta}^{\rm{sh}}$ the strict henselization of the local ring $\O_{X, \zeta}$ (corresponding to $\ov{\zeta}$). Then combining \cite[Th.\,3.5.13]{Huber-etale} with \cite[\href{https://stacks.math.columbia.edu/tag/03Q9}{Tag 03Q9}]{stacks-project}, we conclude that  
    \[
    \left(\rm{R}^1\lambda_* \mu_p\right)_{\ov{\zeta}} \simeq \left(i^*\rm{R}^1j_* \mu_p\right)_{\ov{\zeta}} \simeq \left(\rm{R}^1j_*\mu_p\right)_{\ov{\zeta}} \simeq \rm{H}^1_\et\Big(\Spec \O_{X, \zeta}^{\rm{sh}}\Big[\frac{1}{p}\Big], \mu_p\Big).
    \]
    For brevity, we denote $\O_{X, \zeta}^{\rm{sh}}[\frac{1}{p}]$ by $K_\zeta$. Then \cite[Prop.\,9.1.32]{GRfoundations} implies that $\O^{\rm{sh}}_{X, \zeta}$ is a rank-$1$ valuation ring with the maximal ideal $\m_C \O^{\rm{sh}}_{X, \zeta}$. Thus $p\in \O^{\rm{sh}}_{X, \zeta}$ is a pseudo-uniformizer, so $K_\zeta$ is the fraction field of $\O^{\rm{sh}}_{X, \zeta}$. Hence, the Kummer exact sequence and Hilbert's Theorem $90$ imply that $K_\zeta^\times/K_\zeta^{\times, p} = \rm{H}^1_\et(\Spec K_\zeta, \mu_p)$. Therefore, we reduce the question to showing that $K_\zeta^\times/K_\zeta^{\times, p} \neq 0$.
    
    Since $K_\zeta$ is a field, we see that $T\in K_\zeta^\times$. Therefore, in order to show that $K_\zeta^\times/K_\zeta^{\times, p} \neq 0$, it suffices to show that there is no element $S \in K_\zeta$ such that $S^p=T$. Suppose that there was such an element $S$, then $S\in \O^{\rm{sh}}_{X, \zeta}$ since $\O^{\rm{sh}}_{X, \zeta}$ is a valuation ring (so it is integrally closed in $K_\zeta$). On the other hand, since $\m_C \O^{\rm{sh}}_{X, \zeta}$ is the maximal ideal of $\O^{\rm{sh}}_{X, \zeta}$, we conclude that
    \[
    \O^{\rm{sh}}_{X, \zeta}/\m_C \O^{\rm{sh}}_{X, \zeta} \simeq k(T)^{\rm{sep}},
    \]
    where $k=\O_C/\m_C$ is the residue field of $\O_C$ and $k(T)^{\rm{sep}}$ is a separable closure of $k(T)$. Therefore, we conclude that the element $T\in k(T)^{\rm{sep}}$ must also admit a $p$-th root, which is clearly false by separability of the extension $k(T)\subset k(T)^{\rm{sep}}$. This implies that no such $S\in K_\zeta$ exists, and thus finishes the proof that $K_\zeta^\times/K_\zeta^{\times, p} \neq 0$.
\end{proof}

\begin{rmk} The nearby cycles functor is expected to be left $t$-exact with respect to the perverse $t$-structures. In the case of an ``algebraizable'' admissible formal $\O_C$-scheme $\X$, this is going to be proven in \cite{BL-RH}.
\end{rmk}

\section{Noetherian rig-smooth algebraization}

In this section, we give a proof of Theorem~\ref{thm:intro-artin-algebraization} for any noetherian ring $A$ and any ideal $I\subset A$. The strategy of the argument is to first establish some ``weak'' uniqueness of an algebraization; this occupies the first three lemmas of this section. Then we use this weak uniqueness and some deformation-theoretic arguments to run induction on the number of generators of the ideal $I$. This occupies the rest of the section. 

\begin{lemma}\label{lemma:isoms-on-completions} Let $(A, I)$ be a noetherian henselian pair, let $B$ be a finite type $A$-algebra, and let $\varphi \colon A \to B^\h_I$ be the structure morphism. If the induced morphism $\varphi^{\wedge}_I \colon A^{\wedge}_I \to B^{\wedge}_I$ is an isomorphism, then so is $\varphi$.
\end{lemma}
\begin{proof}
    Our assumption on $\varphi$ implies that the natural morphism $A/I \to B/IB$ is an isomorphism. Then \cite[Prop.~2.3.2]{Moret-Bailly} ensures that there is a clopen subscheme $\Spec B' \subset \Spec B$ such that $A \to B'$ is a finite morphism and $B/IB \to B'/IB'$ is an isomorphism. Therefore, we conclude that $B^\h_I \xr{\sim} (B')^\h_I$. Thus we can replace $B$ by $B'$ to achieve that $A \to B$ is a finite morphism. In this case, \cite[\href{https://stacks.math.columbia.edu/tag/0DYE}{Tag 0DYE}]{stacks-project} implies that $B=B^\h$, so $A \to B^\h_I$ is a finite morphism. Then we conclude that $B^{\wedge}_I \simeq B^\h_I \otimes_{A} A^{\wedge}_I$, so $\varphi^{\wedge}_I = \varphi\otimes_A A^{\wedge}_I$. We conclude that $\varphi$ is an isomorphism since $\varphi\otimes_A A^{\wedge}_I$ is an isomorphism and $A \to A^{\wedge}_I$ is faithfully flat (see \cite[\href{https://stacks.math.columbia.edu/tag/0AGV}{Tag 0AGV}]{stacks-project}).
\end{proof}

\begin{lemma}\label{lemma:isoms-open} Let $A$ be a noetherian ring, let $I\subset A$ be an ideal, let $B$ and $C$ be finite type $A$-algebras, let $\psi\colon B^\h_I \to C^\h_I$ and $\varphi\colon B^{\wedge}_I \to C^{\wedge}_I$ be  homomorphisms of $A$-algebras. If $\varphi$ is an isomorphism and $\psi \,\rm{ mod }\, I = \varphi \,\rm{ mod }\,I$, then $\psi$ is an isomorphism.
\end{lemma}
\begin{proof}
    We denote by $\psi'\colon B\to C^\h_I$ the composition of $\psi$ with the the morphism $B\to B^\h_I$. Then a standard approximation argument ensures that there is an \'etale morphism $C \to C'$ such that $C/IC \to C'/IC'$ is an isomorphism and $\psi'$ factors as $B \xr{\psi''} C' \to (C')^\h_I \simeq C^\h_I$. Since $C^\h_I \simeq (B^\h_I \otimes_B C')^\h_I$, we conclude that $C^\h_I$ (with the $B^\h_I$-algebra structure coming from $\psi$) is obtained as the $I$-adic henselization of a finite type $B^\h_I$-algebra. Thus Lemma~\ref{lemma:isoms-on-completions} implies that it suffices to show that $\psi^{\wedge}_I \colon B^{\wedge}_I \to C^{\wedge}_I$ is an isomorphism. This follows from \cite[Ch.~III, \textsection 2, n.~8, Cor.~3]{Bourbaki} since $\rm{gr}_I(\psi^{\wedge}_I) = \rm{gr}_I(\varphi)$ by our assumption. %Then \cite[\href{https://stacks.math.columbia.edu/tag/0315}{Tag 0315}]{stacks-project} and our assumption on $\psi$ imply that $\wdh{\psi}_I$ is surjective. Then we note that $\varphi^{-1} \circ \wdh{\psi}_I \colon B^{\wedge}_I \to B^{\wedge}_I$ is a surjective ring endomorphism and $B^{\wedge}_I$ is a noetherian ring. Therefore, \cite[\href{https://stacks.math.columbia.edu/tag/06RN}{Tag 06RN}]{stacks-project} ensures that it is an isomorphism, so $\wdh{\psi}_I$ is also injective. 
\end{proof}

For the next definition, we fix a ring $A$ with an ideal $I\subset A$.

\begin{defn} A finitely presented morphism $A \to B$ is {\it smooth outside $\rm{V}(I)$} (or {\it smooth outside $I$}) if $\Spec B \smallsetminus \rm{V}(IB) \to \Spec A \smallsetminus \rm{V}(I)$ is a smooth morphism.
\end{defn}

Now we are ready to show the ``weak'' uniqueness result promised at the beginning of the section. 

\begin{lemma}\label{lemma:approximating-maps} Let $A$ be a noetherian ring, let $I\subset A$ be an ideal, let $B$ be a finite type $A$-algebra that is smooth outside $\rm{V}(I)$, let $C$ be a finite type $A$-algebra, and let $\varphi\colon B^{\wedge}_I \to C^{\wedge}_I$ be a morphism of $A$-algebras. Then, for every integer $n>0$, there is a morphism of $A$-algebras $\psi\colon B^\h_I \to C^\h_I$ such that $\psi \,\rm{ mod }\, I^n = \varphi \,\rm{ mod }\,I^n$. Furthermore, if $\varphi$ is an isomorphism, then any such $\psi$ is an isomorphism as well. 
\end{lemma}
\begin{proof}
    We fix a presentation $B=A[X_1, \dots, X_m]/(f_1, \dots, f_s)$. Then the morphism $\varphi$ is uniquely defined by the set of elements $\wdh{c}_1 \coloneqq \varphi(X_1), \dots, \wdh{c}_m \coloneqq \varphi(X_m)\in C^{\wedge}_I$ such that $f_i(\,\ud{\wdh{c}}\,)=0$ for $i=1, \dots, s$. Now \cite[Th.~2bis on p.560]{Elkik} implies that there are elements $c_1, \dots, c_m\in C^\h_I$ such that $f_i(\,\ud{c}\,)=0$ for $i=1, \dots, s$ and $c_j-\wdh{c}_j\in I^nC^\h_I$ for $j=1, \dots, m$. Then we define $\psi'\colon B\to C^\h_I$ to be the unique $A$-linear morphism that sends $X_j$ to $c_j$ for $j=1, \dots, m$. By construction, we have $\psi' \,\rm{ mod }\, I^n = \varphi \,\rm{ mod }\,I^n$. Now the universal property of henselizations (see \cite[\href{https://stacks.math.columbia.edu/tag/0A02}{Tag 0A02}]{stacks-project}) implies that $\psi'$ uniquely extends to a morphism $\psi\colon B^\h_I \to C^\h_I$. We conclude that $\psi \,\rm{ mod }\, I^n = \varphi \,\rm{ mod }\,I^n$ since the same was true for $\psi'$. Finally, if $\varphi$ is an isomorphism, then Lemma~\ref{lemma:isoms-open} guarantees that $\psi$ must be an isomorphism as well.
\end{proof}

Before we prove the main algebraization result, we need to establish a certain result in deformation theory. It is proven in the two lemmas below. For a ring $A$ and an $A$-module $M$, we denote by $\widetilde{M}$ the quasi-coherent sheaf on $\Spec A$ associated to $M$.

\begin{lemma}\label{lemma:miracle-equiality} Let $A$ be a ring, let $I\subset A$ be a finitely generated ideal, let $U \coloneqq \Spec A \smallsetminus \rm{V}(I)$ be the open complement of $\rm{V}(I)$, let $M$ be an $A$-module, and let $A \to B$ be a flat morphism such that $A/I \to B/IB$ is an isomorphism. We put $M_B \coloneqq M\otimes_A B$ and $U_B \coloneqq U\times_{\Spec A} \Spec B$. Then the natural morphism
\[
\rm{H}^i(U, \widetilde{M}|_U) \to \rm{H}^i(U_B, {\widetilde{M}_B}|_{U_B})
\]
is an isomorphism for any $i\geq 1$.
\end{lemma}
\begin{proof}
    We put $Z\coloneqq \rm{V}(I) \subset \Spec A$ and $Z_B \coloneqq \rm{V}(IB) \subset \Spec B$. Then \cite[\href{https://stacks.math.columbia.edu/tag/0DWR}{Tag 0DWR}]{stacks-project} implies that it suffices to show that the natural morphism $\rm{H}^i_Z(M) \to \rm{H}^i_{Z_B}(M_B)$ is an isomorphism for any $i\geq 2$. We show that this actually holds for any $i\geq 0$. For this, we note that \cite[\href{https://stacks.math.columbia.edu/tag/0ALZ}{Tag 0ALZ}]{stacks-project} implies that it suffices to show that the natural morphism
    \[
    \rm{H}^i_Z(M) \to \rm{H}^i_{Z}(M) \otimes_A B
    \]
    is an isomorphism for any $i\geq 0$. This follows directly from \cite[\href{https://stacks.math.columbia.edu/tag/05E9}{Tag 05E9}]{stacks-project}.
\end{proof}

\begin{lemma}\label{lemma:lift-rig-smooth} Let $A$ be a ring, let $I\subset A$ be a finitely generated ideal, let $J\subset A$ be a square-zero ideal, let $C$ be a finitely presented $A$-algebra that is smooth outside $\rm{V}(I)$. We put $A_0\coloneqq A/J$ and consider a diagram
\begin{equation}\label{eqn:deformation}
\begin{tikzcd}
Z_0 = \Spec C_0 \arrow[r, hook, "i_Z"] \arrow{d}{f_0} & Z = \Spec C \arrow{dd}{h} \\
Y_0 = \Spec B_0 \arrow{d}{g_0} & \\
X_0 = \Spec A_0 \arrow[r, hook, "i_X"] & X = \Spec A
\end{tikzcd}
\end{equation}
such that $i_X$ is the natural closed immersion, the ambient square is cartesian (in particular, this implies that $C_0\simeq C/JC$ as an $A_0$-algebra), $Y_0 \to X_0$ is a finitely presented morphism that is smooth outside $\rm{V}(IA_0)$, and $Z_0 \to Y_0$ is an \'etale morphism that induces an isomorphism $B_0/IB_0 \xr{\sim} C_0/IC_0$. Then we can fill-in Diagram~\ref{eqn:deformation} to a commutative diagram
\begin{equation*}\label{eqn:deformation-2}
\begin{tikzcd}
Z_0 = \Spec C_0 \arrow[r, hook, "i_Z"] \arrow{d}{f_0} & Z = \Spec C \arrow[dd, bend left = 90, "h"] \arrow{d}{f} \\
Y_0 = \Spec B_0 \arrow{d}{g_0} \arrow[r, hook] & Y = \Spec B \arrow{d}{g} \\
X_0 = \Spec A_0 \arrow[r, hook, "i_X"] & X = \Spec A,
\end{tikzcd}
\end{equation*}
such that each square is cartesian, $Y \to X$ is a finitely presented morphism that is smooth outside $\rm{V}(I)$, and $Z \to Y$ is an \'etale morphism that induces an isomorphism $B/IB \xr{\sim} C/IC$. 
\end{lemma}
\begin{proof}
    First, we denote by $U\subset X$, $U_0\subset X_0$, $V_0\subset Y_0$, $W\subset Z$, and $W_0\subset Z_0$ the open complements of the vanishing loci of the ideal $I$. We denote by $\rm{Def}_{X_0 \hookrightarrow X}(W_0)$ (resp. $\rm{Def}_{X_0 \hookrightarrow X}(V_0)$) the set of isomorphism classes of flat $X$-lifts of $W_0$ (resp. of $V_0$); see \cite[(8.5.7)]{FGA-EXP} for more detail. For a smooth morphism $S \to S'$, we denote by $\rm{T}_{S/S'}$ the relative tangent bundle. 
    
    Now the classical deformation theory (see \cite[Th.~8.5.9(b)]{FGA-EXP}) implies that the obstruction to lifting $W_0$ (resp. $V_0$) to a flat $X$-scheme is given by a class in $\rm{H}^2(W_0, \rm{T}_{W_0/X_0} \otimes_{A_0} J)$ (resp. $\rm{H}^2(V_0, \rm{T}_{V_0/X_0} \otimes_{A_0} J)$) and, if this class vanishes, the set of isomorphism classes of flat $X$-lifts is a torsor under the group $\rm{H}^1(W_0, \rm{T}_{W_0/X_0} \otimes_{A_0} J)$ (resp. $\rm{H}^1(V_0, \rm{T}_{V_0/X_0} \otimes_{A_0} J)$). Since $f_0$ is \'etale, we conclude $\rm{T}_{Y_0/X_0} \otimes_{B_0} C_0 \simeq \rm{T}_{Z_0/X_0}$. Therefore, Lemma~\ref{lemma:miracle-equiality} implies that the natural morphism 
    \begin{equation}\label{eqn:iso}
    \rm{H}^i(V_0, \rm{T}_{V_0/X_0} \otimes_{A_0} J) \to \rm{H}^i(W_0, \rm{T}_{W_0/X_0} \otimes_{A_0} J)
    \end{equation}
    is an isomorphism for $i\geq 1$. Thus, (\ref{eqn:iso}) applied to $i=2$ and the functoriality of the obstruction class (see \cite[Rmk.~8.5.10(a)]{FGA-EXP}) imply that the obstruction to the existence of a flat $X$-lift of $V_0$ vanishes (since $W_0$ admits a flat $X$-lift $W$). 
    
    Topological invariance of the small \'etale site (see \cite[\href{https://stacks.math.columbia.edu/tag/04DZ}{Tag 04DZ}]{stacks-project}) implies that any flat $X$-lift of $V_0$ uniquely defines a flat $X$-lift of $W_0$. This induces a map $\alpha\colon \rm{Def}_{X_0\hookrightarrow X}(V_0) \to \rm{Def}_{X_0 \hookrightarrow X}(W_0)$. Now (\ref{eqn:iso}) applied to $i=1$ implies that both of these sets are (compatibly) torsors under the same group. Since both torsors are non-empty, we conclude that $\alpha$ is a bijection. In other words, we can find a flat $X$-lifting $V$ of $V_0$ which fits in a commuting diagram
    \[
    \begin{tikzcd}[column sep = 5em]
        W_0  \arrow[r, hook, "(i_{Z})|_{W_0}"] \arrow[d, swap, "(f_0)|_{W_0}"] & W \arrow{d}{f'} \\
        V_0  \arrow[r, hook] \arrow[d, swap, "(g_0)|_{V_0}"] & V  \arrow{d}{g'}\\
        X_0 \arrow[r, hook, "i_X"] & X 
    \end{tikzcd}
    \]
    such that all squares are cartesian. 

    We wish to lift the whole $X_0$-scheme $Y_0$ to an $X$-scheme $Y$. This is quite subtle because $Y_0$ might not be a flat $X_0$-scheme, so the usual deformation theory does not apply in this case. Instead, we use the theory of algebraic spaces. Namely, we note that $W \to V$ is a separated \'etale morphism due to \cite[\href{https://stacks.math.columbia.edu/tag/06AG}{Tag 06AG}]{stacks-project}. Therefore, \cite[\href{https://stacks.math.columbia.edu/tag/0DVJ}{Tag 0DVJ}]{stacks-project} implies that we can construct an algebraic space $Y$ over $X$ as a pushout of the diagram
    \begin{equation}\label{eqn:pushout}
    \begin{tikzcd}
    W \arrow[r, hook] \arrow{d}{f'} & Z \arrow{d}{f} \\
    V \arrow[r, hook] & Y.
    \end{tikzcd}
    \end{equation}
    Furthermore, {\it loc.cit.} implies that (\ref{eqn:pushout}) is an elementary distinguished square in the sense of \cite[\href{https://stacks.math.columbia.edu/tag/08GM}{Tag 08GM}]{stacks-project}. In particular, the morphism $f$ is \'etale. Then \cite[\href{https://stacks.math.columbia.edu/tag/08GN}{Tag 08GN}]{stacks-project} and \cite[\href{https://stacks.math.columbia.edu/tag/0DVI}{Tag 0DVI}]{stacks-project} imply that $Y\times_X X_0 \simeq Y_0$ as $X_0$-schemes. Since $X_0 \to X$ is a nilpotent thickening, \cite[\href{https://stacks.math.columbia.edu/tag/07VT}{Tag 07VT}]{stacks-project} implies that $Y=\Spec B$ is an affine $X$-scheme. By construction, the affine $X$-scheme $Y$ fits into the following diagram
    \[
    \begin{tikzcd}
        Z_0 = \Spec C_0 \arrow[r, hook, "i_Z"] \arrow{d}{f_0} & Z = \Spec C \arrow{d}{f} \arrow[dd, bend left = 90, "h"] \\
        Y_0 = \Spec B_0 \arrow{d}{g_0} \arrow[r, hook] & Y = \Spec B \arrow{d}{g} \\
        X_0 = \Spec A_0 \arrow[r, hook, "i_X"] & X = \Spec A
    \end{tikzcd}
    \]
    such that each square is cartesian and $f$ is \'etale. Now we note that $h\colon Z \to X$ is finitely presented by assumption, while $g'=g|_V \colon V \to X$ is smooth due to \cite[\href{https://stacks.math.columbia.edu/tag/01V8}{Tag 01V8}]{stacks-project}. Therefore, \cite[\href{https://stacks.math.columbia.edu/tag/036N}{Tag 036N}]{stacks-project} applied to the fppf covering $Z\sqcup V \to Y$ implies $g$ is finitely presented. Since $V = Y \smallsetminus \rm{V}(I)$, we conclude that $g$ is smooth outside of $\rm{V}(I)$. Equivalently, $A \to B$ is smooth outside $\rm{V}(I)$. Finally, \cite[\href{https://stacks.math.columbia.edu/tag/051H}{Tag 051H}]{stacks-project} implies that the natural morphism $B/IB \to C/IC$ is an isomorphism since so is $B/(I+J)B=B_0/IB_0 \to C_0/IC_0 = C/(I+J)C$. This finishes the proof. 
\end{proof}

Before we start the proof of the main result of this section, we need to verify the following basic lemmas.

\begin{lemma}\label{lemma:fully-faithful} Let $A$ be a noetherian ring, let $J=(f_1, \dots, f_s) \subset I = (f_1, \dots, f_s, \dots, f_r)$ be ideals in $A$, and let $B$ be a $J$-adically complete $A$-algebra such that $B/J$ is a finite type $A/J$-algebra. Then the morphism $B \to B^{\wedge}_I \times \prod_{i=s+1}^r B[\frac{1}{f_i}]^{\wedge}_J$ is faithfully flat.
\end{lemma}
\begin{proof}
    First, we note that \cite[\href{https://stacks.math.columbia.edu/tag/0AJQ}{Tag 0AJQ}]{stacks-project} implies that $B$ is noetherian. Therefore, the morphism $B \to B^{\wedge}_I \times \prod_{i=s+1}^r B[\frac{1}{f_i}]^{\wedge}_J$ is flat. Therefore, \cite[\href{https://stacks.math.columbia.edu/tag/00HQ}{Tag 00HQ}]{stacks-project} ensures that it suffices to show that every closed point of $\Spec B$ lies in the image of the map 
    \[
    \Spec \Big( B^{\wedge}_I \times \prod_{i=s+1}^r B[\frac{1}{f_i}]^{\wedge}_J\Big) = \Spec B^{\wedge}_I \bigsqcup \Big(\bigsqcup_{i=s+1}^r \Spec B[\frac{1}{f_i}]^{\wedge}_J\Big) \to \Spec B.
    \]
    First, we note \cite[\href{https://stacks.math.columbia.edu/tag/05GI}{Tag 05GI}]{stacks-project} implies that any maximal ideal $\m \subset B$ contains the ideal $J$. If, furthermore,  $I\subset \m$, then the corresponding closed point $\Spec B$ lies in the image of $\Spec B^{\wedge}_I \to \Spec B$. Otherwise, $J\subset \m$ and there is an index $i\in \{s+1, \dots, r\}$ such that $f_i\notin \m$. In this case, the closed point of $\Spec B$ corresponding to $\m$ lies in the image of the map $\Spec  B[\frac{1}{f_i}]^{\wedge}_J \to \Spec B$. Therefore, $B \to B^{\wedge}_I \times \prod_{i=s+1}^r B[\frac{1}{f_i}]^{\wedge}_J$ is indeed fully faithful. 
\end{proof}

\begin{lemma}\label{lemma:smoothify} Let $A$ be a noetherian ring, let $J\subset I$ be two ideals, and let $B$ be a $J$-adically complete $A$-algebra such that $B/J$ is a finite type $A/J$-algebra. Put $A_n=A/J^n$ and $B_n=B/J^nB$. If $B^{\wedge}_I$ is rig-smooth\footnote{Since $B^{\wedge}_I/IB^{\wedge}_I \simeq B/IB$ is a finite type $A/I$-algebra, the notion of rig-smoothness over $(A, I)$ is well-defined for $B^{\wedge}_I$.} over $(A, I)$ and $B_n$ is smooth outside $\rm{V}(IA_n)$ for every $n\geq 1$, then $B$ is rig-smooth over $(A, J)$. 
\end{lemma}
\begin{proof}
    We refer to \cite[\href{https://stacks.math.columbia.edu/tag/0AJL}{Tag 0AJL}]{stacks-project} for the construction of the naive (completed) cotangent complexes $(\rm{NL}_{B/A})^{\wedge}_{J}$ and $(\rm{NL}_{B^{\wedge}_I/A})^{\wedge}_I$. We choose some compatible generators $J=(f_1, \dots, f_s)$ and $I=(f_1, \dots, f_s, \dots, f_r)$. 
    
    For brevity, we denote by $\rm{N}^{-1}$ and $\rm{N}^0$ the cohomology modules of $(\rm{NL}_{B/A})^{\wedge}_{J}$. Then \cite[\href{https://stacks.math.columbia.edu/tag/0GAJ}{Tag 0GAJ}]{stacks-project} implies that it suffices to show that $\rm{N}^{-1}$ is $J$-power torsion and $\rm{N}^{0}[\frac{1}{f_j}]$ is projective for $j=1, \dots, s$. We will prove a stronger claim that $\rm{N}^{-1}$ is $I$-power torsion and $\rm{N}^{0}[\frac{1}{f_j}]$ is projective for $j=1, \dots, r$. For this, we consider the morphism
    \[
    \alpha \colon B \to B^{\wedge}_I \times \prod_{i=s+1}^r B[\frac{1}{f_i}]^{\wedge}_J.
    \]
    Lemma~\ref{lemma:fully-faithful} implies that $\alpha$ is faithfully flat. Therefore, faithfully flat descent ensures that it suffices to show that $\rm{N}^{-1}\otimes_B B^{\wedge}_I$ and $\rm{N}^{-1}\otimes_B B[\frac{1}{f_i}]^{\wedge}_J$ are $I$-power torsion for any $i=s+1, \dots, r$, and that $(\rm{N}^0 \otimes_B B^{\wedge}_I)[\frac{1}{f_j}]$ and $(\rm{N}^0\otimes_B B[\frac{1}{f_i}]^{\wedge}_J)[\frac{1}{f_j}]$ are projective for any $j=1, \dots, r$ and any $i=s+1, \dots, r$. 
    
    Using the explicit description of $\rm{NL}^{\wedge}$ and the basic properties of completions, we see that $(\rm{NL}_{B/A})^{\wedge}_{J}$ lies in $D_{coh}^{[-1, 0]}(B)$ and there are isomorphisms
    \begin{equation}\label{eqn:base-change-cotangent-complex}
    (\rm{NL}_{B/A})^{\wedge}_{J} \otimes_B B^{\wedge}_I \simeq (\rm{NL}_{B^{\wedge}_I/A})^{\wedge}_I,
    \end{equation}
    \begin{equation}\label{eqn:base-change-cotangent-complex-2}
    (\rm{NL}_{B/A})^{\wedge}_{J} \otimes_B \Big(B\big[\frac{1}{f_i}\big]\Big)^{\wedge}_J\simeq (\rm{NL}_{(B[\frac{1}{f_i}])^{\wedge}_J/A})^{\wedge}_J
    \end{equation}
    for $i=s+1, \dots, r$.

    We first deal with $\rm{N}^{-1}\otimes_B B^{\wedge}_I$ and $(\rm{N}^{0} \otimes_B B^{\wedge}_I)[\frac{1}{f_j}]$. Since $B^{\wedge}_I$ is assumed to be rig-smooth over $(A, I)$, \cite[\href{https://stacks.math.columbia.edu/tag/0GAJ}{Tag 0GAJ}]{stacks-project} and (\ref{eqn:base-change-cotangent-complex}) imply\footnote{More precisely, \cite[\href{https://stacks.math.columbia.edu/tag/0GAJ}{Tag 0GAJ}]{stacks-project} implies that rig-smoothness is equivalent to condition $(5)$ of \cite[\href{https://stacks.math.columbia.edu/tag/0G9K}{Tag 0G9K}]{stacks-project}. Furthermore, the proof of \cite[\href{https://stacks.math.columbia.edu/tag/0G9K}{Tag 0G9K}]{stacks-project} implies that this conditions is equivalent to the condition $(5)$ for any \emph{fixed} set of generators $I=(f_1, \dots, f_s)$.} that $\rm{N}^{-1} \otimes_B B^{\wedge}_I$ is $I$-power torsion and $(\rm{N}^{0} \otimes_B B^{\wedge}_I)[\frac{1}{f_j}]$ is projective for any $j=1, \dots, r$. Then we are only left to deal with $\rm{N}^{-1}\otimes_B B[\frac{1}{f_i}]^{\wedge}_J$ and $(\rm{N}^0\otimes_B B[\frac{1}{f_i}]^{\wedge}_J)[\frac{1}{f_j}]$. We will show a stronger claim that $\rm{N}^{-1}\otimes_B B[\frac{1}{f_i}]^{\wedge}_J=0$ and $\rm{N}^{0}\otimes_B B[\frac{1}{f_i}]^{\wedge}_J$ is projective for any $i=s+1, \dots, r$. First, \cite[\href{https://stacks.math.columbia.edu/tag/0AJS}{Tag 0AJS}]{stacks-project}, and \cite[\href{https://stacks.math.columbia.edu/tag/07BR}{Tag 07BR}]{stacks-project} imply that 
    \[
    (\rm{NL}_{A[\frac{1}{f_i}]^{\wedge}_J/A})^{\wedge}_J \simeq \rR \lim_n \rm{NL}_{A_n[\frac{1}{f_i}]/A_n} \simeq 0
    \]
    for every $i=s+1, \dots, r$. Therefore, \cite[\href{https://stacks.math.columbia.edu/tag/0ALM}{Tag 0ALM}]{stacks-project} implies that $(\rm{NL}_{B[\frac{1}{f_i}]^{\wedge}_J/A})^{\wedge}_J \simeq (\rm{NL}_{B[\frac{1}{f_i}]^{\wedge}_J/A[\frac{1}{f_i}]^{\wedge}_J})^{\wedge}_J$ for any $i=s+1, \dots, r$. Thus, the assumption that each $B_n$ is smooth outside $\rm{V}(IA_n)$, \cite[\href{https://stacks.math.columbia.edu/tag/07BU}{Tag 07BU}]{stacks-project}, and \cite[\href{https://stacks.math.columbia.edu/tag/0AJS}{Tag 0AJS}]{stacks-project} imply that 
    \[
    (\rm{NL}_{B[\frac{1}{f_i}]^{\wedge}_J/A})^{\wedge}_J \simeq (\rm{NL}_{B[\frac{1}{f_i}]^{\wedge}_J/A[\frac{1}{f_i}]^{\wedge}_J})^{\wedge}_J \simeq \rR\lim_n \rm{NL}_{B_n[\frac{1}{f_i}]/A_n[\frac{1}{f_i}]} = \rR\lim_n \Omega^1_{B_n[\frac{1}{f_i}]/A_n[\frac{1}{f_i}]},
    \]
    where each $\Omega^1_{B_n[\frac{1}{f_i}]/A_n[\frac{1}{f_i}]}$ is finite projective. Therefore,  \cite[\href{https://stacks.math.columbia.edu/tag/00RV}{Tag 00RV}]{stacks-project} ensures that $\Omega^1_{B_n[\frac{1}{f_i}]/A_n[\frac{1}{f_i}]} \otimes_{B_n[\frac{1}{f_i}]} B_{n-1}[\frac{1}{f_i}] \simeq \Omega^1_{B_n[\frac{1}{f_i}]/A_n[\frac{1}{f_i}]} \otimes^L_{B_n[\frac{1}{f_i}]} B_{n-1}[\frac{1}{f_i}] \simeq \Omega^1_{B_{n-1}[\frac{1}{f_i}]/A_{n-1}[\frac{1}{f_i}]}$. Hence, the combination of \cite[\href{https://stacks.math.columbia.edu/tag/091D}{Tag 091D}]{stacks-project}, \cite[\href{https://stacks.math.columbia.edu/tag/0912}{Tag 0912}]{stacks-project}, and \cite[\href{https://stacks.math.columbia.edu/tag/0CQF}{Tag 0CQF}]{stacks-project} implies that
    \[
    (\rm{NL}_{B[\frac{1}{f_i}]^{\wedge}_J/A})^{\wedge}_J \simeq \rR\lim_n \Omega^1_{B_n[\frac{1}{f_i}]/A_n[\frac{1}{f_i}]} \simeq \lim_n \Omega^1_{B_n[\frac{1}{f_i}]/A_n[\frac{1}{f_i}]}
    \]
    is finite projective $B[\frac{1}{f_i}]^{\wedge}_J$-module concentrated in degree $0$. In other words, $\rm{H}^{-1}\big((\rm{NL}_{B[\frac{1}{f_i}]^{\wedge}_J/A})^{\wedge}_J\big)=0$ and $\rm{H}^0\big((\rm{NL}_{B[\frac{1}{f_i}]^{\wedge}_J/A})^{\wedge}_J\big)$ is a (finite) projective module for any $i=s+1, \dots, r$. Combining it with (\ref{eqn:base-change-cotangent-complex-2}), we conclude that $\rm{N}^{-1}\otimes_B B[\frac{1}{f_i}]^{\wedge}_J=0$ and $\rm{N}^{0}\otimes_B B[\frac{1}{f_i}]^{\wedge}_J$ is projective for any $i=s+1, \dots, r$. This finishes the proof.
\end{proof}

We also need to discuss the relation between rig-smoothness in the sense of \cite[\href{https://stacks.math.columbia.edu/tag/0GAI}{Tag 0GAI}]{stacks-project} and formal smoothness outside $\rm{V}(I)$ in the sense of \cite[p.~581]{Elkik}. 

\begin{lemma}\label{lemma:rig-smooth-vs-formally-smooth-outside} Let $A$ be a noetherian ring, let $I\subset A$ be an ideal, and let $B$ be an $I$-adically complete $A$-algebra suhc that $B/IB$ is a finite type $A/I$-algebra. Then $B\simeq A^{\wedge}_I\langle T_1, \dots, T_n\rangle/(g_1, \dots, g_m)$ for some $g_1, \dots, g_m \in A^{\wedge}_I\langle T_1, \dots, T_n\rangle$. Furthermore, if $B$ is rig-smooth over $(A, I)$, then the $A^{\wedge}_I$-algebra $B$ is formally smooth outside of $\rm{V}(I)$ in the sense of \cite[p.~581]{Elkik}.
\end{lemma}
\begin{proof}
    The first claim follows immediately from \cite[\href{https://stacks.math.columbia.edu/tag/0AJP}{Tag 0AJP}]{stacks-project}. Therefore, we only need to show that $B$ is formally smooth outside of $\rm{V}(I)$ if $B$ is rig-smooth over $(A, I)$. 
    
    For this, we choose a presentation $B\simeq A^{\wedge}_I\langle T_1, \dots, T_n\rangle/J$ for some ideal $J \subset A^{\wedge}_I\langle T_1, \dots, T_n\rangle$, pick a point $x\in \Spec B \smallsetminus \rm{V}(IB)$ and elements $b_1, \dots, b_s\in B$ as in the equivalent condition $(4)$ of \cite[\href{https://stacks.math.columbia.edu/tag/0GAJ}{Tag 0GAJ}]{stacks-project}. Then there is an index $l\in \{1, \dots, s\}$ such that $x\in \Spec B[\frac{1}{b_l}] \subset \Spec B$. Then \emph{loc.~cit.} constructs elements $f_1, \dots, f_d \in J$ and a subset $T\subset \{1, \dots, n\}$ with $|T|=d$ such that $\det_{i\leq d, j\in T}(\frac{\partial f_i}{\partial T_j})$ divides $b_l$ in $B$ and $b_l J \subset (f_1, \dots, f_d) + J^2$.\footnote{More precisely, \cite[\href{https://stacks.math.columbia.edu/tag/0GAJ}{Tag 0GAJ}]{stacks-project} shows that the $B$-module $J/\big(J^2 + (f_1, \dots, f_d)\big)$ is annihilated by $b_l$.} Using that $x\in \Spec B[\frac{1}{b_l}]$, we conclude that the first condition implies that the minors of order $d$ of the Jacobian matrix $(\frac{\partial f_i}{\partial T_j})_{i=1,\dots, d;\, j=1, \dots, n}$ generate the unit ideal around the point $x\in \Spec B$, while the second condition implies that the elements $(f_1, \dots, f_d)$ generate the ideal $J$ around the point $x\in \Spec A^{\wedge}_I\langle T_1, \dots, T_n\rangle$. Since $x\in \Spec B \smallsetminus \rm{V}(IB)$ was an arbitrary point, we conclude that $B$ is formally smooth outside of $\rm{V}(I)$.
\end{proof}

\begin{thm}[Noetherian rig-smooth algebraization]\label{thm:noetherian-approximation} Let $A$ be a noetherian ring, and let $I\subset A$ be an ideal. Let $B$ be an $I$-adically complete $A$-algebra such that $B/IB$ is a finite type $A/I$-algebra and $B$ is rig-smooth over $(A, I)$ (in the sense of \cite[\href{https://stacks.math.columbia.edu/tag/0GAI}{Tag 0GAI}]{stacks-project}). Then there is a finite type $A$-algebra $C$ such that $C$ is smooth outside $\rm{V}(I)$ and there is an isomorphism $C^{\wedge}_I \simeq B$ of $A$-algebras.
\end{thm}
\begin{proof}
    First, we note that the natural morphism $A \to B$ uniquely factors as $A \to A^\h_I \to B$. Then we note that $B$ is rig-smooth over $(A^\h_I, IA^\h_I)$ (this follows directly from \cite[\href{https://stacks.math.columbia.edu/tag/0GAI}{Tag 0GAI}]{stacks-project}). Now we choose some generators $I=(f_1, \dots, f_r)$ and argue inductively on the number of generators $r$. If $r=1$, Lemma~\ref{lemma:rig-smooth-vs-formally-smooth-outside} implies that $B$ is formally smooth outside $\rm{V}(I)$ in the sense of \cite[p.~581]{Elkik}. Therefore, \cite[Th.~7 on p.~582]{Elkik} implies that there is a finite type $A^\h_I$-algebra $C'$ such that it is smooth outside $\rm{V}(I)$ and there is an isomorphism $(C')^{\wedge}_I\simeq B$ of $A^\h_I$-algebras. Then a standard approximation argument (similar to \cite[\href{https://stacks.math.columbia.edu/tag/0GAS}{Tag 0GAS}]{stacks-project}) implies that there is a finite type $A$-algebra $C$ such that it is smooth outside $\rm{V}(I)$ with an isomorphism $C^{\wedge}_I \simeq B$ of $A$-algebras. 

    Now we do the induction step. We assume that $r>1$ and the result is known for all ideals generated by less than $r$ elements. Then we put $I_2\coloneqq (f_2, \dots, f_r)$, and we also put $A_n\coloneqq A/(f_1^n)$ and $B_n=B/(f_1^n)$ for any integer $n\geq 1$. We note that $B_n$ is $I$-adically complete due to \cite[\href{https://stacks.math.columbia.edu/tag/0AJQ}{Tag 0AJQ}]{stacks-project} and that $B_n$ is rig-smooth over $(A_n, IA_n)$ due to \cite[\href{https://stacks.math.columbia.edu/tag/0GAM}{Tag 0GAM}]{stacks-project}. Since the assertion depends only on $\rm{rad}(I)$ and $\rm{rad}(IA_n) = \rm{rad}(I_2 A_n)$, we can apply the inductive hypothesis to $A_n \to B_n$ to find finite type $A_n$-algebras $C_n$ smooth outside $\rm{V}(IA_n)$ together with isomorphisms $\alpha_n\colon (C_n)^{\wedge}_I \xr{\sim} B_n$ of $A_n$-algebras (equivalently, of $A$-algebras). Since $B_n/(f_1^{n-1}) = B_{n-1}$, we use the isomorphisms $\alpha_n$ to get isomorphisms 
    \[
    \ov{\varphi}_n\colon (C_n)^{\wedge}_I/(f_1^{n-1}) \xr{\sim} (C_{n-1})^{\wedge}_I
    \]
    of $A$-algebras. We denote by $\varphi_n\colon (C_n)^{\wedge}_I \to (C_{n-1})^{\wedge}_I$ the induced morphisms. We also denote by 
    \[
    \pi_n\colon B_n \to B_{n-1}
    \]
    the ``reduction by $f_1^{n-1}$'' morphism. Now we wish to modify $\alpha_n$ and $C_n$ such that all $\varphi_n$ can be ``decompleted'' to morphisms $C_{n} \to C_{n-1}$. 

    {\it Step~$1$. We partially ``decomplete'' $\varphi_n$ to achieve that they come from maps $(C_n)^\h_I \to (C_{n-1})^\h_I$}. To do this, we note that Lemma~\ref{lemma:approximating-maps} ensures that, for each integer $n>1$, we can find an $A$-linear isomorphism 
    \[
    \ov{\psi}_n\colon (C_{n})^\h_I/(f_1^{n-1}) \simeq \big(C_{n}/(f_1^{n-1})\big)^\h_I \xr{\sim} (C_{n-1})^\h_I
    \]
    such that $(\ov{\psi}_n)^{\wedge}_I \,\rm{ mod }\, I^n = \ov{\varphi}_n \,\rm{ mod }\, I^n$. We denote by $\psi_n \colon (C_n)^\h_I \to (C_{n-1})^\h_I$ the induced morphism. For each $n>m\geq 1$, we denote by 
    \[
    \psi_{n, m} \colon (C_n)^\h_I \to (C_m)^\h_I,
    \]
    \[
    \varphi_{n, m} \colon (C_n)^{\wedge}_I \to (C_m)^{\wedge}_I,
    \]
    \[
    \pi_{n, m} \colon B_n \to B_{m}
    \]
    the evident composition of $\psi_i$, $\varphi_i$, and $\pi_i$ for $i=n, \dots, m+1$ respectively. These morphisms induce isomorphisms $\ov{\psi}_{n,m} \colon (C_{n})^\h_I/(f_1^{m})\simeq \big(C_{n}/(f_1^{m})\big)^\h_I \xr{\sim} (C_{m})^\h_I$ and $\ov{\varphi}_{n, m} \colon (C_{n})^{\wedge}_I/(f_1^{m})\simeq\big(C_{n}/(f_1^{m})\big)^{\wedge}_I \xr{\sim} (C_{m})^{\wedge}_I$. Then we define 
    \[
    \theta_{n, m} \coloneqq  \ov{\varphi}_{n, m} \circ \big((\ov{\psi}_{n, m})^{\wedge}_I\big)^{-1} \colon (C_{m})^{\wedge}_I \xr{\sim} (C_{m})^{\wedge}_I
    \]
    for any $n>m\geq 1$. By construction, the diagram
    \begin{equation}\label{eqn:approximate-finite}
    \begin{tikzcd}[column sep = 5em]
    (C_n)^{\wedge}_I  \arrow[r, swap, "\sim"]\arrow{d}{(\psi_{n,m})^{\wedge}_I} \arrow{r}{\alpha_n} & B_n \arrow{d}{\pi_{n,m}} \\
    (C_m)^{\wedge}_I \arrow[r, swap, "\sim"] \arrow{r}{\alpha_{m} \circ \theta_{n, m}} & B_{m}
    \end{tikzcd}
    \end{equation}
    commutes for any $n>m\geq 1$. Now, for any $n\geq 1$ and an element $x\in (C_n)^{\wedge}_I$, the sequence
    \[
    x, \theta_{n+1, n}(x), \theta_{n+2, n}(x), \theta_{n+3, n}(x), \dots
    \]
    is Cauchy with respect to the $I$-adic uniform structure because $(\psi_{n+i})^{\wedge}_I \,\rm{ mod }\, I^{n+i} = \varphi_{n+i} \,\rm{ mod }\, I^{n+i}$ for any integer $i\geq 0$. Since each $(C_n)^{\wedge}_I$ is $I$-adically complete, we can define 
    \[
    \theta_n(x) = \lim_{m \to \infty} \theta_{n+m, n}(x) \in (C_n)^{\wedge}_I
    \]
    for any $x\in (C_n)^{\wedge}_I$. One easily checks that $\theta_n\colon (C_n)^{\wedge}_I \to (C_n)^{\wedge}_I$ is an isomorphism of $A$-algebras. Using (\ref{eqn:approximate-finite}) and a standard limit argument, we conclude that the diagram
    \[
    \begin{tikzcd}[column sep = 5em]
    (C_n)^{\wedge}_I \arrow{d}{(\psi_{n})^{\wedge}_I} \arrow[r, swap, "\sim"] \arrow{r}{\alpha_n\circ \theta_n} & B_n \arrow{d}{\pi_n} \\
    (C_{n-1})^{\wedge}_I \arrow[r, swap, "\sim"]\arrow{r}{\alpha_{n-1} \circ \theta_{n-1}} & B_{n-1}
    \end{tikzcd}
    \]
    commutes for any $n\geq 2$. Therefore, we can replace each $\alpha_n$ by $\alpha_n \circ \theta_n$ to achieve that the diagram
    \[
    \begin{tikzcd}[column sep = 5em]
    (C_n)^{\h}_I \arrow{r} \arrow{d}{\psi_n}& (C_n)^{\wedge}_I \arrow{d}{(\psi_{n})^{\wedge}_I} \arrow[r, swap, "\sim"] \arrow{r}{\alpha_n} & B_n \arrow{d}{\pi_n} \\
    (C_{n-1})^{\h}_I \arrow{r} & (C_{n-1})^{\wedge}_I \arrow[r, swap, "\sim"] \arrow{r}{\alpha_{n-1}} & B_{n-1}
    \end{tikzcd}
    \]
    commutes for any $n\geq 2$. 
    
    {\it Step~$2$. We ``dehenselize'' $\psi_n$ to achieve that they come from maps $C_n \to C_{n-1}$ (possibly after changing $C_n$)}. More precisely, we show that there exist finite type $A_n$-algebras $C'_n$ smooth outside $\rm{V}(IA_n)$ with isomorphisms $\beta_n\colon (C_n)^\h_I \xr{\sim} (C'_n)^\h_I$ of $A_n$-algebras for $n\geq 1$ and morphisms of $A_n$-algebras $\rho_n\colon C'_n \to C'_{n-1}$ for $n\geq 2$ such that $(\rho_n)^\h_I = \psi_n$ for $n\geq 2$ (using the identifications $\beta_n$). 

    We construct $C'_n$, $\beta_n$, and $\rho_n$ inductively on $n$. For $n=1$, we put $C'_1=C_1$ and $\beta_1=\rm{id}$. Now we suppose that $n>1$ and that we have constructed $C'_i$, $\beta_i$, and $\rho_i$ for $i\leq n-1$, and try to construct it for $n$. For the purpose of this construction, we can safely replace $C_i$ by $C'_i$ to achieve that $C_i=C'_i$ and $\beta_i=\rm{id}$ for $i\leq n-1$. 

    First, Lemma~\ref{lemma:isoms-on-completions} ensures that $\psi_n$ induces an isomorphism $\ov{\psi}_n\colon \big(C_n/(f_1^{n-1})\big)^\h_I \xr{\sim} (C_{n-1})^\h_I$. Then a simple approximation argument implies that there is a finite type $A_{n-1}$-algebra $D_{n-1}$ and $A$-linear \'etale morphisms $a_{n-1}\colon C_{n-1} \to D_{n-1}$ and $b_{n-1}\colon C_{n}/(f_1^{n-1}) \to D_{n-1}$ that induce isomorphisms $C_{n-1}/IC_{n-1} \xr{\sim} D_{n-1}/ID_{n-1}$ and $C_n/(f_1^{n-1}, IC_n)\xr{\sim} D_{n-1}/ID_{n-1}$ and such that
    \[
    \big((a_{n-1})^{\h}_I\big)^{-1}\circ (b_{n-1})^{\h}_I = \ov{\psi}_n \colon \big(C_n/(f_1^{n-1})\big)^\h_I \to (C_{n-1})^\h_I.
    \]
    We note that $(a_{n-1})^{\h}_I$ is an isomorphism since it is \'etale and induces an isomorphism on mod-$I$ fibers, so the morphism $\big((a_{n-1})^{\h}_I\big)^{-1}$ in the formula above is well-defined. 
    
    The topological invariance of the small \'etale site (see \cite[\href{https://stacks.math.columbia.edu/tag/039R}{Tag 039R}]{stacks-project}) imply that we can find lift the morphism $b_{n-1}\colon C_n/(f_1^{n-1}) \to D_{n-1}$ to an \'etale $A$-linear morphism $b_n\colon C_n \to D_n$. Then \cite[\href{https://stacks.math.columbia.edu/tag/051H}{Tag 051H}]{stacks-project} ensures that $C_n/IC_n \to D_n/ID_n$ is an isomorphism, thus we have 
    \[
    \big((a_{n-1})^{\h}_I\big)^{-1} \circ (r_n)^\h_I\circ (b_{n})^{\h}_I = \psi_n \colon (C_n)^\h_I \to (C_{n-1})^\h_I,
    \]
    where $r_n\colon D_n \to D_{n-1}$ is the natural reduction morphism. Using Lemma~\ref{lemma:lift-rig-smooth}, we can find a commutative diagram
    \[
    \begin{tikzcd}
    D_n \arrow{r}{r_n} & D_{n-1} \\
    C'_n \arrow{u}{a_n} \arrow{r}{\rho_n}& C_{n-1} \arrow{u}{a_{n-1}}\\
    A_n\arrow{u} \arrow{r} & A_{n-1} \arrow{u}
    \end{tikzcd}
    \]
    such that each square is a push-out square, $C'_n$ is a finite type $A_n$-algebra smooth away from $\rm{V}(IA_n)$, and $a_n$ is an \'etale morphism inducing an isomorphism $C'_n/IC'_n \to D_n/ID_n$. Then we put $\beta_n \coloneqq  \big((a_n)^\h_I\big)^{-1} \circ (b_n)^\h_I\colon (C_n)^\h_I \xr{\sim} (C'_n)^\h_I$; this is evidently an isomorphism of $A$-algebras. Then we have
    \[
    (\rho_n)^\h_I \circ \beta_n = (\rho_n)^\h_I \circ \big((a_n)^\h_I\big)^{-1} \circ (b_n)^\h_I = \big((a_{n-1})^{\h}_I\big)^{-1} \circ (r_n)^\h_I \circ (b_n)^\h_I = \psi_n.
    \]
    Therefore, $C'_n$, $\rho_n$, and $\beta_n$ as defined above do the job. Thus induction finishes the proof of Step~$2$. 

    Now we can replace each $C_n$ by $C'_n$ to achieve that we are in the situation where we have $A$-linear morphisms $\rho_n\colon C_n \to C_{n-1}$ fitting into a commutative diagram
    \[
    \begin{tikzcd}[column sep = 5em]
    C_n \arrow{r} \arrow{d}{\rho_n}& (C_n)^{\wedge}_I \arrow{d}{(g_{n})^{\wedge}_I} \arrow[r, swap, "\sim"] \arrow{r}{\alpha_n} & B_n \arrow{d}{\pi_n} \\
    C_{n-1} \arrow{r} & (C_{n-1})^{\wedge}_I \arrow[r, swap, "\sim"] \arrow{r}{\alpha_{n-1}} & B_{n-1}.
    \end{tikzcd}
    \]
    {\it End of proof.} Now we note that $\widetilde{C} \coloneqq \lim_n C_n$ is an $(f_1)$-adically complete $A$-algebra due to \cite[\href{https://stacks.math.columbia.edu/tag/0AJP}{Tag 0AJP}]{stacks-project}. Also, {\it loc.~cit.} and our assumption on $C_n$ imply that, for any $n\geq 1$, the finite type $A_n$-algebra $\widetilde{C}/(f_1^n)\simeq C_n$ is smooth outside $\rm{V}(IA_n)$. Furthermore, using the following sequence of isomorphisms of $A$-algebras 
    \[
    \widetilde{C}^{\wedge}_I \simeq \lim_n \widetilde{C}/I^n\widetilde{C} \simeq \lim_n \widetilde{C}/(f_1^n, I^n\widetilde{C}) \simeq \lim_{n, m} \widetilde{C}/(f_1^n, I^m\widetilde{C}) \simeq \lim_{n,m} C_n/I^mC_n \simeq \lim_n (\lim_m C_n/I^mC_n) \simeq \lim_n B_n = B,
    \]
    we conclude that $\widetilde{C}^{\wedge}_I$ is rig-smooth over $(A, I)$. Therefore, Lemma~\ref{lemma:smoothify} ensures that $\widetilde{C}$ is rig-smooth over $(A, (f_1))$. Thus, we can apply the base of induction to find a finite type $A$-algebra $C$ such that it is smooth outside $\rm{V}(f_1A)$ and there is an isomorphism $C^{\wedge}_{(f_1)} \simeq \widetilde{C}$ of $A$-algebras. Since $C/(f_1^n) \simeq \widetilde{C}/(f_1^n)$ is smooth outside $\rm{V}(IA_n)$, \cite[\href{https://stacks.math.columbia.edu/tag/0523}{Tag 0523}]{stacks-project} and \cite[\href{https://stacks.math.columbia.edu/tag/00TF}{Tag 00TF}]{stacks-project} imply that $\Spec C \to \Spec A$ is smooth at all points of $\rm{V}(f_1C) \smallsetminus \rm{V}(IC)$. Since it is also assumed to be smooth outside $\rm{V}(f_1A)$, we conclude that $C$ is smooth outside $\rm{V}(IA)$. Finally, we note that
    \[
    C^{\wedge}_I \simeq \big(C^{\wedge}_{(f_1)}\big)^{\wedge}_I \simeq (\widetilde{C})^{\wedge}_I \simeq B.
    \]
    Thus, $C$ does the job. 
\end{proof}

\appendix

\section{Dimension function on affinoids}\label{appendix:berkovich}

In this appendix, we fix a non-archimedean field $K$ and a $K$-affinoid algebra $A$. The main goal of this section is to construct a canonical dimension function on $\Spec A$. The existence of this dimension function is used in the proof of Theorem~\ref{thm:main-theorem} to invoke results from \cite{deGabber}. 

\begin{defn} Let $X$ be a spectral space. A point $y\in X$ is a {\it specialization of} a point $x$ if $y\in \ov{\{x\}}$. We denote the specialization relation by the symbol $x\rightsquigarrow y$.  

A point $y\in X$ is an {\it immediate specialization of} a point $x$ if $x\rightsquigarrow y$, and there is no $z\in X\setminus \{x, y\}$ such that $x\rightsquigarrow z$ and $z \rightsquigarrow y$.
\end{defn}

\begin{defn}\label{defn:dimension-function}\cite[Exp.\,XIV, D\'ef.\,2.1.8, Prop.\,2.1.4]{deGabber} Let $X$ be a noetherian, universally catenary scheme. A {\it dimension function} on $X$ is a map
\[
\delta\colon \abs{X} \to \Z
\]
such that $\delta(y)=\delta(x)-1$ for any immediate specialization $x\rightsquigarrow y$.
\end{defn}

\begin{lemma}\label{lemma:maximal-points-detect-dimension} \cite[Lemma 2.1.5]{Conrad99} Let $A$ be a $K$-affinoid domain, and $\m\subset A$ a(ny) maximal ideal. Then $\dim A = \dim A_{\m}$. 
\end{lemma}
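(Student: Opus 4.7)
The plan is to reduce the statement to the analogous claim for a Tate algebra via Noether normalization, and then transfer the height computation back to $A$ using the going-down theorem.

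By \cite[Prop.~3.1/3]{B}, there is a finite injection $\varphi\colon T_d\hookrightarrow A$, where $T_d \coloneqq K\langle X_1,\dots,X_d\rangle$ and $d=\dim A$. Given a maximal ideal $\m\subset A$, I first claim that the contraction $\mathfrak{n}\coloneqq \varphi^{-1}(\m)$ is a maximal ideal of $T_d$. Indeed, the induced inclusion $T_d/\mathfrak{n}\hookrightarrow A/\m$ is integral, $A/\m$ is a field, and any domain contained in a field over which the field is integral is itself a field.

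The key input is that every maximal ideal $\mathfrak{n}$ of the Tate algebra $T_d$ has height exactly $d$. By the Tate Nullstellensatz for affinoid algebras, $T_d/\mathfrak{n}$ is a finite field extension of $K$, so $\dim T_d/\mathfrak{n}=0$. The ring $T_d$ is a regular Noetherian domain of Krull dimension $d$, and one has the dimension formula $\operatorname{ht}(\mathfrak{n})+\dim T_d/\mathfrak{n}=d$ for maximal ideals (which one proves for Tate algebras along exactly the same lines as for polynomial rings over a field, using Noether normalization of the quotient). Consequently $\operatorname{ht}(\mathfrak{n})=d$.

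Finally, I transfer this bound to $A$: since $T_d$ is regular, hence normal, and $A$ is a domain finite over $T_d$, the going-down theorem applies to the integral extension $T_d\hookrightarrow A$. A saturated chain $(0)=\mathfrak{p}_0\subsetneq\mathfrak{p}_1\subsetneq\cdots\subsetneq\mathfrak{p}_d=\mathfrak{n}$ in $T_d$ then lifts, by iterated application of going-down, to a chain $(0)=\mathfrak{q}_0\subsetneq\mathfrak{q}_1\subsetneq\cdots\subsetneq\mathfrak{q}_d=\m$ in $A$, so $\operatorname{ht}(\m)\geq d$. Combined with the trivial bound $\operatorname{ht}(\m)\leq\dim A=d$, this yields $\dim A_\m=\operatorname{ht}(\m)=d=\dim A$. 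The main obstacle is the height statement for maximal ideals in the Tate algebra; once that is in hand, the transfer to the general affinoid domain $A$ is routine commutative algebra.
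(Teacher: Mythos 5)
Your proof is correct and is essentially the standard argument behind the cited result \cite[Lemma 2.1.5]{Conrad99} (the paper itself only cites Conrad and gives no proof): Noether normalization $T_d\hookrightarrow A$, the fact that every maximal ideal of $T_d$ has height $d$, and going-down for the finite extension of a normal domain. The one input you assert rather than prove --- that $\operatorname{ht}(\mathfrak{n})=d$ for every maximal ideal $\mathfrak{n}\subset T_d$ --- is a standard fact you should simply cite (e.g.\ \cite[\textsection 2.2, Prop.\,17]{B}, already used in this paper), rather than gesture at an analogy with polynomial rings.
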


\begin{cor}\label{cor:dimension-function-affinoid} Let $A$ be a $K$-affinoid algebra. Then the function
\[
\delta_A \colon \abs{\Spec A}\to \Z
\]
defined by $\delta_A(x)=\dim A/\mathfrak{p}_x$ is a dimension function on $\Spec A$.
\end{cor}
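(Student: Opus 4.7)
The plan is to verify Definition~\ref{defn:dimension-function} directly, reducing everything to Lemma~\ref{lemma:maximal-points-detect-dimension} together with the fact that $K$-affinoid algebras are excellent (hence universally catenary) by Kiehl's theorem cited earlier in the paper. Recall also that every quotient of a $K$-affinoid algebra is again $K$-affinoid, so we have a plentiful supply of affinoid domains to which Lemma~\ref{lemma:maximal-points-detect-dimension} applies.

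Given an immediate specialization $x \rightsquigarrow y$ in $\Spec A$, set $B \coloneqq A/\mathfrak{p}_x$ and $\mathfrak{q} \coloneqq \mathfrak{p}_y / \mathfrak{p}_x$. Then $B$ is a $K$-affinoid domain, and the immediate specialization condition translates into $\mathfrak{q}$ being a prime of height $1$ in $B$. Since $\delta_A(x) = \dim B$ and $\delta_A(y) = \dim B/\mathfrak{q}$, the problem reduces to proving
\[
\dim B/\mathfrak{q} = \dim B - 1.
\]

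To establish this, pick any maximal ideal $\mathfrak{m} \subset B$ containing $\mathfrak{q}$. Since $B_{\mathfrak{m}}$ is a catenary Noetherian local domain, the standard dimension identity yields
\[
\dim B_{\mathfrak{m}} \;=\; \mathrm{height}(\mathfrak{q} B_{\mathfrak{m}}) + \dim (B/\mathfrak{q})_{\mathfrak{m}/\mathfrak{q}} \;=\; 1 + \dim (B/\mathfrak{q})_{\mathfrak{m}/\mathfrak{q}}.
\]
Applying Lemma~\ref{lemma:maximal-points-detect-dimension} to $B$ gives $\dim B_{\mathfrak{m}} = \dim B$, and applying it to the $K$-affinoid domain $B/\mathfrak{q}$ (with maximal ideal $\mathfrak{m}/\mathfrak{q}$) gives $\dim (B/\mathfrak{q})_{\mathfrak{m}/\mathfrak{q}} = \dim B/\mathfrak{q}$. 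Combining these two identities produces $\dim B = 1 + \dim B/\mathfrak{q}$, which is exactly $\delta_A(y) = \delta_A(x) - 1$.

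The argument is essentially formal bookkeeping; there is no genuine obstacle beyond the two applications of Lemma~\ref{lemma:maximal-points-detect-dimension}. The one substantive point worth emphasizing is that without Lemma~\ref{lemma:maximal-points-detect-dimension} one cannot relate $\dim B$ (a global invariant) to $\mathrm{height}(\mathfrak{m})$ (a local invariant), so this lemma is doing all the real work — everything else is catenarity in a local Noetherian domain.
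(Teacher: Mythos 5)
Your proof is correct and follows essentially the same route as the paper's: reduce to the quotient domain $B=A/\mathfrak{p}_x$ with a height-one prime $\mathfrak{q}$, localize at a maximal ideal containing $\mathfrak{q}$, apply Lemma~\ref{lemma:maximal-points-detect-dimension} twice (to $B$ and to $B/\mathfrak{q}$), and conclude via the dimension identity $\dim B_{\mathfrak{m}} = \mathrm{ht}(\mathfrak{q}) + \dim(B/\mathfrak{q})_{\mathfrak{m}/\mathfrak{q}}$ in a catenary Noetherian local domain, which is exactly the Stacks Project reference the paper invokes. The only cosmetic difference is that you justify catenarity via excellence (Kiehl), whereas the paper notes that $A$ is a quotient of a regular ring; both are valid.
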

We note that $A$ is a quotient of a regular algebra due to \cite[\textsection 2.2, Prop.\,17]{B}. Therefore, \cite[\href{https://stacks.math.columbia.edu/tag/00NQ}{Tag 00NQ} and \href{https://stacks.math.columbia.edu/tag/00NM}{Tag 00NM}]{stacks-project}  imply that $A$ is universally catenary. In particular, Definition~\ref{defn:dimension-function} applies to $\Spec A$.  
\begin{proof}
    The only thing we have to check is that, for any prime ideals $\mathfrak p' \subset \mathfrak p$ with no proper prime ideals between them, we have 
    \[
    \dim A/\mathfrak p =\dim A/\mathfrak p' -1.
    \]
    For this, we can assume that $\mathfrak p'=(0)$ (so $A$ is a domain) and so $\mathfrak p\subset A$ is an ideal of height-$1$. In this case, we pick a maximal ideal $\m\subset A$. Then Lemma~\ref{lemma:maximal-points-detect-dimension} reduces the question to showing that
    \[
    \dim A_{\m}/\mathfrak p =\dim A_{\m} - 1=\dim A_{\m} - \rm{ht}(\mathfrak p).
    \]
    This follows from \cite[\href{https://stacks.math.columbia.edu/tag/0ECF}{Tag 0ECF}]{stacks-project}. 
\end{proof}

\begin{cor}\label{cor:dimension-finite-type-over-affinoid} \cite[Exp.\,XIV, Cor.\,2.5.2]{deGabber} Let $A$ be a $K$-affinoid algebra, and let $f\colon X \to \Spec A$ be a finite type morphism. Then 
\[
\delta_X(x)=\delta_A(f(x)) + \rm{trdeg}\left(k(x)/k(f(x))\right)
\]
is a dimension function of $X$.
\end{cor}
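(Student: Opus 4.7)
The plan is to reduce the statement to a direct application of \cite[Exp.\,XIV, Cor.\,2.5.2]{deGabber}, whose hypothesis is that $\delta_A$ is already a dimension function on $\Spec A$ and that both $\Spec A$ and $X$ are noetherian and universally catenary. The first input is the content of Corollary~\ref{cor:dimension-function-affinoid}, which has just been proven, so it remains to verify the noetherian and universal catenariness hypotheses.

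Noetherianness of $X$ is immediate: $A$ is noetherian (being $K$-affinoid), and $f$ is of finite type. For universal catenariness of $X$, the argument already used in Corollary~\ref{cor:dimension-function-affinoid} applies: by \cite[\textsection 2.2, Prop.\,17]{B}, $A$ is a quotient of a regular noetherian ring, hence is universally catenary by \cite[\href{https://stacks.math.columbia.edu/tag/00NQ}{Tag 00NQ} and \href{https://stacks.math.columbia.edu/tag/00NM}{Tag 00NM}]{stacks-project}. Since the property of being universally catenary is preserved under passage to any finite type algebra essentially by definition, $X$ is universally catenary as well.

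With these hypotheses in place, \cite[Exp.\,XIV, Cor.\,2.5.2]{deGabber} yields directly that the function $x\mapsto \delta_A(f(x))+\mathrm{trdeg}(k(x)/k(f(x)))$ is a dimension function on $X$, which is what we wanted. If one preferred to give a self-contained argument, one would check the defining property on an immediate specialization $x\rightsquigarrow y$ by splitting into two cases: if $f(x)=f(y)$, use the fact that $X_{f(x)}$ is of finite type over the field $k(f(x))$ together with the standard dimension theory for such algebras to conclude $\mathrm{trdeg}(k(y)/k(f(x)))=\mathrm{trdeg}(k(x)/k(f(x)))-1$; if $f(x)\neq f(y)$, apply the dimension formula for a finite type morphism of noetherian universally catenary rings (\cite[\href{https://stacks.math.columbia.edu/tag/02JU}{Tag 02JU}]{stacks-project}) to compare the transcendence degrees and the values of $\delta_A$. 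The main conceptual step is really the verification of universal catenariness, but thanks to the Noether normalization already implicit in the affinoid setting this is essentially for free.
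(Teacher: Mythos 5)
Your proposal is correct and matches the paper's approach: the paper offers no separate proof and simply derives the statement from the cited \cite[Exp.\,XIV, Cor.\,2.5.2]{deGabber}, the needed hypotheses being exactly what you supply, namely that $\delta_A$ is a dimension function (Corollary~\ref{cor:dimension-function-affinoid}) and that $A$ is noetherian and universally catenary as verified in the remark preceding that corollary. Your extra sketch of a self-contained verification via the dimension formula is a reasonable expansion but not something the paper carries out.
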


\begin{rmk} In the notation of Corollary~\ref{cor:dimension-finite-type-over-affinoid}, we have $\delta_X(x)\leq \dim A + d'$, where  $d'$ is the relative dimension of $f$.
\end{rmk}

\section{Non-algebraizable affine formal schemes}\label{appendix:counterexamples}

In this appendix, we provide examples of non-algebraizable admissible affine formal schemes. We expect that these examples are known to the experts. However, we were not able to find any examples of non-algebraizable admissible formal schemes recorded in the existing literature. Therefore, we decided to include these examples in this paper. 

\subsection{Preliminary results}

In this subsection, we discuss some preliminary results that will be important to justify our examples of non-algebraizable affine admissible formal schemes. 

For the rest of this appendix, we fix a non-archimedean field $K$ with ring of integers $\O_K$ and residue field $k$. We denote by $\m\subset \O_K$ its maximal ideal and fix a choice of a pseudo-uniformizer $\varpi \in \m$. 

\begin{convention} For a ring $R$ and $d=0$, the $R$-algebra $R[T_1, \dots, T_d]$ is equal to $R$. Similarly, for a topologically finite type $\O_K$-algebra $R$ and $d=0$, the $R$-algebra $R\langle T_1, \dots, T_d\rangle$ is equal to $R$.
\end{convention}

Most results of this subsection are concerned about the relationship between $(\varpi)$-adic henselization and $(\varpi)$-adic completion of $\O_K$-algebras which are finite type over a topologically finite type $\O_K$-algbebra. Before we start this discussion, we prove the following basic lemma. 

\begin{lemma}\label{lemma:reduce-to-flat} Let $R_0$ be a topologically finite type $\O_K$-algebra, let $R$ be a finite type $R_0$-algebra, and set $J_R\coloneqq \{f\in R \ | \ \text{ there is } \pi \in \m\smallsetminus \{0\} \text{ such that } \pi f=0\} \subset R$. Then $R^\h_{(\varpi)}[\frac{1}{\varpi}] \to (R/J_R)^\h_{(\varpi)}[\frac{1}{\varpi}]$ and $R^\wedge_{(\varpi)}[\frac{1}{\varpi}] \to (R/J_R)^\wedge_{(\varpi)}[\frac{1}{\varpi}]$ are isomorphisms. 
\end{lemma}
\begin{proof}
    First, we note that flatness of the morphism $R \to R^\h_{(\varpi)}$ and \cite[\href{https://stacks.math.columbia.edu/tag/0DYE}{Tag 0DYE}]{stacks-project} imply that $(R/J_R)^\h_{(\varpi)} \simeq R^\h_{(\varpi)}/ (J_R \otimes_R R^\h_{(\varpi)})$. Therefore, the first part of the claim follows from the observation that $(J_R \otimes_R R^\h_{(\varpi)})[\frac{1}{\varpi}] \simeq J_R[\frac{1}{\varpi}] \otimes_R R^\h_{(\varpi)} \simeq  0$. 

    Now we notice that $R$ is $(\varpi)$-adically adhesive due to \cite[Th.\,7.3.2]{FGK} and \cite[Prop.~0.8.5.19]{FujKato}. Therefore, \cite[Prop.~4.3.4]{FGK} ensures that $(R/J_R)^\wedge_{(\varpi)} \simeq R^\wedge_{(\varpi)}/ (J_R \otimes_R R^\wedge_{(\varpi)})$. Therefore, the second part of the claim follows from the observation that $(J_R \otimes_R R^\wedge_{(\varpi)})[\frac{1}{\varpi}]\simeq J_R[\frac{1}{\varpi}] \otimes_R R^\wedge_{(\varpi)} \simeq  0$. 
\end{proof}

Now we discuss our first main tool in relating the properties of the $(\varpi)$-adic henselizations and $(\varpi)$-adic completions.

\begin{lemma}\label{lemma:completion-faithfully-flat-appendix} Let $R_0$ be a topologically finite type $\O_K$-algebra, and let $R$ be a finite type $R_0$-algebra. Then the natural map $R^\wedge_{(\varpi)} \to \big(R^\h_{(\varpi)}\big)^{\wedge}_{(\varpi)}$ is an isomorphism, the natural map $\varphi\colon R^{\h}_{(\varpi)} \to R^{\wedge}_{(\varpi)}$ is faithfully flat, and $R^{\h}_{(\varpi)}$ is $(\varpi)$-adically topologically universally adhesive (in the sense of \cite[Def.\,0.8.5.17]{FujKato}). 
\end{lemma}
\begin{proof}
    We first show that the natural map $R^{\wedge}_{(\varpi)} \to \big(R^\h_{(\varpi)}\big)^{\wedge}_{(\varpi)}$ is an isomorphism. For this, it suffices to show that the map $R/(\varpi^n) \to R^\h_{(\varpi)}/(\varpi^n) $ is an isomorphism for any $n\geq 1$. This follows from the combination of \cite[\href{https://stacks.math.columbia.edu/tag/0AGU}{Tag 0AGU}]{stacks-project}.  

    Now we show that the map $\varphi \colon R^{\h}_{(\varpi)} \to R^{\wedge}_{(\varpi)}$ is faithfully flat. First, \cite[\href{https://stacks.math.columbia.edu/tag/00HQ}{Tag 00HQ}]{stacks-project} ensures that it suffices to show that $\varphi$ is flat and the image of $\Spec \varphi \colon \Spec R^{\wedge}_{(\varpi)} \to \Spec R^{\h}_{(\varpi)}$ contains all closed points of $\Spec R^{\h}_{(\varpi)}$. Since $\varpi\in \rm{rad}(R^\h_{(\varpi)})$ and the natural map $R^\h_{(\varpi)}/(\varpi) \xr{\sim} R^{\wedge}_{(\varpi)}/(\varpi)$ is an isomorphism, we conclude that every closed point of $\Spec R^{\h}_{(\varpi)}$ lies in the image of $\Spec \varphi$. In other words, for the purpose of proving faithful flatness of $\varphi$, it is enough to show that it is flat. 
    
    For this, we write $R^{\h}_{(\varpi)} = \colim \,R_i$, where the colimit is taken over all \'etale maps $R \to R_i$ such that $R/(\varpi) \to R_i/(\varpi)$ is an isomorphism. For any such $R_i$, the natural map $R^\h_{(\varpi)} \to (R_i)^\h_{(\varpi)}$ is an isomorphism. Using the first paragraph of this proof, we conclude that $R^{\wedge}_{(\varpi)} \to (R_i)^{\wedge}_{(\varpi)}$ is an isomorphism as well. Thus, \cite[\href{https://stacks.math.columbia.edu/tag/05UU}{Tag 05UU}]{stacks-project} and the above observation reduce the question to showing that the natural map $R \to R^{\wedge}_{(\varpi)}$ is flat. Now \cite[Th.\,7.3.2]{FGK} and \cite[Prop.~0.8.5.19]{FujKato} imply that $R$ is $(\varpi)$-adically adhesive in the sense of \cite[Def.\,7.1.3]{FGK}. Therefore, the map $R \to R^{\wedge}_{(\varpi)}$ is flat by virtue of \cite[Prop.~4.3.4]{FGK}.  

    The only thing we are left to show is that $R^\h_{(\varpi)}$ is $(\varpi)$-adically topologically universally adhesive. For this, we need to show that $R^\h_{(\varpi)}$ is $(\varpi)$-adically universally adhesive and $\big(R^\h_{(\varpi)}\big)^{\wedge}_{(\varpi)}\langle T_1, \dots, T_n\rangle \simeq R^{\wedge}_{(\varpi)}\langle T_1, \dots, T_n\rangle$ is $(\varpi)$-adically universally adhesive for any integer $n\geq 0$. The latter condition follows automatically from \cite[Th.\,7.3.2]{FGK} and \cite[Prop.~0.8.5.19]{FujKato} because $R^{\wedge}_{(\varpi)}\langle T_1, \dots, T_n\rangle$ is topologically finite type over $\O_K$. Therefore, it only suffices to show that $R^\h_{(\varpi)}$ is $(\varpi)$-adically universally adhesive. Using \cite[Prop.~0.8.5.19 and Prop.~0.8.5.10]{FujKato}, we see that it is enough to show that $R^\h_{(\varpi)}[\frac{1}{\varpi}]$ is noetherian. Now \cite[\href{https://stacks.math.columbia.edu/tag/033E}{Tag 033E}]{stacks-project} and the faithful flatness of $R^\h_{(\varpi)} \to R^{\wedge}_{(\varpi)}$ imply that it suffices to show that $R^{\wedge}_{(\varpi)}[\frac{1}{\varpi}]$ is noetherian. This follows from \cite[Prop.~3.1/3(i)]{B}.   
\end{proof}

Now we collect some easy consequences of Lemma~\ref{lemma:completion-faithfully-flat-appendix}. 

\begin{cor}\label{cor:preimage-dense} Let $R_0$ be a topologically finite type $\O_K$-algebra, let $R$ be a finite type $R_0$-algebra, and let $\pi \colon \Spec \big(R^{\wedge}_{(\varpi)}[\frac{1}{\varpi}]\big) \to \Spec \big(R^{\h}_{(\varpi)}[\frac{1}{\varpi}]\big)$ be the natural morphism, and let $U\subset \Spec \big(R^{\h}_{(\varpi)}[\frac{1}{\varpi}]\big)$ be a dense open subset. Then $\pi^{-1}(U) \subset \Spec \big(R^{\wedge}_{(\varpi)}[\frac{1}{\varpi}]\big)$ is also a dense open subset.
\end{cor}
\begin{proof}
    Lemma~\ref{lemma:completion-faithfully-flat-appendix} implies that $R^\h_{(\varpi)}[\frac{1}{\varpi}]$ is noetherian. Therefore, $\Spec R^\h_{(\varpi)}[\frac{1}{\varpi}]$ has finitely many irreducible $Z_1, \dots, Z_n$. Let us denote by $\eta_i\in Z_i$ the corresponding generic point. Since $U\subset \Spec \big(R^{\h}_{(\varpi)}[\frac{1}{\varpi}]\big)$ is dense, we conclude that $\eta_i\in U$ for each $i=1, \dots, n$.   

    Now we need to show that $V\cap \pi^{-1}(U)\neq \varnothing$ for any non-empty open subset $V \subset \Spec \big(R^{\wedge}_{(\varpi)}[\frac{1}{\varpi}]\big)$. Pick any $v\in V$. Then there is  an index $i\in\{1, \dots, n\}$ such that $\eta_i$ generizes to $\pi(v)$.  Therefore, Lemma~\ref{lemma:completion-faithfully-flat-appendix} and the going-down lemma (see \cite[\href{https://stacks.math.columbia.edu/tag/00HS}{Tag 00HS}]{stacks-project}) imply that there is a point $w\in \Spec \big(R^{\wedge}_{(\varpi)}[\frac{1}{\varpi}]\big)$ such that $w$ generizes to $v$ and $\pi(w)=\eta_i$. Since $V$ is closed under generizations and $\eta_i\in U$, we see that $w\in V\cap \pi^{-1}(U)$. 
\end{proof}

\begin{cor}\label{cor:approximate} Let $R_0$ be a topologically finite type $\O_K$-algebra and let $R$ be a finite type $R_0$-algebra, and let $R^\h_{(\varpi)} \to \widetilde{S}$ be a finite morphism. Then there is a finite type morphism $R \to R'$, which induces isomorphisms $R^\h_{(\varpi)}[\frac{1}{\varpi}] \xr{\sim} R'^\h_{(\varpi)}[\frac{1}{\varpi}]$ and $R^\wedge_{(\varpi)}[\frac{1}{\varpi}] \xr{\sim} R'^\wedge_{(\varpi)}[\frac{1}{\varpi}]$, and a finite, finitely presented morphism $R' \to S$ such that $S^\h_{(\varpi)}[\frac{1}{\varpi}] \simeq \widetilde{S}$ as an $R^\h_{(\varpi)}[\frac{1}{\varpi}] \simeq R'^\h_{(\varpi)}[\frac{1}{\varpi}]$-algebra. 
\end{cor}
\begin{proof}
    Lemma~\ref{lemma:reduce-to-flat} allows us to reduce to the case when $R$ is $\O_K$-flat. In particular, $R^\h_{(\varpi)}$ is also $\O_K$-flat and, thus, the natural morphism $R^\h_{(\varpi)} \to R^\h_{(\varpi)}[\frac{1}{\varpi}]$ is injective. Let $x_1, \dots, x_n\in \widetilde{S}$ be $R^\h_{(\varpi)}[\frac{1}{\varpi}]$-module generators of $\widetilde{S}$. After replacing each $x_i$ with $\varpi^N\cdot x_i$ for $N\gg 0$, we can assume that the $R^\h_{(\varpi)}$-module submodule of $\widetilde{S}$ generated by $x_i$ is closed under multiplication. In particular, it defines an $\O_K$-flat finite $R^\h_{(\varpi)}$-algebra $\widetilde{S}^+$ such that $\widetilde{S}^+[\frac{1}{\varpi}] \simeq \widetilde{S}$ as $R^\h_{(\varpi)}[\frac{1}{\varpi}]$-algebras.  

    Choose any surjection $f\colon R^\h_{(\varpi)}[X_1, \dots, X_n] \twoheadrightarrow \widetilde{S}^+$ and set $M \coloneqq \ker(f)$. Note that $M$ is $(\varpi)$-saturated (because $\widetilde{S}^+$ is $\O_K$-flat) and $R^\h_{(\varpi)}$ is $(\varpi)$-adically universally adhesive (see Lemma~\ref{lemma:completion-faithfully-flat-appendix}), so \cite[Prop.~0.8.5.3(c)]{FujKato} implies that $M$ is a finite $R^\h_{(\varpi)}[X_1, \dots, X_n]$-module. In other words, $\widetilde{S}^+$ is a finitely presented $R^\h_{(\varpi)}$-algebra. Thus, a standard approximation argument implies that we can find an \'etale map $R \to R'$ which is an isomorphism modulo $\varpi$ (in particular, it induces equivalences $R^\h_{(\varpi)} \xr{\sim} R'^\h_{(\varpi)}$ and $R^\wedge_{(\varpi)} \xr{\sim} R'^\wedge_{(\varpi)}$) and a finite, finitely presented $R' \to S$ such that 
    $S\otimes_{R'} R^\h_{(\varpi)} \simeq \widetilde{S}^+$. Then \cite[\href{https://stacks.math.columbia.edu/tag/0DYE}{Tag 0DYE}]{stacks-project} implies that $S^\h_{(\varpi)} \simeq S\otimes_{R'} R^\h_{(\varpi)} \simeq \widetilde{S}^+$. In particular, we have an natural isomorphism $\widetilde{S} \simeq S^\h_{(\varpi)}[\frac{1}{\varpi}]$ of $R^\h_{(\varpi)}[\frac{1}{\varpi}]$-algebras. 
\end{proof}

Now we are ready to establish our second main tool of this subsection. The following lemma is a much stronger version of Lemma~\ref{lemma:completion-faithfully-flat-appendix}. 

\begin{lemma}\label{lemma:completion-regular-generic-fiber} Let $R_0$ be a topologically finite type $\O_K$-algebra, and let $R$ be a finite type $R_0$-algebra. Then the natural maps $R \to R^{\wedge}_{(\varpi)}$ and $R^{\h}_{(\varpi)} \to R^{\wedge}_{(\varpi)}$ are regular. 
\end{lemma}
\begin{proof}
    First, Lemma~\ref{lemma:completion-faithfully-flat-appendix} ensures that $R \to R^{\wedge}_{(\varpi)}$ and $R^{\h}_{(\varpi)} \to R^{\wedge}_{(\varpi)}$ are flat. Therefore, it suffices to show that these morphisms have geometrically regular fibers. Since $R/(\varpi) \to R^{\wedge}_{(\varpi)}/(\varpi)$ and $R^\h_{(\varpi)}/(\varpi) \to R^{\wedge}_{(\varpi)}/(\varpi)$ are isomorphisms (see \cite[\href{https://stacks.math.columbia.edu/tag/0AGU}{Tag 0AGU}]{stacks-project} and \cite[\href{https://stacks.math.columbia.edu/tag/05GG}{Tag 05GG}]{stacks-project}), it suffices to show that $R[\frac{1}{\varpi}] \to R^{\wedge}_{(\varpi)}[\frac{1}{\varpi}]$ and $R^\h_{(\varpi)}[\frac{1}{\varpi}] \to R^{\wedge}_{(\varpi)}[\frac{1}{\varpi}]$ have geometrically regular fibers.  

    {\it Step~$1$. We reduce the question to showing that the morphism $R[\frac{1}{\varpi}] \to R^{\wedge}_{(\varpi)}[\frac{1}{\varpi}]$ is regular.} The morphism $R[\frac{1}{\varpi}] \to R^{\h}_{(\varpi)}[\frac{1}{\varpi}]$ is ind-\'etale by its very construction. Furthermore, Lemma~\ref{lemma:completion-faithfully-flat-appendix} implies that $R^{\h}_{(\varpi)}[\frac{1}{\varpi}]$ is noetherian. In particular, for any prime ideal $\p\subset R[\frac{1}{\varpi}]$ with residue field $k(\p)$, the $k(\p)$-algebra $R^\h(\p)\coloneqq R^\h_{(\varpi)}[\frac{1}{\varpi}] \otimes_{R[\frac{1}{\varpi}]} k(\p)$ is ind-\'etale and noetherian. Now \cite[\href{https://stacks.math.columbia.edu/tag/092E}{Tag 092E}]{stacks-project} implies that $R^\h(\p)$ has weak dimension $\leq 0$. Therefore, \cite[\href{https://stacks.math.columbia.edu/tag/092F}{Tag 092F}]{stacks-project} implies that $R^\h(\p)$ is reduced and is of Krull dimension $0$. Since $R^\h(\p)$ is noetherian, it has finitely many minimal prime ideals. Therefore, we conclude that $R^\h(\p)$ is a finite product of fields. More precisely, $R^\h(\p) = \prod_{i=1}^m k(\q_i)$ where $\{\q_1, \dots, \q_m\}$ is the set of all prime ideals of $R^\h_{(\varpi)}$ lying over $\p$ and $k(\q_i)$ are the associated residue fields. Since $k(\p) \to R^\h(\p)$ is ind-\'etale, we conclude that each extension $k(\p) \subset k(\q_i)$ is an algebraic separable extension. Therefore, \cite[\href{https://stacks.math.columbia.edu/tag/07QH}{Tag 07QH}]{stacks-project} implies that $R^{\wedge}_{(\varpi)}[\frac{1}{\varpi}] \otimes_{R[\frac{1}{\varpi}]} k(\p)$ is geometrically regular if and only if $R^{\wedge}_{(\varpi)}[\frac{1}{\varpi}] \otimes_{R^{\h}_{(\varpi)}[\frac{1}{\varpi}]} k(\q_i)$ is geometrically regular for each $i=1, \dots, m$. In other words, it suffices to show that $R[\frac{1}{\varpi}] \to R^{\wedge}_{(\varpi)}[\frac{1}{\varpi}]$ is regular.  

    {\it Step~$2$. We reduce to the case $R=\O_K\langle X_1, \dots, X_d\rangle[T_1, \dots, T_n]$ for integers $d, n\geq 0$.} Our assumption on $R$ implies that we can find a surjection $\O_K\langle X_1, \dots, X_d\rangle[T_1, \dots, T_n] \twoheadrightarrow R$. Since $\O_K\langle X_1, \dots, X_d\rangle[T_1, \dots, T_n]$ is $(\varpi)$-adically adhesive, we see that \cite[Prop.~4.3.4]{FGK} guarantees that 
    \[
    R^{\wedge}_{(\varpi)} \simeq R \otimes_{\O_K\langle X_1, \dots, X_d\rangle[T_1, \dots, T_n]} \O_K\langle X_1, \dots, X_d, T_1, \dots, T_n\rangle.
    \]
    Since regular morphisms are preserved under finite type base change (see \cite[\href{https://stacks.math.columbia.edu/tag/07C1}{Tag 07C1}]{stacks-project}), we conclude that it suffices to prove the claim under the additional assumption that $R=\O_K\langle X_1, \dots, X_d\rangle[T_1, \dots, T_n]$.  

    {\it Step~$3$. We show that $R[\frac{1}{\varpi}] \to R^{\wedge}_{(\varpi)}[\frac{1}{\varpi}]$ is regular for $R=\O_K\langle X_1, \dots, X_d\rangle[T_1, \dots, T_n]$.} In this case, the question boils down to showing that $K\langle X_1, \dots, X_d\rangle[T_1, \dots, T_n] \to K\langle X_1, \dots, X_d, T_1, \dots, T_n\rangle$ is regular. This follows from \cite[Lem.~7.1.4]{LRZ24} applied to $A = K \langle X_1, \dots, X_d\rangle$, $B = K\langle X_1, \dots, X_d\rangle[T_1, \dots, T_n]$, and $R = K\langle X_1, \dots, X_d, T_1, \dots, T_n\rangle$.    
\end{proof}

\begin{rmk} The rings $R[\frac{1}{\varpi}]$, $R^\h_{(\varpi)}[\frac{1}{\varpi}]$, and $R^{\wedge}_{(\varpi)}[\frac{1}{\varpi}]$ are noetherian, so a combination of Lemma~\ref{lemma:completion-regular-generic-fiber}, Popescu's smoothening theorem (see \cite[Th.~1.8]{Popescu} or \cite[\href{https://stacks.math.columbia.edu/tag/07GC}{Tag 07GC}]{stacks-project}), and \cite[Th.~1.2]{Longke} imply that $R \to R^{\wedge}_{(\varpi)}$ and $R^\h_{(\varpi)} \to R^{\wedge}_{(\varpi)}$ are ind-smooth maps. 
\end{rmk}

Now we are ready to collect some consequences of Lemma~\ref{lemma:completion-regular-generic-fiber}. In the rest of this appendix, we denote by $\Min(R)$ the set of minimal prime ideals of a ring $R$. 

\begin{cor}\label{cor:properties-after-completion} Let $R_0$ be a topologically finite type $\O_K$-algebra, and let $R$ be a finite type $R_0$-algebra.
\begin{enumerate}
    \item\label{cor:properties-after-completion-1} The natural map $\pi_0\big(\Spec R^{\wedge}_{(\varpi)}[\frac{1}{\varpi}] \big) \to \pi_0\big(\Spec R^{\h}_{(\varpi)}[\frac{1}{\varpi}] \big)$ is a bijection;
%    \item\label{cor:properties-after-completion-2} We have $\dim R^\h_{(\varpi)}[\frac{1}{\varpi}] = \dim R^\wedge_{(\varpi)}[\frac{1}{\varpi}]$;
    \item\label{cor:properties-after-completion-3} The ring $R^\h_{(\varpi)}[\frac{1}{\varpi}]$ is reduced (resp.~normal, resp.~a domain) if and only if $R^{\wedge}_{(\varpi)}[\frac{1}{\varpi}]$ is reduced (resp.~normal, resp.~a domain);
    \item\label{cor:properties-after-completion-4} For any prime ideal $\p\subset R^\h_{(\varpi)}[\frac{1}{\varpi}]$, the ideal $\p \big(R^\wedge_{\varpi}[\frac{1}{\varpi}] \big)$ is also a prime ideal;
    \item\label{cor:properties-after-completion-5} The map $\Spec \big(R^\wedge_{(\varpi)}[\frac{1}{\varpi}]\big) \to \Spec \big(R^\h_{(\varpi)}[\frac{1}{\varpi}]\big)$ restricts to a bijection  $\Min\big(R^\wedge_{(\varpi)}[\frac{1}{\varpi}]\big) \xr{\sim} \Min\big(R^\h_{(\varpi)}[\frac{1}{\varpi}]\big)$. Its inverse is given by the map $\Min\big(R^\h_{(\varpi)}[\frac{1}{\varpi}]\big) \ni \p \mapsto \p \big(R^\wedge_{(\varpi)}[\frac{1}{\varpi}]\big) \in \Min\big(R^\wedge_{(\varpi)}[\frac{1}{\varpi}]\big)$.  
\end{enumerate}
\end{cor}
\begin{proof}
    (\ref{cor:properties-after-completion-1}): Let $\Idem \colon \Rings \to \Sets$ be the functor that sends a ring $A$ to the set of idempotents in $A$. It suffices to show that the map $\Idem\big(R^{\h}_{(\varpi)}[\frac{1}{\varpi}]\big) \to \Idem\big(R^{\wedge}_{(\varpi)}[\frac{1}{\varpi}]\big)$ is a bijection. Since the functor $\Idem$ commutes with filtered colimits and $\Idem(B) \xr{\sim} \Idem(B/I)$ is a bijection for any henselian pair $(B,I)$ (see \cite[\href{https://stacks.math.columbia.edu/tag/09XI}{Tag 09XI}]{stacks-project}),  the result follows from \cite[Th.~2.1.15(b)]{Bouthier-Cesnavicius}.  

 %   (\ref{cor:properties-after-completion-2}): Lemma~\ref{lemma:henselian-noether-normalization} implies that we can choose a finite injective map $\O_K\{T_1,\dots, T_d\} \to R^h_{(\varpi)}$ such that the induced map $k[T_1, \dots, T_d] \to R/(\varpi)$ is still injective.  

    (\ref{cor:properties-after-completion-3}): First, we note that $R^\h_{(\varpi)}[\frac{1}{\varpi}]$ and $R^{\wedge}_{(\varpi)}[\frac{1}{\varpi}]$ are noetherian due to Lemma~\ref{lemma:completion-regular-generic-fiber} and \cite[Prop.~3.1/3(i)]{B}. Therefore, Lemma~\ref{lemma:completion-faithfully-flat-appendix} and Lemma~\ref{lemma:completion-regular-generic-fiber} guarantee that $R^\h_{(\varpi)}[\frac{1}{\varpi}] \to R^{\wedge}_{(\varpi)}[\frac{1}{\varpi}]$ is a faithfully flat regular morphism of noetherian rings. Thus, \cite[\href{https://stacks.math.columbia.edu/tag/033F}{Tag 033F}]{stacks-project} and \cite[\href{https://stacks.math.columbia.edu/tag/07QK}{Tag 07QK}]{stacks-project} (resp. \cite[\href{https://stacks.math.columbia.edu/tag/033G}{Tag 033G}]{stacks-project} and \cite[\href{https://stacks.math.columbia.edu/tag/0BFK}{Tag 0BFK}]{stacks-project}) imply that $R^{\h}_{(\varpi)}[\frac{1}{\varpi}]$ is reduced (resp. normal) if and only if $R^\wedge_{(\varpi)}[\frac{1}{\varpi}]$ is reduced (resp.~normal).  

    Since the morphism $R^\h_{(\varpi)}[\frac{1}{\varpi}] \to R^{\wedge}_{(\varpi)}[\frac{1}{\varpi}]$ is faithfully flat (in particular, injective), we conclude that $R^{\h}_{(\varpi)}[\frac{1}{\varpi}]$ is a domain if $R^{\wedge}_{(\varpi)}[\frac{1}{\varpi}]$ is a domain. Therefore, we are only left to show that $R^{\wedge}_{(\varpi)}[\frac{1}{\varpi}]$ is a domain if so is $R^\h_{(\varpi)}[\frac{1}{\varpi}]$. Furthermore, we already know that $R^{\wedge}_{(\varpi)}[\frac{1}{\varpi}]$ is reduced in this case, so it suffices to show that $\Spec \big(R^{\wedge}_{(\varpi)}[\frac{1}{\varpi}]\big)$ is irreducible when $R^\h_{(\varpi)}[\frac{1}{\varpi}]$ is a domain.   
    
    Now \cite[Th.~5.3(iv)]{Greco} implies that $R^\h_{(\varpi)}[\frac{1}{\varpi}]$ is excellent. Then \cite[\href{https://stacks.math.columbia.edu/tag/07QV}{Tag 07QV}]{stacks-project} ensures that the normalization $R^\h_{(\varpi)}[\frac{1}{\varpi}] \hookrightarrow \widetilde{S}$ is finite. Therefore, Corollary~\ref{cor:approximate}  allows us to reduce to the situation where there is a finite, finitely presented morphism $R \to S$ such that $S^{\h}_{(\varpi)}[\frac{1}{\varpi}] \simeq \widetilde{S}$. Hence,  (\ref{cor:properties-after-completion-1}) and the case of normal rings imply that $S^{\wedge}_{(\varpi)}[\frac{1}{\varpi}]$ is a normal domain. Therefore, in order to show that $\Spec \big( R^{\wedge}_{(\varpi)}[\frac{1}{\varpi}]\big)$ is irreducible, it suffices to show that the natural morphism $\Spec \big(S^{\wedge}_{(\varpi)}[\frac{1}{\varpi}]\big) \to \Spec \big( R^{\wedge}_{(\varpi)}[\frac{1}{\varpi}]\big)$ is an isomorphism over some dense open subset $U\subset \Spec R^{\wedge}_{(\varpi)}[\frac{1}{\varpi}]$. We see that \cite[Prop.~4.3.4]{FGK} implies that $S^{\wedge}_{(\varpi)}[\frac{1}{\varpi}] \simeq S\otimes_{R} R^{\wedge}_{(\varpi)}[\frac{1}{\varpi}] \simeq \widetilde{S}\otimes_{R^{\h}_{(\varpi)}[\frac{1}{\varpi}]} R^{\wedge}_{(\varpi)}[\frac{1}{\varpi}]$. Therefore, the result follows from Corollary~\ref{cor:preimage-dense} and the fact that the normalization $\Spec \widetilde{S} \to \Spec \big(R^\h_{(\varpi)}[\frac{1}{\varpi}]\big)$ is an isomorphism over a dense open subset.  

    (\ref{cor:properties-after-completion-4}): It follows immediately from Corollary~\ref{cor:approximate} and (\ref{cor:properties-after-completion-3}).  

    (\ref{cor:properties-after-completion-5}): Since $R^{\h}_{(\varpi)}[\frac{1}{\varpi}] \to R^{\wedge}_{(\varpi)}[\frac{1}{\varpi}]$ is faithfully flat, \cite[\href{https://stacks.math.columbia.edu/tag/00FK}{Tag 00FK}]{stacks-project} and \cite[\href{https://stacks.math.columbia.edu/tag/00HS}{Tag 00HS}]{stacks-project} imply that the map $\Spec \big(R^\wedge_{(\varpi)}[\frac{1}{\varpi}]\big) \to \Spec \big(R^\h_{(\varpi)}[\frac{1}{\varpi}]\big)$ restricts to a surjection  $\Min\big(R^\wedge_{(\varpi)}[\frac{1}{\varpi}]\big) \twoheadrightarrow \Min\big(R^\h_{(\varpi)}[\frac{1}{\varpi}]\big)$.  

    Now we show that this map is also injective. Suppose we have $\mathfrak{P}_1, \mathfrak{P}_2\in \Min\big(R^\wedge_{(\varpi)}[\frac{1}{\varpi}]\big)$ such that 
    \[
    \mathfrak{P}_1 \cap R^\h_{(\varpi)}\big[\frac{1}{\varpi}\big] =  \mathfrak{P}_2 \cap R^\h_{(\varpi)}\big[\frac{1}{\varpi}\big]  \eqqcolon \p.
    \]
    Then (\ref{cor:properties-after-completion-4}) implies that $\mathfrak{P}\coloneqq \p \cdot (R^{\wedge}_{(\varpi)}[\frac{1}{\varpi}])$ is a prime ideal which is contained in both $\mathfrak{P}_1$ and $\mathfrak{P}_2$. Since both $\mathfrak{P}_1$ and $\mathfrak{P}_2$ are minimal, we conclude that $\mathfrak{P}_1=\mathfrak{P}=\mathfrak{P}_2$. This proves that $\Min\big(R^\wedge_{(\varpi)}[\frac{1}{\varpi}]\big) \to \Min\big(R^\h_{(\varpi)}[\frac{1}{\varpi}]\big)$ is bijective.  

    To understand the inverse to this map, it suffices to show that, for every minimal prime ideal $\p\subset R^\h_{(\varpi)}[\frac{1}{\varpi}]$, the ideal $\mathfrak{P}\coloneqq \p \cdot (R^{\wedge}_{(\varpi)}[\frac{1}{\varpi}])$ is a minimal prime ideal of $R^{\wedge}_{(\varpi)}[\frac{1}{\varpi}]$ and that $\mathfrak{P} \cap R^{\h}_{(\varpi)}[\frac{1}{\varpi}] =\p$. The latter follows immediately from \cite[Th.~7.5]{Matsumura} and faithful flatness of $R^{\h}_{(\varpi)}[\frac{1}{\varpi}] \to R^{\wedge}_{(\varpi)}[\frac{1}{\varpi}]$. Therefore, we only need to show that $\mathfrak{P}$ is a minimal prime ideal. It is a prime ideal by virtue of (\ref{cor:properties-after-completion-4}). So we only need to show that it is minimal among prime ideals of $R^{\wedge}_{(\varpi)}[\frac{1}{\varpi}]$.  
    
    If it is not a minimal prime ideal, then there is a prime ideal $\mathfrak{P}'\nsubseteq \mathfrak{P}$. Then $\mathfrak{P}' \cap R^{\h}_{(\varpi)}[\frac{1}{\varpi}] \subset \mathfrak{P} \cap R^{\h}_{(\varpi)}[\frac{1}{\varpi}] = \p$. Since $\p$ is a minimal prime ideal, we conclude that $\mathfrak{P'} \cap R^{\h}_{(\varpi)}[\frac{1}{\varpi}] = \p$. Hence, $\mathfrak{P} \subset \p \cdot R^{\wedge}_{(\varpi)}[\frac{1}{\varpi}] \subset \mathfrak{P'}$. This implies $\mathfrak{P} = \mathfrak{P'}$ leading to the contradiction. 
\end{proof}

\begin{cor}\label{cor:base-change-nilradical} Let $R_0$ be a topologically finite type $\O_K$-algebra, and let $R$ be a finite type $R_0$-algebra. Assume, in addition, that $R$ is flat over $\O_K$. Then the natural map $\rm{nil}(R) \otimes_{R} R^{\wedge}_{(\varpi)} \to \rm{nil}(R^{\wedge}_{(\varpi)})$ is an isomorphism. 
\end{cor}
\begin{proof}
    Lemma~\ref{lemma:completion-faithfully-flat-appendix} implies that the map $R \to R^{\wedge}_{(\varpi)}$ is flat. This ensures that the natural map $\rm{nil}(R) \otimes_{R} R^{\wedge}_{(\varpi)} \to \rm{nil}\big(R^{\wedge}_{(\varpi)}\big)$ is injective. Thus, in order to show that it is an isomorphism, it suffices to show that 
    \[
    R^{\wedge}/(\rm{nil}(R) \otimes_R R^{\wedge}_{(\varpi)}) \simeq R/\rm{nil}(R) \otimes_R R^{\wedge}_{(\varpi)}) \simeq \big(R/\rm{nil}(R) \big)^{\wedge}_{(\varpi)}
    \]
    is reduced. For brevity, we denote by $S\coloneqq R/\rm{nil}(R)$ and wish to show that $S^{\wedge}_{(\varpi)}$ is reduced. By construction, $S$ is still $\O_K$-flat and finite type over $R_0$. Therefore, we conclude that $S^{\wedge}_{(\varpi)}$ is $\O_K$-flat. Thus, it suffices to show that $S^{\wedge}_{(\varpi)}[\frac{1}{\varpi}]$ is reduced.  Corollary~\ref{cor:properties-after-completion}(\ref{cor:properties-after-completion-3}) guarantees that it is enough to show that $S^\h_{(\varpi)}[\frac{1}{\varpi}]$ is reduced. This follows directly from basic properties of \'etale morphisms and the fact that $S$ is reduced.  
\end{proof}

Our next goal is to relate $\dim R^\h_{(\varpi)}[\frac{1}{\varpi}]$ to $\dim R^{\wedge}_{(\varpi)}[\frac{1}{\varpi}]$. Our main tool to relate these two dimensions will be a henselian version of Noether normalization. For simplicity, we will restrict our attention to the case when $R$ is a finite type $\O_K$-algebra which is sufficient for all our applications. 

\begin{notation} We set $\O_K\{T_1, \dots, T_d\} \coloneqq \O_K[T_1, \dots, T_d]^{\h}_{(\varpi)}$ and $K\{T_1, \dots, T_d\} \coloneqq \O_K\{T_1, \dots, T_d\}[\frac{1}{\varpi}]$.
\end{notation}

\begin{lemma}[Noether normalization]\label{lemma:henselian-noether-normalization} Let $R$ be a finite type flat $\O_K$-algebra. Then there is an integer $d$ and a finite injective morphism $h\colon \O_K\{T_1, \dots, T_d\} \hookrightarrow R^{\h}_{(\varpi)}$ such that $h \text{ \emph{mod} }\m\colon k[T_1, \dots, T_d] \hookrightarrow R/\m R$ is injective as well.
\end{lemma}
\begin{proof}
    The usual Noether normalization lemma \cite[\href{https://stacks.math.columbia.edu/tag/07NA}{Tag 07NA}]{stacks-project} implies that we can find an finite injective morphism $\ov{h}\colon k[T_1, \dots, T_d] \hookrightarrow R/\m R$. Since $R$ is of finite type over $\O_K$, we can lift $h$ to a finite type morphism $h'\colon \O_K[T_1, \dots, T_d] \to R$.  
    
    Now we base change $h'$ along the map $\O_K[T_1, \dots, T_d] \to \O_K\{T_1, \dots, T_d\}$ to get a finite type morphism 
    \[
    h''\colon \O_K\{T_1, \dots, T_d\} \to R \otimes_{\O_K[T_1, \dots, T_d]} \O_K\{T_1, \dots, T_d\}
    \]
    such that $h'' \text{ mod }\m = h \text{ mod }\m$ is finite. Note that \cite[\href{https://stacks.math.columbia.edu/tag/09XJ}{Tag 09XJ}]{stacks-project} implies that $\O_K\{T_1, \dots, T_d\}$ is henselian with respect to the ideal $\m \cdot \O_K\{T_1, \dots, T_d\}$. Therefore, \cite[Prop.~2.3.2]{Moret-Bailly} implies that there is a clopen subscheme $\Spec R' \subset \Spec \big(R \otimes_{\O_K[T_1, \dots, T_d]} \O_K\{T_1, \dots, T_d\}\big)$ such that $R'$ is a finite $\O_K\{T_1, \dots, T_d\}$-algebra and  $\Spec R'$ contains the special fiber of $\Spec \big(R \otimes_{\O_K[T_1, \dots, T_d]} \O_K\{T_1, \dots, T_d\}\big)$. In particular, the natural morphism $R'^\h_{(\varpi)} \to \big(R \otimes_{\O_K[T_1, \dots, T_d]} \O_K\{T_1, \dots, T_d\}\big)^{\h}_{(\varpi)}$ is an isomorphism.  

    The universal property of henselizations (see \cite[\href{https://stacks.math.columbia.edu/tag/0A02}{Tag 0A02}]{stacks-project}) and \cite[\href{https://stacks.math.columbia.edu/tag/09XK}{Tag 09XK}]{stacks-project} imply that 
    \[
    R^\h_{(\varpi)} \simeq \big(R \otimes_{\O_K[T_1, \dots, T_d]} \O_K\{T_1, \dots, T_d\}\big)^{\h}_{(\varpi)} \simeq R'^\h_{(\varpi)} \simeq R'.
    \]
    Thus, we conclude that the natural morphism $h\coloneqq (h')^\h_{(\varpi)} \colon \O_K\{T_1, \dots, T_d\} \to R^\h_{(\varpi)}$ is finite. By construction, the induced morphism $h \text{ mod }\m = \ov{h} \colon k[T_1, \dots, T_d] \to R/\m R$ is injective. Therefore, we are only left to show that $h$ is injective as well.   
    
    To verify injectivity of $h$, we set $M\coloneqq \rm{Im}(h)\subset R$ and $N\coloneqq \rm{Ker}(h)$. Since $R$ is flat over $\O_K$, we conclude that $M \subset R$ is flat over $\O_K$ as well. Thus, the assumption that $h\text{ mod }\m$ is injective implies that $N/\m N \simeq 0$. Since $\O_K\{T_1, \dots, T_d\}$ is $(\varpi)$-adically adhesive (see Lemma~\ref{lemma:completion-faithfully-flat-appendix}) and $N$ is $(\varpi)$-saturated, e.g., $\O_K\{T_1, \dots, T_d\}/N \simeq M$ is $\O_K$-flat, \cite[Prop.~0.8.5.3(c)]{FujKato} implies that $N$ is a finitely generated $\O_K\{T_1, \dots, T_d\}$-module. Therefore, Nakayama's lemma (see \cite[\href{https://stacks.math.columbia.edu/tag/00DV}{Tag 00DV}]{stacks-project}) ensures that $N/\m N=0$ implies that $N=0$. In other words, the morphism $h$ is indeed injective. 
\end{proof}

Now we relate Noether normalization from Lemma~\ref{lemma:henselian-noether-normalization} to the usual Noether normalization for flat, topologically finite type $\O_K$-algebras. 

\begin{lemma}\label{lemma:henselian-noether-normalization-2} Let $R$ be a finite type flat $\O_K$-algebra and let $h\colon \O_K\{T_1, \dots, T_d\} \to R^{\h}_{(\varpi)}$ be an injective finite morphism such that $h \text{ \emph{mod} }\m\colon k[T_1, \dots, T_d] \to R/\m R$ is still injective. Then its completion $h^{\wedge}_{(\varpi)} \colon \O_K\langle T_1, \dots, T_d\rangle \to R^{\wedge}_{(\varpi)}$ is a finite injective morphism such that $h^{\wedge}_{(\varpi)} \text{ \emph{mod} }\m\colon k[T_1, \dots, T_d] \to R/\m R$ is still injective.
\end{lemma}
\begin{proof}
    First, we note that $\O_K\{T_1, \dots, T_d\}$ is $(\varpi)$-adically adhesive due to Lemma~\ref{lemma:completion-faithfully-flat-appendix}. Therefore, \cite[Prop.~4.3.4]{FGK} implies that $R^{\wedge}_{(\varpi)} \simeq R^{\h}_{(\varpi)} \otimes_{\O_K\{T_1, \dots, T_d\}} \O_K\langle T_1, \dots, T_d\rangle$ and $h^{\wedge}_{(\varpi)} = h \otimes_{\O_K\{T_1, \dots, T_d\}} \O_K\langle T_1, \dots, T_d\rangle$. In particular, $h^{\wedge}_{(\varpi)}$ is finite and $h^{\wedge}_{(\varpi)} \text{ mod }\m = h \text{ mod }\m$ is injective. Therefore, we are only left to show that $h^{\wedge}_{(\varpi)}$ is injective. For this, we could repeat the argument in the last paragraph of the proof of Lemma~\ref{lemma:henselian-noether-normalization}. The only difference is that we need to use \cite[Th.\,7.3.2]{FGK} and \cite[Prop.~0.8.5.19]{FujKato} to justify that $\O_K\langle T_1, \dots, T_d\rangle$ is $(\varpi)$-adically adhesive. 
\end{proof}

Now we use Lemma~\ref{lemma:henselian-noether-normalization} to understand the dimension of $K\{T_1, \dots, T_d\}$ and its maximal ideals. We later combine it with Lemma~\ref{lemma:henselian-noether-normalization-2} to show that $\dim R^\h_{(\varpi)}[\frac{1}{\varpi}] = \dim R^{\wedge}_{(\varpi)}[\frac{1}{\varpi}]$ for any finite type $\O_K$-algebra. 

\begin{cor}\label{cor:maximal-ideal-henselian} Let $d\geq 0$ be an integer and let $\p \subset K\{T_1, \dots, T_d\}$ be a prime ideal. Then $\p$ is maximal if and only if $K \to K\{T_1, \dots, T_d\}/\p$ is a finite morphism. 
\end{cor}
\begin{proof}
    Corollary~\ref{cor:approximate} and Lemma~\ref{lemma:henselian-noether-normalization} imply that we can find a finite injective morphism $K\{T_1, \dots, T_n\} \hookrightarrow K\{T_1, \dots, T_d\}/\p$ for some integer $n\geq 0$. First, we assume that $K\{T_1, \dots, T_d\}/\p$ is finite over $K$. Then $K\{T_1, \dots, T_n\}$ is integral over $K$ because it is a $K$-subalgebra of $K\{T_1, \dots, T_d\}/\p$. Hence, $K\{T_1, \dots, T_n\}$ is a field by virtue of \cite[\textsection 9, Lemma 1]{Matsumura}. This implies $n=0$, so another application of \cite[\textsection 9, Lemma 1]{Matsumura} ensures that $K\{T_1, \dots, T_d\}/\p$ is a field, i.e., $\p$ is a maximal ideal.   

    Now we assume that $\p$ is a maximal ideal. Then \cite[\textsection 9, Lemma 1]{Matsumura} guarantees that $K\{T_1, \dots, T_n\}$ is a field. Hence $n=0$, so $K\{T_1, \dots, T_d\}/\p$ is finite over $K$.
\end{proof}

\begin{lemma}\label{lemma:dimension-henselian} Let $d\geq 0$ be an integer, and let $\n \subset K\{T_1, \dots, T_d\}$ be a maximal ideal. Then $\rm{ht}\, \n = d$. In particular, $\dim K\{T_1, \dots, T_d\}=d$ for any integer $d\geq 0$.
\end{lemma}
\begin{proof}
    Lemma~\ref{lemma:completion-faithfully-flat-appendix} implies that the natural morphism $K\{T_1, \dots, T_d\} \to K\langle T_1, \dots, T_d\rangle$ is faithfully flat. Therefore, \cite[\href{https://stacks.math.columbia.edu/tag/00OH}{Tag 00OH}]{stacks-project} and \cite[Prop.~2.2/17]{B} imply that $\dim K\{T_1, \dots, T_d\} \leq \dim K\langle T_1, \dots, T_d\rangle = d$. In particular, $\mathrm{ht}\, \n \leq d$.  

    Now denote by $f\colon K[T_1, \dots, T_d] \to K\{T_1, \dots, T_d\}$ the natural morphism and set $\widetilde{\n}\coloneqq f^{-1}(\n)$. Corollary~\ref{cor:maximal-ideal-henselian} implies that the morphism $K \to K[T_1, \dots, T_d]/\widetilde{\n}$ is a finite type, integral morphism. In particular, it is a finite morphism. Therefore, the polynomial analogue of Corollary~\ref{cor:maximal-ideal-henselian} implies that $\widetilde{\n}$ is a maximal ideal. Furthermore, $K[T_1, \dots, T_d]_{\widetilde{\n}} \to K\{T_1, \dots, T_d\}_{\n}$ is faithfully flat morphism. Therefore, \cite[\href{https://stacks.math.columbia.edu/tag/00OH}{Tag 00OH}]{stacks-project} and \cite[\href{https://stacks.math.columbia.edu/tag/00OP}{Tag 00OP}]{stacks-project} ensure that $\rm{ht}(\n) \geq \rm{ht}(\widetilde{\n})=d$. This finishes the proof. %    Therefore, we only need to show that $\dim K\{T_1, \dots, T_d\} \geq d$. We will show that $\dim K\{T_1, \dots, T_d\}_{(T_1, \dots, T_d)} =d$. We prove this by induction. If $d=0$, the claim is clear. To prove the induction argument, we note that $T_d\in K\{T_1, \dots, T_d\}$ is a regular element and $K\{T_1, \dots, T_d\}$ is noetherian by virtue of Lemma~\ref{lemma:completion-faithfully-flat-appendix}, so \cite[\href{https://stacks.math.columbia.edu/tag/00KW}{Tag 00KW}]{stacks-project} implies that 
 %   \begin{align*}
  %  \dim K\{T_1, \dots, T_d\}_{(T_1, \dots, T_d)} = 
%    \dim K\{T_1, \dots, T_d\}_{(T_1, \dots, T_d)}/(T_d) +1 = \dim K\{T_1, \dots, T_{d-1}\}_{(T_1, \dots, T_{d-1})} +1 =d.
%    \end{align*}    
\end{proof}

\begin{cor}\label{cor:dimension-henselization-completion} Let $R$ be a finite type $\O_K$-algebra. 
\begin{enumerate}
    \item\label{cor:dimension-henselization-completion-1} $\dim R^\h_{(\varpi)}[\frac{1}{\varpi}] = \dim R^{\wedge}_{(\varpi)}[\frac{1}{\varpi}]$;
    \item\label{cor:dimension-henselization-completion-2} If $R^\h_{(\varpi)}[\frac{1}{\varpi}]$ is a domain, then $\dim \big(R^\h_{(\varpi)}[\frac{1}{\varpi}]\big) = \rm{trdeg}\big(\rm{Frac}(R^\h_{(\varpi)}[\frac{1}{\varpi}])/K\big)$. 
\end{enumerate}
\end{cor}
\begin{proof}
    First, Lemma~\ref{lemma:reduce-to-flat} implies that we can assume that $R$ is $\O_K$-flat. Then Lemma~\ref{lemma:henselian-noether-normalization} ensures that there is a finite injective morphism $\O_K\{T_1, \dots, T_d\} \hookrightarrow  R^\h_{(\varpi)}$ which remains injective modulo the maximal ideal $\m$. 
    
    To see (\ref{cor:dimension-henselization-completion-1}), we note that \cite[Th.~20]{Matsumura-algebra}, Lemma~\ref{lemma:dimension-henselian}, and \cite[Prop.~2.2/17]{B} imply that 
    \[
    \dim R^\h_{(\varpi)}\big[\frac{1}{\varpi}\big] = \dim K\{T_1, \dots, T_d\} = d = \dim K\langle T_1, \dots, T_d\rangle = \dim R^\wedge_{(\varpi)}\big[\frac{1}{\varpi}\big]. 
    \]

    To see $(\ref{cor:dimension-henselization-completion-2})$, we note that $\rm{trdeg}\big(\rm{Frac}(R^\h_{(\varpi)}[\frac{1}{\varpi}])/K\big) = \rm{trdeg}\big(\rm{Frac}(K\{T_1, \dots, T_d\})/K\big)$ and $\dim R^\h_{(\varpi)}[\frac{1}{\varpi}] = \dim K\{T_1, \dots, T_d\}=d$. Therefore, it suffices to show that $\rm{trdeg}\big(\rm{Frac}(K\{T_1, \dots, T_d\})/K\big) = d$. Since $K\{T_1, \dots, T_d\}$ is ind-\'etale over $K[T_1, \dots, T_d]$, we conclude that the field extension $K(T_1, \dots, T_d) \subset \rm{Frac}(K\{T_1, \dots, T_d\})$ is algebraic. Therefore, $\rm{trdeg}\big(\rm{Frac}(K\{T_1, \dots, T_d\})/K\big) = \rm{trdeg}\big(K(T_1, \dots, T_d)/K\big) = d$. 
\end{proof}

\subsection{Counterexamples}\label{appendix:counter-examples}

In this subsection, we provide three examples of non-algebraizable admissible affine formal schemes. In two of these examples, the affine schemes will be of pure (relative) dimension $1$ showing that it is essential to assume geometric reducedness in Corollary~\ref{cor:intro-partial-algebraization-2}. The other example will be geometrically reduced showing that some stronger smoothness assumption is necessary in Theorem~\ref{thm:intro-elkik-algebraization}.  

We start with the first example which works only over (some) non-perfect non-archimedean fields of equi-characteristic $p>0$. 

\begin{proposition}[{\cite[\textsection 3.3, p.~509]{Conrad99}}]\label{prop:counterexample-1-appendex} Let $K$ be a non-archimedean field of equi-characteristic $p>0$ such that $\dim_K K^{1/p}=\infty$. Let $\varpi \in \O_K$ be a pseudo-uniformizer and let $a_0, \dots, a_n, \dots {} \in \O_K$ be a countable sequence of elements whose $p$th roots generate an extension of $K$ of infinite degree, and such that $\lim_{n\to \infty} a_i=0$. Put $f\coloneqq \sum_{i\geq 0} a_i X^{ip}\in \O_K\langle X\rangle $ and $A_0\coloneqq \O_K\langle X, Y\rangle/(Y^p - f)$. Then there is no finite type $\O_K$-algebra $B$ with an $\O_K$-linear isomorphism $B^{\wedge}_{(\varpi)} \simeq A_0$. 
\end{proposition}

This example was communicated to us by Brian Conrad. 

\begin{proof}
    First, we show that $A_0 \wdh{\otimes}_{\O_K} \O_{K'}$ is reduced for any \emph{finite} field extension $K\subset K'$. For this, we notice that 
    \[
    A_0 \wdh{\otimes}_{\O_K} \O_{K'} \simeq \O_{K'}\langle X, Y\rangle/\big(Y^p -f (X)\big) \simeq \O_{K'}\langle X\rangle[Y]/\big(Y^p -f(X)\big)
    \]
    and put $Q\coloneqq \Frac\big(\O_{K'}\langle X\rangle \big)$. Then, for the purpose of proving reducedness of $A_0 \wdh{\otimes}_{\O_K} \O_{K'}$, it suffices to show that $Q[Y]/\big(Y^p-f(X)\big)$ is reduced. For this, it is enough to show that $f(X)$ is not a $p$th power in the field $Q$. Since $\O_{K'}\langle X\rangle$ is a normal ring, it suffices to show that $f(X)$ is not a $p$th power in $\O_{K'}\langle X\rangle$. This, in turn, follows from our choice of $f(X)$. \smallskip
    
  %  Now we put $A\coloneqq A_0[\frac{1}{\varpi}] = (A_0)_K$. Then \cite[Lemma B.3.6]{Z-quotients} and the paragraph above imply that $\big(A_0 \wdh{\otimes}_{\O_K} \O_{K'} \big)[\frac{1}{\varpi}] \simeq A \wdh{\otimes}_K K' \simeq A\otimes_K K'$ is reduced for any finite extension $K\subset K'$. 

    Now suppose there is a finite type $\O_K$-algebra $B$ such that $B^{\wedge}_{(\varpi)} \simeq A_0$. Then \cite[\href{https://stacks.math.columbia.edu/tag/030V}{Tag 030V}]{stacks-project} implies that there is a finite extension $K\subset K'$ such that, for any further field extension $K'\subset F$, the nilradical $\rm{nil}(B_{F})$ is isomorphic to $\rm{nil}(B_{K'}) \otimes_{K'} F$. Since $A_0\wdh{\otimes}_{\O_K} \O_{K'}$ is reduced, we can replace $K$ by $K'$, the $\O_K$-algebra $A_0$ by the $\O_{K'}$-algebra $A_0 \wdh{\otimes}_{\O_K} \O_{K'}$, and $B$ by $(B\otimes_{\O_K} \O_{K'})/\rm{nil}(B\otimes_{\O_K} \O_{K'})$ to assume that $B_K$ is geometrically reduced in the algebraic sense, i.e., $B_F=B\otimes_K F$ is reduced for any field extension $K\subset F$. \smallskip

    Lemma~\ref{lemma:completion-regular-generic-fiber} applied to a finite type $\O_{K^{1/p}}$-algebra $B\otimes_{\O_K} \O_{K^{1/p}}$ ensures that the morphism 
    \[
    \big(B\otimes_{\O_K} {\O_{K^{1/p}}}\big)\big[\frac{1}{\varpi}\big] \to \big(B\wdh{\otimes}_{\O_K} {\O_{K^{1/p}}}\big)\big[\frac{1}{\varpi}\big] \simeq \big(A_0\wdh{\otimes}_{\O_K} {\O_{K^{1/p}}}\big)\big[\frac{1}{\varpi}\big]
    \]
    is regular. Since $B_K$ is geometrically reduced, we conclude that $\big(B\otimes_{\O_K} {\O_{K^{1/p}}}\big)\big[\frac{1}{\varpi}\big] \simeq B_{K^{1/p}}$ is reduced. Therefore, \cite[\href{https://stacks.math.columbia.edu/tag/07QK}{Tag 07QK}]{stacks-project} guarantees that $\big(A_0\wdh{\otimes}_{\O_K} {\O_{K^{1/p}}}\big)\big[\frac{1}{\varpi}\big]$ is reduced as well. However, the ring $\big(A_0\wdh{\otimes}_{\O_K} {\O_{K^{1/p}}}\big)\big[\frac{1}{\varpi}\big]$ is not reduced by its very construction. Therefore, no such $B$ exists. 
\end{proof}

\begin{rmk} We note that Proposition~\ref{prop:counterexample-1-appendex} gives an example of a reduced flat, topologically finitely presented $\O_K$-algebra $A_0$ such that $A\coloneqq A_0\otimes_{\O_K} K$ is reduced and is of Krull dimension $1$, but the affine formal scheme $\Spf A_0$ is not algebraizable. In particular, the assumption that $\Spa(A, A^\circ)$ is \emph{geometrically} reduced cannot be dropped in Corollary~\ref{cor:intro-partial-algebraization-2}. We also note that, in this example, the $K$-algebra $A$ is geometrically reduced in the usual algebraic sense, i.e., $A \otimes_K F$ is reduced for any field extension $K\subset F$.   
\end{rmk}

\begin{rmk} In fact, for a non-archimedean field $K$ of equicharacteristic $p>0$, there is a non-algebraizable reduced topologically finite type flat affine relative curve $\Spf A_0$ over $\O_K$ if and only if $\dim_{K^p} K>p$. We do not justify this claim as we never need this result in this paper.  
\end{rmk}

To discuss the other two examples, we will need to use the notion of cross-ratio. Let $F$ be a field, and let $p_1, p_2, p_3, p_4\in \PP^1(F)$ be an ordered quadruple of distict points of the projective line with homogeneous coordinates $p_1=[X_1: X_2]$, $p_2=[Y_1: Y_2]$, $p_3=[Z_1:Z_2]$, and $p_4=[W_1:W_2]$. Then the \emph{cross-ratio} $(p_1, p_2; p_3, p_4)$ is defined to be 
\[
(p_1, p_2; p_3, p_4) = \frac{\det{  \begin{bmatrix}
    X_1 & W_1\\
    X_2 & W_2
  \end{bmatrix}
}}{\det{  \begin{bmatrix}
    Z_1 & W_1\\
    Z_2 & W_2
  \end{bmatrix}
}} \colon \frac{\det{  \begin{bmatrix}
    X_1 & Y_1\\
    X_2 & Y_2
  \end{bmatrix}
}}{\det{  \begin{bmatrix}
    Z_1 & Y_1\\
    Z_2 & Y_2
  \end{bmatrix}
}}.
\]
One checks that this expression does not depend on the choice of coordinates. 

\begin{proposition}\label{prop:counterexample-2-appendex} Let $K$ be a non-archimedean field, let $\varpi\in \O_K$ be a pseudo-uniformizer, and let $f\in K\langle T\rangle$ be an element which is transcendental over $K(T)$ (e.g., $f(T) = \sum_{n\geq 0} \varpi^n T^{n!}$). Set 
\[
A_0\coloneqq \O_K\langle T, X, Y\rangle/\big(X\cdot Y\cdot(X-Y)\cdot (X-TY)\cdot(X-f(T)Y)\big).
\]
Then there is no finite type $\O_K$-algebra $B$ with an $\O_K$-linear isomorphism $B^{\wedge}_{(\varpi)} \simeq A_0$.
\end{proposition}
\begin{proof}
    For brevity, we set $g_1(T, X, Y)\coloneqq X$, $g_2(T, X, Y)\coloneqq Y$, $g_3(T, X, Y) \coloneqq X-Y$, $g_4(T, X, Y) \coloneqq X-TY$, and $g_5(T, X, Y)\coloneqq X-f(T)Y$. We also set $A\coloneqq A_0[\frac{1}{\varpi}]$ and $Z\coloneqq \Spec A$. Then $Z$ has five regular irreducible components $Z_i \coloneqq  \rm{V}(g_i)\subset Z$ of pure dimension $2$. Their scheme-theoretic intersection $W\coloneqq \cap_{i=1}^5 Z_i = \rm{V}(X, Y) = \Spec K\langle T\rangle \subset Z$ is a regular irreducible scheme of pure dimension $1$. We denote by $\zeta\in W$ its generic point and by $F_\zeta\coloneqq \rm{Frac}\big(\O(W)\big)$ the field of rational functions on $W$.  

    We denote by $J \coloneqq \O(Z)=A \twoheadrightarrow \O(W)$ the ideal of $W$. Then one checks directly that the $\O(W)$-module $J/J^2 \cong \ov{X}\cdot A/J \oplus \ov{Y}\cdot A/J$ is a free vector bundle of rank $2$ with a basis given by the residue classes $\ov{X}$ and $\ov{Y}$. We also denote by $J_{i}\coloneqq \O(Z)=A \to \O(Z_i)$ the ideals of $Z_i$ for $i=1, \dots, 5$.  
    
    Then $\ell_i\coloneqq \rm{Im}(J_{i} \to J/J^2)_{\zeta}$ is a line in the two-dimensional $F_\zeta$-vector space $V_\zeta\coloneqq (J/J^2)_{\zeta} \cong \ov{X}\cdot F_\zeta \oplus \ov{Y}\cdot F_\zeta$. In particular, each line $\ell_i$ defines a point $[\ell_i]\in \PP(V_\zeta) = \PP^1(F_\zeta)$. Now let $K\subset L\subset F_\zeta$ be the smallest subfield of $F_\zeta$ which contains $K$ and all possible cross-ratios of ordered quadruples of disticts points among $\{[\ell_1], [\ell_2], [\ell_3], [\ell_4], [\ell_5]\}$. Then, using the homogenenuous coordinates on $\PP(V_\zeta)$ coming from the trivialization $V_\zeta = \ov{X}\cdot F_\zeta \oplus \ov{Y}\cdot F_\zeta$, one easily checks that the extension $K\subset L$ is generated by $T$ and $f(T)$. In particular, we conclude that 
    \begin{equation}\label{eqn:transcendence-degree}
    \rm{trdeg}\big(L/K\big)=2.
    \end{equation}

    Now suppose there is a finite type $\O_K$-algebra $B$ such that $B^{\wedge}_{(\varpi)} \simeq A_0$. We set $\widetilde{Z} \coloneqq \Spec \big(B^{\h}_{(\varpi)}[\frac{1}{\varpi}]\big)$. Then Lemma~\ref{lemma:completion-faithfully-flat-appendix}, Corollary~\ref{cor:properties-after-completion}, and Corollary~\ref{cor:dimension-henselization-completion} imply that $\widetilde{Z}$ has five regular irreducible components $\widetilde{Z}_i$ of pure dimension $2$ and their scheme-theoretic intersection $\widetilde{W}$ is a regular irreducible scheme of pure dimension $1$. We denote by $\widetilde{\zeta} \in \widetilde{W}$ its generic point and by $F_{\widetilde{\zeta}}\coloneqq \rm{Frac}\big(\O(\widetilde{W})\big)$ the field of rational functions on $\widetilde{W}$.  

    We also denote by $\widetilde{J}\coloneqq \O(\widetilde{Z}) \twoheadrightarrow \O(\widetilde{W})$ and $\widetilde{J}_i\coloneqq \O(\widetilde{Z}) \twoheadrightarrow \O(\widetilde{Z}_i)$ the corresponding ideals of $\widetilde{W}$ and $\widetilde{Z}_i$ for $i=1, \dots, 5$. Lemma~\ref{lemma:completion-faithfully-flat-appendix} implies that $\O(\widetilde{W}) \to \O(W)$ and $\O(\widetilde{Z}_i) \to \O(Z_i)$ are faithfully flat and that $\widetilde{J}\otimes_{\O(\widetilde{Z})} \O(Z) \simeq J$ and $\widetilde{J}_i \otimes_{\O(\widetilde{Z}_i)} \O(Z_i) \simeq J_i$ for each $i=1, \dots, 5$. In particular, we conclude that $\widetilde{J}/\widetilde{J}^2$ is a vector bundle of rank $2$, the image $\widetilde{\ell}_i\coloneqq \rm{Im}(\widetilde{J}_{i} \to \widetilde{J}/\widetilde{J}^2)_{\widetilde{\zeta}}$ is a line in the two-dimensional $F_{\widetilde{\zeta}}$-vector space $V_{\widetilde{\zeta}}\coloneqq (\widetilde{J}/\widetilde{J}^2)_{\widetilde{\zeta}}$, and the natural map $\PP\big(V_{\widetilde{\zeta}}\big) \to \PP(V_\zeta)$ sends $[\widetilde{\ell}_i]$  to $[\ell_i]$ for each $i=1, \dots, 5$. This implies that the field $L$ defined above is equal to the the smallest subfield of $F_{\widetilde{\zeta}}$ which contains $K$ and all possible cross-ratios of ordered quadruples of disticts points among $\{[\widetilde{\ell}_1], [\widetilde{\ell}_2], [\widetilde{\ell}_3], [\widetilde{\ell}_4], [\widetilde{\ell}_5]\}$. In particular, we see that $L$ is a subfield of $F_{\widetilde{\zeta}}$. Thus, Corollary~\ref{cor:dimension-henselization-completion} implies that 
    \[
    \rm{trdeg}\big(L/K\big) \leq \rm{trdeg}\big(F_{\widetilde{\zeta}}/K\big) = 1. 
    \]
    This contradicts (\ref{eqn:transcendence-degree}). Hence, there is no finite type $\O_K$-algebra $B$ such that $B^{\wedge}_{(\varpi)} \simeq A_0$. 
\end{proof}

Using Proposition~\ref{prop:counterexample-2-appendex}, we also construct an example of a non-algebraizable non-reduced admissible affine formal scheme of relative dimension $1$.   
\begin{proposition}\label{prop:counterexample-3-appendex} Let $K$ be a non-archimedean field, let $\varpi\in \O_K$ be a pseudo-uniformizer, and let $f\in K\langle T\rangle$ be an element which is transcendental over $K(T)$ (e.g., $f(T) = \sum_{n\geq 0} \varpi^n T^{n!}$). Set 
\[
A_0\coloneqq \O_K\langle T, X, Y\rangle/\Big(X Y(X-Y) (X-TY)(X-f(T)Y), X^6, X^5Y, X^4Y^2, X^3Y^3, X^2Y^4, XY^5, Y^6\Big).
\]
Then there is no finite type $\O_K$-algebra $B$ with an $\O_K$-linear isomorphism $B^{\wedge}_{(\varpi)} \simeq A_0$.
\end{proposition}
\begin{proof}
    We denote by $N \coloneqq \rm{nil}(A_0) \subset A_0$ the nilradical of $A_0$ and by $\O_K\langle T\rangle \simeq A_{0, \red} \coloneqq A_0/N$ the reduced quotient of $A_0$. Then we see that $N/N^2$ is a free $A_{0, \red}$-module of rank two with a basis given by the residue classes $\ov{X}$ and $\ov{Y}$, i.e., $N/N^2\simeq \ov{X}\cdot A_{0, \red}\oplus \ov{Y}\cdot A_{0, \red}$. Now we consider the kernel $I\coloneqq \ker\big(\Sym^5_{A_{0, \red}}(N/N^2) \to N^5/N^6\big)$ and see that $I$ is an $A_{0, \red}$-module generated by the element $\ov{X}\ov{Y}(\ov{X}-\ov{Y})(\ov{X}-T\ov{Y})(\ov{X}-f(T)\ov{Y})$. Therefore, the $\O_K$-scheme 
    \[
    Z\coloneqq \rm{V}\Big(I\cdot \big(\bigoplus_{n\geq 0}\Sym^n_{A_{0, \red}} N/N^2\big)\Big) \subset \Spec \big(\bigoplus_{n\geq 0}\Sym^n_{A_{0, \red}} N/N^2\big) \simeq \Spec A_{0, \red}[\ov{X}, \ov{Y}] \simeq \Spec \O_K\langle T\rangle[\ov{X}, \ov{Y}]
    \]
    is isomorphic to $\Spec \O_K\langle T\rangle[\ov{X},\ov{Y}]/\big(\ov{X}\cdot\ov{Y}\cdot(\ov{X}-\ov{Y})\cdot(\ov{X}-T\ov{Y})\cdot (\ov{X}-f(T)\ov{Y})\big)$. We put $A_1\coloneqq \O(Z)$, then 
    \[
    (A_1)^{\wedge}_{(\varpi)} \simeq \O_K\langle T, \ov{X}, \ov{Y}\rangle/\big(\ov{X}\cdot\ov{Y}\cdot(\ov{X}-\ov{Y})\cdot(\ov{X}-T\ov{Y})\cdot (\ov{X}-f(T)\ov{Y})\big).
    \]

    Now suppose that there is a finite type $\O_K$-algebra $B$ such that $B^{\wedge}_{(\varpi)} \simeq A_0$. Lemma~\ref{lemma:reduce-to-flat} ensures that we can assume that $B$ is $\O_K$-flat. Let $\widetilde{N}\coloneqq \nil(B)$ be the nilradical of $B$ and put $\widetilde{I}\coloneqq \ker\big(\Sym^5_{B_{\red}}(\widetilde{N}/\widetilde{N}^2) \to \widetilde{N}^5/\widetilde{N}^6\big)$. We note that $\widetilde{N}$ is a finite $B$-module by the combination of \cite[Prop.~0.8.5.3, Prop.~0.8.5.8]{FujKato}, and \cite[Th.\,7.3.2]{FGK}. We set
    \[
    B_1\coloneqq \Big(\bigoplus_{n\geq 0} \Sym^n_{B_{\red}} \widetilde{N}/\widetilde{N}^2 \Big)/\Big(\widetilde{I}\cdot\big(\bigoplus_{n\geq 0} \Sym^n_{B_{\red}} \widetilde{N}/\widetilde{N}^2\big)\Big).
    \]
    By construction, the $\O_K$-algebra $B_1$ is of finite type.  
    
    Now Lemma~\ref{lemma:completion-faithfully-flat-appendix} and Corollary~\ref{cor:base-change-nilradical} imply that $\widetilde{N} \otimes_B A_0 \simeq N$ and that $\widetilde{I} \otimes_B A_0 \simeq I$. Therefore, we conclude that $(B_1)^{\wedge}_{(\varpi)} \simeq (A_1)^{\wedge}_{(\varpi)} \simeq \O_K\langle T, \ov{X}, \ov{Y}\rangle/\big(\ov{X}\cdot\ov{Y}\cdot (\ov{X}-\ov{Y})\cdot(\ov{X}-T\ov{Y})\cdot (\ov{X}-f(T)\ov{Y})\big)$. This contradicts Proposition~\ref{prop:counterexample-2-appendex}. 
\end{proof}

\begin{rmk} We note that Proposition~\ref{prop:counterexample-3-appendex} gives an example of a flat, topologically finitely presented $\O_K$-algebra $A_0$ such that $A\coloneqq A_0\otimes_{\O_K} K$ is of Krull dimension $1$, but the affine formal scheme $\Spf A_0$ is not algebraizable. In particular, the assumption that $\Spa(A, A^\circ)$ is reduced cannot be dropped in Corollary~\ref{cor:intro-partial-algebraization-2}. 
\end{rmk}

% * End of document
% ** Bibliography
\bibliography{main}
\end{document}